\theoremstyle{plain}
\newtheorem{definition}{Definition}[section]
\newtheorem{lemma}[definition]{Lemma}
\newtheorem{prop}[definition]{Proposition}
\newtheorem{theorem}[definition]{Theorem}
\newtheorem{corol}[definition]{Corollary}
\newtheorem{hypothesis}[definition]{Hypothesis}
\theoremstyle{remark}
\newtheorem{ex}[definition]{Example}
\newtheorem{remark}[definition]{Remark}
\newcommand{\ind}{\mathds{1}}
\newcommand{\surf}{S}
\newcommand{\wordsA}{\mathcal{X}}
\newcommand{\wordsB}{\mathcal{Y}}
\newcommand{\N}{\mathbb{N}}
\newcommand{\E}{\mathds{E}}
\newcommand{\Real}{\mathbb{R}}
\newcommand{\C}{\mathbb{C}}
\newcommand{\ee}{\mathrm{e}}
\newcommand{\algA}{\mathcal{A}}
\newcommand{\algB}{\mathcal{B}}
\newcommand{\Unit}{\mathbb{U}}
\newcommand{\Weingarten}{\mathrm{Wg}}
\newcommand{\Sym}{\mathfrak{S}}
\newcommand{\partition}{\mathcal{P}}
\newcommand{\Inv}{\mathcal{I}}
\newcommand\cycle[2][\,]{%
  \readlist\thecycle{#2}%
  (\foreachitem\i\in\thecycle{\ifnum\icnt=1\else#1\fi\i})%
}
\DeclareMathOperator{\Cyc}{Cycles}
\DeclareMathOperator{\Id}{Id}
\DeclareMathOperator{\val}{val}
\DeclareMathOperator{\Supp}{Supp}
\newcommand{\mwalks}{\overrightarrow{w}}
\newcommand{\mhurwitz}{\overrightarrow{h}}
\newcommand{\mwset}{\overrightarrow{\mathcal{W}}}
\newcommand{\carte}{\mathcal{C}}
\newcommand{\W}{\mathcal{W}}
\newcommand{\tW}{\tilde{\mathcal{W}}}
\newcommand{\M}{\mathcal{M}}
\newcommand{\D}{\mathcal{D}}
\newcommand{\opP}{\mathcal{P}}
\author{Thomas Buc--d'Alch\'e
\thanks{UMPA UMR 5669, ENS de Lyon, CNRS; 46, all\'ee d’Italie 69007, Lyon, France;  email:thomas.buc-dalche@ens-lyon.fr}}
\title{Topological expansion of unitary integrals and maps}
\date{}
\begin{document}

\maketitle

\begin{abstract}
  In this article, we study integrals on the unitary group with respect
  to the Haar measure. We give a combinatorial interpretation in terms
  of maps of the asymptotic topological expansion, established
  previously by Guionnet and Novak. The maps we introduce -- the maps
  of unitary type -- satisfy Tutte-like equations. It allows us to
  show that in the perturbative regime, they describe the different
  orders of the asymptotic topological expansion. Furthermore, they
  generalize the monotone Hurwitz numbers.
\end{abstract}

\section{Introduction}

In the breakthrough article \cite{brezin_planar_1978}, Brézin and al.
used random matrix theory to address the problem of enumeration of
maps, graphs embedded in surfaces up to homeomorphisms. The
topological properties of Feynman diagrams had previously been shown
to be critical in the work of 't Hooft \cite{t_hooft_planar_1974}, thus
relating the combinatorics of maps to field theory (see also the
review article \cite{bessis_quantum_1980}). For instance, the planar
diagrams give the leading order in the expansion of physically
significant quantities.

The random matrix approach to the enumeration of maps pioneered by
Brézin and al. subsequently found many applications. Harer and Zagier
used the same approach to study the topological properties of the
moduli space of curves \cite{harer_euler_1986}. In the celebrated
article \cite{kontsevich_intersection_1992}, Kontsevitch used matrix
integrals to solve Witten's conjecture. See also
\cite{di_francesco_2d_1995} for a review of the application of random
matrix theory to combinatorial problems appearing in 2D gravity. More
generally, random matrices provide a powerful tool to address hard
combinatorial problems such as the problem of the enumeration of
Riemann surfaces, see the work of Eynard \cite{eynard_counting_2016}.
For another approach on the enumeration of maps, see for instance
\cite{bouttier_census_2002}.

In all the problems above, the matrix models used are related to the
Gaussian Unitary Ensemble (GUE). Let
$\dd M = \prod_{i}\dd M_{ii}\prod_{i < j}\dd \Re(M_{i j})\dd \Im(M_{i j})$
be the Lebesgue measure on the space of Hermitian matrices
$\mathcal{H}_{N}$ and $V$ be a polynomial called the potential. We
consider the measure
\begin{equation*}
  \begin{split}
    \mu^{N}_{\text{GUE}, V} = \frac{1}{Z_{\text{GUE}, V}^{N}}\ee^{-N\Tr V(M) - \frac{N}{2}\Tr M^{2}}\dd M,
  \end{split}
\end{equation*}
where the normalization constant is the partition function
\begin{equation*}
  \begin{split}
    Z_{\text{GUE}, V}^{N} = \int_{\mathcal{H}_{N}}\ee^{-N\Tr V(M) -\frac{N}{2}\Tr M^{2}}\dd M.
  \end{split}
\end{equation*}

Many relevant quantities, such as the partition function can be
expressed as a formal series of maps using Wick's formula (see \cite{zvonkin_matrix_1997} for
an introduction). For instance, for $V(M) = t q(M)$, with
$q(M) = M^{4}$ and $t \in \Real$, we get the formal expansion in the
dimension $N$ of the random matrix $M$,
\begin{equation*}
  \begin{split}
    \ln Z_{\text{GUE}, V}^{N}& = \sum_{g\geq 0}N^{2-2g}\sum_{n \geq 0}\frac{(-t)^{n}}{n!}\M^{\text{GUE}, (g)}_{q, n}\,,
  \end{split}
\end{equation*}
where $\M^{\text{GUE}, (g)}_{q, n}$ is the number of connected maps of
genus $g$ with $n$ vertices, all of them of degree 4. Notice that the
term of order $N^{2-2g}$ in this expansion is a generating series of
maps of genus $g$. We call such an expansion a (formal) \textbf{topological
  expansion}.

In general, the above equality holds in the sense of formal power
series, see \cite{eynard_formal_2011} for instance. By the above
equality, we mean that the derivatives with respect to $t$ at $t=0$ of
the left and right sides of the equation above coincide. In fact, the
series of maps on the right side may not converge in general.

We can replace this divergent series with an asymptotic expansion as
$N \to \infty$, where the equality holds up to an error of order $N^{-p}$,
for some integer $p$. Ercolani and McLaughlin obtained such an
expansion in a one-matrix model, for a potential whose coefficients
are close to zero \cite{ercolani_asymptotics_2003}. The case with several random matrices was
studied by Guionnet and Maurel-Segala
\cite{guionnet_combinatorial_2006, guionnet_second_2007}, and
Maurel-Segala \cite{maurel-segala_high_2006}. More complicated models involving not only a matrix
from the GUE but also deterministic matrices, sometimes called models
with external sources, have been studied, see \cite{brezin_random_2016}.

The multi-matrix models display much more variety. As for the
one-matrix models, they were first studied by physicists, see the
reviews \cite{gross_two_1991,di_francesco_2d_1995}. From an analytical
point of view, they are harder to solve than one-matrix models, see
for instance the works of Mehta \cite{mehta_method_1981}, and from a
combinatorial point of view, they allow to address a wealth of
combinatorial problems as they are related to the enumeration of
colored maps, see for instance \cite{guionnet_combinatorial_2006}.

In this article, we establish a similar link between integrals of
unitary matrices and the combinatorics of some maps. More precisely,
we introduce new maps, the maps of unitary type (Definition
\ref{def:maps_unitary_type}), that describe the topological expansion.
These maps allow us to relate the Weingarten calculus and the
Dyson-Schwinger equation -- two important ways to study unitary
integrals. In a particular case, the maps of unitary type are related
to the Hurwitz numbers. In this way, we generalize part of the results
obtained in \cite{goulden_monotone_2011}, that relate a particular integral, the HCIZ integral,
to Hurwitz numbers.

The Haar unitary matrices share the same unitary invariance as
matrices of the Ginibre ensemble. An expansion in terms of
non-crossing permutations for expectation of traces of words of
Ginibre matrices $G_{i}$ has been obtained in \cite{mingo_annular_2004}. This point of view
in terms of non-crossing permuations is similar to the interpretation
in terms of maps. For instance, the annuli considered in multi-annulus
permutations correspond to the vertices in a map. A genus expansion in
terms of maps has been obtained in \cite{dubach_words_2021}. They consider only some
pairings, \textit{admissible pairings}, which corresponds to the fact
that edges are oriented in the maps of unitary type. In particular,
the unitary invariance of the Ginibre ensemble implies that to have a
non-zero expectation, the words in Ginibre matrices considered must be
\textit{balanced}, i.e. contain as many $G_{i}$ as $G_{i}^{*}$. A similar
condition appears for Haar unitary matrices.

We introduce some notation. We consider matrices of dimension
$N \in \N^{*} = \{1, 2, 3, \ldots\}$. We denote by $\Tr A = \sum_{i=1}^{N}A_{ii}$
the trace of a matrix $A$, and by $\tr = \Tr/N$ the normalized trace.
Notice that both $\Tr$ and $\tr$ depend on the dimension $N$. The
conjugate transpose of a matrix $M$ is denoted by $M^{*}$. Let
$p \in \N^{*}$. For all $N \geq 1$, we fix $p$ deterministic matrices
$A_{1}^{N}, \ldots, A_{p}^{N}$ of size $N \times N$. The matrix $U^{N}$ will be
a unitary matrix of size $N \times N$, i.e. an element of the unitary group
$\Unit(N)$, and $(U^{N})^{*} = (U^{N})^{-1}$ will be its conjugate
transpose.

Let $\dd U^{N}$ be the Haar measure on the unitary group $\Unit(N)$,
and $V$ be a non-commutative polynomial in several variables, that
does not depend on $N$. The measure $\mu^{N}_{V}$ is given by
\begin{equation}\label{eq:measure}
  \begin{split}
    \dd\mu_{V}^{N}(U^{N}) &= \frac{1}{Z^{N}_{V}}\exp\left(N\Tr V\Big(U^{N}, (U^{N})^{*}, A_{1}^{N}, (A_{1}^{N})^{*}, \ldots, A_{p}^{N}, (A_{p}^{N})^{*}\Big)\right)\dd U^{N},
  \end{split}
\end{equation}
where the partition function $Z^{N}_{V}$ is
\begin{equation}
  \begin{split}
    Z^{N}_{V} &= \int_{\Unit(N)}\exp\left(N\Tr V\Big(U^{N}, (U^{N})^{*}, A_{1}^{N}, (A_{1}^{N})^{*}, \ldots, A_{p}^{N}, (A_{p}^{N})^{*}\Big)\right)\dd U^{N}.
  \end{split}
\end{equation}

We will evaluate all non-commutative polynomials at the matrices
$U^{N}, (U^{N})^{*}, A_{1}^{N}, (A_{1}^{N})^{*}, \ldots, A_{p}^{N}, (A_{p}^{N})^{*}$
and will omit writing this explicitly in the sequel, e.g. writing
$\Tr(V)$ to mean
$\Tr(V(U^{N}, (U^{N})^{*}, A_{1}^{N}, \ldots, A_{p}^{N}))$.

In Section \ref{sec:multimatrix}, we will consider measures of the
form
\begin{equation*}
  \begin{split}
    \frac{1}{Z^{N}_{V}}\exp(N\Tr V)\dd U^{N}_{1}\cdots \dd U^{N}_{n},
  \end{split}
\end{equation*}
where $V$ is a noncommutative polynomial that depends on
$U^{N}_{1}, \ldots, U^{N}_{n}$, all independent and Haar-distributed.

We will assume the two following hypotheses.
\begin{hypothesis}\label{hyp:real}
  For all $N \geq 1$ and for all
  $U_{1}, \ldots, U_{n} \in \Unit(N)^{n}$, $\Tr V$ is real.
\end{hypothesis}
\begin{hypothesis}\label{hyp:bound-infty}
  Assume
  \begin{equation*}
    \sup_{N \geq 1}\sup_{1 \leq i \leq p}\|A^{N}_{i}\| < \infty\,,
  \end{equation*}
  where $\|\cdot\|$ is the operator norm.
\end{hypothesis}

In most of the article, we will not assume
Hypothesis \ref{hyp:bound-infty}, but rather assume Hypothesis
\ref{hyp:bound}:
\begin{hypothesis}\label{hyp:bound}
  For all $N \geq 1$ and for all $1 \leq i \leq p$,
  $\|A^{N}_{i}\| \leq 1$, where $\|\cdot\|$ is the operator norm.
\end{hypothesis}
This will prove convenient, and will not change the main result,
Theorem \ref{thm:main} stated below. Stating Theorem \ref{thm:main}
with Hypothesis \ref{hyp:bound-infty} instead of \ref{hyp:bound}
corresponds to rescaling the coefficients of the polynomial $V$.

Hypothesis \ref{hyp:real} implies that the measure $\mu^{N}_{V}$ is a probability
measure, and in particular that $Z^{N}_{V} \in (0, +\infty)$. We write the
potential $V$ as a sum of monomials $q_{i}$ with complex coefficients
$z_{i}$, $V = \sum_{i}z_{i}q_{i}$. Thus, we will sometimes consider the
partition functions, cumulants, etc. as functions of
$\bm{z} = (z_{1}, z_{2}, \ldots )$. With this notation, the reality
conditions is
\begin{equation*}
  \sum_{i}z_{i}\Tr(q_{i}) = \sum_{i}\overline{z_{i}}\Tr(q_{i}^{*})\,.
\end{equation*}
Notice that for generic $q_{i}$'s, $\Tr V$ might be real for only
specific values of $\bm{z}$.

Notice that when considering the partition function with potential
$V = tAU^{N}B(U^{N})^{*}$, where $t\in \C$ and $A, B$ are self-adjoint matrices, we
recover the Harish-Chandra-Itzykson-Zuber (HCIZ) integral
\begin{equation*}
  \begin{split}
    Z^{N}_{V} = \int_{\Unit(N)}\exp(tN\Tr(AU^{N}B(U^{N})^{*}))\dd U^{N},
  \end{split}
\end{equation*}
which was first studied by Harish-Chandra
\cite{harish-chandra_differential_1957} and Itzykson and Zuber
\cite{itzykson_planar_1980}, and whose asymptotics have been since investigated,
see \cite{zinn-justin_integrals_2003, goulden_monotone_2014, guionnet_asymptotics_2015, novak_complex_2020}.

We will compute joint moments and cumulants (see Definition \ref{def:moment-cumulant}) of
the random variables $\Tr(P_{1}), \ldots, \Tr(P_{l})$ under $\mu^{N}_{V}$
(for $V$ small), where the $P_{i}$ are non-commutative polynomials. In
\cite{collins_asymptotics_2009}, the first-order asymptotics of partition functions was studied.
In \cite{guionnet_asymptotics_2015}, it has been shown that the joint cumulants admit an asymptotic
expansion as $N\to \infty$, when the coefficients of the potential $V$ are
small enough.

The goal of this article is to give a combinatorial interpretation of
the coefficients of this expansion. We show that unitary matrix
integrals enumerate a particular family of maps, which we call maps of
unitary type. They are introduced in Section \ref{sec:maps_unitary}, Definition
\ref{def:maps_unitary_type}. This interpretation links the
Dyson-Schwinger equation, which is satisfied by sums of maps of
unitary type, and the Weingarten calculus studied first by Weingarten
\cite{weingarten_asymptotic_1978}, and then by \cite{samuel_un_1980}, whose results were rediscovered and expanded
upon by Collins \cite{collins_moments_2003} and Collins and \'Sniady \cite{collins_integration_2006}. See \cite{collins_weingarten_2022} for a
review.

Expansions in terms of combinatorial objects have already been
introduced for unitary matrices. For instance, in the case of the HCIZ
integral, expansions for the free energy using double Hurwitz numbers
are computed in \cite{goulden_monotone_2011}. In
\cite{collins_asymptotics_2009}, the leading order of the expansion of
unitary integrals is expressed in terms of maps with ``dotted edges''.
However, to our knowledge, no interpretation of these expansions using
maps has been obtained at all orders for the unitary integrals we
consider. As an interesting particular case, when considering
alternated polynomials (see Definition \ref{def:alternated_poly}), the
combinatorics of maps of unitary type is related to triple Hurwitz
numbers.

In the case of the GUE, integrals of random matrices and enumeration
of maps are related by Wick's formula. In the case of unitary
matrices, Wick's formula is replaced by Weingarten's formula. In
Section \ref{sec:Weingarten}, we express joint moments of random variables
$\Tr(P_{i})$, for non-commutative polynomials $P_{i}$, using
Weingarten's formula. In the case where the potential $V = 0$, we can
express such moments as weighted sums of permutations. In Section \ref{sec:maps},
we recall a few notions on maps and introduce the maps of unitary
type, which are our main combinatorial tools. This allows us to deduce
a topological expansion for the joint cumulants in the case of no
potential (i.e. $V = 0$). To address the general case $V \neq 0$, we
introduce generating series of maps of unitary type of the form
\begin{equation*}
  \M^{(g), N}_{V, l}(P_{1}, \ldots, P_{l}) =\sum_{\bm{n}\in\N^{k}}\frac{\bm{z}^{\bm{n}}}{\bm{n}!}\times \sum w_{N}(\carte, \bm{n}, V, P_{1}, \ldots, P_{l}),
\end{equation*}
where the second sum is on a set of connected maps $\carte$ of unitary
type (see Definition \ref{def:maps_unitary_type}) of genus $g$ which
depends on $V, P_{1}, \ldots, P_{l}, \bm{n}$. The term
$w_{N}(\carte, \bm{n}, V, P_{1}, \ldots, P_{l})$ is a weight which depends
on the size $N$, $\carte$, $\bm{n}$ and on the polynomials
$V, P_{1}, \ldots, P_{l}$. See Definition \ref{def:formal_cumulant}.

In Section \ref{sec:induction}, we describe a decomposition of maps of
unitary type, which can be interpreted as a cutting procedure. It
allows us to deduce induction relations -- similar to the topological
recursion of Chekhov, Eynard and Orantin, see \cite{chekhov_matrix_2006, eynard_algebraic_2008} --
on weighted sums $\M^{(g), N}_{V, l}$ of maps of unitary type of a
given genus $g$. This decomposition is reminiscent of a procedure
introduced by Tutte \cite{tutte_enumeration_1968}. In Section
\ref{sec:multimatrix}, we extend the results obtained so far to the
case of integrals over several independent random unitary matrices
$U^{N}_{1}, \ldots, U^{N}_{n}$.

It turns out that the induction relations obtained in Section
\ref{sec:induction} are related to the Dyson-Schwinger lattice. The
Dyson-Schwinger lattice (see \cite{guionnet_asymptotics_2015}) is a
family of equations relating cumulants together, which generalize the
Dyson-Schwinger equation (see Equation (\ref{eq:DS-problem})). This
equation admits under some hypotheses a unique solution
\cite{collins_asymptotics_2009}. Furthermore, in
\cite{guionnet_asymptotics_2015}, the Dyson-Schwinger lattice has been
used to establish the existence of an asymptotic expansion of the
cumulants, when $N\to\infty$. Let us assume Hypotheses \ref{hyp:real}
and \ref{hyp:bound}, and that the joint law of the matrices
$A^{N}_{i}$, $\tr$ admits an asymptotic expansion as $N \to \infty$.
For all $h$, we have an asympotic expansion for the renormalized joint
cumulants $N^{l-2}\W^{N}_{V, l}(P_{1}, \ldots, P_{l})$ (introduced in
Definition \ref{def:moment-cumulant-particular}) when the coefficients
of the potential $V$ are small enough
\begin{equation}\label{eq:expansion-key-thm}
  N^{l-2}\W_{V, l}^{N}(P_{1}, \ldots, P_{l}) = \sum_{g=0}^{h}\frac{\tau^{V}_{l, g}(P_{1}, \ldots, P_{l})}{N^{2g}} + o(N^{-2h}),
\end{equation}
where the coefficients $\tau^{V}_{l, g}(P_{1}, \ldots, P_{l})$ are
uniquely defined by some induction relations.

In Section \ref{sec:dyson-schw-equat}, we use the same techniques to
express the terms of this expansion in terms of maps of unitary type.
We thus obtain a topological expansion: the coefficient of
$\frac{1}{N^{2g}}$ in the expansion is a generating series of weighted
unitary type maps of genus $g$.

We thus improve on the result of \cite[Theorem
25]{guionnet_asymptotics_2015} by relaxing the hypotheses, showing
that the convergence is uniform in $g$ and $l$, and by giving a
combinatorial interpretation to the coefficients
$\tau^{V}_{l, g}(P_{1}, \ldots, P_{l})$.
\begin{theorem}[Main theorem]\label{thm:main}
  Assume that for all $N \geq 1$, $\Tr(V)$ is real for all
  $U_{1}, \ldots, U_{n} \in \Unit(N)^{n}$ and that
  $\sum_{N \geq 1}\sup_{1 \leq i \leq p}\|A_{i}^{N}\| < \infty$.

  There exists $\epsilon > 0$ such that if
  \begin{equation*}
    \|\bm{z}\|_{\infty} < \epsilon,
  \end{equation*}
  then for all $l \geq 1$, $g \geq 0$, and
  $\bm{P} = (P_{1}, \ldots, P_{l})$, we have the asymptotic expansion
  as $N \to \infty$
  \begin{equation*}
    N^{l-2}\W^{N}_{V, l}(P_{1}, \ldots, P_{l}) = \sum_{h=0}^{g}\frac{1}{N^{2h}}\M^{(h), N}_{V, l}(P_{1}, \ldots, P_{l}) + \order{N^{-2g-2}}.
  \end{equation*}
\end{theorem}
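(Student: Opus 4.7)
The plan is to match the analytic asymptotic expansion of the cumulants with the combinatorial generating series by showing both satisfy the same induction relations (the Dyson-Schwinger lattice on the analytic side, Tutte-like relations on the combinatorial side), and then to use uniform combinatorial bounds to promote a term-by-term identification into the quantitative asymptotic estimate of the theorem.

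First I would handle the base case $V=0$. By the Weingarten expansion of Section \ref{sec:Weingarten}, $\E[\prod \Tr(P_i)]$ is an explicit sum over permutations indexed by Weingarten cycle classes, and Section \ref{sec:maps} interprets these as sums of connected unitary-type maps of fixed genus. This yields an \emph{exact} finite expansion of $N^{l-2} \W^{N}_{0, l}(\bm{P})$ with explicit remainder controlled by the truncation of the Weingarten expansion, in which the coefficient of $N^{-2h}$ is by construction $\M^{(h), N}_{0, l}(\bm{P})$. Then, for general $V$, I would run an induction on, say, the lexicographic pair $(g, l)$. The induction step uses two parallel structures: on the cumulants, the Dyson-Schwinger lattice of \cite{guionnet_asymptotics_2015} (Section \ref{sec:dyson-schw-equat}) closes the system by expressing $\W^{N}_{V, l}$ in terms of strictly smaller cumulants and $V$-insertions, and hence forces a recursion on the $\tau^{V}_{l, g}$; on the combinatorial side, the cutting decomposition of Section \ref{sec:induction} yields Tutte-like relations on $\M^{(g), N}_{V, l}$. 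Checking that, after matching the parameters, these two systems of induction relations coincide on coefficients provides the identity $\tau^{V}_{l, g}(\bm{P}) = \M^{(g), N}_{V, l}(\bm{P})$ at every order, once the induction is seeded by the $V=0$ computation and by the boundary cases coming from lower $l$.

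To promote this term-by-term match into the genuine asymptotic bound with a \emph{single} $\epsilon$ independent of $g$ and $l$, I would need a uniform combinatorial estimate on the weights $w_{N}(\carte, \bm{n}, V, \bm{P})$ together with a count of connected unitary-type maps of genus $g$ with a prescribed face structure. Hypothesis \ref{hyp:bound} bounds the operator-norm contributions, so the weight is dominated by a purely combinatorial factor; the key ingredient is a bound of the form $C^{n}$ (with a polynomial prefactor in $n$ and $g$) on the number of such maps, which ensures that the series $\M^{(g), N}_{V, l}$ converges on a disk $\|\bm{z}\|_{\infty} < \epsilon$ with $\epsilon$ independent of $g$ and $l$. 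Feeding these bounds back into the Dyson-Schwinger lattice and controlling the remainder $\order{N^{-2g-2}}$ by Cauchy-type estimates on the generating series of the preceding genera then closes the argument.

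The main obstacle is precisely this uniformity: the estimate \eqref{eq:expansion-key-thm} from \cite{guionnet_asymptotics_2015} is only pointwise in $g$, and its implicit constants are a priori not controlled as $g, l$ grow. Everything therefore depends on obtaining a sufficiently sharp combinatorial bound on the number of connected unitary-type maps of genus $g$ and on tracking how these bounds propagate through the Tutte-like induction when a face is cut---the Dyson-Schwinger lattice comparison is then a formal matter, but the quantitative identification requires the genus-independent radius of convergence established by those combinatorial estimates.
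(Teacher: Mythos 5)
Your overall architecture (Weingarten base case at $V=0$, parallel recursions on the analytic and combinatorial sides, uniform combinatorial bounds giving a genus-independent radius of convergence) matches the paper's, and you correctly isolate the uniformity in $g,l$ as the crux. But there are two genuine gaps. First, the identification $\tau^{V}_{l,g}=\M^{(g),N}_{V,l}$ is not available under the theorem's hypotheses: the expansion \eqref{eq:expansion-key-thm} with $N$-independent coefficients $\tau^{V}_{l,g}$ only exists if one additionally assumes that $\tr$ of words in the $A_{i}^{N}$ admits an asymptotic expansion, an assumption the main theorem deliberately drops (and indeed $\M^{(g),N}_{V,l}$ still depends on $N$ through $\tr_{\phi_{\carte}}(\bm{M})$, so the two objects do not even have the same type). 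Moreover, "checking that the two recursions coincide" only yields an identification if the recursion determines its solution uniquely, which requires inverting the master operator $\Xi^{V}_{\tau}=\Id+\Pi\bar{T}_{\tau}+\bar{\opP}^{V}$ on $\algB^{\perp}_{\xi}$ for small $V$ -- the gradient trick -- and this operator never appears in your proposal.

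Second, the mechanism you offer for the remainder, "Cauchy-type estimates on the generating series of the preceding genera," is not a workable substitute for what the paper actually does. The paper sets $S^{(g),N}_{V,l}=\sum_{h=0}^{g}N^{-2h}\M^{(h),N}_{V,l}$ and $\delta^{(g),N}_{V,l}=\tW^{N}_{V,l}-S^{(g),N}_{V,l}$, subtracts the Dyson--Schwinger equation satisfied by $\tW^{N}_{V,l}$ (Proposition \ref{prop:DS-first}) from the analogous equation satisfied by $S^{(g),N}_{V,l}$ (a consequence of the Tutte relations \eqref{eq:family_DS}), and obtains a linearized equation for $\delta^{(g),N}_{V,l}$ whose source terms are either previous errors or explicitly of order $N^{-2g-2}$. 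Inverting $\Xi^{V}_{\tW^{N}_{V,1}/2+\M^{(0),N}_{V,1}/2}$ in a $\xi$-norm then propagates the bound $\|\delta^{(g)}_{l}\|_{2^{l-2}\xi}=\order{N^{-2g-2}}$ by induction on $(g,l)$, seeded by a quadratic fixed-point estimate for $\delta^{(0)}_{1}$ (Lemma \ref{lem:bound-first-error}) that itself uses the a priori bound $\|\tW^{N}_{V,2}\|_{\xi/2}\leq C'$ from \cite{guionnet_asymptotics_2015}. Without this subtraction-and-inversion step your induction has no way to compare the true cumulant to the truncated formal series, so the quantitative $\order{N^{-2g-2}}$ estimate does not follow from the ingredients you list.
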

Notice that we do not require the trace $\Tr$ to have an asymptotic
expansion as in \cite[Theorem 25]{guionnet_asymptotics_2015}.

An interesting particular case described in Section \ref{sec:hurwitz}
is when all the polynomial involved are \textbf{alternated}, see
Definition \ref{def:alternated_poly}, that is if they can be written as
\begin{equation*}
  \begin{split}
    P = B_{1}^{N}U^{N}C_{1}^{N}(U^{N})^{*}\cdots B_{m}^{N}U^{N}C_{m}^{N}(U^{N})^{*},
  \end{split}
\end{equation*}
where $B_{i}^{N}$ and $C^{N}_{i}$ for $i = 1, \ldots, m$ are square $N \times N$
matrices. This is the case of the HCIZ integral in particular. In that
case, our sums of maps are related to the triple monotine Hurwitz
numbers, which count some ramified coverings of the sphere with at
most three nonsimple ramification points. We thus generalize the link
between the (double) monotone Hurwitz numbers and the HCIZ integral,
which had already been studied in \cite{goulden_monotone_2014}. See also
\cite{collins_tensor_2023} for a study of the HCIZ integral in the
tensor setting.

In Section \ref{sec:Weingarten}, we give definitions and recall
important consequences of the Weingarten calculus. In Section
\ref{sec:maps}, we introduce the maps of unitary types and show that
they describe the topological expansion of cumulants with respect to
the Haar mesure. When the polynomial are alternated, these maps are
related to the triple Hurwitz numbers. In Section \ref{sec:induction},
we give a decomposition of maps of unitary type and deduce induction
relations on sums of maps of a given genus and with prescribed
vertices, in the spirit of the work of Tutte
\cite{tutte_enumeration_1968}. In Section \ref{sec:dyson-schw-equat},
we study the Dyson-Schwinger equation and give the proof of the main
result.

\paragraph{Acknowledgement.}
I thank my PhD advisors, Alice Guionnet and Grégory Miermont for
countless discussions, tips and advice during this project. This
project was supported by ERC Project LDRAM : ERC-2019-ADG Project
884584. I thank two anonymous referee for their many helpful
recommandations.

%%%%% Weingarten
\section{Weingarten calculus}\label{sec:Weingarten}

In this section, we first give a few definitions and introduce
notation pertaining to moments and cumulants of traces of random
matrices. Then, we give a short review of the Weingarten calculus.
This allows us to give expression for the expectation of a product of
traces of monomials in the matrices
$U^{N}, (U^{N})^{*}, A_{i}^{N}, (A^{*})^{N}$.

\subsection{Moments and cumulants}
\label{sec:moments-cumulants}

Let us consider $l \geq 1$ non-commutative polynomials
$P_{1}, P_{2}, \ldots, P_{l}$ in the variables $u, u^{-1}$, and
$ a_{i}, a_{i}^{*}$ for $1 \leq i \leq p$, with $p \in \N$. We define
the involution $*$ such that $u^{*} = u^{-1}$, for $1 \leq i \leq p$,
$(a_{i})^{*} = a_{i}^{*}$, and for any letters $X_{1}, \ldots, X_{k}$
in $\{u, u^{*}, a_{i}, a_{i}^{*} \colon 1 \leq i \leq p\}$ and
$z \in \C$, we have
$(zX_{1}\cdots X_{k})^{*} = z^{*}X_{k}^{*}\cdots X_{1}^{*}$. We denote
the unital $*$-algebra generated by such polynomials by
\begin{equation*}
  \begin{split}
    \algA = \C\langle{u, u^{-1}, a_{i}, a_{i}^{*}; 1 \leq i \leq p}\rangle.
  \end{split}
\end{equation*}
The unital $*$-algebra generated by the non-commutative polynomials in
$a_{1}, a_{1}^{*}, \ldots, a_{p}, a_{p}^{*}$ only is denoted by
$\algB$. We will evaluate all polynomials $P_{i}$ at the matrices
$U^{N}, (U^{N})^{*}, A_{1}^{N}, (A_{1}^{N})^{*}, \ldots, A_{p}^{N}, (A_{p}^{N})^{*}$
and will omit writing this explicitly in the sequel, e.g. writing
$\Tr(P)$ to mean
$\Tr(P(U^{N}, (U^{N})^{*}, A_{1}^{N}, \ldots, A_{p}^{N}))$. Notice
that there is no relation between the formal variables $u$ and
$u^{-1}$, or $a_{i}$ and $a_{i}^{*}$ for $i \in \N^{*}$ (except for
those involving $*$). We will denote by $\tr = \frac{1}{N}\Tr$ the
normalized trace.

In this article, we study the random variables
$\Tr(P_{1}), \ldots, \Tr(P_{l})$, seen as functions of $U^{N}$, under the
measure $\mu_{V}^{N}$ (see (\ref{eq:measure})). We will be interested in computing
the joint moments and cumulants of these random variables. To state
the definition of the cumulants, we introduce some notation about
partitions. We denote by $\partition(I)$ the set of partitions of a
finite set $I$. In particular, for $n\in \N^{*}$, we denote the set
$\{1, 2, \ldots, n\}$ by $[n]$. We denote the cardinality of a finite set $I$
by $|I|$. Given a partition $\pi \in \partition(I)$, $|\pi|$ is the number
of blocks of $\pi$.

\begin{definition}\label{def:moment-cumulant}
  Let $k\in \N^{*}$. The \textbf{joint moment} of the complex random
  variables $X_{1}, \ldots, X_{k}$ is
  \begin{equation*}
    \begin{split}
      m_{k}(X_{1}, \ldots, X_{k}) = \E\left[X_{1}X_{2}\cdots X_{k}\right]\,.
    \end{split}
  \end{equation*}

  The \textbf{joint cumulant} of the complex random variables
  $X_{1}, \ldots, X_{k}$ is $c_{k}(X_{1}, \ldots, X_{k})$, defined
  recursively by
  \begin{equation*}
    \begin{split}
      c_{k}(X_{1}, \ldots, X_{k}) = m_{k}(X_{1}, \ldots, X_{k}) - \sum_{\substack{\pi \in \partition([k])\\|\pi| \geq 2}}\prod_{B \in \pi}c_{|B|}(X_{i} \colon i \in B)\,.
    \end{split}
  \end{equation*}
\end{definition}
Notice that both the joint moments and cumulants are symmetric, multilinear
functions. The symmetry, which can be proven inductively, makes the expression
$c_{|B|}(X_{i} \colon i \in B)$ above unambiguous.

\begin{remark}\label{rem:other-def-cumulants}
  The cumulants can also be defined, see \cite{speed_cumulants_1983},
  as the coefficients of the series of the logarithm of the
  exponential generating series of the moments
  \begin{equation*}
    \begin{split}
      \sum_{n\geq0}\frac{z^{n}}{n!}c_{n}(X_{1}, \ldots, X_{1}) = \ln \sum_{n \geq 0}\frac{z^{n}}{n!}m_{n}(X_{1}, \ldots, X_{1})\,.
    \end{split}
  \end{equation*}
\end{remark}

\begin{definition}\label{def:moment-cumulant-particular}
  For $(P_{1}, \ldots, P_{l}) \in \algA^{l}$, we write the joint moments of the traces of
  $P_{i}$'s under $\mu^{N}_{V}$ as
  \begin{equation*}
    \begin{split}
      \alpha^{N}_{V, l}(P_{1}, \ldots, P_{l}) &= m_{l}(\Tr(P_{1}), \ldots, \Tr(P_{l})) = \int_{\Unit(N)}\Tr(P_{1})\cdots\Tr(P_{l})\dd\mu_{V}^{N}\,.
    \end{split}
  \end{equation*}
  We write the joint cumulants under $\mu^{V}_{N}$ as
  \begin{equation*}
    \begin{split}
      \W^{N}_{V, l}(P_{1}, \ldots, P_{l}) &= c_{l}(\Tr(P_{1}), \ldots, \Tr(P_{l}))\,, \\
    \end{split}
  \end{equation*}
  and introduce the renormalized cumulants
  \begin{equation*}
    \begin{split}
      \tilde{\W}^{N}_{V, l}(P_{1}, \ldots, P_{l}) &= N^{l-2}c_{l}(\Tr(P_{1}), \ldots, \Tr(P_{l}))\,.\\
    \end{split}
  \end{equation*}
\end{definition}

In Section \ref{sec:dyson-schw-equat}, we will discuss an asymptotic expansion (as $N \to \infty$)
for the joint cumulants. For now, we study the moments for $N$ fixed.
When $V = 0$, we can compute directly the moments using Weingarten's
formula, see Subsection \ref{sec:Weingarten-formula}. When $V \neq 0$, we can compute the
cumulants using the free energy $F_{V}^{N}$ defined in terms of the
partition function $Z^{N}_{V}$. Recall that
$V = \sum_{i=1}^{k}z_{i}q_{i}$ is the potential, a sum of $k$ polynomials
$q_{1}, \ldots, q_{k} \in \algA$ with complex coefficients $z_{1}, \ldots, z_{k}$.
Note that $V$ does not depend on $N$. We have
\begin{equation*}
  \begin{split}
    Z^{N}_{V} &= \int_{\Unit(N)}\exp(N\Tr(V))\dd U^{N},
  \end{split}
\end{equation*}
and we define the free energy as
\begin{equation}\label{eq:free-energy}
  F_{V}^{N} = \frac{1}{N^{2}}\ln Z^{N}_{V}.
\end{equation}
The free energy is always well defined when $\Tr V$ is real.

In the expression of the partition function, we can develop the
exponential as a series and exchange the sum and the integral:
\begin{equation*}
  \begin{split}
    Z^{N}_{V} &= \int_{\Unit(N)}\sum_{n_{1}, \ldots, n_{k} \geq 0}\prod_{i=1}^{k} \frac{(Nz_{i}\Tr(q_{i}))^{n_{i}}}{n_{i}!}\dd U^{N}\\
              &= \sum_{n_{1}, \ldots, n_{k} \geq 0}\prod_{i=1}^{k}\frac{(Nz_{i})^{n_{i}}}{n_{i}!}\int_{\Unit(N)}\Tr(q_{1})^{n_{1}}\cdots \Tr(q_{k})^{n_{k}}\dd U^{N}\,.
  \end{split}
\end{equation*}
In the second line, we used Hypothesis \ref{hyp:bound}, which implies
that $|\Tr(q_{i})|\leq N$, and the fact that we are integrating with
respect to the Haar measure on the compact group $\Unit(N)$ to
exchange the sum and the integral. Notice that this expression is
valid for all $\bm{z}$, even if $\Tr V$ is not real.

We introduce the notation $\bm{z}=(z_{1}, \ldots, z_{k})$, and for
$\bm{n} = (n_{1}, \ldots, n_{k}) \in \N^{k}$,
$\bm{z}^{\bm{n}} = \prod_{i=1}^{k}z_{i}^{n_{i}}$ and
$\bm{n}! = \prod_{i=1}^{k}n_{i}!$. Then,
\begin{equation*}
  \begin{split}
    Z^{N}_{V}&= \sum_{n\geq 0}N^{n}\sum_{\substack{\bm{n} \in \N^{k}\\n_{1} + \cdots + n_{k} = n}}\frac{\bm{z}^{\bm{n}}}{\bm{n}!}\alpha_{0, n}^{N}(\underbrace{q_{1}, \ldots, q_{1}}_{n_{1} \text{times}}, \ldots, \underbrace{q_{k}, \ldots, q_{k}}_{n_{k} \text{times}}),\\
  \end{split}
\end{equation*}
and therefore the partition function is a generating series of the
moments with respect to the Haar measure (i.e. with $V = 0$).

Similarly, the free energy is a generating series of the renormalized cumulants
for $V = 0$ (see \cite[Theorem 1.3.3, 4.]{bona_handbook_2015})
\begin{equation*}
  \begin{split}
    F^{N}_{V} &= \sum_{n \geq 1}\sum_{\substack{\bm{n} \in \N^{k}\\n_{1} + \cdots +n_{k} = n}}\frac{(N\bm{z})^{\bm{n}}}{\bm{n}!}\frac{1}{N^{2}}\W_{0, n}^{N}(\underbrace{q_{1}, \ldots, q_{1}}_{n_{1}\text{
    times }}, \ldots, \underbrace{q_{k}, \ldots, q_{k}}_{n_{k}\text{ times
    }})\\
              &= \sum_{n \geq 1}\sum_{\substack{\bm{n} \in \N^{k}\\n_{1} + \cdots + n_{k} = n}}\frac{\bm{z}^{\bm{n}}}{\bm{n}!}\tilde{\W}_{0, n}^{N}(\underbrace{q_{1}, \ldots, q_{1}}_{n_{1}\text{
    times }}, \ldots, \underbrace{q_{k}, \ldots, q_{k}}_{n_{k}\text{ times
    }})\\
  \end{split}
\end{equation*}
Notice that the free energy a priori exists only for $\bm{z}$
sufficiently small. Indeed, $Z^{N}_{V}$ is defined for all $\bm{z}$
but is nonzero on a open neighborhood of $0$ which depends on $N$. In
particular, the radius of convergence of $F_{V}^{N}$ a priori depends
on $N$.

Notice that by modifying the potential $V$ and differentiating, we have
\begin{equation*}
  \begin{split}
    \frac{\partial}{\partial t}\Big|_{t = 0}F^{N}_{V + tP} &= \frac{1}{N}\int_{\Unit(N)}\Tr(P)\dd\mu^{N}_{V}(U^{N}) = \frac{1}{N}\alpha_{V, 1}^{N}(P) = \tilde{\W}^{N}_{V, 1}(P).
  \end{split}
\end{equation*}

In general, we can prove by induction the following lemma, which is a
consequence of the definition of cumulants given in Remark
\ref{rem:other-def-cumulants}.
\begin{lemma}\label{lem:diff-free-energy}
  The renormalized joint cumulants are given by
  \begin{equation*}
    \begin{split}
      \tilde{\W}^{N}_{V, l}(P_{1}, \ldots, P_{l}) &= \frac{\partial^{l}}{\partial t_{1}\partial t_{2}\cdots \partial t_{l}}\Big|_{t_{1} = \ldots = t_{l} = 0}F^{N}_{V + \sum_{i}t_{i}P_{i}}.
    \end{split}
  \end{equation*}
\end{lemma}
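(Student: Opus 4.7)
The plan is to reduce the identity to the standard fact that joint cumulants are the mixed partial derivatives at the origin of the logarithm of the joint moment generating function. First, I would write
\[
\frac{Z^N_{V+\sum_i t_i P_i}}{Z^N_V} = \int_{\Unit(N)}\exp\!\left(N\sum_i t_i \Tr(P_i)\right)\dd\mu^N_V = \E_{\mu^N_V}\!\left[\exp\!\Big(\sum_i t_i\cdot N\Tr(P_i)\Big)\right],
\]
which is the joint moment generating function, in the variables $t_1,\ldots,t_l$, of the bounded random variables $N\Tr(P_1),\ldots,N\Tr(P_l)$ under $\mu^N_V$. Compactness of $\Unit(N)$ ensures this MGF is entire in $(t_1,\ldots,t_l)$, so all mixed partials can be computed by differentiating under the integral.

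Next, I would invoke the general identity
\[
\frac{\partial^l}{\partial t_1\cdots\partial t_l}\Big|_{t=0}\ln\E\!\left[\exp\!\Big(\sum_i t_i Y_i\Big)\right] = c_l(Y_1,\ldots,Y_l),
\]
applied to $Y_i = N\Tr(P_i)$. This identity is what we would prove by induction on $l$: expanding both sides as formal multivariate power series in the $t_i$ and using $\ln(1+x)=\sum_n(-1)^{n+1}x^n/n$ together with the multinomial theorem reproduces exactly the moment--cumulant recursion of Definition \ref{def:moment-cumulant}. This is the only substantive step; everything else is bookkeeping.

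Combining the two, since $F^N_{V+\sum_i t_i P_i} = N^{-2}\bigl(\ln Z^N_V + \ln \E_{\mu^N_V}[\exp(\sum_i t_i N\Tr(P_i))]\bigr)$, each $\partial/\partial t_i$ pulls down a factor of $N$ from the exponent, so by multilinearity of cumulants
\[
\frac{\partial^l}{\partial t_1\cdots\partial t_l}\Big|_0 F^N_{V+\sum_i t_i P_i} = \frac{1}{N^2}\,c_l\bigl(N\Tr(P_1),\ldots,N\Tr(P_l)\bigr) = N^{l-2}\,c_l\bigl(\Tr(P_1),\ldots,\Tr(P_l)\bigr),
\]
which is exactly $\tilde{\W}^N_{V,l}(P_1,\ldots,P_l)$. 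The case $l=1$ is the computation already displayed immediately before the lemma, and serves as the base of the induction. There is no analytic obstacle since $\Unit(N)$ is compact and the traces are polynomial in the matrix entries, so all exchanges of sums, integrals, and derivatives are justified; the only real content lies in the combinatorial moment--cumulant identity.
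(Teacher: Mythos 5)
Your proof is correct and follows essentially the route the paper intends: the paper gives only the $l=1$ computation and asserts the general case "by induction," while its Remark after Definition \ref{def:moment-cumulant} already records that cumulants are the coefficients of the logarithm of the exponential moment generating series, which is exactly the identity you invoke. The factorization $Z^{N}_{V+\sum_i t_i P_i} = Z^{N}_{V}\,\E_{\mu^{N}_{V}}[\exp(\sum_i t_i N\Tr(P_i))]$ and the bookkeeping of powers of $N$ are handled correctly, and the analytic justifications (compactness of $\Unit(N)$, nonvanishing of $Z^{N}_{V+\sum_i t_i P_i}$ near $t=0$ so the logarithm is well defined) are adequate.
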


Lemma \ref{lem:diff-free-energy} implies that for a fixed $N$, there exists a
neighborhood $U_{0} \in \C^{k}$ of $0$ such that for $\bm{z} \in U_{0}$,
\begin{equation}\label{eq:expr-cumulant}
  \begin{split}
    \tilde{\W}^{N}_{V, l}(P_{1}, \ldots, P_{l})
    &= \sum_{n \geq 0}\sum_{\substack{\bm{n} \in \N^{k}\\n_{1} + \cdots + n_{k} = n}}\frac{\bm{z}^{\bm{n}}}{\bm{n}!}\tilde{\W}_{0, n}^{N}(\underbrace{q_{1}, \ldots, q_{1}}_{n_{1}\text{
    times }}, \ldots, \underbrace{q_{k}, \ldots, q_{k}}_{n_{k}\text{ times
    }}, P_{1}, \ldots, P_{l}).
  \end{split}
\end{equation}

In the next subsections, we compute the moments with respect to the
Haar measure. From these moments and Definition
\ref{def:moment-cumulant}, we can compute the cumulants with respect
to the Haar measure. The expression (\ref{eq:expr-cumulant}) motivates
the introduction in Section \ref{sec:formal-topological-exp} of a
formal sum. The first terms of this sum are shown to give the
asymptotic expansion of the cumulants in Theorem \ref{thm:main}.

\subsection{The Weingarten formula}\label{sec:Weingarten-formula}
To compute the moments with respect to the Haar measure, the key tool
is Weingarten's formula, first obtained in
\cite{weingarten_asymptotic_1978}, which expresses the average of
coefficients of a unitary matrix in terms of the Weingarten function
defined below (Definition \ref{def:Weingarten-function}). See
\cite{collins_weingarten_2022} for a review on the Weingarten
calculus.

The Weingarten formula involves a sum over permutations. Let us fix
some notation pertaining to permutations. For $I$ a finite set, we
denote by $\Sym(I)$ the set of permutations on this set. In
particular, $\Sym_{n} = \Sym([n])$ is the set of permutations on
$[n] = \{1, 2, \ldots, n\}$. A permutation $\sigma$ admits a decomposition in
disjoint cycles. The set of cycles of $\sigma$ is denoted by $\Cyc(\sigma)$ and
its number of cycles is denoted by $c(\sigma)$. A cycle $c \in \Cyc(\sigma)$ is
written $\cycle{u_{1}, u_{2}, \ldots, u_{k}}$ with distinct
$u_{1}, \ldots, u_{k}$. It is the permutation whose support is
$\{u_{1}, \ldots, u_{k}\}$ and such that $c(u_{i}) = u_{i+1}$ with the
convention $u_{k+1}=u_{1}$.

We also introduce the modified traces $\Tr_{\sigma}(\bm{M})$ and
$\tr_{\sigma}(\bm{M})$ for $\sigma \in \Sym(I)$ and $\bm{M} = (M_{i}, i\in I)$ a
tuple of matrices, defined by
\begin{equation}\label{eq:not_trace_permutation}
  \begin{split}
    \Tr_{\sigma}(\bm{M}) &= \prod_{c\in\Cyc(\sigma)}\Tr(\overrightarrow{\prod}_{i\in c}M_{i}),\\
    \tr_{\sigma}(\bm{M}) &= \prod_{c\in\Cyc(\sigma)}\tr(\overrightarrow{\prod}_{i\in c}M_{i}) = N^{-c(\sigma)}\Tr_{\sigma}(\bm{M}),
  \end{split}
\end{equation}
where, if $c = \cycle{i_{1}, \ldots, i_{k}}$ is a cycle of the
permutation $\sigma$, the notation
$\overrightarrow{\prod}_{i\in c}M_{i}$, stands for the non-commutative
product $M_{i_{1}}M_{i_{2}}\cdots M_{i_{k}}$. Notice that such a
non-commutative product is defined up to circular permutation. The
trace property ensures that the quantity $\Tr_{\sigma}(\bm{M})$ is
well defined.

\begin{definition}\label{def:Weingarten-function}
  Let $q \leq N$ be an integer. The \textbf{Weingarten function}
  $\Weingarten_{N}\colon \Sym_{q} \to \C$ is defined for all $\pi\in\Sym_{q}$ by
  \[
    \Weingarten_{N}(\pi) = \int_{\Unit(N)}(U^{N})_{11}\cdots (U^{N})_{qq}\overline{(U^{N})_{1\pi(1)}\cdots (U^{N})_{q\pi(q)}}\dd U^{N}.
  \]
\end{definition}
This function can also be defined for all $q \in \N^{*}$ using
characters of the symmetric group (see \cite{collins_integration_2006}). The invariance of the
Haar measure by multiplication by permutation matrices implies that
the Weingarten function is invariant by conjugation, i.e. for all
$\sigma, \pi \in \Sym_{q}$ we have
\begin{equation*}
  \Weingarten_{N}(\sigma\pi\sigma^{-1}) = \Weingarten_{N}(\pi).
\end{equation*}

With our definition of the Weingarten function, Weigarten's formula is
valid in the case $q \leq N$. It actually holds for all $q \geq 1$ with an
appropriate definition of the Weingarten function.
\begin{theorem}{(Weingarten's formula, see \cite{collins_moments_2003} and \cite{collins_integration_2006})}
  Let $U^{N}$ be a Haar-distributed unitary matrix of size $N\times N$ and
  $\bm{i}=(i_{1}, i_{2}, \ldots, i_{q}), \bm{j}=(j_{1}, j_{2}, \ldots, j_{q}), \bm{i'}=(i'_{1}, i'_{2}, \ldots, i'_{q'})$
  and $\bm{j} = (j'_{1}, j'_{2}, \ldots, j'_{q'})$ be elements of $[N]^{q}$
  or $[N]^{q'}$ for $q, q' \geq 1$.
\begin{equation}
  \int_{\Unit(N)}(U^{N})_{i_{1}j_{1}}\cdots (U^{N})_{i_{q}j_{q}}\overline{(U^{N})_{i'_{1}j'_{1}}\cdots (U^{N})_{i'_{q'}j'_{q'}}}\dd U^{N} = \delta_{q, q'}\sum_{\rho, \sigma\in\Sym_{q}}\prod_{k=1}^{q}\delta_{i_{k},i'_{\sigma(k)}}\delta_{j_{k}, j'_{\rho(k)}}\Weingarten_{N}(\sigma\rho^{-1})\,.
\end{equation}
\end{theorem}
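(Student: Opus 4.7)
The plan is to view the left-hand side as the $(\bm{i},\bm{i}';\bm{j},\bm{j}')$-matrix element of the averaging operator
\[
  \Pi_{q,q'} = \int_{\Unit(N)} U^{\otimes q} \otimes \bar{U}^{\otimes q'}\, \dd U
\]
acting on $(\C^N)^{\otimes q} \otimes (\C^N)^{\otimes q'}$, and to use the bi-invariance of the Haar measure together with Schur--Weyl duality to pin down its structure.

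The factor $\delta_{q,q'}$ is immediate from $\Unit(1)$ invariance: under $U \mapsto \ee^{i\theta}U$ the integrand acquires a factor $\ee^{i\theta(q-q')}$, so integrating over $\theta\in[0,2\pi]$ forces vanishing when $q\neq q'$. Assume $q=q'$. For every $V \in \Unit(N)$, the substitutions $U\mapsto VU$ and $U \mapsto UV$ yield
\[
  (V^{\otimes q} \otimes \bar V^{\otimes q})\,\Pi_{q,q} = \Pi_{q,q} = \Pi_{q,q}\,(V^{\otimes q} \otimes \bar V^{\otimes q}),
\]
so $\Pi_{q,q}$ is the orthogonal projection onto the subspace of $\Unit(N)$-invariants of $(\C^N)^{\otimes q}\otimes(\C^N)^{\otimes q}$ for the action $V\cdot(x\otimes y)=V^{\otimes q}x\otimes \bar V^{\otimes q}y$. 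Schur--Weyl duality identifies this invariant subspace as the span of the $q!$ vectors $|\sigma\rangle = \sum_{\bm k}e_{k_1}\otimes\cdots\otimes e_{k_q}\otimes e_{k_{\sigma(1)}}\otimes\cdots\otimes e_{k_{\sigma(q)}}$ for $\sigma\in\Sym_q$, whose Gram matrix is $\langle\sigma|\rho\rangle = N^{c(\sigma^{-1}\rho)}$. Writing the projection in this basis, $\Pi_{q,q}=\sum_{\sigma,\rho}w(\sigma,\rho)|\sigma\rangle\langle\rho|$ with $w$ a (pseudo-)inverse of the Gram matrix, and reading off the $(\bm i,\bm i';\bm j,\bm j')$ matrix element reproduces the double sum of products of Kronecker deltas that appears on the right-hand side.

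To identify $w(\sigma,\rho)$ with $\Weingarten_N(\sigma\rho^{-1})$, I would specialize $\bm i=\bm j=\bm i'=(1,\ldots,q)$ and $\bm j'=(\pi(1),\ldots,\pi(q))$: the right-hand side then collapses to a single nonzero term, while the left-hand side equals $\Weingarten_N(\pi)$ by Definition~\ref{def:Weingarten-function}; the class-function property of $\Weingarten_N$ (equivalently, the invariance of the integral under simultaneous relabelling of the four index tuples) then propagates this to all $(\sigma,\rho)$. The main delicate point is the regime $q>N$, where the $|\sigma\rangle$ become linearly dependent and the Gram matrix is singular; the resolution, which is standard in the Weingarten literature, is to interpret everything as a matrix-element identity in the indices and to define $\Weingarten_N$ via the Moore--Penrose pseudo-inverse (or equivalently via the $\Sym_q$-character expansion of \cite{collins_integration_2006}), under which the formula remains valid for all $q$ and $N$.
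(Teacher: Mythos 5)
The paper does not prove this theorem: it is imported verbatim from \cite{collins_moments_2003} and \cite{collins_integration_2006}, so there is no in-paper argument to compare against. Your sketch is a faithful outline of the standard proof from those references: the $\delta_{q,q'}$ factor from the central $\Unit(1)$, the identification of $\Pi_{q,q}$ as the orthogonal projection onto the $\Unit(N)$-invariants of $(\C^{N})^{\otimes q}\otimes(\C^{N})^{\otimes q}$ via bi-invariance of the Haar measure, Schur--Weyl duality to see that the invariants are spanned by the vectors $\ket{\sigma}$ with Gram matrix $N^{c(\sigma^{-1}\rho)}$, and the pseudo-inverse of the Gram matrix producing the coefficients. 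The bookkeeping you leave implicit does check out: the left-translation invariance of the Gram matrix passes to its pseudo-inverse, so $w(\sigma,\rho)$ depends only on $\sigma^{-1}\rho$, and together with the class-function property this matches $\Weingarten_{N}(\sigma\rho^{-1})$ after the specialization you describe. Two points are worth tightening. First, the specialization $\bm{i}=\bm{j}=\bm{i'}=(1,\ldots,q)$ isolates a single term on the right-hand side only when the $q$ indices are distinct, i.e.\ $q\leq N$ --- which is also the only regime in which Definition \ref{def:Weingarten-function} makes literal sense --- so both the identification of $w$ with $\Weingarten_{N}$ for general $(\sigma,\rho)$ and the case $q>N$ ultimately rest on the pseudo-inverse (equivalently, character-expansion) definition you invoke at the end; this is indeed how \cite{collins_integration_2006} proceeds. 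Second, the deltas in the statement, written $\delta_{i_{k},\sigma(i'_{k})}$, must be read as $\delta_{i_{k}, i'_{\sigma(k)}}$ (a permutation of $[q]$ cannot act on an index in $[N]$), which is what your derivation actually produces.
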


Before giving the expression for the moments with respect to the Haar
measure $\alpha_{0, l}^{N}(P_{1}, \ldots, P_{l})$, let us make a
simplifying assumptions on our polynomials $P_{i}$.

We introduce the set $\wordsB$ of words in the letters
$a_{1}, a_{1}^{*}, \ldots, a_{p}, a^{*}_{p}$. We assume that for all
$i$, $P_{i}$ can be written uniquely as
\begin{equation}\label{eq:wordsX}
  M_{i, 1}u^{\epsilon_{i, 1}}M_{i, 2}u^{\epsilon_{i, 2}}\cdots M_{i, d_{i}}u^{\epsilon_{i, d_{i}}},
\end{equation}
where $M_{i, j}$ is either the empty word or an element of $\wordsB$,
$d_{i} \geq 1$, and
$\bm{\epsilon_{i}} = (\epsilon_{i, 1}, \ldots, \epsilon_{i, d_{i}}) \in \{\pm 1\}^{d_{i}}$. We write
$\wordsA$ the set of such monomials. We have $\wordsB \subset \wordsA$.
Notice that $\algA$ is generated by the elements of $\wordsA$ up to
cyclic permutation of the factors in a monomial.

The integer $d_{i}$, that we will sometime write $\deg P_{i}$, is the
\textbf{degree} of the monomial $P_{i}$. Notice that there is no
relation between the formal variables, in particular between $u$ and
$u^{-1}$ (except for those involving $*$). Therefore, the degree of
\eqref{eq:wordsX} is defined by counting the total number of letter
$u$ or $u^{*}$ in a word. In particular, $\deg(uu^{-1}) = 2$.

\begin{definition}\label{def:L_prime}
  With $(P_{1}, \ldots, P_{l}) \in \wordsA^{l}$, and with the notation
  \eqref{eq:wordsX}, we set
  \begin{itemize}
    \item $\bm{P} = (P_{1}, \ldots, P_{l})$,
    \item $\bm{M}_{\bm{P}} = (M_{i})_{i \in [\sum_{i}\deg P_{i}]} = (M_{1, 1}, \ldots, M_{1, d_{1}}, \ldots, M_{l, 1}, \ldots, M_{l, d_{l}})$,
    \item
          $\bm{\epsilon}_{\bm{P}}  = (\epsilon(i))_{i \in [\sum_{i}\deg P_{i}]} = (\epsilon_{1, 1}, \ldots, \epsilon_{1, d_{1}}, \ldots, \epsilon_{l, 1}, \ldots, \epsilon_{l, d_{l}})$.
  \end{itemize}
  Notice that we change the indices of the monomials $M_{i, j}$ and of
  $\bm{\epsilon}_{i, j}$, by setting for all
  $1 \leq i \leq k, 1 \leq j \leq d_{i}$,
  $M_{d_{1} + \cdots +d_{i-1} + j} = M_{i, j}$ and
  $\epsilon(d_{1} + \cdots d_{i-1} + j) = \epsilon_{i, j}$.

  We set $\deg \bm{P} = \sum_{i}\deg P_{i}$.

  Furthermore, we define the permutation
  \begin{equation}\label{eq:not_gamma}
      \gamma_{\bm{P}} = \cycle{1, \ldots, d_{1}}\cycle{d_{1} + 1, \ldots, d_{2}}\cdots\cycle{d_{l-1}+1, \ldots, d_{l}}\,.
  \end{equation}
\end{definition}
In the sequel, we shall consider $\bm{\epsilon}_{\bm{P}}$ as a function, but
sometime using vector notation for convenience. In particular, we
consider the sets.
$\bm{\epsilon}_{\bm{P}}^{-1}(+1) = \{i \in [\deg \bm{P}]\colon \bm{\epsilon}_{\bm{P}}(i) = +1\}$
and
$\bm{\epsilon}_{\bm{P}}^{-1}(-1) = \{i \in [\deg \bm{P}]\colon \bm{\epsilon}_{\bm{P}}(i) = -1\}$.

\begin{remark}\label{rem:relabelling}
  The permutation $\gamma_{\bm{P}}$ defined by \eqref{eq:not_gamma}
  gives a choice of labelling for the letters $u$ and $u^{*}$ in the
  monomials we consider. Notice that this choice is arbitrary. The
  cycle notation is convenient here as we are interested in traces of
  such monomials. The ordering of the letters in the words needs only
  to be specified up to cyclic permutation.

  For any permutation $\sigma \in \Sym_{\deg \bm{P}}$, we can replace
  $\gamma_{\bm{P}}, \bm{M}_{\bm{P}}=(M_{i}), \bm{\epsilon}_{\bm{P}}=(\epsilon(i))$ by
  $\gamma' = \sigma^{-1}\gamma_{\bm{P}}\sigma, \bm{M}' = (M'_{i}) = (M_{\sigma(i)}), \bm{\epsilon}' = (\epsilon'(i)) = (\epsilon(\sigma(i)))$.
  This new data describes the same polynomials. By this we means that
  if we write $\gamma' = c_{1}'\cdots c_{l}'$ the decomposition in disjoint
  cycles of $\gamma'$, we have
  \begin{equation*}
    \prod_{i=1}^{l}\Tr(P_{i}) = \prod_{i=1}^{l}\Tr \left( \overrightarrow{\prod}_{j \in c_{i}}M_{j}u^{\bm{\epsilon}'(j)} \right)\,.
  \end{equation*}
  Notice that the non-commutative product is only defined up to the
  cyclic permutation of the factors. The cyclic property of the trace
  ensures that the quantity on the right-hand side is well defined.
\end{remark}

We can assume all the polynomials are of the form (\ref{eq:wordsX})
without loss of generality as $\alpha_{V, l}^{N}$ is multilinear and
satisfies the trace property
\begin{equation*}
  \begin{split}
    \alpha_{V, l}^{N}(P_{1}, \ldots, P_{i-1}, P_{i}Q, P_{i+1}, \ldots, P_{l}) = \alpha_{V, l}^{N}(P_{1}, \ldots, P_{i-1}, QP_{i}, P_{i+1}, \ldots, P_{l})\,,
  \end{split}
\end{equation*}
as $\Tr(P_{i}Q) = \Tr(QP_{i})$. Furthermore, if there exists $i$ such
that $P_{i}$ contains no letter $u$ nor $u^{-1}$, we can factor the
term $\Tr(P_{i})$ out of the moment.

The formula for the moments with respect to the Haar measure involves
permutations belonging to the set
$\Sym^{(\epsilon)}(I) \subset \Sym(I)$ of permutations (introduced in
\cite{mingo_second_2007}), for $\epsilon \in \{\pm 1\}^{I}$.
\begin{definition}\label{def:permutations-epsilon}
  Let $\epsilon \in \{\pm 1\}^{I}$. The set
  $\Sym^{(\epsilon)}(I) \subset \Sym(I)$ is the set of permutations
  $\pi \in \Sym(I)$ such that
  \begin{equation*}
    \begin{split}
      \pi\left(\epsilon^{-1}(+1)\right) = \epsilon^{-1}(-1)\,.
    \end{split}
  \end{equation*}
  Furthermore, we define
  $\pi^{(\epsilon)} = \pi^{2}|_{\epsilon^{-1}(+1)}\in\Sym(\epsilon^{-1}(+1))$.
\end{definition}
Notice that the set $\Sym^{(\epsilon)}(I)$ is empty if
$|\epsilon^{-1}(+1)| \neq |\epsilon^{-1}(-1)|$.

\begin{ex}
  For instance, if $\epsilon = (+1, +1, -1, +1, -1, -1)$, then
  $\pi = \cycle{1, 3, 4, 6}\cycle{2, 5} \in \Sym_{6}^{(\epsilon)}$, and
  $\pi^{(\epsilon)} = \cycle{1, 4}\cycle{2}$.
\end{ex}

The notation of Definitions \ref{def:L_prime} and \ref{def:permutations-epsilon} allows us to express the
moments in a compact way.
\begin{prop}[{\cite[Proposition 3.4]{mingo_second_2007}}]\label{prop:comput_moment}
  Let $\bm{P} = (P_{1}, \ldots, P_{l}) \in \wordsA^{l}$. We have
  \begin{equation}\label{eq:moments_V=0}
    \begin{split}
      \alpha^{N}_{0, l}(\bm{P}) &= \alpha^{N}_{0, l}(P_{1}, \ldots, P_{l}) = \sum_{\pi\in\Sym^{(\epsilon_{\bm{P}})}_{\deg \bm{P}}}\Tr_{\gamma_{\bm{P}}\pi^{-1}}(\bm{M}_{\bm{P}})\Weingarten_{N}(\pi^{(\epsilon_{\bm{P}})}).
    \end{split}
  \end{equation}
\end{prop}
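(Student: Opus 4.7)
The plan is to unfold the product of traces, apply Weingarten's formula to the Haar integral, and then recast the resulting double sum over $\Sym_q \times \Sym_q$ (with $q$ the number of $u$'s) as a single sum over $\Sym^{(\epsilon_{\bm{P}})}_{\deg \bm{P}}$.

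First, I would write $\Tr(P_{1})\cdots\Tr(P_{l})$ explicitly in coordinates. Using the global labelling of Definition \ref{def:L_prime}, assign to each factor $M_{k}$ a row index $r_{k}$ and a column index $c_{k}$; then the subsequent $u^{\epsilon(k)}$ factor has matrix coefficient whose row and column indices are $c_{k}$ and $r_{\gamma_{\bm{P}}(k)}$ respectively, possibly transposed when $\epsilon(k) = -1$ (since $(U^{N})^{*}_{ab} = \overline{(U^{N})_{ba}}$). This produces
\begin{equation*}
\Tr(P_{1})\cdots\Tr(P_{l}) = \sum_{\bm{r},\bm{c}}\Bigl(\prod_{k=1}^{n}(M_{k})_{r_{k},c_{k}}\Bigr)\prod_{k\in I^{+}}(U^{N})_{c_{k},r_{\gamma_{\bm{P}}(k)}}\prod_{k\in I^{-}}\overline{(U^{N})_{r_{\gamma_{\bm{P}}(k)},c_{k}}},
\end{equation*}
where $I^{\pm} = \epsilon_{\bm{P}}^{-1}(\pm 1)$ and $n = \deg \bm{P}$.

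Next, I would take the expectation under Haar measure. Fixing an arbitrary enumeration of $I^{+}$ and $I^{-}$ (which forces $|I^{+}| = |I^{-}| = q$ for nonvanishing integrals), Weingarten's formula yields a sum over $(\rho,\sigma) \in \Sym_{q}\times\Sym_{q}$ of two families of Kronecker deltas, weighted by $\Weingarten_{N}(\sigma\rho^{-1})$. The key reparametrization step is to package $(\rho,\sigma)$ as a single permutation $\pi \in \Sym^{(\epsilon_{\bm{P}})}_{n}$: define $\pi|_{I^{+}}: I^{+}\to I^{-}$ from $\rho$ and $\pi|_{I^{-}}: I^{-}\to I^{+}$ from $\sigma^{-1}$ (via the chosen enumerations). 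Under this identification, a direct check gives $\pi^{(\epsilon_{\bm{P}})} = \pi^{2}|_{I^{+}} = \sigma\rho^{-1}$ (as conjugacy classes, using conjugation invariance of $\Weingarten_{N}$), so the Weingarten weight is exactly $\Weingarten_{N}(\pi^{(\epsilon_{\bm{P}})})$ and the sum over $(\rho,\sigma)$ becomes a sum over $\pi \in \Sym^{(\epsilon_{\bm{P}})}_{n}$.

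Finally, I would interpret the combined deltas. For each $k \in I^{+}$ the Weingarten deltas force $c_{k} = r_{\gamma_{\bm{P}}(\pi(k))}$ and, symmetrically, for each $k \in I^{-}$ they force a similar identification; together these glue the endpoints of the $M_{k}$'s into cycles. A careful bookkeeping shows that the resulting identification on the set $[n]$ is precisely the permutation $\gamma_{\bm{P}}\pi^{-1}$, so that summing out the free indices $\bm{r},\bm{c}$ produces $\Tr_{\gamma_{\bm{P}}\pi^{-1}}(\bm{M}_{\bm{P}})$ by definition \eqref{eq:not_trace_permutation}. Assembling the three pieces gives \eqref{eq:moments_V=0}. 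The main obstacle is the index bookkeeping in the reparametrization step: one must verify that the chosen identification of $(\rho,\sigma)$ with $\pi$ sends the deltas produced by Weingarten to exactly the ones that build $\gamma_{\bm{P}}\pi^{-1}$, and that the conjugacy-invariance of $\Weingarten_{N}$ absorbs the residual ambiguity in the enumerations of $I^{\pm}$.
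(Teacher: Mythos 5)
Your outline is correct, but note that the paper does not prove this proposition at all: it is imported verbatim by citation from Mingo and Speicher \cite[Proposition 3.4]{mingo_second_2007}, so there is no in-paper argument to compare against. What you write is the standard derivation (and essentially the one in the cited source): expand the traces in coordinates, apply the Weingarten formula, and repackage the pair $(\rho,\sigma)$ as a single $\pi\in\Sym^{(\epsilon_{\bm{P}})}_{\deg\bm{P}}$. The one step you rightly flag as delicate does check out: with your convention ($\pi|_{I^{+}}$ built from $\rho$, $\pi|_{I^{-}}$ from $\sigma^{-1}$) the row/column deltas become $c_{j}=r_{\gamma_{\bm{P}}\pi^{-1}(j)}$ for all $j$, which is exactly the gluing that produces $\Tr_{\gamma_{\bm{P}}\pi^{-1}}(\bm{M}_{\bm{P}})$, while $\pi^{2}|_{I^{+}}$ is conjugate to $\sigma\rho^{-1}$ so the weight is $\Weingarten_{N}(\pi^{(\epsilon_{\bm{P}})})$ by conjugation invariance. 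It is also worth observing that the residual ambiguity in the convention is harmless for a structural reason: $\pi\mapsto\pi^{-1}$ is a bijection of $\Sym^{(\epsilon_{\bm{P}})}_{\deg\bm{P}}$ onto itself and satisfies $(\pi^{-1})^{(\epsilon_{\bm{P}})}=(\pi^{(\epsilon_{\bm{P}})})^{-1}$, whose Weingarten value is unchanged, so a bookkeeping that lands on $\gamma_{\bm{P}}\pi$ instead of $\gamma_{\bm{P}}\pi^{-1}$ is corrected by reindexing the sum. The only other point to keep in mind is that when $|\epsilon_{\bm{P}}^{-1}(+1)|\neq|\epsilon_{\bm{P}}^{-1}(-1)|$ both sides vanish (the integral by unitarity, the right side because $\Sym^{(\epsilon_{\bm{P}})}$ is empty), which your remark on nonvanishing integrals implicitly covers.
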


\subsection{Expansion of the Weingarten function}
We wish to express the moments and cumulants uniquely in terms of combinatorial
objects and traces. To this end, we now present a result of Novak
\cite{novak_jucys-murphy_2010} expressing the Weingarten function in terms of
walks on the Cayley graph of $\Sym_{n}$ generated by the transpositions.

\begin{definition}\label{def:value_permutation}
  The \textbf{value} of a transposition
  $\tau = \cycle{i, j}\in\Sym(I)$, where $I$ is a finite subset of
  $\N^{*}$, is $\val(\tau) = \max \{i, j\}$.
\end{definition}

\begin{definition}\label{def:monotone-walk}
  Let $\rho$ and $\sigma$ be in $\Sym(I)$, with $I$ a finite subset of
  $\N^{*}$.

  A \textbf{(weakly) monotone walk with $r$ steps on $\Sym(I)$} from $\rho$ to
  $\sigma$ is a tuple $(\tau_{1}, \ldots, \tau_{r})$ of transpositions of
  $\Sym(I)$ such that
  \begin{itemize}
    \item $\tau_{r}\cdots\tau_{1}\rho = \sigma$, and
    \item $\val(\tau_{1}) \leq \cdots \leq \val(\tau_{r})$.
  \end{itemize}
  We denote the set of such walks by $\mwset^{r}(\rho, \sigma)$, and we define
  $\mwalks^{r}(\rho, \sigma)$ as the cardinality of the set
  $\mwset^{r}(\rho, \sigma)$.
\end{definition}
In this Definition, we use the arrow notation $\mwalks$ and $\mwset$
to emphasize the monotonicity property, as in
\cite{goulden_monotone_2014}.

\begin{prop}[{\cite[Theorem 3.1]{novak_jucys-murphy_2010}}]\label{prop:dvpt_weingarten}
  Let $\pi \in \Sym_{q}$ with $N \geq q$.
  We have
  \begin{equation*}
    \begin{split}
      \Weingarten_{N}(\pi) = \sum_{r \geq 0}\frac{(-1)^{r}}{N^{r+q}}\mwalks^{r}(\Id, \pi),
    \end{split}
  \end{equation*}
  and the series is absolutely convergent.
\end{prop}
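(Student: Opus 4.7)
The plan is to lift the scalar identity to a statement in the group algebra $\C[\Sym_q]$ and exploit the commuting Jucys--Murphy elements $J_k = \sum_{i<k}\cycle{i,k}$ for $k=2,\ldots,q$. Setting $\mathrm{Wg}_N = \sum_{\pi\in\Sym_q}\Weingarten_N(\pi)\,\pi$, the entire proposition reduces to the algebraic identity
\[
  N^q \mathrm{Wg}_N = \prod_{k=2}^q\left(1 + \frac{J_k}{N}\right)^{-1} \qquad (N \geq q),
\]
followed by a term-by-term expansion of the right-hand side.

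For the identity itself, I would argue by Schur--Weyl duality: the product $\prod_{k=2}^q(N+J_k)$ is central, because symmetric polynomials in the $J_k$ lie in the center of $\C[\Sym_q]$, and it acts on an irreducible component $V^\lambda$ as multiplication by $\prod_{(i,j)\in\lambda,\,(i,j)\neq(1,1)}(N+c(i,j))$, where $c(i,j)$ is the content of the cell. The classical character formula $\Weingarten_N(\pi) = \frac{1}{(q!)^2}\sum_\lambda \chi^\lambda(\Id)^2\chi^\lambda(\pi)/s_\lambda(1^N)$, together with the hook-content formula for $s_\lambda(1^N)$, shows that $N^q\mathrm{Wg}_N$ acts on $V^\lambda$ by the reciprocal of the same scalar, so the two central elements agree.

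For the expansion, since the $J_k$ commute pairwise I would expand each factor as a geometric series and combine them as
\[
  \prod_{k=2}^q\!\left(1+\frac{J_k}{N}\right)^{-1} = \sum_{r_2,\ldots,r_q\geq 0}\frac{(-1)^{r_2+\cdots+r_q}}{N^{r_2+\cdots+r_q}}\,J_q^{r_q}J_{q-1}^{r_{q-1}}\cdots J_2^{r_2}.
\]
Expanding each $J_k^{r_k}$ as a sum over ordered $r_k$-tuples of transpositions of value exactly $k$ and concatenating, the coefficient of $\pi$ in $J_q^{r_q}\cdots J_2^{r_2}$ becomes the number of sequences $(\tau_1,\ldots,\tau_r)$ satisfying $\val(\tau_1)\leq\cdots\leq\val(\tau_r)$ and $\tau_r\cdots\tau_1=\pi$ --- precisely $\mwalks^r(\Id,\pi)$, with $r=r_2+\cdots+r_q$. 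Regrouping by $r$ yields the claimed formula.

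Finally, for absolute convergence, each $J_k$ is a sum of $k-1$ transpositions, so the total number of monotone walks of length $r$ in $\Sym_q$ starting at $\Id$ is bounded by the coefficient of $z^r$ in $\prod_{k=2}^q(1-(k-1)z)^{-1}$, a power series with radius of convergence $1/(q-1)$; hence $\sum_r\mwalks^r(\Id,\pi)/N^r$ converges as soon as $N>q-1$, which is exactly the hypothesis $N\geq q$. The main delicate point I anticipate is the ordering bookkeeping in the expansion: one must multiply the $J_k$'s in \emph{decreasing} order of $k$ (rightmost factor $J_2$) so that, after expansion into transpositions, the resulting word read from right to left has \emph{non-decreasing} values, matching the orientation in the definition of $\mwset^r$. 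The algebraic identity of the second paragraph, while the most substantial ingredient conceptually, is classical and can also be extracted from \cite{collins_integration_2006}.
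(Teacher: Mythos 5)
The paper offers no proof of this proposition: it is quoted directly from \cite[Theorem 3.1]{novak_jucys-murphy_2010}, and your argument is exactly the Jucys--Murphy-element proof of that reference (the central identity $N^{q}\sum_{\pi}\Weingarten_{N}(\pi)\pi=\prod_{k=2}^{q}(1+J_{k}/N)^{-1}$ via Schur--Weyl/hook-content, followed by geometric-series expansion with the ordering convention you describe). Your proof is correct, including the convergence bound $\sum_{r}\mwalks^{r}(\Id,\pi)z^{r}$ being dominated by $\prod_{k=2}^{q}(1-(k-1)z)^{-1}$, which gives absolute convergence precisely for $N\geq q$.
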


Propositions \ref{prop:dvpt_weingarten} and \ref{prop:comput_moment} imply the following result (recall notation
from Definition \ref{def:L_prime}).
\begin{corol}\label{corol:expansion_moment}
  Let $N \geq 1$ be an integer, $\bm{P} = (P_{1}, \ldots, P_{l}) \in \wordsA^{l}$
  with $m=\deg \bm{P}/2 \leq N$. The moments admits the expansion
  \begin{equation*}
    \begin{split}
      \alpha^{N}_{0, l}(P_{1}, \ldots, P_{l}) = \sum_{r \geq 0}\frac{(-1)^{r}}{N^{r+m}}\sum_{\pi\in\Sym^{(\epsilon_{P})}_{2m}}\Tr_{\gamma_{\bm{P}}\pi^{-1}}(\bm{M}_{\bm{P}})\mwalks^{r}(\Id, \pi^{(\epsilon_{P})}).
    \end{split}
  \end{equation*}
  Moreover, the series is absolutely convergent, uniformly on
  $\bm{M_{\bm{P}}}$.
\end{corol}
Notice that if $\deg{P}$ is odd, there are a different number of
occurences of $u$ and of $u^{*}$, and such moments are $0$.
\begin{proof}
  Starting from the expression for the moment of Proposition
  \ref{prop:comput_moment}, we use the expansion for the Weingarten
  function of Proposition \ref{prop:dvpt_weingarten}. Notice that this
  second result can only be used if $m=\deg \bm{P}/2 \leq N$, where
  $m$ is the total number of letter $u$ in the monomials
  $P_{1}, \ldots, P_{l}$. One of the sums is finite, we can exchange
  the sums and get the wanted expression.

  Finally, as the matrices $(A_{i})$ have their operator norm bounded
  by 1, we can crudely bound the trace by
  \begin{equation*}
    |\Tr_{\gamma_{\bm{P}}\pi^{-1}}(\bm{M}_{\bm{P}})| \leq N^{c(\gamma_{\bm{P}}\pi^{-1})} \leq N^{2m}\,.
  \end{equation*}
  This implies that the convergence is uniform in $\bm{M}_{\bm{P}}$.
\end{proof}

%%%%%% Maps
\section{Maps and maps of unitary type}\label{sec:maps}
In this section, we introduce combinatorial objects, the so-called
maps of unitary type, that will be convenient to express the moments
$\alpha_{0, l}^{N}$, and then the cumulants $\W^{N}_{0, l}$. These maps are
particular cases of the maps appearing in the Gaussian case.

\subsection{Maps}\label{ssec:maps}
First, we give a few definitions regarding maps. See
\cite{gamkrelidze_graphs_2004} for more details on maps.

\begin{definition}\label{def:embedded-graph}
  An \textbf{embedded graph} is a pair $(\Gamma, \surf)$, where
  $\surf$ is a compact topological surface and $\Gamma$ is a graph
  (with possibly loops and multiple edges) embedded in $\surf$, so
  that we write $\Gamma \subset \surf$, such that
  \begin{itemize}
    \item the vertices of $\Gamma$ are distinct points on the surface $\surf$,
    \item the edges of $\Gamma$ are simple paths on $\surf$ that can
          intersect only at their endpoints,
    \item the complement $\surf \setminus \Gamma$ of the graph is a
          disjoint union of simply connected open sets. Each of these
          connected components is called a \textbf{face}.
  \end{itemize}
  The genus of an embedded graph is the genus of the surface $\surf$.

  An embedded graph will be said to be oriented if $\Gamma$ is an
  oriented graph.
\end{definition}
We shall sometimes refer to $\Gamma$ and $\surf$ as the underlying graph
and surface of an embedded graph. Notice that the genus of the surface
is well-defined: it follows from the fact that the faces are
homeomorphic to disks, see \cite[Section
1.3.2.2.]{gamkrelidze_graphs_2004}.

\begin{remark}
  In this article, the embedded graphs are in general disconnected. We
  will specify it when the maps we consider are connected.
\end{remark}

\begin{definition}
  Two (oriented or unoriented) embedded graphs
  $(\Gamma_{1}, \surf_{1})$ and $(\Gamma_{2}, \surf_{2})$ are said to
  be isomorphic if there is a orientation-preserving homeomorphism
  $h \colon \surf_{1} \to \surf_{2}$ such that $h|_{\Gamma}$ is an
  isomorphism of (oriented or unoriented) graphs.
\end{definition}

\begin{definition}
  A \textbf{map} (or oriented map) is an equivalence class of embedded graphs
  (or oriented embedded graphs) up to isomorphism.
\end{definition}
As the genus of a surface is a topological invariant, the genus of a map is the
genus of any of its representative.

\begin{definition}
  A face will be said to be \textbf{incident} to a vertex or an edge if the vertex or the edge belongs to the boundary of the face.
\end{definition}

It will be convenient to regard each edge of a map as being made of
two half-edges. As a part of an embedded graph they can be described
as follows. Each edge $e = \{v, v'\}$ (with possibly $v= v'$) can be
parametrized by $\gamma_{e}\colon [0, 1] \to \surf$, with $\gamma_{e}(0) = v$ and
$\gamma_{e}(1) = v'$. The two half-edges that compose $e$ are
$h = \gamma_{e}([0, 1/2])$ and $ h' = \gamma_{e}([1/2, 1])$. We will say that
$h$ (respectively $h'$) is connected to $v$ (resp. $v'$). As we will
be concerned only with combinatorial data, the choice of
parametrization $\gamma_{e}$ will be unimportant. When going from the
vertex of the half-edge to the other end of the half-edge (connected
to another half-edge), we can distinguish a left side and a right side
(see Figure \ref{fig:half-edge-left}). Notice that the left and right side are defined
relative to the position fo the incident vertex, and does not depend
on the eventual orientation.

\begin{figure}[htbp]
  \centering
  \includegraphics[width=0.5\textwidth]{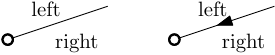}
  \caption{\label{fig:half-edge-left}The left and right side of a half-edge, and of an ingoing half-edge.}
\end{figure}

Defining a left side of an half edge is crucial for the labelling
procedure. Indeed, we regard each half-edge as being incident to the
face on its left. A face will thus be described by the labels of the
half-edges incident to it.

We label the half-edges of a map $\carte$ from $1$ to $2m$, where $m$
is the number of edges of $\carte$. By convention, we write each label
at the left of its half-edge. See Figure \ref{fig:usual_map}. In an oriented map with
labelled half-edges, the edges can be represented as an ordered pair
of two half-edges. The first half-edge is connected to the first
vertex of the edge and is said to be \textbf{outgoing}. The second half-edge is
connected to the second vertex of the edge and is said to be
\textbf{ingoing}.

\begin{figure}[htbp]
  \centering
  \includegraphics[width=0.3\textwidth]{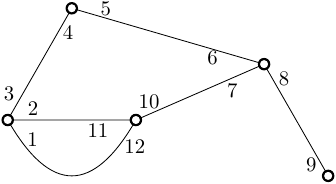}
  \caption{\label{fig:usual_map}A map with labelled half-edges.}
\end{figure}

These labels allow us to define three permutations that encode the
labelled map, see \cite[Section 1.3.3.]{gamkrelidze_graphs_2004}.
\begin{definition}\label{def:usual_maps_permutational_model}
  Let $\carte$ be a map with $2m \geq 2$ labelled half-edges. We define the three
  permutations $\sigma_{\carte}, \alpha_{\carte}, \varphi_{\carte}\in\Sym_{2m}$ as
  follows.
  \begin{itemize}
    \item Let $i \in [2m]$. The half-edge labelled by $i$ is attached to a vertex
          $v_{i}$. Starting from the half-edge $i$ and turning in the
          clockwise direction around $v_{i}$, the next half-edge we
          encounter is labelled $j$ (possibly $i = j$). We set
          $\sigma_{\carte}(i) = j$.
    \item Let $i \in [2m]$. The half-edge labelled by $i$ is attached to another
          half-edge labelled $j$. We set $\alpha_{\carte}(i) = j$.
    \item Let $i \in [2m]$. The half-edge labelled $i$ has a face $f_{i}$ to its
          left. Starting from the half-edge $i$, we turn in the counterclockwise
          direction around the face $f_{i}$. The next half-edge we encounter
          with $f_{i}$ to its left is labelled $j$. We set
          $\varphi_{\carte}(i) = j$.
  \end{itemize}
  The three permutations $\sigma_{\carte}, \alpha_{\carte}, \varphi_{\carte}$
  constitute the permutational model of $\carte$. If the map has no
  edges then we do not define any permutational data.
\end{definition}
The permutation $\sigma_{\carte}$ describes how the half-edges are arranged
around a vertex (we call this data ``the local structure of the
map''), and $\alpha_{\carte}$ describes how to attach them. The permutation
$\alpha_{\carte}$ only depends on the underlying graph of the map. Notice
that $\alpha_{\carte}$ belongs to the set of involutions without fixed
points
\begin{equation*}
  \begin{split}
    \Inv_{2m} = \{\alpha\in\Sym_{2m}\colon \alpha^{2} = \Id, \forall i \in [2m], \alpha(i) \neq i\}.
  \end{split}
\end{equation*}
Notice that we chose different conventions than in
\cite{gamkrelidze_graphs_2004}, resulting in our permutation $\sigma$
being the inverse of theirs.

\begin{ex}
  The map $\carte$ of Figure \ref{fig:usual_map} is described by
  \begin{equation*}
    \begin{split}
      \sigma_{\carte}&= \cycle{1, 3, 2}\cycle{4, 5}\cycle{6, 8, 7}\cycle{9}\cycle{10, 12, 11}\\
      \alpha_{\carte}&= \cycle{1, 12}\cycle{2, 11}\cycle{3, 4}\cycle{5, 6}\cycle{7, 10}\cycle{8, 9}\\
      \varphi_{\carte}&= \cycle{1, 11}\cycle{2, 10, 6, 4}\cycle{3, 5, 8, 9, 7, 12}.
    \end{split}
  \end{equation*}
\end{ex}

For an oriented map, we must also describe the orientation of each half-edge.
\begin{definition}
  Let $\carte$ be an oriented map with $2m$ labelled half-edges. We
  define the function $\bm{\epsilon}_{\carte} \in \{\pm 1\}^{[2m]}$ as follows. For
  all $i\in [2m]$, we set $\epsilon(i) = +1$ if the half-edge labelled $i$ is
  outgoing and $\epsilon(i) = -1$ if the half-edge labelled $i$ is ingoing.

  Such an $\bm{\epsilon}$ belongs to the set
  $\mathcal{E}_{2m} = \{\bm{\epsilon}\in\{\pm 1\}^{2m}\colon \sum_{i=1}^{2m}\epsilon(i) = 0\}$.
\end{definition}
In the case of an oriented map, $\alpha$ is in the set $\Inv_{2m}^{(\epsilon)}$ of
the permutations of $\Inv_{2m}$ such that for all $i \in [2m]$,
$\epsilon(\alpha(i)) = -\epsilon(i)$.

\begin{lemma}{{\cite[Proposition 1.3.16]{gamkrelidze_graphs_2004}}}\label{lem:psi_map}
  Let $\carte$ be a map with labelled half-edges. We have
  \begin{equation*}
    \begin{split}
      \varphi\alpha = \sigma.
    \end{split}
  \end{equation*}
\end{lemma}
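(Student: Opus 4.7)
The plan is to verify the identity $\varphi\alpha\sigma=\Id$ pointwise by a purely local argument at each vertex. It is convenient to prove the equivalent rearrangement $\varphi(h)=\sigma^{-1}\alpha(h)$ for every half-edge $h$; since $\alpha$ is an involution, this immediately gives
\begin{equation*}
\varphi\alpha\sigma(i)=\sigma^{-1}\alpha\bigl(\alpha(\sigma(i))\bigr)=i.
\end{equation*}

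I fix a half-edge $h$ attached to a vertex $v$, and I let $f$ be the face to the left of $h$. Because $\sigma$ rotates half-edges counterclockwise around $v$, the angular sector at $v$ between $h$ and $\sigma(h)$ (swept counterclockwise) lies inside $f$; in particular $f$ borders $v$ both on the left of $h$ and on the right of $\sigma(h)$. I then walk along $h$ outward from $v$, cross the edge, and arrive at the vertex $v'$ where $\alpha(h)$ is attached. A key observation, coming from the surface orientation, is that crossing an edge swaps left and right: the face on the left of $h$ (outgoing from $v$) is the face on the right of $\alpha(h)$ (outgoing from $v'$). Hence at $v'$ the angular sector immediately clockwise of $\alpha(h)$ is part of $f$, and the half-edge at $v'$ that bounds this sector on its other side is $\sigma^{-1}(\alpha(h))$. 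By construction this half-edge has $f$ on its left, and it is the next one encountered when walking along $\partial f$ counterclockwise starting from $h$. Therefore $\varphi(h)=\sigma^{-1}(\alpha(h))$, which is what was needed.

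The main obstacle is not combinatorial but conventional: one must be precise about the surface orientation, about what "counterclockwise around $v$" and "counterclockwise around $f$" mean, and about the "left side" of an outgoing half-edge (Figure \ref{fig:half-edge-left} is crucial here). A clean way to make everything rigorous is to work in an oriented chart around each vertex so that the cyclic order of half-edges encoded by $\sigma$ is the one inherited from the orientation of $\surf$, and then to observe that traversing an edge from $h$ to $\alpha(h)$ reverses the canonical outward orientation of its two halves, which is exactly what produces the left/right swap used above.
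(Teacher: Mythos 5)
Your proof is correct: the identity $\varphi(h)=\sigma^{-1}(\alpha(h))$ is the right local statement under the paper's conventions (counterclockwise rotation at vertices for $\sigma$, faces read off on the left for $\varphi$), your left/right swap across an edge is the correct geometric input, and the algebraic step $\varphi\alpha\sigma=\sigma^{-1}\alpha^{2}\sigma=\Id$ follows since $\alpha$ is an involution; one can also sanity-check the formula against the example following Definition \ref{def:usual_maps_permutational_model}. Note that the paper does not actually prove Lemma \ref{lem:psi_map} — it is quoted from the literature — so there is no in-text argument to compare against; however, your face-boundary walk is essentially the same mechanism the paper uses to prove the unitary-type analogue, Lemma \ref{lem:phi_map}, where one follows the boundary of the face from a half-edge, crosses the edge via $\pi_{\carte}$, and then turns clockwise at the new vertex, yielding $\phi_{\carte}=\rho_{\carte}^{-1}\pi_{\carte}^{-1}$. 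The only cosmetic remark is that your opening observation about the sector between $h$ and $\sigma(h)$ at the starting vertex $v$ is not needed for the conclusion; the whole content of the proof is the analysis at the far vertex $v'$.
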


Conversely, we can reconstruct a map from two permutations
$\sigma \in \Sym_{2m}, \alpha \in \Inv_{2m}$. The following theorem is
essentially a restatement of a result obtained in
\cite{edmonds_combinatorial_1960-1}.
\begin{theorem}\label{thm:usual_map_permutational_model}
  Let $m\geq 1$, $\sigma \in \Sym_{2m}$ and $\mathfrak{C}(m, \sigma)$ be the
  set of maps with labelled half-edges $\carte$ such that
  $\sigma_{\carte} = \sigma$. Then, the mapping
  \begin{equation*}
    \begin{split}
      \mathfrak{C}(m, \sigma) &\to \Inv_{2m}\\
      \carte &\mapsto \alpha_{\carte},
    \end{split}
  \end{equation*}
  is a bijection.
\end{theorem}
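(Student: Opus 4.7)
The plan is to prove injectivity and surjectivity separately, using the standard correspondence between combinatorial maps and rotation systems.

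For injectivity, suppose $\carte_{1}, \carte_{2} \in \mathfrak{C}_{m, \sigma}$ satisfy $\alpha_{\carte_{1}} = \alpha_{\carte_{2}}$. By Lemma \ref{lem:psi_map} we also have $\varphi_{\carte_{1}} = \varphi_{\carte_{2}}$, so the entire permutational data coincides. I would construct an isomorphism directly: the cycles of $\sigma$ put the half-edges in bijection with the vertices of both maps, the cycles of $\alpha$ with the edges, and the cycles of $\varphi$ with the faces; since all three incidence structures coincide, there is a combinatorial homeomorphism of the $1$-skeleton respecting the cyclic orderings around each vertex, hence extending uniquely (up to isotopy) to the faces, which are disks by Definition \ref{def:embedded-graph}. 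This yields an orientation-preserving homeomorphism of the underlying surfaces sending $\carte_{1}$ to $\carte_{2}$.

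For surjectivity, given $\alpha \in \Inv_{2m}$, I would construct a map $\carte$ with $\sigma_{\carte} = \sigma$ and $\alpha_{\carte} = \alpha$ by the classical polygon-gluing recipe. For each cycle $c$ of $\sigma$, place a vertex $v_{c}$ and attach to it, in the counterclockwise cyclic order prescribed by $c$, a star of half-edges labelled by the elements of $c$. Glue the half-edges pairwise according to $\alpha$ to obtain an abstract graph $\Gamma$ with a rotation system. To embed it in a surface, set $\varphi = (\alpha \sigma)^{-1}$ and glue a topological disk along each cycle of $\varphi$ (the boundary of such a disk being identified with the sequence of corners visited by $\varphi$). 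The resulting space $\surf$ is a compact oriented surface, $\Gamma \subset \surf$ is an embedded graph whose face complements are disks, and one checks from the construction that $\sigma_{\carte} = \sigma$ and $\alpha_{\carte} = \alpha$.

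The main obstacle is the topological part of the surjectivity argument: verifying that the polygon-gluing really produces a compact surface (and that its faces are simply connected, so that the pair $(\Gamma, \surf)$ is an embedded graph in the sense of Definition \ref{def:embedded-graph}). For this I would invoke the classical Edmonds--Heffter--Ringel rotation-system theorem \cite{edmonds_combinatorial_1960-1}, which exactly asserts that any pair $(\sigma, \alpha)$ with $\alpha \in \Inv_{2m}$ determines a unique orientable cellular embedding of $\Gamma$. The remaining bookkeeping, namely matching up labels so that $\sigma_{\carte}$ and $\alpha_{\carte}$ come out to be $\sigma$ and $\alpha$ rather than a relabelling thereof, is immediate from the construction since the labels were chosen to coincide with the action of $\sigma$ and $\alpha$ from the outset.
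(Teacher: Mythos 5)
Your argument is correct and is essentially the route the paper takes: the paper gives no proof beyond noting the theorem is a restatement of the Edmonds rotation-system correspondence \cite{edmonds_combinatorial_1960-1}, and your sketch is the standard polygon-gluing proof of that correspondence, with the key topological step (that the gluing yields a compact surface with disk faces) likewise delegated to Edmonds' theorem. The injectivity and label-bookkeeping details you add are sound and consistent with Lemma \ref{lem:psi_map} and Definition \ref{def:embedded-graph}.
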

This theorem shows that once the local structure of the map (and a
labelling of the half-edges) is fixed, the map only depends on the
underlying graph. We have the corresponding result for oriented maps.
\begin{theorem}\label{thm:perm-model-oriented-map}
  Let $m\geq 1$, $\sigma \in \Sym_{2m}, \bm{\epsilon} \in \mathcal{E}_{2m}$ and
  $\mathfrak{C}(m, \bm{\epsilon}, \sigma)$ be the set of oriented maps
  with $2m$ labelled half-edge $\carte$ such that $\sigma_{\carte} = \sigma$ and
  $\bm{\epsilon}_{\carte} = \bm{\epsilon}$. Then,
  \begin{equation*}
    \begin{split}
      \mathfrak{C}(m, \epsilon, \sigma) &\to \Inv^{(\bm{\epsilon})}_{2m}\\
      \carte &\mapsto \alpha_{\carte},
    \end{split}
  \end{equation*}
  is a bijection.
\end{theorem}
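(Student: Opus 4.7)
The plan is to reduce this statement to the already-stated unoriented bijection of Theorem \ref{thm:usual_map_permutational_model}, using the observation that an orientation of an edge is exactly a partition of its two half-edges into one outgoing and one ingoing.

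First, I would set up a forgetful map $\Phi \colon \mathfrak{C}_{m, \bm{\epsilon}, \sigma} \to \mathfrak{C}_{m, \sigma}$, sending an oriented map to its underlying unoriented map (with the same labelling and same local structure $\sigma$). By Theorem \ref{thm:usual_map_permutational_model}, the target is in bijection with $\Inv_{2m}$ via $\carte \mapsto \alpha_{\carte}$, and this is compatible with $\Phi$ (forgetting the orientation does not change which half-edge is glued to which).

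Next, I would show that the fiber of $\Phi$ over an unoriented map $\carte$ with involution $\alpha = \alpha_{\carte}$ is either empty or a single point, depending on whether $\alpha \in \Inv^{(\bm{\epsilon})}_{2m}$ or not. The key point is: orienting $\carte$ amounts to choosing, for each edge $e = \{i, \alpha(i)\}$, which half-edge is outgoing and which is ingoing; and specifying $\bm{\epsilon}$ fixes this choice by declaring that the half-edge labelled $k$ is outgoing iff $\epsilon_k = +1$. This prescription defines a consistent orientation on $e$ if and only if $\epsilon_i$ and $\epsilon_{\alpha(i)}$ have opposite signs, i.e.\ $\epsilon(\alpha(i)) = -\epsilon(i)$ for all $i$, which is precisely the condition $\alpha \in \Inv^{(\bm{\epsilon})}_{2m}$ from Definition \ref{def:permutations-epsilon}.

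Combining these two steps, the map $\carte \mapsto \alpha_{\carte}$ on $\mathfrak{C}_{m, \bm{\epsilon}, \sigma}$ lands in $\Inv^{(\bm{\epsilon})}_{2m}$, and conversely, given any $\alpha \in \Inv^{(\bm{\epsilon})}_{2m}$, the unique unoriented map produced by Theorem \ref{thm:usual_map_permutational_model} can be uniquely oriented so as to have $\bm{\epsilon}_{\carte} = \bm{\epsilon}$, yielding an element of $\mathfrak{C}_{m, \bm{\epsilon}, \sigma}$. This gives the bijection.

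There is no real obstacle here beyond bookkeeping; the only point that requires a sentence of care is verifying that the sign-compatibility condition on an edge is equivalent to membership in $\Inv^{(\bm{\epsilon})}_{2m}$, but this is immediate from the definition since $\alpha$ is an involution, so $\alpha(\{i : \epsilon_i = +1\}) = \{i : \epsilon_i = -1\}$ is the same condition as $\epsilon(\alpha(i)) = -\epsilon(i)$ for all $i$.
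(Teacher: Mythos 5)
Your proof is correct, and since the paper offers no argument for this theorem (it is stated as the immediate oriented counterpart of Theorem \ref{thm:usual_map_permutational_model}), your reduction via the forgetful map to the unoriented case, with the observation that the fiber over $\alpha$ is a singleton precisely when $\epsilon(\alpha(i))=-\epsilon(i)$ for all $i$, is exactly the argument the paper leaves implicit. The one point worth a small correction is that the condition defining $\Inv^{(\bm{\epsilon})}_{2m}$ is given pointwise in the text just before the theorem, not in Definition \ref{def:permutations-epsilon} (which concerns $\Sym^{(\epsilon)}(I)$), though as you note the two formulations coincide for a fixed-point-free involution when $\bm{\epsilon}\in\mathcal{E}_{2m}$.
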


\subsection{Maps of unitary type}\label{sec:maps_unitary}
We have just seen how to describe a map with permutations. We now define a
particular type of map, which we call map of unitary type, whose edge structure
is described by a permutation $\pi \in \Sym_{2m}^{(\epsilon)}$ for some
$\epsilon$ and $m \geq 1$ and a monotone walk
$(\tau_{1}, \ldots, \tau_{r}) \in \mwset^{r}(\Id, \pi^{(\epsilon)})$.

\begin{definition}\label{def:alternated_vertex}
  A vertex in an oriented map will be said to be \textbf{alternated} if when
  going around this vertex the half-edges connected to it are alternatively
  ingoing and outgoing.
\end{definition} % TODO : Figure

\begin{definition}\label{def:maps_unitary_type}
  Let $I$ be a finite subset of $\N^{*}$ and $r \in \N$. A \textbf{map of
    unitary type} with labels in $I$ with $r$ black vertices is an oriented map
  with vertices colored in white or black such that
  \begin{enumerate}
    \item\label{it:unit-bv} there are $r$ black vertices, which are alternated of degree 4 and
          numbered from $1$ to $r$;
    \item\label{it:unit-wv} there are $|I|$ half-edges that are connected to white vertices. We
          call these half-edges \textbf{white half-edges}. Each element of $I$
          labels exactly one white half-edge;
    \item\label{it:unit-order} if an oriented edge connects the black vertex numbered $k$
          to the black vertex numbered $l$, with the orientation from
          $k$ to $l$, then $k < l$.
  \end{enumerate}
\end{definition}
See Figure \ref{fig:unitary-map} for an example.

\begin{remark}
  There is a correspondence between a tuple
  $\bm{P} = (P_{1}, \ldots, P_{l})$ of monomials and a family of maps of
  unitary type. The number of white vertices is $l$, each of them
  corresponds to a monomial. The white outgoing half-edges correspond
  to occurences of $u$, the white ingoing half-edges correspond to
  occurences of $u^{*}$. The black vertices correspond to steps in a
  walk as defined in Definition \ref{def:monotone-walk}. Note however that the
  monotonicity condition of the walk correspond to the increasing
  condition defined in Definition \ref{def:monotone-unitary-map}. This link will be described in
  more details in Section \ref{sec:expr-moments}.
\end{remark}

\begin{remark}
  The map has oriented edges so there are as many ingoing as outgoing
  half-edges, of any color. The black vertices are alternated and of
  edgree $4$ so there are as many ingoing black half-edges as black
  outgoing half-edges. Thus, there are as many white ingoing
  half-edges as white outgoing half-edges.
\end{remark}

\begin{remark}\label{rem:face-incident_white}
  Notice that condition 3. in Definition \ref{def:maps_unitary_type}
  implies that each face is incident to at least one white vertex.
  Indeed, if it were not the case, there would be a face incident to
  only black vertices, numbered $n_{1} < n_{2} < \ldots < n_{k}$, with
  $n_{k} < n_{1}$, a contradiction.
\end{remark}

\begin{figure}[ht]
  \centering
  \includegraphics[width=0.5\textwidth]{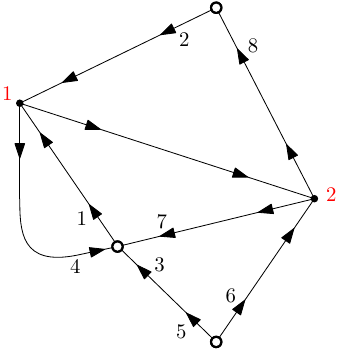}
  \caption{\label{fig:unitary-map} A unitary type map. The numbers in red (1 near the
    black vertex the left, 2 near the black vertex on the right) are
    the numbers of the black vertices, the labels in black are the
    labels of the white half-edges.}
\end{figure}

\begin{remark}
  The maps of unitary type are very similar to the maps introduced in
  \cite{collins_asymptotics_2009} to describe the leading term in the
  asymptotics of the cumulants when $N \to \infty$. In fact, the two
  kinds of maps in genus 0 are related by a surgery that transforms
  black vertices of unitary maps into ``dotted edges'' of the maps
  from \cite{collins_asymptotics_2009}. Here, we consider the
  non-planar cases as well.
\end{remark}

We denote by $w_{k}(\carte)$ the white vertex in the unitary type map $\carte$
connected to the half-edge labelled $k$. We will omit the notation $\carte$ if
there is no ambiguity.

Notice that in a map of unitary type, the half-edges connected to
black vertices are not labelled. We now explain how to label them.
Consider, in a map of unitary type, an unlabelled half-edge which we
denote by $h$. This half-edge has a face $f$ to its left (see Figure
\ref{fig:half-edge-left}). Starting from $h$, we turn around the face
in the clockwise direction until we encounter a labeled half-edge
connected to a white vertex, which is labelled by $i$. We assign to
$h$ the label $i$. See Figures \ref{fig:unitary-map} and
\ref{fig:prop_label}.

\begin{figure}[htbp]
  \centering
    \includegraphics[width=0.5\textwidth]{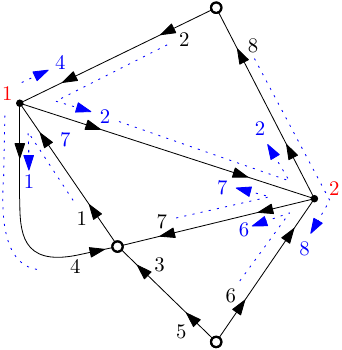}
    \caption{\label{fig:prop_label}Procedure to assign labels to half-edges}
    The newly labelled half-edges are in blue (and follow the dotted
    arrows).
\end{figure}

Notice that by Remark \ref{rem:face-incident_white}, all faces are incident to at least one white vertex,
so all unlabelled half-edges can be labelled by this procedure, in a unique way.

The following Lemma will be used to prove Lemma \ref{lem:pi_sym_eps}.
\begin{lemma}\label{lem:label_he_in-out}
  Let $h$ be a half-edge labelled by $i$. There exists a unique white
  half-edge $h'$ labelled by $i$. If $h$ is ingoing then $h'$ is
  ingoing. If $h$ is outgoing then $h'$ is outgoing.
\end{lemma}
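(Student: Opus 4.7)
The uniqueness of $h'$ is immediate from condition 2 in Definition \ref{def:maps_unitary_type}: each element of $I$ labels exactly one white half-edge. The content of the lemma is therefore the orientation matching. My plan is to split cases: if $h$ is itself a white half-edge the conclusion is trivial, so I will focus on the case where $h$ sits at a black vertex, and thus was assigned its label by the clockwise walking procedure described just before the statement.

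The idea is to encode one step of the clockwise walk around a face in the permutation model of Definition \ref{def:usual_maps_permutational_model}. By Lemma \ref{lem:psi_map}, $\varphi\alpha\sigma = \Id$ and hence $\varphi^{-1} = \alpha\sigma$; one clockwise step around the face $f$ on the left of a half-edge amounts to first rotating counterclockwise at the current vertex (apply $\sigma$) and then crossing the edge (apply $\alpha$). I then want to verify that this composition preserves orientation whenever it starts from a half-edge rooted at a black vertex. The reasoning is: the alternated degree-$4$ condition (Definition \ref{def:alternated_vertex}) tells me that $\sigma$ flips orientation at a black vertex, while $\alpha\in\Inv_{2m}^{(\epsilon)}$ always flips orientation, so the two flips cancel.

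Termination of the walk is guaranteed by Remark \ref{rem:face-incident_white}, which ensures that every face is incident to a white vertex. I will then write the walk as $h = h_0, h_1 = \varphi^{-1}(h_0), \ldots, h_k = h'$. By the very definition of the procedure, $h_0, \ldots, h_{k-1}$ all lie at black vertices (a white $h_j$ with $j<k$ would have stopped the walk earlier), so the preservation statement applied inductively yields that $h_k$ carries the same orientation as $h_0$, which is exactly what is claimed.

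The main obstacle I anticipate is purely conventional: carefully matching the geometric picture of ``turn clockwise around $f$ starting from a half-edge with $f$ on its left'' with the algebraic $\varphi^{-1}$ rather than $\varphi$, and confirming that the alternated condition at a black vertex really translates into $\sigma$ flipping the sign $\epsilon(\cdot)$. Once these two bookkeeping checks are made in a local chart at a vertex, the proof reduces to the short two-line induction sketched above.
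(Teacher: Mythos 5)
Your proof is correct and follows the same route as the paper: walk clockwise around the face from $h$ to the white half-edge $h'$, note that every intermediate vertex is black and alternated, and conclude that the in/out orientation is preserved at each step, with termination guaranteed because every face is incident to a white vertex. The only difference is cosmetic --- you formalize one step of the walk as $\varphi^{-1}=\alpha\sigma$ and cancel the two orientation flips algebraically, whereas the paper draws the same conclusion directly from the alternation condition at the black vertices.
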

\begin{proof}
  Consider an ingoing half-edge $h$. The existence and uniqueness of
  $h'$ is a consequence of the definition. If $h$ is a white
  half-edge, the statement is obvious. If not, then consider the face $f$
  to its left. Starting from $h$ we turn around $f$ in the clockwise
  direction until we reach a white vertex $w$. All the vertices we
  encounter before $w$ are black. The black vertices are alternated so
  all the half-edges such that $f$ is at their left are ingoing as
  well, and so is the white half-edge $h'$ that we reach, whose label
  is the same as the label of $h$. We proceed similarly for outgoing half-edges.
\end{proof}

The labels for the edges allow us to define the notion of value of a black
vertex.

\begin{definition}
  Consider a black vertex $b$. Let $i$ and $j$ be the labels of the two outgoing
  half-edges at $b$. The \textbf{value} of the black vertex $b$ is $\val(b) = \max(i, j)$.
\end{definition}

\begin{definition}\label{def:monotone-unitary-map}
  A map of unitary type with $r$ black vertices $b_{1}, \ldots, b_{r}$
  numbered respectively $1, \ldots, r$ is \textbf{nondecreasing} if
  \begin{equation*}
    \begin{split}
      \val(b_{1}) \leq \val(b_{2}) \leq \cdots \leq \val(b_{r}).
    \end{split}
  \end{equation*}
\end{definition}

\begin{ex}
  Figure \ref{fig:prop_label} displays an example. The labels of the
  black vertices are in red. The values of the black vertices $s_{1}$
  and $s_{2}$ are $\val(s_{1}) = 2, \val(s_{2}) = 6$.
\end{ex}

\subsection{Permutational model}
Similarly as in Section \ref{ssec:maps}, we define a permutational model for the
maps of unitary type.

\begin{definition}\label{def:permutation-unitary}
  Let $I \subset \N^{*}$ be finite and $r\in\N$. Let $\carte$ be a map of
  unitary type with labels in $I \neq \emptyset$ and $r$ black
  vertices.

  We define $\bm{\epsilon}_{\carte} = (\epsilon(i), i\in I)$ as follows. If the
  white half-edge labelled $i \in I$ is outgoing, we set $\epsilon(i) = +1$,
  else we set $\epsilon(i) = -1$.

  We define $\gamma_{\carte}, \pi_{\carte}, \phi_{\carte} \in \Sym(I)$ and
  $\tau_{\carte} = (\tau_{1}, \ldots, \tau_{r}) \in \Sym(\epsilon_{\carte}^{-1}(+1))^{r}$ as
  follows.
  \begin{itemize}
    \item Let $i \in I$. The white half-edge $h_{i}$, labelled $i$, is
          connected to a white vertex $w_{i}$. Starting from $h_{i}$,
          we turn in the clockwise direction around $w_{i}$. Let $j$
          be the label of the next half-edge connected to $w_{i}$. We
          set $\gamma_{\carte}(i) = j$.
    \item Let $i \in I$. The white half-edge $h_{i}$ labelled $i$ is connected
          to another half-edge $h_{j}$, which is labelled by $j$. We set
          $\pi_{\carte}(i) = j$.
    \item Let $i \in I$. The white half-edge labelled $i$ has a face $f_{i}$ to
          its left. Starting from the half-edge $i$, we turn in the
          counterclockwise direction around the face $f_{i}$. The next white
          half-edge with $f_{i}$ on its left we encounter is labelled $j$. We
          set $\phi_{\carte}(i) = j$.
    \item Let $b_{l}$ be the black vertex numbered $l$. The outgoing half-edges
          that are connected to it are labelled by $i$ and $j$. We set
          $\tau_{l} = \cycle{i, j}$.
  \end{itemize}
\end{definition}
The permutations $\gamma_{\carte}, \pi_{\carte}, \phi_{\carte}$ are the
counterparts for maps of unitary type of the permutations
$\sigma_{\carte}, \alpha_{\carte}, \varphi_{\carte}$ defined in Definition
\ref{def:usual_maps_permutational_model}.

\begin{ex}
  For the map in Figure \ref{fig:prop_label}, we have $r = 2$ and
  \begin{equation*}
    \begin{split}
      \gamma_{\carte}&=\cycle{1, 7, 3, 4}\cycle{5, 6}\cycle{2, 8},\\
      \epsilon_{\carte} &= (+1, +1, -1, -1, +1, +1, -1, -1),\\
      \tau_{1} &= \cycle{1, 2}, ~\tau_{2} = \cycle{2, 6},\\
      \pi_{\carte} &= \cycle{1, 7, 6, 8, 2, 4}\cycle{3, 5},\\
      \phi_{\carte} &=\cycle{1}\cycle{2}\cycle{3, 6}\cycle{4, 8, 5}\cycle{7}.
    \end{split}
  \end{equation*}
\end{ex}

\begin{lemma}\label{lem:pi_sym_eps}
  The permutation $\pi_{\carte}$ belongs to $\Sym^{(\epsilon_{\carte})}(I)$,
  defined in Definition \ref{def:permutations-epsilon}.
\end{lemma}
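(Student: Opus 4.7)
The plan is to unpack the definition of $\Sym^{(\epsilon_{\carte})}(I)$ and verify directly that $\pi_{\carte}$ sends $\epsilon_{\carte}^{-1}(+1)$ into $\epsilon_{\carte}^{-1}(-1)$. The key input is Lemma \ref{lem:label_he_in-out}, which says that any labelled half-edge (white or not) has the same orientation as the unique white half-edge carrying the same label; this is exactly the bridge I need, because $\pi_{\carte}(i)$ is defined by reading off the label borne by a half-edge that may well be attached to a black vertex rather than a white one.

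First I would take $i \in I$ with $\epsilon_{\carte}(i) = +1$, so that the white half-edge $h_i$ labelled $i$ is outgoing. By the definition of $\pi_{\carte}$, the integer $j := \pi_{\carte}(i)$ is the label carried by the half-edge $h'$ forming the other side of the edge incident to $h_i$. Since the two half-edges of an oriented edge carry opposite orientations, $h'$ is ingoing. Applying Lemma \ref{lem:label_he_in-out} to $h'$, the unique white half-edge carrying the label $j$ is ingoing as well, so $\epsilon_{\carte}(j) = -1$. This yields $\pi_{\carte}(\epsilon_{\carte}^{-1}(+1)) \subseteq \epsilon_{\carte}^{-1}(-1)$.

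Running the symmetric argument starting from an element of $\epsilon_{\carte}^{-1}(-1)$ gives the reverse inclusion $\pi_{\carte}(\epsilon_{\carte}^{-1}(-1)) \subseteq \epsilon_{\carte}^{-1}(+1)$; since $\pi_{\carte}$ is a bijection of $I$, the two inclusions together force $|\epsilon_{\carte}^{-1}(+1)| = |\epsilon_{\carte}^{-1}(-1)|$ and $\pi_{\carte}(\epsilon_{\carte}^{-1}(+1)) = \epsilon_{\carte}^{-1}(-1)$. Hence $\pi_{\carte} \in \Sym^{(\epsilon_{\carte})}(I)$. There is no real obstacle: the substantive combinatorial content is already encoded in Lemma \ref{lem:label_he_in-out}, and all that remains is this orientation-preservation bookkeeping.
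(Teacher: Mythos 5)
Your proof is correct and follows essentially the same route as the paper's: both reduce the claim to Lemma \ref{lem:label_he_in-out} by noting that the two half-edges of an edge have opposite orientations and that the label of a non-white half-edge inherits the orientation of the white half-edge bearing the same label. Your version is in fact slightly more careful than the paper's, since you explicitly upgrade the inclusion $\pi_{\carte}(\epsilon_{\carte}^{-1}(+1)) \subseteq \epsilon_{\carte}^{-1}(-1)$ to the required set equality via bijectivity.
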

\begin{proof}
  An edge consists of an outgoing half-edge $h$ attached to an ingoing half-edge
  $h'$. Assume that $h$ is white. Let $i$ be the label of $h$ and $j$ be the
  label of $h'$. We have $\pi(i) = j$. By Lemma \ref{lem:label_he_in-out}, $j$
  is the label of a white ingoing half-edge. Thus, $\epsilon(i) = -1$ and
  $\epsilon(j) = +1$. We proceed similarly if $h'$ is white.
\end{proof}

We have the following counterpart of Lemma \ref{lem:psi_map}
\begin{lemma}\label{lem:phi_map}
  For a unitary type map $\carte$, we have
  $\gamma_{\carte}\pi_{\carte}^{-1} = \phi_{\carte}$.
\end{lemma}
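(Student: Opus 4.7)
The plan is to deduce the identity from Lemma \ref{lem:psi_map} applied to the ordinary oriented map underlying $\carte$, obtained by giving a unique auxiliary label to every half-edge including those incident to black vertices. Let $\hat{\sigma}, \hat{\alpha}, \hat{\varphi}$ be its three permutations on the set $L$ of all half-edges, so that $\hat{\varphi}\hat{\alpha}\hat{\sigma} = \Id_{L}$ by Lemma \ref{lem:psi_map}. Extend the unitary labelling to a map $\ell \colon L \to I$ which is the identity on white half-edges; on a black half-edge $h$, the labelling procedure for unlabelled half-edges turns clockwise around the face of $h$, which is precisely the direction $\hat{\varphi}^{-1}$ (since $\hat{\varphi}$ turns counterclockwise around each face by its very definition), so $\ell(h) = \hat{\varphi}^{-m}(h)$ for the smallest $m \geq 1$ with $\hat{\varphi}^{-m}(h) \in I$. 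Reading the definitions of $\rho, \pi, \phi$ in these terms gives $\rho(i) = \hat{\sigma}(i)$ for $i \in I$ (every half-edge at a white vertex is white), $\pi(i) = \ell(\hat{\alpha}(i))$, and $\phi(i) = \hat{\varphi}^{k}(i)$ for the smallest $k \geq 1$ with $\hat{\varphi}^{k}(i) \in I$.

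With this in place the computation is short. Fix $i \in I$ and set $j = \rho(i) = \hat{\sigma}(i)$, $h = \hat{\alpha}(j)$, and $k = \pi(j) = \ell(h)$. Using $\hat{\varphi} = \hat{\sigma}^{-1}\hat{\alpha}^{-1}$, one obtains
\[
  \hat{\varphi}(h) = \hat{\sigma}^{-1}\hat{\alpha}(h) = \hat{\sigma}^{-1}(j) = i.
\]
If $h$ is white, then $k = h$ and $\phi(k) = \hat{\varphi}(k) = i$ since $\hat{\varphi}(k) \in I$. If $h$ is black, write $k = \hat{\varphi}^{-m}(h)$ with $m \geq 1$ minimal; by minimality all of $\hat{\varphi}^{-m+1}(h), \ldots, \hat{\varphi}^{-1}(h), h$ are black, so iterating $\hat{\varphi}$ forward from $k$ retraces these in order and at the $(m+1)$-th step lands on $\hat{\varphi}(h) = i \in I$, the first white half-edge encountered. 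In either case $\phi\pi\rho(i) = i$, giving $\phi\pi\rho = \Id$, equivalently $\phi = \rho^{-1}\pi^{-1}$.

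The main obstacle is not this case analysis but the bookkeeping of orientation conventions: I need to confirm that the counterclockwise-around-face direction used in defining $\phi$ and the clockwise-around-face direction used in the labelling procedure correspond to opposite senses of the same $\hat{\varphi}$-orbit. It is exactly this opposition that makes the backward $\hat{\varphi}^{-1}$-chain from $h$ down to the first white $k$ be retraced in reverse by the forward $\hat{\varphi}$-chain from $k$ back through $h$ and on to $i$. With this convention check in place, the lemma is essentially the restriction to white half-edges of the ordinary-map identity $\hat{\varphi}\hat{\alpha}\hat{\sigma} = \Id$.
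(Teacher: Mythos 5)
Your proof is correct, and it takes a route that differs in organization from the paper's. The paper argues directly on the unitary-type map: it fixes a white half-edge $i$, walks along the boundary of the face on its left until the first white vertex, identifies the last traversed (ingoing) half-edge as $\pi_{\carte}^{-1}(i)$, and reads off the next white half-edge as $\rho_{\carte}^{-1}\pi_{\carte}^{-1}(i)$, relying on a figure for the chain of intermediate black vertices. You instead factor everything through the permutational model $\hat{\sigma},\hat{\alpha},\hat{\varphi}$ of the underlying ordinary oriented map and invoke Lemma \ref{lem:psi_map}; the key observations that $\rho_{\carte}$ and $\phi_{\carte}$ are the restriction and the trace of $\hat{\sigma}$ and $\hat{\varphi}$ on the white half-edges, and that the label-propagation procedure is exactly backward iteration of $\hat{\varphi}$ (so $\pi_{\carte}(i)=\ell(\hat{\alpha}(i))$), turn the lemma into the identity $\hat{\varphi}\hat{\alpha}\hat{\sigma}=\Id$ restricted to $I$. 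Your minimality argument for the chain of black half-edges between $k=\ell(h)$ and $\hat{\varphi}(h)=i$ makes explicit what the paper leaves to the picture, and the orientation check (clockwise labelling versus counterclockwise $\hat{\varphi}$) is precisely the point that needs care; you handle it correctly. The trade-off is that your setup requires introducing the auxiliary labelling and verifying the three dictionary identities, whereas the paper's argument is shorter but more figure-dependent.
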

\begin{proof}
  Let $i \in I$ be the label of a white outgoing half-edge, and $f$
  the face at the left of the half-edge. Starting from the half-edge
  labelled $i$, we follow the boundary of the face until we encounter
  a white vertex. The last half-edge we traversed, which was ingoing,
  was labelled by $j$. This half-edge is connected to a outgoing
  half-edge labelled $i$. By definition, we thus have
  $\pi_{\carte}(j)=i$. The next labelled half-edge when going around
  $f$ in the counterclockwise order is the half-edge following the
  half-edge $j$ when turning in the clockwise direction around the
  white vertex. This next half-edge is thus labelled
  $\gamma_{\carte}(j) = \gamma_{\carte}\pi_{\carte}^{-1}(i)$,
  see Figure \ref{fig:lemme_phi}.

  \begin{figure}[htbp]
  \centering
    \includegraphics[width=0.4\textwidth]{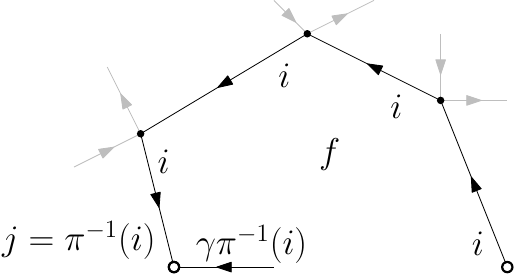}
    \caption{\label{fig:lemme_phi}Proof of Lemma \ref{lem:phi_map}.}
  \end{figure}

  The proof is identical if $i$ if the label of an ingoing half-edge.
\end{proof}

\begin{prop}\label{prop:tau_carte}
  Let $I$ be a finite subset of $\N^{*}$, $r \in \N^{*}$, $\gamma \in \Sym(I)$,
  and $\epsilon \in \{\pm 1\}^{I}$. Let $\carte$ be a unitary type map with set of
  labels $I$ and with $r$ black vertices such that $\gamma_{\carte} = \gamma$
  and $\epsilon_{\carte} = \epsilon$, and let $\tau_{\carte} = (\tau_{1}, \ldots, \tau_{r})$.
  Then, $\tau_{r}\cdots\tau_{1} = \pi_{\carte}^{(\epsilon)}$.
\end{prop}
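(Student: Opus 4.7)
The plan is to argue by induction on the number $r$ of black vertices. When $r = 0$ the empty product $\tau_{r} \cdots \tau_{1}$ is the identity, and in a unitary type map without black vertices every edge runs from a white outgoing half-edge to a white ingoing half-edge, so $\pi_{\carte}$ is the involution swapping the two labels of every edge and $\pi_{\carte}^{(\epsilon)} = \pi_{\carte}^{2}|_{\epsilon^{-1}(+1)} = \Id$.

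For the inductive step I would single out the black vertex $b_{r}$ of maximal number. Condition 3 of Definition \ref{def:maps_unitary_type} forces both outgoing half-edges at $b_{r}$ to end at white vertices, since a black endpoint would bear a number strictly larger than $r$, which is impossible. Label the four half-edges at $b_{r}$ in counterclockwise cyclic order by $o_{1}, i_{1}, o_{2}, i_{2}$ with propagated labels $a_{1}, c_{1}, a_{2}, c_{2}$, so that $\tau_{r} = (a_{1}, a_{2})$, and construct $\carte'$ by deleting $b_{r}$ and re-pairing the four dangling half-edges into two new oriented edges, joining $\alpha(o_{1})$ to $\alpha(i_{2})$ and $\alpha(o_{2})$ to $\alpha(i_{1})$; this pairing is dictated by the orientations and by the requirement that the ambient surface admit the natural surgery at $b_{r}$. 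The map $\carte'$ is a unitary type map with $r - 1$ black vertices; because the surgery is local at $b_{r}$ and leaves the white-vertex structure untouched, $\rho_{\carte'} = \rho_{\carte}$ and $\epsilon_{\carte'} = \epsilon_{\carte}$, and the maximality of $r$ together with a careful tracking of how the modified faces can wrap back through earlier black vertices ensures that the propagated labels at $b_{l}$ for $l < r$ are preserved. The induction hypothesis applied to $\carte'$ then yields $\pi_{\carte'}^{(\epsilon)} = \tau_{r-1} \cdots \tau_{1}$.

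The main step, which I expect to be the principal obstacle, is to show $\pi_{\carte}^{(\epsilon)} = \tau_{r} \, \pi_{\carte'}^{(\epsilon)}$. For this I would fix $i \in \epsilon^{-1}(+1)$ and trace $\pi_{\carte}^{2}(i)$ through its two defining operations: partner traversal along an edge, followed, when the partner is a black half-edge, by clockwise face-propagation to the nearest white half-edge, where Lemma \ref{lem:label_he_in-out} guarantees that orientations are preserved across the jump. The traces in $\carte$ and in $\carte'$ can differ only when the computation enters one of the four edges modified by the surgery around $b_{r}$, and a case analysis on which of $\alpha(o_{1}), \alpha(o_{2}), \alpha(i_{1}), \alpha(i_{2})$ is first met will show that the two resulting endpoints either agree or differ by exactly the transposition $(a_{1}, a_{2}) = \tau_{r}$. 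Verifying these four subcases, together with the preservation claims used in the previous paragraph to apply the induction hypothesis, is where the alternating structure at $b_{r}$ and the specific shape of the surgery become essential and forms the bulk of the technical work.
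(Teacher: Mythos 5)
Your strategy is genuinely different from the paper's and is viable, but the two claims you defer to ``the bulk of the technical work'' are in fact the entire content of the proposition. The paper argues directly: starting from a white outgoing half-edge labelled $k$, it follows the boundary of the face to its right through the chain of black vertices $u_{1},\ldots,u_{p}$ (numbered $n_{1}<\cdots<n_{p}$) until the next white vertex, reads off $\tau_{n_{p}}\cdots\tau_{n_{1}}(k)=\pi^{(\epsilon)}(k)$ from the local alternating structure, and then uses condition 3 of Definition \ref{def:maps_unitary_type} to show by contradiction that every other $\tau_{j}$ fixes the relevant intermediate labels. Your induction-on-$r$ via deletion of the top black vertex is essentially the paper's own Section \ref{sec:cut_maps} surgery (second case) run in reverse: there the paper proves exactly your key identity, $\pi_{\carte'}=\tau_{r}\pi_{\carte}$ hence $\pi_{\carte'}^{(\epsilon)}=\tau_{r}\pi_{\carte}^{(\epsilon)}$ (Lemma \ref{lem:pi_B}), and that $\tau_{\carte'}=(\tau_{1},\ldots,\tau_{r-1})$. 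So the route closes, and it has the virtue of making Proposition \ref{prop:tau_carte} a corollary of the same local surgery used later for the Tutte equations; the paper's direct argument, by contrast, needs no statement about how the surgery interacts with faces far from $b_{r}$.

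That said, as written your proposal has a gap at the label-preservation step, and the justification you gesture at (``maximality\ldots together with a careful tracking'') needs to be the following specific argument: by condition 3 and maximality of $r$, both edges leaving $b_{r}$ terminate at white vertices, so $b_{r}$ can only occur as the \emph{terminal} black vertex of any clockwise labelling path; consequently, for a half-edge at $b_{l}$ with $l<r$, the clockwise walk to its labelling white half-edge never traverses $b_{r}$, and the splice (which only alters the face boundary at the two corners of $b_{r}$ that get merged) cannot change it. Without this observation the claim is not obvious, since the surgery does merge two faces and could a priori reroute a labelling path through the splice point. The four-subcase verification of $\pi_{\carte}^{(\epsilon)}=\tau_{r}\,\pi_{\carte'}^{(\epsilon)}$ is routine once one also records that $\tau_{r}$ is supported in $\epsilon^{-1}(+1)$ while $\pi_{\carte}$ exchanges $\epsilon^{-1}(+1)$ and $\epsilon^{-1}(-1)$, so that $(\tau_{r}\pi)^{2}|_{\epsilon^{-1}(+1)}=\tau_{r}\pi^{(\epsilon)}$. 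With those two points filled in, your proof is complete.
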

\begin{proof}
  Let $k \in I$ be the label of a white outgoing half-edge connected to a vertex
  $w_{k} = u_{0}$. Let $f$ be the face at its right. We construct a path
  starting from the half-edge labelled $k$ as follows, see also Figure
  \ref{fig:preuve_tau_carte}. Consider the edge $e_{1} = (u_{0}, u_{1})$ of
  which the half-edge labelled $k$ is part. If $u_{1}$ is white then for all
  $1 \leq j \leq r, \tau_{j}(k)=k = \pi_{\carte}^{(\epsilon)}(k)$.

  If $u_{1}$ is black, we can find vertices $u_{2}, u_{3}, \ldots, u_{p+1}$ such
  that $u_{2}, \ldots, u_{p}$ are black and $u_{p+1}$ is white, and
  $(u_{j}, u_{j+1})$ follows $(u_{j-1}, u_{j})$ when going around the vertex
  $u_{j}$ in the counterclockwise order. Notice that these edges are all part of
  the boundary of $f$.

  Let $n_{1}, n_{2}, \ldots, n_{p}$ be the labels of the black vertices
  $u_{1}, \ldots, u_{p}$, and $k_{j}, 1 \leq j \leq p+1$ be the labels of the
  outgoing half-edges edges (connected to $u_{j-1}$) in
  $(u_{j-1}, u_{j})$. Notice that $1 \leq n_{1}, \ldots, n_{p} \leq r$ as
  black vertices have labels in $[r]$. By construction, we have
  $\tau_{n_{j}}(k_{j-1}) = k_{j}$.

  \begin{figure}[htbp]
    \centering
    \includegraphics[width=0.4\textwidth]{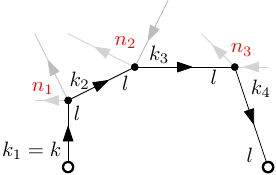}
    \caption{\label{fig:preuve_tau_carte}Chain of edges around the face $f$.}
  \end{figure}

  We have $\tau_{n_{p}}\tau_{n_{p-1}}\cdots\tau_{n_{1}}(k) = k_{p}$, so the labels
  of the black ingoing vertices incident to $f$ are all equal to
  $l = \pi_{\carte}(k)$, by construction of $\pi_{\carte}$. We have
  $\pi_{\carte}(k) = l = \pi_{\carte}^{-1}(k_{p})$. Thus
  $\tau_{n_{p}}\tau_{n_{p-1}}\cdots\tau_{n_{1}}(k) = \pi^{(\epsilon)}(k)$.

  Assume now that
  $\tau_{r}\cdots \tau_{1}(k) \neq \tau_{n_{p}}\tau_{n_{p-1}}\cdots\tau_{n_{1}}(k)$. Let $j$ be
  the minimal index such that there exists $p'$ satisfying
  $n_{p'} \leq j < n_{p'+1}$ (with the convention $n_{p+1} = r+1$) and
  $\tau_{j}\cdots \tau_{1}(k)\neq \tau_{n_{p'}}\cdots\tau_{n_{1}}(k)$. The index $j$ is minimal
  so $j > n_{p'}$ (else we would have a contradiction as
  $\tau_{j-1}\cdots \tau_{1}(k)= \tau_{n_{p'-1}}\cdots\tau_{n_{1}}(k)$). We have
  $k_{p'} = \tau_{j-1}\cdots \tau_{1}(k) = \tau_{n_{p'}}\cdots\tau_{n_{1}}(k)$. By
  construction, all the half-edges labelled by $k_{p'}$ are on the
  boundary of a same face $f'$, and they follow each other. We have
  just seen that there is such an half-edge in the edge between
  $u_{p'}$ and $u_{p'+1}$. The fact that $\tau_{j}(k_{p'}) \neq k_{p'}$
  implies that there is an half-edge labelled $k_{p'}$ that is
  connected to the $j$-th black vertex. However, this edge must be
  before (when going around the face $f'$) or after the edge
  $(u_{p'}, u_{p'+1})$ in the boundary of $f'$. This contradicts the
  fact that if there is an edge going from a black vertex $i$ to a
  black vertex labelled $j$ we have $i < j$, as
  $n_{p'} < j < n_{p'+1}$.
\end{proof}

\begin{definition}
  We denote by $\mathfrak{C}^{r}(I, \epsilon, \gamma)$ the set of nondecreasing
  unitary type maps $\carte$ with set of labels $I$ and with $r$ black vertices
  such that $\gamma_{\carte} = \gamma$ and $\epsilon_{\carte} = \epsilon$.

  Similarly, we denote by $\mathfrak{C}(g, I, \epsilon, \gamma)$ the set of
  nondecreasing unitary type maps $\carte$ with set of labels $I$ and
  with genus $g$ such that $\gamma_{\carte} = \gamma$ and $\epsilon_{\carte} = \epsilon$.
\end{definition}

\begin{theorem}\label{thm:permutational_model}
  Let $I$ be a finite subset of the positive integers, $r\in \N^{*}$,
  $\epsilon \in \{\pm 1\}^{I}$ and $\gamma \in \Sym(I)$.

  The mapping
  \begin{equation*}
    \begin{split}
      \mathfrak{C}^{r}(I, \epsilon, \gamma) &\to \bigcup_{\pi\in\Sym^{(\epsilon)}(I)}\{\pi\}\times\mwset^{r}(\Id, \pi^{(\epsilon)})\\
      \carte &\mapsto (\pi_{\carte}, \tau_{\carte})
    \end{split}
  \end{equation*}
  is a bijection.
\end{theorem}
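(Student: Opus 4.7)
The plan is first to verify that the mapping is well-defined, then to construct an explicit inverse by induction on $r$. For well-definedness: Lemma~\ref{lem:pi_sym_eps} yields $\pi_\carte \in \Sym^{(\epsilon)}(I)$; Proposition~\ref{prop:tau_carte} yields $\tau_r \cdots \tau_1 = \pi_\carte^{(\epsilon)}$; and the nondecreasing condition on $\carte$ (Definition~\ref{def:monotone-unitary-map}) is exactly the monotonicity $\val(\tau_1) \leq \cdots \leq \val(\tau_r)$. Hence $(\pi_\carte, \tau_\carte)$ lies in the target set.

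For the bijection, I would argue by induction on $r$. The base case $r = 0$ reduces to Theorem~\ref{thm:perm-model-oriented-map}: the condition $\pi^{(\epsilon)} = \Id$ combined with $\pi \in \Sym^{(\epsilon)}(I)$ forces $\pi^2 = \Id$ on $\epsilon^{-1}(+1)$, so $\pi \in \Inv^{(\epsilon)}(I)$, and a unitary type map with no black vertices is just an oriented map. The bijection with involutions is then provided by Theorem~\ref{thm:perm-model-oriented-map} applied to $(\sigma, \alpha) = (\rho, \pi)$.

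For the inductive step $r \geq 1$, I would construct a ``removal of the last black vertex'' operation $\carte \mapsto (\carte', \tau_r)$ where $\carte' \in \mathfrak{C}^{r-1}_{I, \epsilon, \rho}$ and $\tau_r$ is a transposition satisfying $\val(\tau_r) \geq \val(\tau_{r-1})$ (vacuous when $r = 1$). The $r$-th black vertex $b_r$ has degree four and is alternated; removing it by joining each incoming half-edge with the adjacent outgoing half-edge in the cyclic order yields a unitary type map $\carte'$ with $r - 1$ black vertices, still labeled $1, \ldots, r - 1$ and still satisfying item~(3) of Definition~\ref{def:maps_unitary_type}, because no edge leaves $b_r$ toward a later black vertex. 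On the algebraic side, the identity $\pi_{\carte'}^{(\epsilon)} = \tau_r^{-1} \pi_\carte^{(\epsilon)} = \tau_{r-1}\cdots\tau_1$, obtained from Proposition~\ref{prop:tau_carte}, encodes precisely how the permutational data changes when $b_r$ is removed.

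The main obstacle is to show that this removal is a bijection, i.e.\ that given any $\carte' \in \mathfrak{C}^{r-1}_{I, \epsilon, \rho}$ and an admissible transposition $\tau_r = (i, j)$ with $\max(i,j) \geq \val(\tau_{r-1})$, there is a unique way to insert a new black vertex into $\carte'$ to recover $\carte$. The value condition is essential here: writing $\tau_r = (i, j)$ with $j = \val(\tau_r)$, the hypothesis $j \geq \val(\tau_{r-1})$ implies that after the labeling procedure of $\carte'$, the outgoing half-edges carrying the labels $i$ and $j$ sit in canonically determined positions unaffected by further transpositions, so that the two edges of $\carte'$ to split when inserting $b_r$ are uniquely identified. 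Once this uniqueness is verified, iterating the inductive step $r$ times accumulates the full tuple $(\tau_1, \ldots, \tau_r)$ and matches it with the permutation $\pi_\carte$ obtained from the base case, establishing the claimed bijection.
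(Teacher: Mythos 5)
Your well-definedness check and your base case are fine, and the overall strategy (induction on $r$ via a remove/insert-last-black-vertex operation) is a legitimately different route from the paper, which instead builds the inverse in one shot from $(\pi,\tau)$ using the permutational model of ordinary maps and then proves injectivity directly. However, there is a genuine gap in your inductive step: the operation ``remove $b_r$ and join each ingoing half-edge to the adjacent outgoing half-edge'' does not in general produce a map, let alone an element of $\mathfrak{C}^{r-1}_{I,\epsilon,\rho}$. When the two corners of $b_r$ that get merged by the smoothing belong to the same face of $\carte$, the resulting region is an annulus, violating the requirement in Definition~\ref{def:embedded-graph} that faces be simply connected; this is exactly the situation the paper confronts in Section~\ref{sec:cut_maps}, where the surgery must be completed by cutting the surface along $\eta_{1}\cup\eta_{2}$ to restore disk faces. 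But that cut modifies the white vertices: by Lemma~\ref{lem:rho_B} the resulting map has vertex permutation $\rho\tau_{r}$, not $\rho$, so it lands in $\mathfrak{C}^{r-1}_{I,\epsilon,\rho\tau_{r}}$ and cannot be fed back into your induction hypothesis. Either way, the claim that the removal yields a map of unitary type with the same $\rho$ fails as stated. The step can be repaired — take the smoothed abstract graph together with its unchanged rotation system at the surviving vertices and invoke Theorem~\ref{thm:usual_map_permutational_model} to re-embed it (possibly on a surface of different genus) as the map $\carte'$ — but that is a combinatorial reconstruction, not the geometric excision you describe, and it needs to be said.

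A second, smaller issue: you attribute the uniqueness of the insertion point to the monotonicity hypothesis $\val(\tau_{r})\geq\val(\tau_{r-1})$. That condition only guarantees that the reassembled map is nondecreasing, i.e.\ that it belongs to $\mathfrak{C}^{r}_{I,\epsilon,\rho}$. What actually pins down where $b_{r}$ must be inserted is item~3 of Definition~\ref{def:maps_unitary_type}: since $b_{r}$ carries the largest number, no edge may leave it toward another black vertex, so it must sit at the very end of the chains of half-edges labelled $i$ and $j$ (this is the content of the four-case analysis in the paper's injectivity argument). As written, your argument for uniqueness rests on the wrong hypothesis, even though the conclusion is correct.
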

\begin{proof}
  Lemma \ref{lem:pi_sym_eps} and Proposition \ref{prop:tau_carte} show that this
  map has values in
  $\bigcup_{\pi\in\Sym^{(\epsilon)}(I)}\{\pi\}\times\mwset^{r}(\Id, \pi^{(\epsilon)})$.

  We now construct an inverse mapping. To do so, we explicitely construct
  a map corresponding to permutations $\pi$ and
  $\bm{\tau} = (\tau_{1}, \ldots, \tau_{r})$. By Theorem
  \ref{thm:usual_map_permutational_model}, it suffices to construct
  from $\pi$ and $\bm{\tau}$ the incidence relation of the underlying
  graph.

  To this end, we introduce the set whose elements represent the
  half-edges
  $\tilde{I} = \{h_{i}\colon i \in I\} \cup \bigcup_{j=1}^{r}\{h_{j, 1}, h_{j, 2}, h_{j, 3}, h_{j, 4}\}$.
  We can split this set into the set of ingoing and outgoing edges
  $\tilde{I} = \tilde{I}_{\text{in}}\cup \tilde{I}_{\text{out}}$. We have
  $\tilde{I}_{\text{out}} = \{h_{i}\colon i\in I, \epsilon(i) = +1\}\cup\bigcup_{j=1}^{r}\{h_{j, 2}, h_{j, 4}\}$.
  The elements $h_{j, k}$ represent the half-edges of the black
  vertices of the map we are going to construct, and the elements
  $h_{i}$ represent the half-edges of the white vertices. We are going
  to define a labelling function $L\colon \tilde{I}\to I$. We set for all
  $i \in I$, $L(h_{i}) = i$. The function $L$ is constructed by
  induction. At the initial step, it is only defined for the white
  half-edges. We then define it for the black half-edges of the black
  vertix $i$ at step $i$.

  To construct a map, we use Theorem
  \ref{thm:usual_map_permutational_model}. We define two permutations
  $\sigma, \alpha \in \Sym(\tilde{I})$ as follows. We define
  $\tilde{\gamma} \in \Sym(\tilde{I})$ by
  $\tilde{\gamma}(h_{i}) = h_{\gamma(i)}$ and the identity otherwise. We
  set
  \begin{equation*}
    \begin{split}
      \sigma = \tilde{\gamma}\cycle{h_{1, 1}, h_{1, 2}, h_{1, 3}, h_{1, 4}}\cdots\cycle{h_{r, 1}, h_{r, 2}, h_{r, 3}, h_{r, 4}}\,.
    \end{split}
  \end{equation*}

  The permutation $\alpha$ is given by the following algorithm. Let
  $\pi\in\Sym^{(\epsilon)}(I)$, and
  $\tau = (\tau_{1}, \ldots, \tau_{r})\in\mwset^{r}(\Id, \pi^{(\epsilon)})$. We consider first
  the permutation $\tau_{1}=(i_{1}, j_{1})$, with $i_{1} < j_{1}$. We set
  $\alpha_{1} = \cycle{h_{i_{1}}, h_{1, 1}}\cycle{h_{j_{1},} h_{1, 3}}$. We
  set $L(h_{1, 2}) = j_{1}$ and $L(h_{1, 4}) = i_{1}$. In terms of
  maps, this procedure corresponds to connecting two edges to a same
  black vertex, see Figure \ref{fig:construction-proof}.

  \begin{figure}[htbp]
    \centering
    \includegraphics[width=0.4\textwidth]{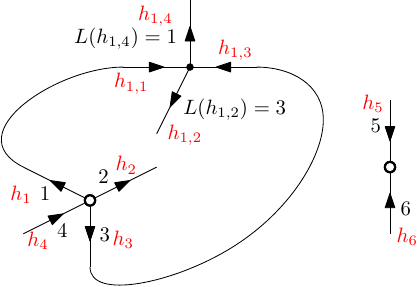}
    \caption{\label{fig:construction-proof}First step of the construction of the
      permutation $\alpha$ from one transposition $\tau_{1}=\cycle{1, 3}$,
      represented as a map.}
    \small{The name of the half-edges are in red
      ($h_{1}, \ldots, h_{6}$ near the white vertices and
      $h_{1, 1}, \ldots, h_{1, 4}$ near the black vertices), and the
      labels are in black. Here,
      $\alpha_{1} = \cycle{h_{1}, h_{1, 1}}\cycle{h_{3}, h_{1, 3}}$.}
  \end{figure}

  We proceed similarly to construct the black vertices labelled
  $2, 3, \ldots, r$ from the transpositions
  $\tau_{2}, \ldots, \tau_{r}$. At the $k$-th step, we consider the
  transposition $\tau_{k} = \cycle{i_{k}, j_{k}}$, with
  $i_{k} < j_{k}$. There is only one half-edge $h$ (respectively $h'$)
  in $\tilde{I}_{\text{out}}$ such that $L(h) = i_{k}$ and
  $\alpha_{k-1}(h) = h$ (respectively $L(h') = j_{k}$ and
  $\alpha_{k-1}(h') = h'$). We set
  $\alpha_{k} = \alpha_{k-1}\cycle{h, h_{k, 1}}\cycle{h', h_{k, 3}}$.

  Finally, we connect each remaining outgoing half-edge labelled $i$
  to the ingoing half-edge $\pi^{-1}(i)$. For all $i \in I$, there is a
  unique $h$ such that $\alpha_{r}(h) = h$ and $L(h) = i$. We set
  $\alpha_{r+1,i} = \cycle{h, h_{\pi^{-1}(i)}}$ and define
  $\alpha = \alpha_{r}\prod_{i \in I}\alpha_{r+1, i}$. We define
  $\tilde{\epsilon} \in \{\pm 1\}^{\tilde{I}}$ by $\tilde{\epsilon}(i) = +1$ if
  $i \in \tilde{I}_{\text{out}}$ and $\tilde{\epsilon}(i) = -1$ otherwise.

  Theorem \ref{thm:usual_map_permutational_model} implies that given $\sigma, \alpha$ and $\tilde{\epsilon}$, we construct
  a unique map $\tilde{\carte}$. By construction, the resulting map is
  of unitary type : the vertices attached to the half-edges $h_{i}$
  are the white vertices and the other are the black vertices. The
  black vertex attached to the half-edges $h_{j, k}$ is numbered $j$.
  The map is constructed such that $\pi_{\carte} = \pi_{\tilde{\carte}}$
  and $\tau_{\carte} = \tau_{\tilde{\carte}}$.

  Furthermore, the map is nondecreasing (recall Definition
  \ref{def:monotone-unitary-map}) as the tuple $\bm{\tau}$ is a monotone
  walk.

  We have constructed a right inverse, so the map
  $\carte \mapsto (\pi_{\carte}, \tau_{\carte})$ is surjective. We now show that
  this map is injective. We show that the incidence relation of a map
  of unitary type $\carte$ is determined by the permutations. Indeed,
  consider a map of unitary type described by $\pi$ and
  $\tau = (\tau_{1}, \ldots, \tau_{r})$, and an outgoing half-edge $h_{i}$ labelled
  $i$. There are four cases.
  \begin{itemize}
    \item If $h_{i}$ is a white half-edge such that for all $j$ we
          have $\tau_{j}(i) = i$, then $h_{i}$ is necessarily attached
          to the white half-edge labelled $\pi^{-1}(i)$.
    \item If $h_{i}$ is a white half-edge and there exists $k$, such that
          $\tau_{k}(i) \neq i$, then $h_{i}$ is necessarily connected to the
          $k'$-th black vertex, where
          $k' = \min\{k\colon \tau_{k}(i)\neq i\}$.
    \item If $h_{i}$ is a half-edge connected to the $k$-th black vertex and for
          all $l > k$ $\tau_{l}(i) = i$, then $h_{i}$ is necessarily attached to a white
          half-edge labelled $\pi^{-1}(i)$
    \item If $h_{i}$ is a half-edge connected to the $k$-th black vertex and $l$
          is the smallest integer such that $l > k$ and $\tau_{l}(i) \neq i$,
          then $h_{i}$ is necessarily attached to the $l$-black vertex.
  \end{itemize}
  Thus two maps of unitary type in $\mathfrak{C}^{r}(I, \epsilon, \gamma)$
  described by the same permutations $\pi$ and $\tau$ have necessarily the
  same edges, i.e. are identical.
\end{proof}

For a tuple of permutations,
$(\sigma_{1}, \ldots, \sigma_{k})\in\Sym(I)^{k}$, we denote the
subgroup of $\Sym(I)$ they generate by
\begin{equation*}
  \left< \sigma_{1}, \ldots, \sigma_{k} \right>\,.
\end{equation*}

We can associate to the triplet
$(\gamma_{\carte}, \pi_{\carte}, \tau_{\carte})$ the group
\begin{equation}\label{eq:group}
  \begin{split}
    G(\carte) = \langle{\gamma_{\carte}, \pi_{\carte}, \tau_{1}, \ldots, \tau_{r}}\rangle,
  \end{split}
\end{equation}
where $\tau_{\carte} = (\tau_{1}, \ldots, \tau_{r})$.

\begin{prop}\label{prop:connectedness}
  A unitary type map $\carte$ with set of labels $I$ is connected if
  and only if the group $G(\carte)$ defined by (\ref{eq:group}) acts transitively
  on $I$.
\end{prop}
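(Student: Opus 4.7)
The plan is to reduce the statement to the analogous connectedness criterion for ordinary oriented maps, transferring information between the two settings via the labelling $L \colon \tilde I \to I$. Recall that the proof of Theorem~\ref{thm:permutational_model} realises $\carte$ as an ordinary oriented map on a set of half-edges $\tilde I$, encoded by two permutations $\sigma_\carte, \alpha_\carte \in \Sym(\tilde I)$. The standard criterion for ordinary oriented maps (the analogue of Theorem~\ref{thm:usual_map_permutational_model}) gives that $\carte$ is connected if and only if $\langle \sigma_\carte, \alpha_\carte \rangle$ acts transitively on $\tilde I$, so it suffices to show this transitivity is equivalent to $G(\carte)$ acting transitively on $I$.

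Two properties of the labelling anchor the comparison. First, for any $h \in \tilde I$, the half-edges $h$ and $h_{L(h)}$ lie on the boundary of the same face (this is built into the labelling procedure), so they belong to the same $\langle \sigma_\carte, \alpha_\carte \rangle$-orbit, since the face permutation $\varphi_\carte = (\alpha_\carte \sigma_\carte)^{-1}$ lies in $\langle \sigma_\carte, \alpha_\carte \rangle$. Second, by Remark~\ref{rem:face-incident_white}, every face contains a white vertex on its boundary, so every $\langle \sigma_\carte, \alpha_\carte \rangle$-orbit on $\tilde I$ meets $\{h_i \colon i \in I\}$. Together these facts reduce the proposition to comparing $\langle\sigma_\carte,\alpha_\carte\rangle$-orbits restricted to the anchor half-edges $\{h_i\}$ with $G(\carte)$-orbits on $I$.

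For the direction ``$G(\carte)$ transitive $\Rightarrow$ $\carte$ connected'', I would verify that each generator of $G(\carte)$ keeps the anchor $h_i$ in a single $\langle\sigma_\carte,\alpha_\carte\rangle$-orbit: the identity $\sigma_\carte(h_i) = h_{\rho(i)}$ handles $\rho$; the definition $\pi(i) = L(\alpha_\carte(h_i))$ together with the first anchor fact handles $\pi$; and for $\tau_k = (i_k, j_k)$, both $h_{i_k}$ and $h_{j_k}$ are the anchors of the two outgoing half-edges at the black vertex numbered $k$, which share a $\sigma_\carte$-orbit because the four half-edges at that vertex form a single $4$-cycle of $\sigma_\carte$, and the first anchor fact then places $h_{i_k}$ and $h_{j_k}$ in the same orbit.

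For the converse, given any $i, j \in I$ and a sequence $h_i = v_0, v_1, \ldots, v_n = h_j$ in $\tilde I$ with consecutive pairs differing by $\sigma_\carte^{\pm 1}$ or $\alpha_\carte$, I would track $L(v_m)$ and show that $L(v_m)$ and $L(v_{m+1})$ always lie in the same $G(\carte)$-orbit. When $v_m$ (hence $v_{m+1}$) is a white half-edge this is immediate from $L(\sigma_\carte(h_a)) = \rho(a)$ and $L(\alpha_\carte(h_a)) = \pi(a)$. The hard part will be the case where $v_m$ is a black half-edge: moving via $\sigma_\carte$ around a black vertex jumps between half-edges on possibly different faces, with labels not obviously related. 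The resolution is to prove that the four labels around each black vertex all lie in one $G(\carte)$-orbit: the two outgoing labels are linked by the corresponding $\tau_k$, and each ingoing label is linked to an outgoing one via the face-chain analysis used in the proof of Proposition~\ref{prop:tau_carte}, which expresses label transitions along a face boundary through compositions of the $\tau_\bullet$ and of the anchor identity $\pi(a) = L(\alpha_\carte(h_a))$. This chain analysis is where the bulk of the technical work lies, after which an induction along the path concludes the proof.
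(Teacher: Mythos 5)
Your argument is correct, but it takes a genuinely different route from the paper's. The paper works directly on the surface: for ``connected $\Rightarrow$ transitive'' it takes a topological path between two white vertices, cuts it at white vertices into segments whose interiors meet only black vertices, and reads off a word in $\rho$, $\pi$ and the $\tau_{k}$ from the labels of the outgoing half-edges of the successive edges; for the converse it builds a path edge by edge from a word in the generators. You instead pass to the half-edge model $(\sigma_{\carte}, \alpha_{\carte})$ on $\tilde{I}$ from the proof of Theorem \ref{thm:permutational_model}, invoke the classical criterion that a map is connected iff $\langle \sigma_{\carte}, \alpha_{\carte}\rangle$ acts transitively on its half-edges (standard, though not stated in the paper), and compare orbits through the labelling $L$. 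Your two anchor facts are correct and make the direction ``$G(\carte)$ transitive $\Rightarrow$ connected'' essentially formal, which is tidier than the paper's explicit path construction. For the converse, the step you defer --- that the four labels around a black vertex lie in a single $G(\carte)$-orbit --- is real work but is true and provable exactly as you indicate: since $\varphi_{\carte}^{-1} = \alpha_{\carte}\sigma_{\carte}$, every black half-edge $h$ satisfies $L(h) = L(\alpha_{\carte}(\sigma_{\carte}(h)))$, and iterating this along the face chain (the black vertex numbers strictly increase along it, so it terminates at a white half-edge) expresses each ingoing label at the $k$-th black vertex as $\pi^{-1}$ applied to a $\tau$-word in the outgoing labels $i_{k}, j_{k}$. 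This is the same combinatorial crux the paper disposes of via its observation that every edge incident to the $k$-th black vertex has its outgoing half-edge labelled by one of the two entries of $\tau_{k}$; so the two proofs converge on one lemma, and what your approach buys is a cleaner global structure (a standard criterion plus a purely local verification at each black vertex) at the cost of importing the half-edge encoding and the unstated transitivity criterion for ordinary maps.
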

\begin{proof}
  First, assume that $\carte$ is connected. Let $i, j\in I$. There is a
  path $\rho$ (made up by vertices and edges) connecting the white
  vertices $w_{i}$ and $w_{j}$. First, let us assume that $\rho$ contains
  only black vertices, except for its boundary which is made up of
  $w_{i}$ and $w_{j}$. The path encounters the black vertices
  $n_{1}, \ldots, n_{p}$, the labels on the left of the edges that
  constitute $\rho$ are $k_{1}, \ldots, k_{p+1}$. The first and last edges are
  connected to $w_{i}$ and $w_{j}$ so $k_{1} = \gamma_{\carte}^{m_{1}}(i)$ and
  $k_{p+1} = \gamma_{\carte}^{m_{2}}\pi_{\carte}^{m_{3}}(j)$ for some integers
  $m_{1}, m_{2}, m_{3}$.

  Let $1 \leq i \leq p$. If $k_{i} = k_{i+1}$, we set $\sigma_{i} = \Id$, and if
  $\tau_{n_{i}}(k_{i}) = k_{i+1}$, we set $\sigma_{i} = \tau_{n_{i}}$, see
  Figure \ref{fig:preuve_connectedness}. Those are the only two possibilities as
  the half-edges connected to a black vertex labeled $k$, with
  $\tau_{k} = \cycle{u, v}$ can only be labeled by $u$ or $v$.

  \begin{figure}[htbp]
    \centering
    \includegraphics[width=0.4\textwidth]{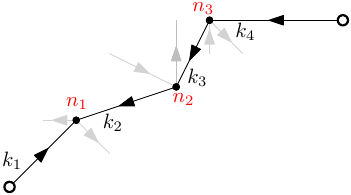}
    \caption{\label{fig:preuve_connectedness}Three situations for $\sigma_{i}$.}
    \small{We set $\sigma_{1}=\tau_{n_{1}}$, $\sigma_{2}=\Id$, and
      $\sigma_{3}=\tau_{n_{3}}$. The black vertices are labelled
      $n_{1}, n_{2}, n_{3}$ in red. In grey are the two half-edges
      that do not play a role for each black vertex.}
  \end{figure}

  Thus, we have proved that there is
  $\sigma_{\rho} = \pi_{\carte}^{-m_{3}}\gamma_{\carte}^{-m_{2}}\sigma_{p}\cdots \sigma_{1}\gamma_{\carte}^{-m_{1}}\in G(\carte)$,
  such that $\sigma_{\rho}(i) = j$.

  In general, any path connecting $w_{i}$ and $w_{j}$ can be written as the
  concatenation of paths with only black vertices in their interiors, we can
  thus construct by composition a permutation in $G(\carte)$ that sends $i$ to
  $j$. Thus $G(\carte)$ is transitive.

  Conversely, if $G(\carte)$ is transitive, for any $k, l\in I$, there
  exists $\sigma \in G(\carte)$ such that $\sigma(k)=l$. We can write
  $\sigma = \sigma_{p}\cdots \sigma_{1}$, with for all $i$, $\sigma_{i}$ is one of
  $\gamma_{\carte}, \pi_{\carte}^{-1}, \tau_{1}, \ldots, \tau_{r}$. We use this to
  construct a path connecting $v_{k}$ to $v_{l}$. For all $i$, we
  attach to $\sigma_{i}$ a path $\rho_{i}$ starting from a half-edge labelled
  $k_{i}$. We set $k_{1} = k$, and we will show that $k_{p+1} = l$.
  \begin{itemize}
    \item If $\sigma_{i} = \gamma_{\carte}$, $\rho_{i}$ is the empty path, and $k_{i+1}=\gamma_{\carte}(k_{i})$.
    \item If $\sigma_{i} = \pi_{\carte}^{-1}$, $\rho_{i}$ is the path connecting
          the half-edge $k_{i}$ to the half-edge $\pi^{-1}(k_{i})$. Such a path
          exists by the propagation of labels procedure. We set $k_{i+1} = \pi^{-1}(k_{i})$.
    \item If $\sigma_{i} = \tau_{n_{i}}$, for some $n_{i}$, and
          $\tau_{n_{i}}(k_{i}) = k_{i}$, then $\rho_{i}$ is the empty path and
          $k_{i+1}=k_{i}$.
    \item If $\sigma_{i} = \tau_{n_{i}}$, for some $n_{i}$, and
          $\tau_{n_{i}}(k_{i}) \neq k_{i}$, then we set
          $k_{i+1} = \tau_{n_{i}}(k_{i})$. Both $k_{i}$ and $k_{i+1}$ are labels
          of outgoing half-edges. We set $\rho_{i}$ to be the path that starts
          from the half-edge $k_{i}$, follows the half-edges labelled $k_{i}$
          until it reaches the black vertex $n_{i}$, and then follows the
          half-edges labelled $k_{i+1}$ until the half-edge $k_{i+1}$, and the
          vertex $w_{k_{i+1}}$.
  \end{itemize}

  We have constructed a path going from the half-edge $i$ to the half-edge
  $k_{p+1}=\sigma(i)=j$, as wanted.
\end{proof}

\subsection{Expression of the moments in terms of maps of unitary type}
\label{sec:expr-moments}
Theorem \ref{thm:permutational_model} allows us to rewrite the expression for the moments given
in Corollary \ref{corol:expansion_moment} (see Definition \ref{def:L_prime} for relevant notation).
\begin{corol}\label{corol:moments_maps}
  Let $N \geq 1$ be an integer,
  $\bm{P} = (P_{1}, \ldots, P_{l}) \in \wordsA^{l}$ be monomials with
  $m=\frac{1}{2}\deg \bm{P} \leq N$. The moments under the Haar
  measure $\mu^{N}_{0}$ (see Definition
  \ref{def:moment-cumulant-particular}) admit the following expansion
  \begin{equation*}
    \begin{split}
      \alpha^{N}_{0, l}(P_{1}, \ldots, P_{l})
      = \sum_{r \geq 0}\frac{(-1)^{r}}{N^{r+m}}\sum_{\carte \in \mathfrak{C}^{r}([2m], \bm{\epsilon}_{\bm{P}}, \gamma_{\bm{P}})}\Tr_{\phi_{\carte}}(\bm{M}_{\bm{P}}).
    \end{split}
  \end{equation*}
  Furthermore, the series is absolutely convergent.
\end{corol}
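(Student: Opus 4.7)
The plan is to read off the corollary as a direct translation of Proposition \ref{prop:expansion_moment} through the bijection of Theorem \ref{thm:permutational_model}, using Lemma \ref{lem:phi_map} to recognize the Weingarten trace factor as a trace indexed by the face permutation of a map of unitary type.

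First I would rewrite the counting factor $\mwalks^{r}(\Id, \pi^{(\bm{\epsilon}_{\bm{P}})})$ as the cardinality $|\mwset^{r}(\Id, \pi^{(\bm{\epsilon}_{\bm{P}})})|$, so that the inner sum in Proposition \ref{prop:expansion_moment} becomes a double sum over pairs
\begin{equation*}
\bigl(\pi, (\tau_{1}, \ldots, \tau_{r})\bigr)\in \bigcup_{\pi\in\Sym^{(\bm{\epsilon}_{\bm{P}})}_{2m}}\{\pi\}\times\mwset^{r}(\Id, \pi^{(\bm{\epsilon}_{\bm{P}})}),
\end{equation*}
each contributing $\Tr_{\gamma_{\bm{P}}\pi^{-1}}(\bm{M}_{\bm{P}})$. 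I then apply Theorem \ref{thm:permutational_model} with $I=[2m]$, $\epsilon=\bm{\epsilon}_{\bm{P}}$, and $\rho=\gamma_{\bm{P}}^{-1}$: this puts the above index set in bijection with $\mathfrak{C}^{r}_{[2m], \bm{\epsilon}_{\bm{P}}, \gamma_{\bm{P}}^{-1}}$, sending a pair $(\pi, \tau)$ to the unique nondecreasing unitary type map $\carte$ with $\rho_{\carte}=\gamma_{\bm{P}}^{-1}$, $\epsilon_{\carte}=\bm{\epsilon}_{\bm{P}}$, $\pi_{\carte}=\pi$, and $\tau_{\carte}=\tau$.

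The remaining point is that the summand is expressed intrinsically in terms of $\carte$. By Lemma \ref{lem:phi_map}, for any such $\carte$,
\begin{equation*}
\phi_{\carte}=\rho_{\carte}^{-1}\pi_{\carte}^{-1}=\gamma_{\bm{P}}\,\pi^{-1},
\end{equation*}
so $\Tr_{\gamma_{\bm{P}}\pi^{-1}}(\bm{M}_{\bm{P}})=\Tr_{\phi_{\carte}}(\bm{M}_{\bm{P}})$. Substituting these two identifications into the expression from Proposition \ref{prop:expansion_moment} yields the announced formula, and the absolute convergence is inherited verbatim from that proposition.

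The only non-automatic point, and the one I would be most careful about, is matching the conventions so that the choice $\rho_{\carte}=\gamma_{\bm{P}}^{-1}$ (rather than $\gamma_{\bm{P}}$) is the correct one for Lemma \ref{lem:phi_map} to reproduce exactly $\gamma_{\bm{P}}\pi^{-1}$; once this orientation bookkeeping is confirmed, the corollary follows by pure substitution, with no further analytic input.
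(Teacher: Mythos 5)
Your proposal is correct and follows exactly the paper's own argument: expand $\mwalks^{r}$ as a sum over walks, apply the bijection of Theorem \ref{thm:permutational_model} with $\rho=\gamma_{\bm{P}}^{-1}$, and use Lemma \ref{lem:phi_map} to identify $\gamma_{\bm{P}}\pi_{\carte}^{-1}=\rho_{\carte}^{-1}\pi_{\carte}^{-1}=\phi_{\carte}$. The convention check you flag is indeed the only delicate point, and it works out as you describe.
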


The weights $\Tr_{\phi_{\carte}}(\bm{M}_{\bm{P}})$ can be interpreted as product
of weights given by the faces of the map $\carte$, see Figure
\ref{fig:avec_matrices}.

\begin{figure}[htbp]
  \centering
  \includegraphics[width=0.4\textwidth]{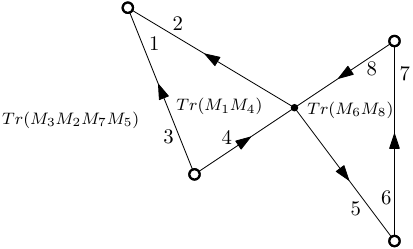}
  \caption{\label{fig:avec_matrices}A map with its weights}
  This weighted map gives (up to a sign) a contribution from the sum
  $\alpha^{(0), N}_{0, 4}(M_{1}u^{-1}M_{2}u^{-1}, M_{3}uM_{4}u, M_{5}u^{-1}M_{6}u, M_{7}u^{-1}M_{8}u)$.
\end{figure}

\begin{proof}[Proof of Corollary \ref{corol:moments_maps}]
  Recall the expression of Corollary \ref{corol:expansion_moment}:
  \begin{equation*}
    \begin{split}
      \alpha^{N}_{0, l}(P_{1}, \ldots, P_{l}) = \sum_{r \geq 0}\frac{(-1)^{r}}{N^{r+m}}\sum_{\pi\in\Sym^{(\epsilon_{\bm{P}})}_{2m}}\Tr_{\gamma_{\bm{P}}\pi^{-1}}(\bm{M}_{\bm{P}})\mwalks^{r}(\Id, \pi^{(\epsilon_{\bm{P}})}).
    \end{split}
  \end{equation*}
  By definition of $\mwalks^{r}(\Id, \pi^{(\epsilon_{\bm{P}})})$, we
  can rewrite this as
  \begin{equation*}
    \begin{split}
      \alpha^{N}_{0, l}(P_{1}, \ldots, P_{l})
      &= \sum_{r \geq 0}\frac{(-1)^{r}}{N^{r+m}}\sum_{\substack{\pi\in\Sym^{(\epsilon_{\bm{P}})}_{2m}\\(\tau_{1}, \ldots, \tau_{r})\in \mwset^{r}(\Id, \pi^{(\epsilon_{\bm{P}})})}}\Tr_{\gamma_{\bm{P}}\pi^{-1}}(\bm{M}_{\bm{P}})\\
      &= \sum_{r \geq 0}\frac{(-1)^{r}}{N^{r+m}}\sum_{\carte \in \mathfrak{C}^{r}([2m], \bm{\epsilon}_{\bm{P}},  \gamma_{\bm{P}})}\Tr_{\gamma_{\bm{P}}\pi_{\carte}^{-1}}(\bm{M}_{\bm{P}}),\\
    \end{split}
  \end{equation*}
  where we used Theorem \ref{thm:permutational_model} in the last line.

  We get the result by using Lemma \ref{lem:phi_map}, which gives
  $\gamma_{\bm{P}}\pi_{\carte}^{-1} = \phi_{\carte}$.
\end{proof}

Definition \ref{def:moment-cumulant} and Corollary
\ref{corol:moments_maps} allow us to express the cumulants in terms of
maps of unitary types. We deduce the following Lemma.
\begin{lemma}\label{lem:cumulants_V=0_first}
  Let $N \geq 1$ be an integer,
  $\bm{P} = (P_{1}, \ldots, P_{l}) \in \wordsA^{l}$ be monomials with
  $m=\frac{1}{2}\deg \bm{P}$. The cumulants admit the expansion
  \begin{equation*}
    \begin{split}
      \W^{N}_{0, l}(P_{1}, \ldots, P_{l})
      = \sum_{r \geq 0}\frac{(-1)^{r}}{N^{r+m}}\sum_{\substack{\carte \in \mathfrak{C}^{r}([2m], \bm{\epsilon}_{\bm{P}}, \gamma_{\bm{P}})\\ \carte \text{ is connected }}}\Tr_{\phi_{\carte}}(\bm{M}_{\bm{P}}).
    \end{split}
  \end{equation*}
  Furthermore, the series is absolutely convergent.
\end{lemma}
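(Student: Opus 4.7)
The plan is to apply Corollary \ref{corol:moments_maps} to the moment $\alpha^{N}_{0, l}(\bm{P})$ and then decompose the resulting sum over unitary type maps according to their connected components. Since the moment-cumulant relation in Definition \ref{def:moment-cumulant} precisely inverts a sum over set partitions, identifying the sum over connected maps with the cumulant will follow by induction on $l$.

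To start, I would group the terms appearing in Corollary \ref{corol:moments_maps} according to the set partition $\kappa(\carte) \in \partition([l])$ induced by the connected components of $\carte$, using the canonical bijection between white vertices and polynomials $P_{i}$ provided by the cycle structure of $\rho_{\carte} = \gamma_{\bm{P}}^{-1}$. The half-edges, black vertices and faces all distribute among the connected components $C_{1}, \ldots, C_{s}$, and the relevant data factorize: if $B_{j}$ denotes the block associated with $C_{j}$, with $r_{j}$ black vertices and $m_{j} = \frac{1}{2}\deg\bm{P}_{B_{j}}$, then $\Tr_{\phi_{\carte}}(\bm{M}_{\bm{P}}) = \prod_{j=1}^{s} \Tr_{\phi_{C_{j}}}(\bm{M}_{\bm{P}_{B_{j}}})$, $m = \sum_{j} m_{j}$, $r = \sum_{j} r_{j}$, and $(-1)^{r}N^{-(r+m)}$ factorizes correspondingly.

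Next, for a fixed partition $\kappa$, I would establish a bijection between maps $\carte \in \mathfrak{C}^{r}_{[2m], \bm{\epsilon}_{\bm{P}}, \gamma_{\bm{P}}^{-1}}$ with $\kappa(\carte) = \kappa$ and tuples $(\carte_{j})_{1 \leq j \leq s}$ in which $\carte_{j}$ is a connected nondecreasing unitary type map on $B_{j}$ with black vertices numbered $1, \ldots, r_{j}$ for some $r_{j} \geq 0$. The forward direction is canonical: restrict $\carte$ to $C_{j}$ and relabel the global black-vertex indices $n_{j, 1} < \cdots < n_{j, r_{j}}$ to $1, \ldots, r_{j}$, which preserves the nondecreasing property. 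The reverse direction is the main point of the argument: given such a tuple, I must exhibit a unique numbering by $1, \ldots, r$ of the combined black vertices making the assembled map nondecreasing. The key observation is that the half-edge labels appearing in $C_{j}$ form a subset of $[2m]$ that is disjoint from the corresponding set for $C_{j'}$ when $j \neq j'$, because each half-edge propagates its label, via the face-tracing procedure, from a white half-edge of its own component. Hence the values of black vertices, each being the maximum of two outgoing half-edge labels belonging to the same component, are pairwise distinct across different components. Sorting all $r$ black vertices by value therefore yields the unique globally nondecreasing numbering compatible with the local orders, ties (which can only occur within a single component) being broken by the local numbering.

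Combining the weight factorization with this bijection gives
\begin{equation*}
  \alpha^{N}_{0, l}(\bm{P}) = \sum_{\kappa \in \partition([l])} \prod_{B \in \kappa} C(B),
\end{equation*}
where $C(B)$ denotes the series appearing on the right-hand side of the claimed formula, evaluated on $(P_{i})_{i \in B}$. This is exactly of the shape of the moment-cumulant relation from Definition \ref{def:moment-cumulant}, so a straightforward induction on $l$ yields $C([l]) = \W^{N}_{0, l}(\bm{P})$: the case $l = 1$ is immediate, and for $l \geq 2$ isolating the term $\kappa = \{[l]\}$ on both sides identifies $C([l])$ with $\W^{N}_{0, l}(\bm{P})$ using the inductive hypothesis on smaller blocks. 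Absolute convergence of the series defining $C(B)$ is inherited from that of Corollary \ref{corol:moments_maps} by the triangle inequality, as the sum is taken over a subfamily of the maps considered there.
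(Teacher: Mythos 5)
Your argument is correct and follows essentially the same route as the paper: expand the moment via Corollary \ref{corol:moments_maps}, group maps by the partition of $[l]$ induced by their connected components, and invert the moment--cumulant relation of Definition \ref{def:moment-cumulant} by induction on $l$. The only difference is that you spell out the reassembly bijection (in particular the unique globally nondecreasing renumbering of black vertices, using disjointness of the label sets across components), a point the paper's proof leaves implicit.
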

\begin{proof}
  We show the formula by induction using Corollary \ref{corol:moments_maps}.
  Notice first that when $l = 1$,
  $\alpha^{N}_{0, 1}(P_{1}) = \W^{N}_{0, 1}(P_{1})$ and the maps in
  $\mathfrak{C}^{r}([2m], \bm{\epsilon}_{\bm{P}}, \gamma_{\bm{P}})$ are
  connected.

  Then, we notice that a map can be decomposed into its connected
  components. This decomposition gives a partition of the set of
  labels of half-edges. Each block contains the labels appearing in
  one connected component. Using Definitions \ref{def:moment-cumulant}
  and Definition \ref{def:moment-cumulant-particular}, we obtain that
  \begin{equation*}
    \begin{split}
      \W^{N}_{0, l}(P_{1}, \ldots, P_{l})
      &= \alpha^{N}_{0, l}(P_{1}, \ldots, P_{l}) - \sum_{\substack{\Pi \in \partition([l])\\|\Pi| \geq 2}}\prod_{B \in \Pi}\W_{0, |B|}^{N}(P_{i}, i \in B)\\
      &= \sum_{r \geq 0}\frac{(-1)^{r}}{N^{r+m}}\sum_{\carte \in \mathfrak{C}^{r}([2m], \bm{\epsilon}_{\bm{P}}, \gamma_{\bm{P}})}\Tr_{\phi_{\carte}}(\bm{M}_{\bm{P}})\\
        &- \sum_{r \geq 0}\frac{(-1)^{r}}{N^{r+m}}\sum_{\substack{\carte \in \mathfrak{C}^{r}([2m], \bm{\epsilon}_{\bm{P}}, \gamma_{\bm{P}})\\ \carte \text{ has at least $2$ connected components }}}\Tr_{\phi_{\carte}}(\bm{M}_{\bm{P}}).\\
    \end{split}
  \end{equation*}
  Hence the result.
\end{proof}

\begin{remark}
  The formulae imply that we can express moments and cumulants with
  respect to the Haar measure as a weighted sum over maps. The maps
  are the nondecreasing maps of unitary type whose local structure
  (i.e. how the half-edges are attached to the vertices, but not how
  the half-edges are attached together) is determined by
  $\gamma_{\bm{P}}^{-1}$ and $\bm{\epsilon}_{\bm{P}}$. To each face is
  associated a weight, which is the trace of a certain word in the
  matrices of $\bm{M}_{\bm{P}}$, times a sign.
\end{remark}

\paragraph{A topological expansion for the Haar measure.}
We now rewrite Lemma \ref{lem:cumulants_V=0_first} as a sum over the
genus $g$ of the maps rather than on the number of black vertices $r$.
We will see that this gives us an expansion in powers of
$\frac{1}{N^{2}}$. We first recall Euler's formula
\begin{equation}\label{eq:Euler}
  \begin{split}
    2 - 2g(\carte) = V(\carte) - E(\carte) + F(\carte),
  \end{split}
\end{equation}
where $V(\carte), E(\carte)$ and $F(\carte)$ are the number of
vertices, edges and faces of a map $\carte$, and $g(\carte)$ is its
genus. In the case of a map of unitary type labelled by a set of $2m$
integers, and with $r$ black vertices, we have
\begin{itemize}
  \item $c(\gamma_{\carte})$ white and $r$ black vertices,
  \item $2m$ white half-edges and $4r$ half-edges out of black
        vertices, for a total of $m + 2r$ edges,
  \item $c(\phi_{\carte})$ faces (see Definition \ref{def:embedded-graph}).
\end{itemize}
Thus, we get
\begin{equation}\label{eq:Euler_for_map}
  \begin{split}
    2 - 2g(\carte) = (c(\gamma_{\carte})  + r) - (m + 2r) + c(\phi_{\carte}) = c(\gamma_{\carte}) + c(\phi_{\carte}) - m - r.
  \end{split}
\end{equation}
A change of variable in the sum of Lemma \ref{lem:cumulants_V=0_first}, and the identities
$l = c(\gamma_{\bm{P}})$ and
$\Tr_{\phi_{\carte}}(\bm{M}_{\bm{P}}) = N^{c(\phi_{\carte})}\tr_{\phi_{\carte}}(\bm{M}_{\bm{P}})$
give the following Proposition.
\begin{prop}\label{prop:cumulants_V=0_second}
  Let $N \geq 1$ be an integer,
  $\bm{P} = (P_{1}, \ldots, P_{l}) \in \wordsA^{l}$ be monomials with
  $m=\frac{1}{2}\deg \bm{P}$. The cumulants admit the expansion
  \begin{equation*}
    \begin{split}
      \W^{N}_{0, l}(P_{1}, \ldots, P_{l})
      = N^{2-l}(-1)^{m+l}\sum_{g \geq 0}\frac{1}{N^{2g}}\sum_{\substack{\carte \in \mathfrak{C}(g, [2m], \bm{\epsilon}_{\bm{P}}, \gamma_{\bm{P}})\\ \carte \text{ is connected }}}(-1)^{c(\phi_{\carte})}\tr_{\phi_{\carte}}(\bm{M}_{\bm{P}}).
    \end{split}
  \end{equation*}
  Furthermore, the series is absolutely convergent.
\end{prop}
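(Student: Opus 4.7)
The plan is to reindex the sum from Lemma \ref{lem:cumulants_V=0_first} by the genus $g$ instead of the number of black vertices $r$, converting between them by Euler's formula (\ref{eq:Euler_for_map}). The sum on the right-hand side of Lemma \ref{lem:cumulants_V=0_first} can be viewed as a single sum over connected maps $\carte$ of unitary type in $\bigcup_{r \geq 0} \mathfrak{C}^{r}_{[2m], \bm{\epsilon}_{\bm{P}}, \gamma_{\bm{P}}^{-1}}$. Each such connected map has a well-defined genus, so this index set can equally well be partitioned as $\bigcup_{g \geq 0} \mathfrak{C}_{g, [2m], \bm{\epsilon}_{\bm{P}}, \gamma_{\bm{P}}^{-1}}$ restricted to connected maps. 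It remains to express the summand in terms of $g$, $c(\phi_{\carte})$ and powers of $N$.

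Next I would use that the constraint $\rho_{\carte} = \gamma_{\bm{P}}^{-1}$ fixes the number of cycles of $\rho_{\carte}$ to be $l$: indeed $\gamma_{\bm{P}}$ is by definition (\ref{eq:not_gamma}) a product of $l$ disjoint cycles, one per polynomial $P_i$, so $c(\rho_{\carte}) = l$. Plugging this into (\ref{eq:Euler_for_map}) gives
\[
r = 2g - 2 + l + c(\phi_{\carte}) - m.
\]
From this one reads off both the power of $N$ and the sign in the summand. Using $\Tr_{\phi_{\carte}}(\bm{M}_{\bm{P}}) = N^{c(\phi_{\carte})} \tr_{\phi_{\carte}}(\bm{M}_{\bm{P}})$, a direct computation gives
\[
\frac{1}{N^{r+m}} \Tr_{\phi_{\carte}}(\bm{M}_{\bm{P}}) = \frac{N^{2-l}}{N^{2g}} \tr_{\phi_{\carte}}(\bm{M}_{\bm{P}}),
\]
and, since $(-1)^{2g-2} = 1$, the sign is
\[
(-1)^{r} = (-1)^{m + l + c(\phi_{\carte})} = (-1)^{m+l}(-1)^{c(\phi_{\carte})}.
\]
Combining these two identities reproduces exactly the summand appearing in the statement of the proposition.

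Finally, I would justify the reordering of the sum. The absolute convergence in Lemma \ref{lem:cumulants_V=0_first} allows us to rearrange the summation order; grouping terms by $g$ instead of by $r$ gives the claimed formula and preserves absolute convergence. There is no genuine analytic obstacle here: the only thing to be careful about is bookkeeping the parities and the exponents of $N$, which is purely computational. The main content of the proposition is the simple but striking observation that, once one parametrizes by the topological invariant $g$ rather than the combinatorial invariant $r$, the Weingarten expansion takes the form of a topological expansion in powers of $1/N^{2}$, with a prefactor $N^{2-l}$ that matches the usual normalization of cumulants.
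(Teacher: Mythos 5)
Your proposal is correct and follows exactly the paper's route: the paper derives this proposition by "a change of variable in the sum of Lemma \ref{lem:cumulants_V=0_first}" using Euler's formula (\ref{eq:Euler_for_map}), which is precisely your computation of $r = 2g-2+l+c(\phi_{\carte})-m$ (via $c(\rho_{\carte})=l$), the resulting powers of $N$ and signs, and the appeal to absolute convergence to regroup by genus.
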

Notice that this expansion is in terms of the normalized trace
$\tr = \frac{1}{N} \Tr$. The factors with the trace are bounded by $1$
if we assume $\|A^{N}_{i}\| \leq 1$ for all $1 \leq i \leq N$ and
$N \geq 1$.

\begin{remark}
  The sum in Proposition \ref{prop:cumulants_V=0_second} is in general
  not finite. Indeed, even for $l=1$ and $P_{1} = AUBU^{*}$, the sum
  contains terms of arbitrary genus. They appear for instance because
  of the factorization of the identity $\Id = \cycle{1, 2}^{2k}$, for
  all $k \geq 0$.
\end{remark}

\begin{definition}\label{def:term_2g_cumulant}
  Let $N \geq 1$ be an integer, $\bm{P} = (P_{1}, \ldots, P_{l}) \in \wordsA^{l}$
  be monomials with $m=\frac{1}{2}\deg \bm{P}$. The term of order $2g$ in the
  expansion of the cumulant is denoted by
  \begin{equation*}
    \begin{split}
      \M^{(g), N}_{0, l}(P_{1}, \ldots, P_{l}) = (-1)^{m+l}\sum_{\substack{\carte \in \mathfrak{C}(g, [2m], \bm{\epsilon}_{\bm{P}}, \gamma_{\bm{P}})\\ \carte \text{ is connected }}}(-1)^{c(\phi_{\carte})}\tr_{\phi_{\carte}}(\bm{M}_{\bm{P}}).
    \end{split}
  \end{equation*}
  We extend this definition to all monomials in $\algA$ by setting for
  $P_{1}, \ldots, P_{l} \in \wordsA$ and $M$ a word in
  $a_{1}, a_{1}^{*}, \ldots, a_{p}, a^{*}_{p}$,
  \begin{equation*}
    \begin{split}
      \M^{(g), N}_{0, l}(P_{1}, \ldots, P_{i-1}, P_{i}M, P_{i+1}, \ldots, P_{l}) = \M^{(g), N}_{0, l}(P_{1}, \ldots, P_{i-1}, MP_{i}, P_{i+1}, \ldots, P_{l}),
    \end{split}
  \end{equation*}
  for all $1 \leq i \leq l$.
\end{definition}
The last property is enforced so that $\M^{(g), N}_{0, l}$ has a
property of cyclicity, as does the trace.

\paragraph{Stationnary distribution of the $(A_{i}^{N})_{1\leq i \leq p}$.}

Let us consider a particular choice for the sequence of matrices
$(A_{i}^{N})_{1\leq i \leq p, N \geq 1}$. Fix a family of $p$ matrices of fixed
size $M \times M$, $(A_{i}^{M})_{1 \leq i \leq p}$, and consider the sequence of
matrices $(A^{qM}_{i})_{1 \leq i \leq p, q \geq 1}$, where $A^{qM}_{i}$ is the
block-diagonal matrix with $q$ blocks, whose blocks are $A^{M}_{i}$.
When considering the sums of maps for $N = qM$, the traces
$\tr_{\phi}(\bm{M})$ no longer depend on $q$ or $N = qM$.

In the case of zero potential ($V = 0$), by Proposition
\ref{prop:cumulants_V=0_second}, the renormalized cumulant
$\tilde{\W}^{N}_{0, 1}$ converges with limit
\begin{equation*}
  \begin{split}
    \lim_{q\to\infty} \tilde{\W}^{qM}_{0, 1}(P) = \M^{(0), M}_{0, 1}(P),
  \end{split}
\end{equation*}
for $P \in \algA$. This fact allows us to prove the following Lemma.
\begin{lemma}\label{lem:bound_01}
  Fix $N \in \N^{*}$. Assume that $\|A^{N}_{i}\| \leq 1$ for all $1 \leq i \leq p$.
  Let $P \in \wordsA$ be a monomial. We have for all choices of
  $(A^{N}_{i})_{1 \leq i \leq p}$ that
  \begin{equation*}
    \begin{split}
      |\M^{(0),N}_{0, 1}(P)| \leq 1.
    \end{split}
  \end{equation*}
\end{lemma}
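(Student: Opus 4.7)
The plan is to exploit the observation made just above the lemma: if we take $A^{qN}_i$ to be the block-diagonal matrix made of $q$ copies of $A^N_i$, then every normalized trace $\tr_{\phi_\carte}(\bm{M}_{\bm{P}})$ is independent of $q$, so $\M^{(g), qN}_{0,1}(P)$ equals $\M^{(g), N}_{0,1}(P)$ for every $g$. We then compare the absolutely convergent topological expansion of $\tilde{\W}^{qN}_{0,1}(P)$ given by Proposition \ref{prop:cumulants_V=0_second} against the trivial probabilistic bound, and let $q \to \infty$ to isolate the genus-$0$ term.

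More concretely, the first step is to check the block-diagonal invariance: since $\tr_{qN}(\diag(B, \ldots, B)) = \tr_N(B)$ and products of block-diagonal matrices remain block-diagonal, each factor appearing in $\tr_{\phi_\carte}(\bm{M}_{\bm{P}})$ is unchanged when $A^N_i$ is replaced by $A^{qN}_i = \diag(A^N_i, \ldots, A^N_i)$. Applying Proposition \ref{prop:cumulants_V=0_second} with $l = 1$ (so $\tilde{\W}^{qN}_{0,1}(P) = (qN)^{-1}\W^{qN}_{0,1}(P)$) therefore gives
\begin{equation*}
\tilde{\W}^{qN}_{0,1}(P) = \sum_{g \geq 0} (qN)^{-2g}\, \M^{(g), N}_{0,1}(P),
\end{equation*}
where the series is absolutely convergent (and the coefficients do not depend on $q$).

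The second step is the deterministic bound. Since $\tilde{\W}^{qN}_{0,1}(P) = \E[\tr(P(U^{qN}, A^{qN}_i, (A^{qN}_i)^{*}))]$ and $P = M_1 u^{\epsilon_1} \cdots M_d u^{\epsilon_d} \in \wordsA$, unitarity gives $\|U^{qN}\| = 1$, while Hypothesis \ref{hyp:bound} gives $\|A^{qN}_i\| = \|A^N_i\| \leq 1$, so the operator norm satisfies $\|P(U^{qN}, A^{qN}_i, (A^{qN}_i)^{*})\| \leq 1$. Hence $|\tilde{\W}^{qN}_{0,1}(P)| \leq 1$ uniformly in $q \geq 1$.

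Finally, I let $q \to \infty$: for each $g \geq 1$, $(qN)^{-2g} \to 0$, and the summable sequence $N^{-2g}|\M^{(g), N}_{0,1}(P)|$ (coming from the absolute convergence at $q = 1$) dominates each term, so by dominated convergence $\tilde{\W}^{qN}_{0,1}(P) \to \M^{(0), N}_{0,1}(P)$. The uniform bound from the previous paragraph then passes to the limit, yielding $|\M^{(0), N}_{0,1}(P)| \leq 1$. There is no real obstacle here; the only subtlety is the interchange of limit and sum, and that is handled cleanly by the absolute convergence already stated in Proposition \ref{prop:cumulants_V=0_second}.
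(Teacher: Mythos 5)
Your proof is correct and follows essentially the same route as the paper: the paper also fixes the matrices at size $N$ (called $M$ there), passes to block-diagonal extensions along the subsequence $qN$ so that the traces $\tr_{\phi_{\carte}}(\bm{M}_{\bm{P}})$ become $q$-independent, extracts the genus-$0$ coefficient as $\lim_{q\to\infty}\tilde{\W}^{qN}_{0,1}(P)$ using the absolutely convergent expansion of Proposition \ref{prop:cumulants_V=0_second}, and concludes from $|\E[\tr(P)]|\leq\|P\|\leq 1$. You merely spell out the dominated-convergence step that the paper leaves implicit.
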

\begin{proof}
  By the previous remark, we have for the choice of with the
  $(A_{i}^{N})_{1\leq i \leq p}$ block diagonal as above,
  \begin{equation*}
    \begin{split}
      |\M^{(0),M}_{0, 1}(P)| = \lim_{N\to\infty}|\tilde{\W}^{N}_{0, 1}(P)| = \lim_{N\to\infty}\E\left[\tr(P)\right] \leq 1,
    \end{split}
  \end{equation*}
  as $\|P\| \leq 1$.
\end{proof}

More generally, with the $(A_{i}^{N})_{1\leq i \leq p}$ block diagonal
as above, we have
\begin{equation*}
  \begin{split}
    \tilde{\W}^{qM}_{0, l}(\bm{P}) &= (-1)^{m+l}\sum_{g \geq 0}\frac{1}{(qM)^{2g}}\sum_{\substack{\carte \in \mathfrak{C}(g, [2m], \bm{\epsilon}_{\bm{P}}, \gamma_{\bm{P}})\\ \carte \text{ is connected }}}(-1)^{c(\phi_{\carte})}\tr_{\phi_{\carte}}(\bm{M}_{\bm{P}})\,,
  \end{split}
\end{equation*}
where $\tr_{\phi_{\carte}}(\bm{M}_{\bm{P}})$ does not depend on $q$.
This implies the following Lemma.
\begin{lemma}\label{lem:prop-M0}
  For all $N \geq 1$, $g \geq 0$, $l \geq 1$, and
  $\bm{P} \in \wordsA^{l}$, we have the following properties.
  \begin{enumerate}[(i)]
    \item (Traciality) For all $Q \in \wordsA$,
    \begin{equation*}
      \begin{split}
        \M^{(g), N}_{0, l}(P_{1}, \ldots, P_{l-1}, P_{l}Q) &= \M^{(g), N}_{0, l}(P_{1}, \ldots, P_{l-1}, QP_{l})\,. \\
      \end{split}
    \end{equation*}
    \item (Symmetry) For all permutation $\sigma \in \Sym_{l}$,
    \begin{equation*}
      \begin{split}
        \M^{(g), N}_{0, l}(P_{1}, \ldots, P_{l}) &= \M^{(g), N}_{0, l}(P_{\sigma(1)}, \ldots, P_{\sigma(l)})\,.
      \end{split}
    \end{equation*}
    \item (Simplification) We have
          \begin{equation*}
            \begin{split}
              \M^{(g), N}_{0, l}(P_{1}, \ldots, P_{l-1}, u^{*}P_{l}u) &= \M^{(g), N}_{0, l}(P_{1}, \ldots, P_{l})\,.
            \end{split}
          \end{equation*}
          \item (Conjugation) We have
          \begin{equation*}
            \begin{split}
              \M^{(g), N}_{0, l}(P_{1}^{*}, \ldots, P_{l}^{*}) &= \overline{\M^{(g), N}_{0, l}(P_{1}, \ldots, P_{l})}\,.
            \end{split}
          \end{equation*}

  \end{enumerate}
\end{lemma}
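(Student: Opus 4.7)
For (i) traciality and (ii) symmetry, I would appeal directly to Remark \ref{rem:symmetry-traciality} (itself a direct consequence of Lemma \ref{lem:relabel_V=0}): taking $\sigma \in \Sym_{2m}$ to be a cyclic shift of the labels in the block associated to $P_l$ gives traciality within a slot, and taking $\sigma$ to permute whole label-blocks gives symmetry between slots. Both reduce to invariance of the weighted sum of maps under conjugation of the defining data $(\gamma_{\bm{P}}, \bm{M}_{\bm{P}}, \bm{\epsilon}_{\bm{P}})$ by $\sigma$.

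For (iii) and (iv), I would combine the matrix-level identities with a dimension-inflation trick. The identities $\Tr((U^N)^* P_l U^N) = \Tr(P_l)$ (by unitarity and cyclicity) and $\overline{\Tr(M)} = \Tr(M^*)$ yield respectively
\begin{equation*}
\W^N_{0,l}(P_1, \ldots, P_{l-1}, u^* P_l u) = \W^N_{0,l}(\bm{P}) \qquad \text{and} \qquad \W^N_{0,l}(\bm{P}^*) = \overline{\W^N_{0,l}(\bm{P})}.
\end{equation*}
To refine these to identities at each genus, I would inflate the matrix dimension: for fixed $N$ and matrices $(A^N_i)$, set $A^{qN}_i = A^N_i \otimes \Id_q$ for $q \in \N^*$. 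A direct verification gives $\tr_{qN}(w(A^{qN}_i)) = \tr_N(w(A^N_i))$ for every word $w$, so inspection of Definition \ref{def:term_2g_cumulant} shows $\M^{(g), qN}_{0,l}(\bm{P}) = \M^{(g), N}_{0,l}(\bm{P})$ when the former is evaluated at the inflated matrices; in particular, the value is independent of $q$.

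Substituting the inflated matrices into the cumulant identities and using Proposition \ref{prop:cumulants_V=0_second} turns them into
\begin{equation*}
\sum_{g \geq 0}(qN)^{-2g}\bigl[\M^{(g), N}_{0,l}(\bm{P}') - \M^{(g), N}_{0,l}(\bm{P})\bigr] = 0
\end{equation*}
with $\bm{P}' = (P_1,\ldots,P_{l-1},u^*P_l u)$, and an analogous identity for the conjugation statement, valid for every sufficiently large $q$. Proposition \ref{prop:cumulants_V=0_second} guarantees absolute convergence, so this is an analytic function of $x = (qN)^{-2}$ in some neighborhood of $0$, vanishing on the sequence $\{1/(qN)^2\}_{q \geq 1}$ which accumulates at $0$; hence all its Taylor coefficients vanish, giving (iii), and the analogous argument yields (iv). The main subtlety is the $q$-invariance of $\M^{(g), qN}_{0,l}$ under tensor inflation: this is the linchpin that allows a single numerical identity on the full cumulant to be refined into infinitely many per-genus identities via identification of power-series coefficients, sidestepping any direct combinatorial involution on maps.
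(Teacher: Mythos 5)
Your proposal is correct. For parts (iii) and (iv) it is essentially the paper's own argument: the paper inflates the matrices by taking $A_i^{qM}$ block-diagonal with $q$ identical blocks $A_i^{M}$ rather than tensoring with $\Id_q$, but these are the same device up to conjugation by a permutation matrix; in both cases the point is that the per-face normalized traces become independent of $q$, so the expansion of Proposition \ref{prop:cumulants_V=0_second} becomes a single analytic function of $\hbar = 1/(qN)$ vanishing along a sequence accumulating at $0$, and the numerical identities satisfied by the renormalized cumulants $\tW^{N}_{0,l}$ propagate to every coefficient. Where you diverge is in (i) and (ii): the paper's proof of the lemma runs this same analytic argument uniformly for all four properties, whereas you obtain traciality and symmetry combinatorially from the relabelling invariance of Lemma \ref{lem:relabel_V=0} --- which is precisely what the paper records separately in Remark \ref{rem:symmetry-traciality}, so this shortcut is fully licensed by the text. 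Both routes are sound. The combinatorial one is more self-contained at the level of maps and avoids the inflation device where it can; the analytic one has the advantage of treating in one stroke the two properties that are \emph{not} mere relabellings of the permutational data (simplification changes the degree and hence the label set, and conjugation reverses orientations), and for those two your inflation-plus-accumulation-point argument, which matches the paper's, does seem genuinely needed.
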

\begin{proof}
  Consider the series
  \begin{equation*}
    \begin{split}
      G(\hbar) &= (-1)^{m+l}\sum_{g \geq 0}\hbar^{2g}\sum_{\substack{\carte \in \mathfrak{C}(g, [2m], \bm{\epsilon}_{\bm{P}}, \gamma_{\bm{P}})\\ \carte \text{ is connected }}}(-1)^{c(\phi_{\carte})}\tr_{\phi_{\carte}}(\bm{M}_{\bm{P}})\,,
    \end{split}
  \end{equation*}
  where the polynomials in the tuple $\bm{M}_{\bm{P}}$ are evaluated
  at the matrices
  $A_{1}^{M}, (A_{1}^{M})^{*}, \ldots, A_{2m}^{M}, (A_{2m}^{M})^{*}$.
  Proposition \ref{prop:cumulants_V=0_second} implies that
  $G(1/qM) = \tW^{N}_{0, l}(\bm{P})$.

  Thus, as the renormalized cumulant under the Haar measure
  $\tW^{N}_{0, l} = N^{l-2}\W^{N}_{0, l}$ satisfies all four
  properties, and the set $\{1/qM\}_{q \geq 1}$ has an accumulation point, we get
  the result.
\end{proof}

\subsection{Formal topological expansion}\label{sec:formal-topological-exp}
When the potential $V$ is not zero, we expect to have an expansion of
the free energy as in Proposition \ref{prop:cumulants_V=0_second}. Let
us now consider a potential of the form
$V = \sum_{i=1}^{k}z_{i}q_{i}$, with
$\bm{z} = (z_{1}, \ldots, z_{k}) \in \C^{k}$ and
$\bm{q} = (q_{1}, \ldots, q_{k}) \in \wordsA^{k}$.

Proposition \ref{prop:cumulants_V=0_second} motivates the introduction of the
formal series
\begin{equation}
  \begin{split}
    F^{N, f}_{V}
    &= \frac{1}{N^{2}}\sum_{\bm{n} \in \N^{k}}\frac{(-\bm{z})^{\bm{n}}}{\bm{n}!}\tW_{0, \sum_{i}n_{i}}^{N}(\bm{q}_{\bm{n}})\\
    &= \sum_{g \geq 0}\frac{1}{N^{2g}}\sum_{\bm{n} \in \N^{k}}\frac{\bm{z}^{\bm{n}}}{\bm{n}!}(-1)^{\deg \bm{q}_{\bm{n}}}\sum_{\substack{\carte \in \mathfrak{C}(g, [2\deg \bm{q}_{\bm{n}}], \bm{\epsilon}_{\bm{q}_{\bm{n}}}, \gamma(\bm{q}_{\bm{n}}))\\ \carte \text{ is connected }}}(-1)^{c(\phi_{\carte})}\tr_{\phi_{\carte}}(\bm{M}_{\bm{q}_{\bm{n}}})\,,
  \end{split}
\end{equation}
where we use the notation
$\bm{q}_{\bm{n}} = (\underbrace{q_{1}, \ldots, q_{1}}_{n_{1}\text{
    times }}, \ldots, \underbrace{q_{k}, \ldots, q_{k}}_{n_{k}\text{
    times }})$ for $\bm{n} = (n_{1}, \ldots, n_{k})$, as well as
$\bm{z}^{\bm{n}} = \prod_{i=1}^{k}z_{i}^{n_{i}}$ and
$\bm{n}! = \prod_{i=1}^{k}n_{i}!$.

Similarly, we introduce formal series corresponding to the cumulants.
\begin{definition}\label{def:formal_cumulant}
  Let $N \in \N^{*}$, $\bm{P} = (P_{1}, \ldots, P_{l}) \in \wordsA^{l}$ be
  monomials with $m=\frac{1}{2}\deg \bm{P}$. The \textbf{formal cumulant} of
  $\bm{P}$ is the formal series
  \begin{equation*}
    \begin{split}
      \M^{N}_{V, l}(P_{1}, \ldots, P_{l}) &= \sum_{g\geq 0}\frac{1}{N^{2g}}\M^{(g), N}_{V, l}(P_{1}, \ldots, P_{l}),
    \end{split}
  \end{equation*}
  where the $g$-th term is
  \begin{equation*}
    \begin{split}
      \M^{(g), N}_{V, l}(&P_{1}, \ldots, P_{l})\\ &= \sum_{\bm{n}\in\N^{k}}\frac{\bm{z}^{\bm{n}}}{\bm{n}!}(-1)^{\deg \bm{q}_{\bm{n}}+\deg \bm{P}}\sum_{\substack{\carte \in \mathfrak{C}(g, [\deg \bm{q}_{\bm{n}}+\deg \bm{P}], \bm{\epsilon}_{\bm{q}_{\bm{n}}\bm{P}}, \gamma_{\bm{q}_{\bm{n}}\bm{P}})\\ \carte \text{ is connected }}}(-1)^{c(\phi_{\carte})}\tr_{\phi_{\carte}}(\bm{M}_{\bm{q}_{\bm{n}}\bm{P}}),
    \end{split}
  \end{equation*}
  where $\bm{q}_{\bm{n}}\bm{P}$ is the concatenation of the two tuples
  $\bm{q}_{\bm{n}}$ and $\bm{P}$. In particular,
  $\gamma_{\bm{q}_{\bm{n}}\bm{P}}$ is the permutation defined in
  \eqref{eq:not_gamma} associated to the tuple
  $\bm{q}_{\bm{n}}\bm{P}$.
\end{definition}
Notice that the total numbers of $u$ and $u^{*}$ (or white half-edges)
is $\sum_{i}n_{i}\deg q_{i} + 2m$, and the number of white vertices is
$\sum_{i}n_{i}+l$. The case of the formal free energy corresponds to
$m=0$, $l=0$.

At this point, it is not clear whether the series
$\M^{(g), N}_{V, l}(P_{1}, \ldots, P_{l})$ converge. It will be shown in
Section \ref{sec:exist-form-cumul}.

In Section \ref{sec:dyson-schw-equat}, we will show that in the asymptotic regime, the cumulant
$\W^{N}_{V, l}(P_{1}, \ldots, P_{l})$ coincides with the formal cumulant up to an
arbitrary order, for $\bm{z}$ small enough.
\subsection{Alternated polynomials and Hurwitz numbers}\label{sec:hurwitz}
In this section, we consider a particular case, that is we assume that
all polynomials are alternated monomials (see Definition
\ref{def:alternated_poly}). In particular, this covers the case of a
potential of the form $V = zAU^{N}B(U^{N})^{*}$ encountered in the
HCIZ integral. In \cite{goulden_monotone_2014}, the HCIZ integral had
been expressed in terms of monotone double Hurwitz numbers. In the
multimatrix case, results relating the more general tensor HCIZ
integral to the Hurwitz numbers have been obtained in
\cite{collins_tensor_2023}.

Here we consider only the case where we have a single unitary matrix.

\begin{definition}\label{def:alternated_poly}
  A monomial $P \in \algA$ is said to be \textbf{alternated} if it can be
  written
  \begin{equation*}
    \begin{split}
      P = B_{1}uC_{1}u^{-1}\cdots B_{m}uC_{m}u^{-1},
    \end{split}
  \end{equation*}
  with $B_{i}, C_{i}, 1\leq i \leq m$ words in
  $a_{1}, a_{1}^{*}, \ldots, a_{p}, a_{p}^{*}$.
\end{definition}

In this section, we will assume that all the polynomials involved
($P_{1}, \ldots, P_{l}, q_{1}, \ldots, q_{k}$) are alternated monomials. We
write as before $\bm{P} = (P_{1}, \ldots, P_{l})$. We now explain how we
can give different expressions for the renormalized cumulants in this
case. To do so we reason only using the permutational model of the
maps of unitary type.

In this case, $\bm{\epsilon} = (+1, -1, +1, -1, \ldots)$, and we have
$\gamma_{\bm{P}}(\bm{\epsilon}^{-1}(+1)) = \bm{\epsilon}^{-1}(-1)$ and
$\gamma_{\bm{P}}(\bm{\epsilon}^{-1}(-1)) = \bm{\epsilon}^{-1}(+1)$.
Thus, $\gamma_{\bm{P}} \in \Sym^{(\bm{\epsilon})}_{2m}$. We define
$\tilde{\gamma} = \gamma_{\bm{P}}^{2}|_{\bm{\epsilon}^{-1}(+1)}$.

In particular, this implies that for all
$\carte \in \mathfrak{C}(g, [2m], \bm{\epsilon}, \gamma_{\bm{P}})$, we
have $\phi_{\carte}(\bm{\epsilon}^{-1}(+1)) = \bm{\epsilon}^{-1}(+1)$ and
$\phi_{\carte}(\bm{\epsilon}^{-1}(-1)) = \bm{\epsilon}^{-1}(-1)$. That is, we can write
$\phi_{\carte}$ as a product of two permutations, one, $\rho_{\carte}$,
having its support in $\bm{\epsilon}^{-1}(+1)$, and the other, $\sigma_{\carte}$,
having its support in $\bm{\epsilon}^{-1}(-1)$.

Write $\tau_{\carte} = (\tau_{1}, \ldots, \tau_{r})$. We then notice that the group
generated by $\gamma_{\bm{P}}, \phi_{\carte}, \tau_{1}, \ldots \tau_{r}$ is transitive if
and only if the group generated by
$\gamma_{\bm{P}}, \rho_{\carte}, \sigma_{\carte}, \tau_{1}, \ldots \tau_{r}$
is transitive. As conjugating the elements of a group by one of the
elements does not change the gorup, we have
\begin{equation*}
  \langle{\gamma_{\bm{P}}, \sigma_{\carte}, \rho_{\carte}, \tau_{1}, \ldots, \tau_{r}}\rangle = \langle{\gamma_{\bm{P}}, \gamma_{\bm{P}}^{-1}\sigma_{\carte}\gamma_{\bm{P}}, \rho_{\carte}, \tau_{1}, \ldots, \tau_{r}}\rangle\,.
\end{equation*}

Now, we remark that the subgroup of $\Sym_{2m}$ on the right-hand side
is transitive if and only if the subgroup $\langle{\tilde{\gamma}, \gamma_{\bm{P}}^{-1}\sigma\gamma_{\bm{P}}, \rho, \tau_{1}, \ldots, \tau_{r}}\rangle$ of $\Sym(\epsilon^{-1}(+1))$
is transitive.

This remark allows us to rewrite the sum of Definition
\ref{def:term_2g_cumulant},
\begin{equation*}
  \begin{split}
    \W^{(g), N}_{0, l}(P_{1}, \ldots, P_{l})
    &= (-1)^{m}\sum_{\substack{\carte \in \mathfrak{C}(g, [2m], \bm{\epsilon}_{\bm{P}} \gamma_{\bm{P}})\\\carte \text{ connected}}}(-1)^{c(\phi_{\carte})}\tr_{\phi_{\carte}}(\bm{M}_{P})\\
    &= (-1)^{m}\sum_{\substack{\sigma, \rho \in\Sym_{m}\\\tau\in\mwset_{g}(\Id, \rho^{-1}\tilde{\gamma}\sigma)\\\langle{\tilde{\gamma}, \sigma, \rho, \tau_{1}, \ldots, \tau_{r}}\rangle \text{ transitive}}}(-1)^{c(\sigma)+c(\rho)}\tr_{\rho}(\bm{B}_{\bm{P}})\tr_{\sigma}(\bm{C}_{\bm{P}})\,,\\
  \end{split}
\end{equation*}
where we introduced the notation
\begin{equation*}
  \begin{split}
    \bm{B}_{\bm{P}} &= (M_{i}; i \in \bm{\epsilon}^{-1}(+1))\\
    \bm{C}_{\bm{P}} &= (M_{i}; i \in \bm{\epsilon}^{-1}(-1))\,,
  \end{split}
\end{equation*}
if $\bm{M}_{\bm{P}} = (M_{i})_{1 \leq i \leq \deg \bm{P}}$.

\begin{definition}
  Let $\rho, \gamma, \sigma \in \Sym_{m}$. The \textbf{$r$-th monotone triple
    Hurwitz number} associated to $\rho, \gamma, \sigma$, denoted by
  $\mhurwitz^{r}(\rho, \gamma, \sigma)$, is the number of $r$-uple of
  transpositions $(\tau_{1}, \ldots, \tau_{r}) \in \Sym_{m}^{r}$ such that
  \begin{itemize}
    \item $\tau_{r}\cdots \tau_{1} = \rho\gamma\sigma$;
    \item $\val(\tau_{1}) \leq \val(\tau_{2}) \leq \cdots \leq \val(\tau_{r})$;
    \item the group
          $\langle{\gamma, \rho, \sigma, \tau_{1}, \ldots, \tau_{r}}\rangle \subset \Sym_{m}$
          is transitive.
  \end{itemize}
  When $g$ satisfies the Euler equation
  \begin{equation*}
    \begin{split}
      2-2g = c(\gamma) +c(\rho) + c(\sigma) - r - m,
    \end{split}
  \end{equation*}
  we set
  $\mhurwitz_{g}(\gamma, \sigma, \rho) = \mhurwitz^{r}(\gamma, \sigma, \rho)$.
\end{definition}

This gives us the following Proposition.
\begin{prop}
  Let $\bm{P} = (P_{1}, \ldots, P_{l})$ be alternated monomials. We have
  \begin{equation}
    \begin{split}
      \W^{(g), N}_{0, l}(P_{1}, \ldots, P_{l})
      &= (-1)^{m}\sum_{\sigma, \rho \in\Sym_{m}}(-1)^{c(\sigma)+c(\rho)}\tr_{\rho}(\bm{B}_{\bm{P}})\tr_{\sigma}(\bm{C}_{\bm{P}})\mhurwitz_{g}(\rho^{-1}, \tilde{\gamma}, \sigma).\\
    \end{split}
  \end{equation}
\end{prop}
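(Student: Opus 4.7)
The plan is to observe that the bulk of the work has already been done in the text immediately preceding the Proposition: what remains is to recognize the inner sum in the displayed expression
\[
  \W^{(g), N}_{0, l}(P_{1}, \ldots, P_{l}) = (-1)^{m}\!\!\!\sum_{\substack{\sigma, \rho \in\Sym_{m}\\\tau\in\mwset_{g}(\Id, \rho^{-1}\tilde{\gamma}\sigma)\\\langle{\tilde{\gamma}, \sigma, \rho, \tau_{1}, \ldots, \tau_{r}}\rangle \text{ transitive}}}\!\!\!(-1)^{c(\sigma)+c(\rho)}\tr_{\rho}(\bm{B}_{\bm{P}})\tr_{\sigma}(\bm{C}_{\bm{P}})
\]
as being, by definition, a sum over monotone triple Hurwitz numbers. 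So the first thing I would do is justify carefully the rewriting above: start from Definition \ref{def:term_2g_cumulant} and the bijection of Theorem \ref{thm:permutational_model}, which replaces the sum over $\carte\in\mathfrak{C}_{g,[2m],\bm{\epsilon}_{\bm{P}},\gamma_{\bm{P}}^{-1}}$ by a sum over $(\pi,\tau)$ with $\pi\in\Sym_{2m}^{(\epsilon_{\bm{P}})}$ and $\tau\in\mwset^{r}(\Id,\pi^{(\epsilon)})$, and use Lemma \ref{lem:phi_map} to eliminate $\pi$ in favour of $\phi_{\carte}$ via $\phi_{\carte}=\gamma_{\bm{P}}\pi^{-1}$.

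Next, I would invoke the alternated structure: since $\bm{\epsilon}_{\bm{P}}=(+1,-1,+1,-1,\ldots)$ and $\gamma_{\bm{P}}\in\Sym^{(\bm{\epsilon})}_{2m}$, the permutation $\phi_{\carte}$ preserves $\epsilon^{-1}(\pm 1)$; identifying each of these sets with $[m]$ via the order-preserving bijection, the restrictions of $\phi_{\carte}$ become two permutations $\rho,\sigma\in\Sym_{m}$, giving the trace factorisation $\tr_{\phi_{\carte}}(\bm{M}_{\bm{P}})=\tr_{\rho}(\bm{B}_{\bm{P}})\tr_{\sigma}(\bm{C}_{\bm{P}})$ and the sign split $(-1)^{c(\phi_{\carte})}=(-1)^{c(\rho)+c(\sigma)}$. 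Combining $\pi_{\carte}=\phi_{\carte}^{-1}\gamma_{\bm{P}}$ from Lemma \ref{lem:phi_map} with Proposition \ref{prop:tau_carte}, the walk endpoint $\pi_{\carte}^{(\epsilon)}=\pi_{\carte}^{2}|_{\epsilon^{-1}(+1)}$ unfolds, under the identifications, into $\rho^{-1}\tilde{\gamma}\sigma$. The transitivity of $\langle\gamma_{\bm{P}},\pi_{\carte},\tau_{1},\ldots,\tau_{r}\rangle$ on $[2m]$ (equivalent to connectedness by Proposition \ref{prop:connectedness}) becomes transitivity of $\langle\tilde{\gamma},\rho,\sigma,\tau_{1},\ldots,\tau_{r}\rangle$ on $[m]$, as outlined in the text via the chain of generator rewritings.

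Having justified the displayed equation, I would then simply read off the Hurwitz identification: the inner sum over $\tau\in\mwset^{r}(\Id,\rho^{-1}\tilde{\gamma}\sigma)$ with the transitivity constraint is, by Definition of $\mhurwitz^{r}(\rho^{-1},\tilde{\gamma},\sigma)$, the cardinality of that set. Switching the summation index from $r$ to the genus $g$ is legitimate because the Euler formula \eqref{eq:Euler_for_map} $2-2g=c(\rho_{\carte})+c(\phi_{\carte})-m-r$ specialises, with $c(\rho_{\carte})=c(\gamma_{\bm{P}}^{-1})=l=c(\tilde{\gamma})$ and $c(\phi_{\carte})=c(\rho)+c(\sigma)$, to $2-2g=c(\tilde{\gamma})+c(\rho)+c(\sigma)-r-m$, which is exactly the convention used to turn $\mhurwitz^{r}$ into $\mhurwitz_{g}$. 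Substituting yields the claim.

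I expect the only real obstacle to be bookkeeping around the identification $\epsilon^{-1}(\pm 1)\simeq[m]$ needed to write $\pi_{\carte}^{(\epsilon)}=\rho^{-1}\tilde{\gamma}\sigma$ cleanly. The shift inherent in $\gamma_{\bm{P}}$ mixing odd and even positions has to be absorbed consistently, so that the factor $\tilde{\gamma}=\gamma_{\bm{P}}^{2}|_{\epsilon^{-1}(+1)}$ appears between $\rho^{-1}$ and $\sigma$ rather than conjugated into one of them. Once this identification is pinned down, everything else is routine: the equality of sums is by construction, not by a further nontrivial calculation.
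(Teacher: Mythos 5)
Your proposal follows essentially the same route as the paper: the paper's justification of this Proposition is precisely the discussion preceding it (the permutational model, the splitting of $\phi_{\carte}$ into $\rho$ and $\sigma$ via the alternated structure, the chain of generator rewritings for transitivity, and Euler's formula to pass from $r$ to $g$), after which the inner sum is read off as $\mhurwitz_{g}$ by definition. You correctly identify the only delicate point — the identification of $\epsilon^{-1}(\pm 1)$ with $[m]$ so that $\pi_{\carte}^{(\epsilon)}$ takes the form $\rho^{-1}\tilde{\gamma}\sigma$ — which the paper also leaves as bookkeeping.
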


\begin{remark}
  In the case of the HCIZ integral, we have $\tilde{\gamma} = \Id$,
  thus the monotone triple Hurwitz numbers reduce to the monotone
  double Hurwitz numbers.
\end{remark}
\begin{remark}
  Notice that when all the polynomials are alternated, all the white vertices
  in the unitary type maps involved are alternated vertices (see Definition
  \ref{def:alternated_vertex}).
\end{remark}
\begin{remark}
  In this particular case, the maps of unitary type bear similarity
  with the ribbon graphs introduced in \cite{johnson_hurwitz_2012}.
\end{remark}

\section{Tutte-like equations}\label{sec:induction}
We will now state induction relations that applies to the sums of maps
$\M_{0, l}^{(g), N}$ defined in Definition \ref{def:term_2g_cumulant}. They are obtained by a
procedure very similar to the one used by Tutte in \cite{tutte_enumeration_1968}. These
induction relations are the analog of the topological recursion for
matrices of the GUE \cite{eynard_algebraic_2008}. Similar induction relations have been
obtained for maps related to the Gaussian case in \cite{guionnet_second_2007} and \cite{maurel-segala_high_2006}, and for
maps with ``dotted edges'' in the unitary case for $g = 0$ in \cite{collins_asymptotics_2009}.
More precisely, we will prove the following theorem.
\begin{theorem}\label{thm:induction_V=0}
  Let $N \in \N^{*}$,
  $\bm{P} = (P_{1}, \ldots, P_{l}u) \in \wordsA^{l}$ be monomials.
  Then, for $g \geq 0$ and $m = \frac{1}{2}\deg \bm{P}\geq 2$, we have
  the induction relation
  \begin{equation*}
    \begin{split}
      \M^{(g), N}_{0, l}&(P_{1}, \dots, P_{l}u)\\
      = &-\sum_{P_{l} = QuR}\Big[\M^{(g-1), N}_{0, l+1}(P_{1}, \dots, P_{l-1}, Qu, Ru)
      + \sum_{\substack{g_{1}+g_{2}=g\\I\subset [l-1]}}\M^{(g_{1}), N}_{0, |I|+1}(\bm{P}|_{I}, Qu)\M^{(g_{2}), N}_{0, l-|I|}(\bm{P}|_{I^{c}}, Ru)\Big]\\
      &- \sum_{j=1}^{l-1}\sum_{P_{j}=QuR}\M^{(g), N}_{0, l-1}(P_{1}, \dots, P_{j-1}, P_{j+1}, \dots, P_{l-1}, RQuP_{l}u)\\
      &+\sum_{P_{l} = Qu^{*}R}\Big[\M^{(g-1), N}_{0, l+1}(P_{1}, \dots, P_{l-1}, Q, R)
      + \sum_{\substack{g_{1}+g_{2}=g\\I\subset [l-1]}}\M^{(g_{1}), N}_{0, |I|+1}(\bm{P}|_{I}, Q)\M^{(g_{2}), N}_{0, l-|I|}(\bm{P}|_{I^{c}}, R)\Big]\\
      &+ \sum_{j=1}^{l-1}\sum_{P_{j}=Qu^{*}R}\M^{(g), N}_{0, l-1}(P_{1}, \dots, P_{j-1}, P_{j+1}, \dots, P_{l-1}, RQP_{l}),
    \end{split}
  \end{equation*}
  where we use the notation $\bm{P}|_{I} = (P_{i})_{i\in I}$ and we
  set by convention, $\M^{(-1), N}_{0, l} = 0$ and
  $\M^{(g), N}_{0, 0} = 0$.
\end{theorem}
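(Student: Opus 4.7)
Proof plan.

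The strategy is a Tutte-like combinatorial decomposition of the nondecreasing unitary type maps that contribute to $\M^{(g),N}_{0,l}(P_1,\ldots,P_l u)$. Using the relabelling flexibility of Lemma~\ref{lem:relabel_V=0}, I arrange that the distinguished outgoing white half-edge $h^\star$, namely the one associated with the final letter $u$ of $P_l u$, carries the largest label $i^\star = 2m$. Through the permutational model of Theorem~\ref{thm:permutational_model}, each map $\carte$ in $\mathfrak{C}_{g,[2m],\bm{\epsilon}_{\bm{P}u},\gamma_{\bm{P}u}^{-1}}$ is equivalently given by a pair $(\pi_{\carte},(\tau_1,\ldots,\tau_r))$ with $\tau_r\cdots\tau_1=\pi_{\carte}^{(\epsilon)}$ and monotone values $\val(\tau_1)\leq\cdots\leq\val(\tau_r)$. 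Since $i^\star=2m$ is maximal, any transposition moving $i^\star$ has value $2m$, so monotonicity forces such transpositions to sit at the tail of the walk; in particular, either $\tau_r$ moves $i^\star$ or no $\tau_k$ does, and these two possibilities are disjoint and exhaustive.

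In the first regime, no $\tau_k$ moves $i^\star$, so $h^\star$ is paired, through the single edge encoded by $\pi_{\carte}$, with a white ingoing half-edge located on some letter $u^{*}$ in the data. When that $u^{*}$ lies inside $P_l$ (so $P_l=Qu^{*}R$), cutting the corresponding edge either splits the vertex $v_l$ into two white vertices labelled by $Q$ and $R$ inside the same connected component (removing a handle and lowering the genus by one, yielding the $\M^{(g-1),N}_{0,l+1}(\ldots,Q,R)$ term) or disconnects the map into two components of genera summing to $g$ (yielding the $\sum_{g_1+g_2=g}\sum_{I\subseteq[l-1]}$ product term). When the $u^{*}$ lies in some $P_j$ with $j<l$ (so $P_j=Qu^{*}R$), the cut instead fuses $v_j$ and $v_l$ into a single white vertex carrying $RQP_l$ read cyclically, yielding the $\M^{(g),N}_{0,l-1}(\ldots,RQP_l)$ term. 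None of these operations changes $r$, so the sign is preserved and the contributions appear with a plus sign.

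In the second regime, $\tau_r=(i^\star,j)$ for some $j<i^\star$, so $h^\star$ reaches through the last (largest-valued) black vertex $b_r$, whose opposite outgoing half-edge carries label $j$ and corresponds to a letter $u$ lying either inside $P_l=QuR$ or inside some $P_j=QuR$ with $j<l$. The surgery that removes $b_r$ and reconnects its four incident half-edges into two edges yields a nondecreasing unitary type map with $r-1$ black vertices; the monotonicity of the walk is preserved precisely because $\val(\tau_r)$ was maximal. The connectivity analysis parallels the first regime: splitting inside $P_l$ produces the $\M^{(g-1),N}_{0,l+1}(\ldots,Qu,Ru)$ and corresponding product terms, while merging with $P_j$ produces the $\M^{(g),N}_{0,l-1}(\ldots,RQuP_l u)$ term, the two retained $u$'s being the outgoing half-edges of $b_r$ that survive the surgery. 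Because exactly one black vertex is deleted, Euler's identity~\eqref{eq:Euler_for_map} gives $(-1)^{m+l+c(\phi_{\carte})}=(-1)^r$ and the overall sign flips, producing the minus signs in front of the four $u$-terms.

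The main obstacle is to verify that these cut-and-surgery operations define genuine bijections between the original class of maps and the corresponding right-hand classes. This amounts to checking that each operation produces a valid nondecreasing unitary type map (with $\pi$ and the monotone walk updated consistently and the maximality of $\val(\tau_r)$ used crucially), that the genus shifts predicted by Euler agree with those announced, that the new polynomials at the split or merged white vertex coincide with $Q,R,Qu,Ru,RQP_l$ or $RQuP_l u$ up to the traciality and symmetry of Lemma~\ref{lem:prop-M0}, and finally that the reverse operation (reinserting the severed edge, or restoring the black vertex $b_r$ at the top of the walk) recovers each original map exactly once.
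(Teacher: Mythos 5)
Your proposal follows essentially the same route as the paper's proof: normalize so the distinguished outgoing half-edge carries the maximal label $2m$, use monotonicity to split into the two exhaustive cases ($\tau_{r}$ fixes $2m$, hence the half-edge is matched to a white $u^{*}$ half-edge and one contracts that edge; or $\tau_{r}$ moves $2m$, and one removes the last black vertex), then sort each surgery by connectivity and Euler's formula into the genus-preserving, genus-lowering, and vertex-merging subcases, with the sign governed by the parity of $r$. This is precisely the decomposition of Section~\ref{sec:cut_maps} and the case analysis in the paper's proof of Theorem~\ref{thm:induction_V=0}.
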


With a similar proof, we can state a similar theorem for
$\bm{P} = (P_{1}, \ldots, P_{l}u^{*})$. Equivalently, this is a
consequence of the invariance by conjugation of the sums of maps, see
Lemma \ref{lem:prop-M0}.

Theorem \ref{thm:induction_V=0} describes how a map of unitary type
can be decomposed into one or several maps. In the Section
\ref{sec:cut_maps}, we will describe the precise procedure used to cut
maps of unitary type into one or more maps of unitary types, or
equivalently to decompose the permutations describing a particular
map. The procedure can be understood as an elaborate way of
contracting an edge in a map. This allows us to give an interpretation
to the terms appearing in the recurrence.

Consider first the terms in the last two lines (the other terms have a
similar interpretation). When contracting an edge, three situations can occur.
\begin{itemize}
  \item The edge is a non-contractible loop, informally it goes around
        a handle of the surface, in that case when contracting it we
        remove the handle and create a new vertex, this corresponds to
        the term
        \begin{equation*}
          \sum_{P_{l} = Qu^{*}R}\M^{(g-1), N}_{0, l+1}(P_{1}, \dots, P_{l-1}, Q, R)\,.
        \end{equation*}
  \item The edge is a contractible loop. When contracting the edge we cut the
        map into two disconnected components. This corresponds to the
        term
        \begin{equation*}
          \sum_{P_{l} = Qu^{*}R}\sum_{\substack{g_{1}+g_{2}=g\\I\subset [l-1]}}\M^{(g_{1}), N}_{0, |I|+1}(\bm{P}|_{I}, Q)\M^{(g_{2}), N}_{0, l-|I|}(\bm{P}|_{I^{c}}, R)\,.
        \end{equation*}
  \item The edge is not a loop. When contracting the edge we just merge
        two vertices. This corresponds to the term
        \begin{equation*}
          \sum_{j=1}^{l-1}\sum_{P_{j}=Qu^{*}R}\M^{(g), N}_{0, l-1}(P_{1}, \dots, P_{j-1}, P_{j+1}, \dots, P_{l-1}, RQP_{l})\,.
        \end{equation*}
\end{itemize}
The other terms have a similar form. They do not correspond to the
contraction of an edge, but rather to the erasure of a black vertex.
This Theorem shows that a sum of maps can be expressed in terms of the
sums of maps with either lower genus, lower overall degree, or lower
number of vertices.

Before describing precisely the procedure used to cuts the maps of
unitary type, and giving the proof of Theorem \ref{thm:induction_V=0}, we show how to
rewrite these equation in a compact way. The function
$\M^{(g), N}_{0, l}$ was defined only on some monomials, i.e. on the
set $\wordsA^{l}$. Recall that $\algA$ is the algebra of
noncommutative polynomials in the variables
$u, u^{*}, a_{1}, a_{1}^{*}, \ldots$. We now extend $\M^{(g), N}_{0, l}$ by
linearity to an linear form on the tensor space
$\algA^{\otimes l} = \algA\otimes_{\C} \cdots \otimes_{\C} \algA$. We will use intechangeably
the notation $\M^{(g), N}_{0, l}(P_{1}, \ldots, P_{l})$
($M^{(g), N}_{0, l}$ as a multilinear function) and
$\M^{(g), N}_{0, l}(P_{1}\otimes \cdots \otimes P_{l})$ ($M^{(g), N}_{0, l}$ as a
linear function on $\algA^{\otimes l}$). We can then rewrite Theorem \ref{thm:induction_V=0}
using the notion of non-commutative derivative.

Let $\bm{P} = (P_{1}, \ldots, P_{l})$ be a $k$-tuple of polynomials,
and $I = \{i_{1}< i_{2} < \cdots < i_{p}\}$ be a non-empty subset of
$[l]$, then we define
\begin{equation*}%\label{eq:not_I_tensor}
  \begin{split}
    \bm{P}_{I} = P_{i_{1}}\otimes P_{i_2} \otimes \cdots \otimes P_{i_{p}}.
  \end{split}
\end{equation*}
We define the operations $\times$ and $\sharp$ as follows. Let
$\bm{P} \in \algA^{l}$, $I \subset [l-1]$, $Q = Q_{1}\otimes Q_{2} \in \algA^{\otimes 2}$ and
$S = S_{1}\otimes S_{2} \in \algA^{\otimes 2}$, then we set
\begin{equation}\label{eq:def-times-tensor}
  Q \times S = (Q_{1}\otimes Q_{2}) \times (S_{1}\otimes S_{2}) = (Q_{1}S_{1}) \otimes (Q_{2}S_{2})\,,
\end{equation}
and
\begin{equation}\label{eq:def-sharp-tensor}
  \bm{P}_{I}\otimes \bm{P}_{I^{c}} \sharp Q = \bm{P}_{I}\otimes Q_{1} \otimes \bm{P}_{I^{c}} \otimes Q_{2}.
\end{equation}

\begin{definition}\label{def:nc_derivative}
  The \textbf{logarithmic non-commutative derivative}
  $\partial\colon\algA \to \algA^{\otimes 2}$ with respect to $u$ of a
  monomial $P\in \algA$ is defined by
  \begin{equation*}
    \begin{split}
      \partial P = \sum_{P=QuR}Qu\otimes R - \sum_{P=Qu^{-1}R}Q\otimes u^{-1}R \in \algA^{\otimes 2}.
    \end{split}
  \end{equation*}
  The definition extends by linearity to any polynomial in $\algA$.
\end{definition}

\begin{remark}
  This derivative was introduced by Voiculescu in \cite[Section 8.1]{voiculescu_analogues_1999}. However, as
  remarked by an anonymous reviewer, this corresponds to the
  derivation on the unitary group invariant under multiplication from
  the right. Near the identity, this derivation corresponds to the
  ordinary gradient $\nabla_{H}$ where $\exp^{H} = U$, hence the name
  logarithmic derivative.
\end{remark}

\begin{remark}\label{rem:interpretation-nc-derivative}
  Consider a monomial $P \in \algA$ evaluated in
  $U, U^{*}, A_{1}, A_{1}^{*}, \ldots$, and denote by
  $\partial_{m, j}$ the derivative with respect to the coefficient
  $(m, j)$ of $U$. We have
  \begin{equation*}
    \sum_{m}U_{m, k}(\partial_{m,j}P)_{i,l} = \sum_{m}\sum_{P = QUR}Q_{i, m}U_{m, k}R_{j, l} = \sum_{P=QUR}(QU)_{i, k}R_{j, l}\,,
  \end{equation*}
  and similarly, if $\overline{\partial}_{m, j}$ is the derivative with
  respect to the coefficient $(m, j)$ of $U^{*}$
  \begin{equation*}
    \sum_{m}U_{j, m}^{*}(\overline{\partial}_{k, m}P)_{i,l} = \sum_{m}\sum_{P = QU^{*}R}Q_{i, k}U^{*}_{j, m}R_{m, l} = \sum_{P=QUR}Q_{i, k}(U^{*}R)_{j, l}\,.
  \end{equation*}

  In coordinates, the non-commutative derivative thus corresponds to
  \begin{equation*}
    (\partial P)_{\substack{i, k\\j, l}} = \sum_{m} \left( U_{m, k}\partial_{m,j}P -  U_{j, m}^{*}\overline{\partial}_{k, m}P\right)_{i, l}\,.
  \end{equation*}
\end{remark}

\begin{definition}\label{def:cyclic_derivative}
  The \textbf{cyclic derivative} $\D\colon \algA \to \algA$ with respect to $u$ of a monomial
  $P\in \algA$ is defined by
  \begin{equation*}
    \begin{split}
      \D P = \sum_{P=QuR}RQu - \sum_{P=Qu^{-1}R}u^{-1}RQ.
    \end{split}
  \end{equation*}
  The definition extends by linearity to any polynomial in $\algA$.
\end{definition}

\begin{remark}
  Let $P$ be a monomial. In coordinates, the cyclic derivative is
  \begin{equation*}
    (\D P)_{i, j} = \sum_{k, m} \left( U_{m, j}\partial_{m,i}P - U^{*}_{m, j}\overline{\partial}_{j, m}P \right)_{k, k} = \sum_{k}(\partial P)_{\substack{j, k\\i, k}}\,.
  \end{equation*}
\end{remark}

To take advantage of the notation just introduced, we shall write
$\M^{(g), N}_{0, l}\otimes \M^{(g'), N}_{0, l'}(P_{1}\otimes \cdots \otimes P_{l+l'})$
with $P_{i} \in \algA$ to mean
\begin{equation*}
  \M^{(g), N}_{0, l}\otimes \M^{(g'), N}_{0, l'}(P_{1} \otimes \cdots \otimes P_{l+l'})
  = \M^{(g), N}_{0, l}(P_{1}, \ldots, P_{l})\M^{(g'), N}_{0, l'}(P_{l+1}, \ldots, P_{l+l'})\,.
\end{equation*}
Theorem \ref{thm:induction_V=0} can then be rewritten as follows.
\begin{corol}\label{corol:induction-general}
  For $m\geq 2$, $g \geq 1$, and $P_{1}, \ldots, P_{l} \in \algA$, we
  have the following equation
  \begin{equation}
    \begin{split}
      \sum_{I \subset [l-1]}&\sum_{g = g_{1}+g_{2}}\M^{(g_{1}), N}_{0, |I|+1}\otimes\M^{(g_{2}), N}_{0, |I^{c}|+1}(\bm{P}_{I}\otimes\bm{P}_{I^{c}}\sharp \partial P_{l})\\
      &= -\M^{(g-1), N}_{0, l+1}(P_{1}\otimes \cdots \otimes P_{l-1} \otimes \partial P_{l}) - \sum_{j=1}^{l-1}\M^{(g), N}_{0, l-1}(P_{1}\otimes \cdots \otimes \check{P_{j}} \otimes \cdots \otimes (\D P_{j})P_{l}),
    \end{split}
  \end{equation}
  where $\check{P_{j}}$ means that the factor $P_{j}$ is omitted.
\end{corol}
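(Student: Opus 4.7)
The plan is to derive the corollary by expanding the non-commutative and cyclic derivatives according to their definitions and matching the resulting sum term by term with the equation of Theorem~\ref{thm:induction_V=0}.

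First, by multilinearity of $\M^{(g),N}_{0,l}$ on $\algA^{\otimes l}$ and of $\partial$ and $\D$, it suffices to treat the case where each $P_i$ is a monomial. Next, by the traciality property of Lemma~\ref{lem:prop-M0}(i), both sides of the corollary are invariant under cyclic rotation of $P_l$: the left-hand side because $\bm{P}_I \otimes \bm{P}_{I^c} \sharp \partial P_l$ transforms consistently (each factorization $P_l = QuR$ becomes an equivalent factorization after rotation, up to moving letters between $Q$ and $R$), and the cyclic-derivative term on the right-hand side is manifestly rotation-invariant. Thus, if $P_l$ contains at least one letter $u$ or $u^{-1}$, we may assume $P_l$ ends in $u$ (the $u^{-1}$ case then follows by taking adjoints and applying Lemma~\ref{lem:prop-M0}(iv), exactly as noted in the remark after Theorem~\ref{thm:induction_V=0}). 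If $P_l$ contains no $u$ or $u^{-1}$ at all, then $\partial P_l = 0$ and $\D P_l$ does not appear, and both sides vanish for trivial reasons after inserting the factor $P_l$ into neighboring arguments using traciality.

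Assuming $P_l = P_l' u$ for a monomial $P_l'$, I would then expand
\[
  \partial P_l = \sum_{P_l = QuR} Qu \otimes R \;-\; \sum_{P_l = Qu^{-1}R} Q \otimes u^{-1}R
\]
and write the left-hand side of the corollary as a sum of four families of terms, organized by the sign of the split and by the partition $I \sqcup I^c = [l-1]$ with $g = g_1 + g_2$. Applying Theorem~\ref{thm:induction_V=0} to $\bm P = (P_1, \ldots, P_{l-1}, P_l' u)$ produces exactly matching families: the $u$-splits $P_l = QuR$ reproduce the first two lines of the theorem (giving rise to $\M^{(g-1)}_{l+1}(\ldots, Qu, Ru)$ and $\M^{(g_1)}_{|I|+1}(\ldots, Qu)\,\M^{(g_2)}_{l-|I|}(\ldots, Ru)$ after folding the trailing $u$ of $P_l'u$ into the right factor via traciality), and the $u^{-1}$-splits reproduce the third and fourth lines. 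Analogously, expanding
\[
  \D P_i = \sum_{P_i = QuR} RQu \;-\; \sum_{P_i = Qu^{-1}R} u^{-1}RQ
\]
and inserting it as $(\D P_i)P_l$ reproduces, with matching signs, the two single-cumulant sums over $j$ in the theorem, again using traciality to align $RQuP_l$ with the theorem's $RQuP_l u$ and $u^{-1}RQ P_l$ with $RQP_l$.

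The main obstacle is purely bookkeeping: keeping signs, insertion points under $\sharp$, and the traciality rotations aligned across all four sign patterns and both types of contributions (pair-of-cumulants and single-cumulant). I expect the clearest presentation is to state the identification for $P_l$ ending in $u$ once and for all, handle the $u^{-1}$ case by conjugation via Lemma~\ref{lem:prop-M0}(iv), and then invoke multilinearity and traciality to pass to general $P_l \in \algA$.
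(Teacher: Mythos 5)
The paper offers no written proof of this corollary --- it is presented as a direct reformulation of Theorem~\ref{thm:induction_V=0} in derivative notation --- so your plan of expanding $\partial$ and $\D$ and matching terms is exactly the intended route. However, your description of the matching misses the one step that is not a tautology. Theorem~\ref{thm:induction_V=0} has the bare term $\M^{(g),N}_{0,l}(P_{1},\dots,P_{l}u)$ on its left-hand side, while the corollary has no such term; you never say where it goes. Writing the corollary's last argument as $P_{l}'u$, the expansion $\partial(P_{l}'u)=P_{l}'u\otimes 1+\sum_{P_{l}'=QuR}Qu\otimes Ru-\sum_{P_{l}'=Qu^{-1}R}Q\otimes u^{-1}Ru$ contains the degenerate split at the trailing $u$, and it is precisely the term $\bm{P}_{[l-1]}\otimes\emptyset\,\sharp\,(P_{l}'u\otimes 1)$ with $I=[l-1]$, $g_{2}=0$ that produces $\M^{(g),N}_{0,l}(P_{1},\dots,P_{l}'u)\cdot\M^{(0),N}_{0,1}(1)$; one must check that $\M^{(0),N}_{0,1}(1)=1$ and that every other degenerate contribution $\M^{(g_{2}),N}_{0,|I^{c}|+1}(\bm{P}_{I^{c}},1)$ with $(g_{2},I^{c})\neq(0,\emptyset)$ vanishes by connectedness (and likewise $\M^{(g-1),N}_{0,l+1}(\dots,P_{l}'u,1)=0$ on the right). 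Your claim that ``the $u$-splits reproduce the first two lines of the theorem'' is therefore wrong for this particular split, and without this identification the two sides do not balance.

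A secondary point: the asserted cyclic invariance of both sides in $P_{l}$ does not hold term by term. For instance $\partial(ua)=u\otimes a$ while $\partial(au)=au\otimes 1$, and applying tracial functionals gives $\mu_{1}(u)\mu_{2}(a)$ versus $\mu_{1}(au)\mu_{2}(1)$, which differ in general; the same issue affects $\M^{(g-1),N}_{0,l+1}(\cdots\otimes\partial P_{l})$. So you cannot justify ``WLOG $P_{l}$ ends in $u$'' by rotation invariance of the identity. What saves you is that $\M^{(g),N}_{0,l}$ is only defined on monomials of $\wordsA$ (which by construction end in $u^{\pm1}$) and extended to $P_{i}M$, $M\in\wordsB$, by the traciality convention of Definition~\ref{def:term_2g_cumulant}; the honest reduction is to prove the identity for $P_{l}\in\wordsA$ ending in $u$ (via the theorem) and ending in $u^{-1}$ (via the conjugate version, as you propose), and to treat a trailing $\wordsB$-factor explicitly through the definition of $\partial$ and the extension convention rather than by an appeal to a rotation invariance that is false as stated.
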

\begin{proof}
  We group the terms two by two. We have
  \begin{equation*}
    \begin{split}
      -\sum_{P_{l} = QuR}&\M^{(g-1), N}_{0, l+1}(P_{1}, \dots, P_{l-1}, Qu, Ru)
                           +\sum_{P_{l} = Qu^{*}R}\M^{(g-1), N}_{0, l+1}(P_{1}, \dots, P_{l-1}, Q, R)\\
                         &= - \M^{(g-1), N}_{0, l+1}(P_{1}, \dots, P_{l-1}, \partial (P_{l}u)) + \M^{(g-1), N}_{0, l+1}(P_{1}, \dots, P_{l-1}, P_{l}, 1)\\
                         &= - \M^{(g-1), N}_{0, l+1}(P_{1}, \dots, P_{l-1}, \partial (P_{l}u))\,.
    \end{split}
  \end{equation*}
  where we used the traciality property of Lemma \ref{lem:prop-M0} to
  replace $u^{*}Ru$ by $R$ in the last argument of
  $\M^{(g-1), N}_{0, l+1}$, in the second line. In the third line we
  used the fact that there are no connected map with $l+1$ vertices,
  $l \geq 1$, and one vertex of degree $0$.

  We proceed similarly for the two other terms. We have
  \begin{equation*}
    \begin{split}
      - \sum_{P_{l} = QuR}&\sum_{\substack{g_{1}+g_{2}=g\\I\subset [l-1]}}\M^{(g_{1}), N}_{0, |I|+1}(\bm{P}|_{I}, Qu)\M^{(g_{2}), N}_{0, l-|I|}(\bm{P}|_{I^{c}}, Ru)\\
      +\sum_{P_{l} = Qu^{*}R}&\sum_{\substack{g_{1}+g_{2}=g\\I\subset [l-1]}}\M^{(g_{1}), N}_{0, |I|+1}(\bm{P}|_{I}, Q)\M^{(g_{2}), N}_{0, l-|I|}(\bm{P}|_{I^{c}}, R)\\
                          &= -\sum_{\substack{g_{1}+g_{2}=g\\I\subset [l-1]}}\M^{(g_{1}), N}_{0, |I|+1}\otimes\M^{(g_{2}), N}_{0, l-|I|}(\bm{P}|_{I}\otimes\bm{P}|_{I^{c}} \sharp (\partial Pu))
      + \M^{(g), N}_{0, l}(\bm{P})\,,
    \end{split}
  \end{equation*}
  where we used that there are no maps with one vertex of degree 0 if
  $g \geq 1$ or $l \geq 2$.

  The third term is
  \begin{equation*}
    \begin{split}
      - \sum_{j=1}^{l-1}\sum_{P_{j}=QuR}&\M^{(g), N}_{0, l-1}(P_{1}, \dots, P_{j-1}, P_{j+1}, \dots, P_{l-1}, RQuP_{l}u)\\
      + \sum_{j=1}^{l-1}\sum_{P_{j}=Qu^{*}R}&\M^{(g), N}_{0, l-1}(P_{1}, \dots, P_{j-1}, P_{j+1}, \dots, P_{l-1}, RQP_{l})\\
                                        &= - \sum_{j=1}^{l-1}\M^{(g), N}_{0, l-1}(P_{1}, \dots, P_{j-1}, P_{j+1}, \dots, P_{l-1}, (\D P_{j})P_{l}u)\,.
    \end{split}
  \end{equation*}
  Putting the three terms together, we get the result when $P_{l}$ is
  a monomial that finishes by $u$. The cyclicity property of Lemma \ref{lem:prop-M0}
  implies that it is then the same if $P_{l}$ contains a $u$. Finally,
  if $P_{l}$ does not contains a $u$, then it is either a constant and
  the formula is clear, or it contains a $u^{*}$. In the latter case,
  the conjugation property of Lemma \ref{lem:prop-M0} allows us to recover the
  result.

  Finally, the result extend by linearity to all polynomials, as it is
  linear in each of the $P_{i}$.
\end{proof}

Notice that the formula in Corollary \ref{corol:induction-general} is
valid not only for monomials $P_{l}$ finishing by a $u$ but also for
all polynomials.

\subsection{How to cut maps}\label{sec:cut_maps}

In this section, we fix two integers $m \geq 2$ and $g \geq 0$, a
permutation $\gamma \in \Sym_{2m}$ and
$\bm{\epsilon} \in \mathcal{E}_{2m} = \{\bm{\epsilon} = (\epsilon(i))_{i\in [2m]} \in \{\pm 1\}^{2m}\colon \sum_{i=1}^{2m}\epsilon(i) = 0\}$.
By the cyclicity and the symmetry properties proved in Lemma \ref{lem:prop-M0}, we
can assume that the polynomial $P_{l}$ finishes by the letter $u$. We
thus assume that $\epsilon(2m) = +1$. We consider a map of unitary type,
$\carte \in \mathfrak{C}(g, m, \gamma, \bm{\epsilon})$, with $r$ black vertices. Let
$\surf$ denote the underlying surface of the map $\carte$.

By Theorem \ref{thm:permutational_model}, this map is described by the permutation
$\pi \coloneqq \pi_{\carte}$ and the tuple of transpositions
$\tau_{\carte} = (\tau_{1}, \ldots, \tau_{r}) \in \mwset^{r}(\pi^{(\epsilon)})$, with $r$
related to $g$, $\pi_{\carte}$ and $\gamma_{\carte}$ by Euler's formula, see
(\ref{eq:Euler_for_map}). We will consider two ways of cutting this map, depending on
whether $\tau_{r}(2m) = 2m$ or not.

\begin{remark}\label{rem:vertex_boundary_component}
  We can also see vertices as ``holes'' in the surface, that is, we take the
  underlying surface $\surf$ to be a surface with boundaries. A vertex is then a
  boundary component (homeomorphic to a circle) of the surface $\surf$. An edge is
  then a path connecting two boundary components. See Figure
  \ref{fig:ex_vertex_boundary} for an example.
\end{remark}
\begin{figure}[htbp]
  \centering
  \begin{minipage}{0.45\textwidth}
      \centering
      \includegraphics[width=0.5\textwidth]{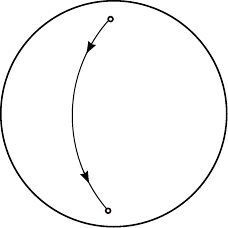}
      \caption*{(a)}
    \end{minipage}
    \begin{minipage}{0.45\textwidth}
      \centering
      \includegraphics[width=0.45\textwidth]{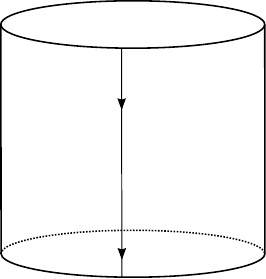}
      \caption*{(b)}
  \end{minipage}\hfill
  \caption{\label{fig:ex_vertex_boundary}(a) An oriented map of genus 0 with
    two vertices (b) The corresponding map where vertices are seen as boundary
    components. The underlying surface is a cylinder.}
\end{figure}

We will see white vertices as boundary components of the underlying
surface, as explained in Remark \ref{rem:vertex_boundary_component}. To describe the cutting
procedure, we introduce the definition of a corner, see Figure \ref{fig:corner}.
\begin{definition}\label{def:corner}
  Let $\carte$ be a map of unitary type. Consider a (white or black)
  vertex $v$ in $\carte$, seen as a boundary component of a surface,
  as explained in Remark \ref{rem:vertex_boundary_component}, and a half-edge $h$.

  The \textbf{corner} at the left (respectively at the right) of $h$ is the
  connected, closed set $C$ such that
  \begin{itemize}
    \item $C$ is a subset of the boundary component $B$ corresponding
          to $v$;
    \item $C$ is in the boundary of the face $f$ at the left (respectively
          right) of the half-edge $h$;
    \item the boundary of $C$ is $\{u, v\}$, where $u$ is the point of
          intersection of $B$ and the half-edge $h$, and $v$ is the
          point of intersection of $B$ and the half-edge $h'$ that
          follows the half-edge $h$ when going around the face $f$ in
          the clockwise (resp. counterclockwise) direction starting
          from the left (resp. the right) of $h$.
  \end{itemize}
\end{definition}

\begin{figure}[ht]
  \centering
  \includegraphics[width=0.3\textwidth]{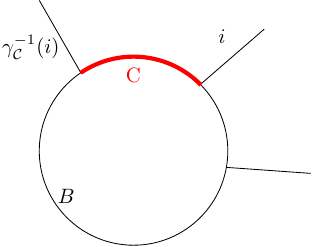}
  \caption{\label{fig:corner} The corner $C$ at the left of $i$, displayed with a red
    thick line. Here, $i$ is a white half-edge.}
\end{figure}

\subsubsection{First case: $\tau_{r}(2m) = 2m$}
In this case, $\val(\tau_{r}) < 2m$, see Definition
\ref{def:value_permutation}. The monotonicity of the walk $\bm{\tau}$
implies that for all $i \in [r]$, $\val(\tau_{i}) < 2m$, and in
particular $\tau_{i}(2m) = 2m$. By Definition
\ref{def:permutation-unitary}, the half-edge $2m$ is not connected to
any black half-edge, and is thus connected to a white-half-edge, say
the $j$-th one. Note that by our assumption that $\epsilon(2m) = +1$,
$\epsilon(j) = -1$. Notice that because $\carte$ is non-decreasing,
$\tau_{r}(2m) = 2m$ implies that for all $i$, $\tau_{i}(2m) = 2m$.

We construct a map $\carte'$ of unitary type from $\carte$ using the
following procedure, depicted in Figure \ref{fig:case_1_cut}.
\begin{enumerate}
  \item We choose a path $\eta$ in the face $f$ at the right of the half-edge
        $2m$. This path is chosen to start from the white vertex
        $w_{2m}$, attached in the corner (see Definition
        \ref{def:corner}) at the right of the half-edge $2m$, and end
        at $w_{j}$, attached in the corner at the left of the
        half-edge $j$. As faces are homeomorphic to disks, there is
        only one way to choose $\eta$ up to homotopy.
  \item We remove the edge containing the half-edges $j$ and $2m$.
  \item We cut the surface along $\eta$. Depending on the cases we connect two
        distinct boundary components of $\surf$, or we connect one boundary
        component to itself.
\end{enumerate}

\begin{remark}
  Notice that if $w_{j}$ and $w_{2m}$ are distinct vertices, this surgery is the
  usual contraction of an edge.
\end{remark}

\begin{remark}
  If the path $\eta$ is a loop, then it may be that the resulting map
  $\carte'$ is disconnected. Furthermore, one of the connected
  component can be a vertex with no edges. In this case, we remove
  this component.
\end{remark}

\begin{figure}[htbp]
  \begin{minipage}{0.33\textwidth}
    \centering
    \includegraphics[width=0.45\textwidth]{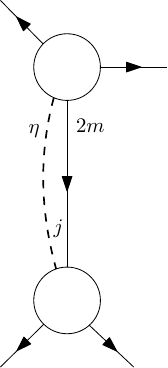}
    \caption*{1.}
  \end{minipage}\hfill
  \begin{minipage}{0.33\textwidth}
      \centering
      \includegraphics[width=0.45\textwidth]{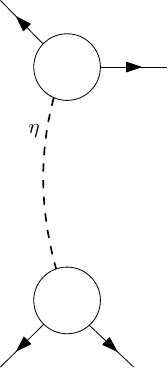}
    \caption*{2.}
  \end{minipage}
  \begin{minipage}{0.33\textwidth}
      \centering
      \includegraphics[width=0.45\textwidth]{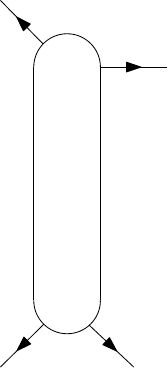}
    \caption*{3.}
  \end{minipage}
  \caption{\label{fig:case_1_cut}First way to cut the map.}
\end{figure}

\begin{lemma}\label{lem:first-cut-map}
  If $\carte$ is a map of unitary type with label set $I$ of size
  $2m$, then $\carte'$ is a map of unitary type, with label set
  $I \setminus \{j, 2m\}$, of size $2m-2$. Furthermore, if $\carte$ is
  non-decreasing, then so is $\carte'$.
\end{lemma}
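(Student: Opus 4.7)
The plan is to verify each clause of Definition 3.13 for $\carte'$, plus the non-decreasing property, by combining a global topological argument with a local check at the level of the permutational model.

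First I would record the structural consequence of the hypothesis $\tau_r(2m)=2m$. Since the walk $\tau_\carte = (\tau_1, \ldots, \tau_r) \in \mwset^r(\Id, \pi_\carte^{(\epsilon)})$ is non-decreasing in value, and $\val(\tau_r) = 2m$ would force $2m$ to appear in $\tau_r$ contradicting $\tau_r(2m) = 2m$, we get $\val(\tau_r) < 2m$; monotonicity then yields $\val(\tau_i) < 2m$ for all $i$, so no transposition $\tau_i$ moves the label $2m$. Moreover, each $\tau_i$ acts on $\epsilon^{-1}(+1)$ only, and $j := \pi_\carte(2m) \in \epsilon^{-1}(-1)$, so no $\tau_i$ involves $j$ either. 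Translating this through Theorem 3.15 and the label-propagation procedure, the edge of $\carte$ carrying the white half-edge labelled $2m$ is a direct edge between the white vertices $w_{2m}$ and $w_j$, not passing through any black vertex.

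Next I would verify that the three-step surgery yields a bona fide embedded graph. Removing the edge between $w_{2m}$ and $w_j$ merges the face $f$ to the right of the half-edge $2m$ with the face immediately on the other side of that edge into a single region $f'$; since $\eta$ is a simple arc in $\overline{f'}$ from $w_{2m}$ to $w_j$ (unique up to homotopy because faces are simply connected), cutting along $\eta$ splits $f'$ into two open disk regions. Viewing white vertices as boundary components as in Remark 4.2, the effect on the underlying surface is either (a) to merge two distinct boundary components into one if $w_{2m} \neq w_j$, or (b) to split one boundary component into two if $w_{2m} = w_j$; in both cases the complement of the new graph in the new surface is a disjoint union of open disks, so we do obtain an embedded graph $\carte'$, possibly on a disconnected surface.

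Then I would check each condition of Definition 3.13. Condition (1) is immediate since no black vertex was modified: $\carte'$ still has $r$ alternated black vertices of degree $4$, numbered $1$ to $r$. For condition (2), the surgery removed exactly the two white half-edges labelled $2m$ and $j$, so the white half-edges of $\carte'$ are in bijection with $I \setminus \{j, 2m\}$. Condition (3), on the ordering of edges joining pairs of black vertices, is inherited from $\carte$ since the subgraph induced by the black vertices is untouched. At the permutational level this amounts to the identities $\pi_{\carte'} = \pi_\carte|_{I \setminus \{j,2m\}}$ and $\tau_{\carte'} = \tau_\carte$, with the monotone-walk property preserved since the common sequence of transpositions is unchanged and now composes to $\pi_{\carte'}^{(\epsilon')} = \pi_\carte^{(\epsilon)}|_{(I\setminus\{j,2m\})\cap\epsilon^{-1}(+1)}$. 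Finally, the non-decreasing condition of Definition 3.17 is preserved verbatim, as the values $\val(b_1), \ldots, \val(b_r)$ are read off from the outgoing half-edges at the black vertices, none of which was altered.

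The delicate step is the second one: one must check carefully that $\eta$ can indeed be realised as a simple path in the (possibly pinched) merged region $f'$ and that the cutting produces only disk faces, handling separately the subcases $w_{2m} \neq w_j$ vs $w_{2m} = w_j$, whether the two sides of the removed edge bordered the same face originally, and whether the underlying surface becomes connected or disconnected. Once this topological verification is in place, all remaining properties reduce to the bookkeeping described above.
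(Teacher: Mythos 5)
Your proposal is correct and follows essentially the same route as the paper's proof: check that removing the edge merges the two adjacent faces and that cutting along $\eta$ (homotopic to the removed edge) separates them again into disks, then observe that the black vertices and their mutual edges are untouched, so the remaining conditions of the definition and the non-decreasing property are inherited. Your additional remarks on why $\tau_r(2m)=2m$ forces the half-edge $2m$ to be attached directly to a white half-edge, and on the permutational model, are consistent with (and slightly more explicit than) the paper's treatment.
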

\begin{proof}
  We first check that $\carte'$ is a map. We only need to check that
  each face of $\carte'$ is homeomorphic to a disk. We only modify the
  faces $f_{\text{left}}$ and $f_{\text{right}}$ at the left and the
  right of the edge $(w_{2m}, w_{j})$. At step 2, when we remove the
  edge, we connect $f_{\text{left}}$ and $f_{\text{right}}$. However,
  at step 3, we cut along a path homotopic to the edge, thus
  separating the two faces. All the faces of $\carte'$ thus remain
  disks.

  We now check that this map is a map of unitary type. The map
  $\carte'$ has $2m-2$ labelled white half-edges (maybe 0 if $m = 1$),
  with label set $I \setminus \{j, 2m\}$ (property \ref{it:unit-wv} in Definition \ref{def:maps_unitary_type} is
  satisfied). The black vertices have not been modified when
  transforming $\carte$ into $\carte'$, so properties \ref{it:unit-bv} and \ref{it:unit-order} in
  Definition \ref{def:maps_unitary_type} are satisfied. As the black vertices are not
  modified, a non-decreasing map remain non-decreasing.
\end{proof}

Let us now compute the permutations that represent $\carte'$. We will
need the notion of the trace of a permutation introduced by Kreweras
\cite{kreweras_sur_1972}. This notion has nothing to do with the notion of trace of a
matrix.
\begin{definition}\label{def:trace-permutation}
  Let $A$ be a finite subset of $\N$. The \textbf{trace} of a permutation
  $\sigma \in \Sym(A)$, on $B \subset A$, denoted by $\Tr(\sigma; B)$, is the
  permutation in $\Sym(B)$ defined for each $x \in B$ by
  \begin{equation*}
    \begin{split}
      \Tr(\sigma; B)(x) = \sigma^{p_{x}}(x),
    \end{split}
  \end{equation*}
  with $p_{x} \geq 1$ the smallest integer so that
  $\sigma^{p}(x) \in B$.
\end{definition}
Computing the trace of a permutation in cycle notation is
straightforward: write the cycle decomposition of $\sigma$, and erase all
elements in the cycles that do not belong to $B$.

Let $I_{j} = [2m-1]\setminus\{j\}$.

\begin{lemma}\label{lem:pi_A}
  Let $\pi' \coloneqq \Tr(\pi_{\carte}; I_{j})$.
  We have $\pi' = \pi_{\carte}\cycle{j, 2m} = \cycle{j, 2m}\pi_{\carte}$, and $\pi_{\carte'} = \pi' = \pi_{\carte}|_{I_{j}}$.
\end{lemma}
\begin{proof}
  We have assumed that the half-edges $2m$ and $j$ are connected to
  form an edge. This imply $\pi_{\carte}(2m) = j$ and $\pi_{\carte}(j) = 2m$. Thus, as $\pi_{\carte}$
  is a permutation, for all $k \in I_{j}$, $\pi_{\carte}(k) \in I_{j}$. This
  means that in the notation of Definition
  \ref{def:trace-permutation}, $p_{k}=1$. We get the first claim, and
  that $\pi' = \pi_{\carte}|_{I_{j}}$.

  When removing the edge at step 2, it is clear that the map we obtain is still
  described by $\pi_{\carte}$, with a cycle removed. When cutting the map at
  step 3, we do not modify the edges further.
\end{proof}

\begin{lemma}\label{lem:rho_A}
  Let $\gamma' \coloneqq \Tr(\gamma\cycle{j, 2m}; I_{j})$.
  We have $\gamma_{\carte'} = \gamma'$.
\end{lemma}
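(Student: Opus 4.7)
The plan is to split into two cases according to whether the white vertices $w_{2m}$ and $w_j$ (to which the half-edges $2m$ and $j$ are attached in $\carte$) coincide or are distinct. In each case I would compute $\Tr(\rho\cycle{j,2m}; I_j)$ directly from the cycle structure of $\rho = \rho_\carte$, and independently compute $\rho_{\carte'}$ by reading off the counterclockwise order of the half-edges around the vertex (or vertices) of $\carte'$ produced by the surgery of Section \ref{sec:cut_maps}, then check the two match.

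To set up the computation, let $(2m, a_1, \ldots, a_s)$ denote the cycle of $\rho$ containing $2m$ and $(j, b_1, \ldots, b_t)$ the cycle containing $j$ (assuming they are distinct cycles); these sequences record the labels of the half-edges around $w_{2m}$ and $w_j$ in counterclockwise order, by definition of $\rho_\carte$. A short manipulation shows that $\rho\cycle{j,2m}$ has a single cycle $(2m, b_1, \ldots, b_t, j, a_1, \ldots, a_s)$ merging the two, so taking the trace on $I_j = [2m-1]\setminus\{j\}$ (which deletes the entries $j$ and $2m$) produces the cycle $(b_1, \ldots, b_t, a_1, \ldots, a_s)$. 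Geometrically, since $w_{2m} \neq w_j$ the path $\eta$ is homotopic in the face $f$ to the deleted edge, so the surgery is the standard contraction of that edge: $w_{2m}$ and $w_j$ merge into a single white vertex around which the remaining half-edges are arranged, counterclockwise, precisely as $b_1, \ldots, b_t, a_1, \ldots, a_s$, which matches $\rho'$.

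If instead $w_{2m} = w_j$, the half-edges $2m$ and $j$ lie on a single cycle of $\rho$, which we write $(2m, a_1, \ldots, a_s, j, b_1, \ldots, b_t)$. Multiplying by $\cycle{j,2m}$ splits this cycle into $(2m, b_1, \ldots, b_t)$ and $(j, a_1, \ldots, a_s)$, and taking the trace on $I_j$ removes the entries $2m$ and $j$ and leaves the two cycles $(b_1, \ldots, b_t)$ and $(a_1, \ldots, a_s)$. On the geometric side, the deleted edge is now a loop at the common vertex, and cutting along $\eta$ separates the single boundary component into two; a direct inspection of the two resulting boundaries reads off exactly these two cyclic orders.

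The delicate point, and the only step that is not purely mechanical, is the orientation convention: one must verify that the splicing (or splitting) direction given by the surgery matches the orbits of $\rho\cycle{j,2m}$ and not those of its inverse. This is where the prescription that $\eta$ lies in the face to the right of the half-edge $2m$ is actually used, and it can be settled by drawing the local picture near the deleted edge in each of the two sub-cases and tracing a small loop along the new boundary. Once these local pictures are in place, the identity $\rho_{\carte'} = \Tr(\rho\cycle{j,2m}; I_j)$ follows in both cases.
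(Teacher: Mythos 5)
Your proposal is correct and follows essentially the same route as the paper: a case split on whether $w_{j}$ and $w_{2m}$ coincide, an explicit computation of the cycle structure of $\rho\cycle{j,2m}$ (merging two cycles in the first case, splitting one cycle in the second), and a comparison with the vertex structure produced by the surgery. Your cycle computations match the paper's, and you are in fact slightly more careful than the paper in flagging the orientation convention as the one point that needs checking against the local picture.
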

\begin{proof}
  Assume first that $w_{j}$ and $w_{2m}$ are two distinct vertices. Let
  $c = \cycle{u_{1}, \ldots, u_{p}, j}$ and
  $c' = \cycle{u'_{1}, \ldots, u'_{p'}, 2m}$ be the cycles that represent them.
  After cutting the map at step 3, the vertices are replaced by a vertex with
  structure
  $\cycle{u_{1}, \ldots, u_{p}, u'_{1}, \ldots, u'_{p'}} = \Tr(cc'\cycle{j, 2m}; I_{j})$.

  If $w_{j} = w_{2m}$, this vertex is represented by a cycle
  $c = \cycle{u_{1}, \ldots, u_{p}, j, u'_{1}, \ldots, u'_{p'}, 2m}$,
  which we cut using the transposition $\cycle{j, 2m}$. We obtain two
  vertices represented by the two cycles $\Tr(c\cycle{j, 2m}; I_{j})$.
\end{proof}

\begin{lemma}\label{lem:phi_A}
  We have $\phi_{\carte'} = \Tr(\phi_{\carte}; I_{j})$.
\end{lemma}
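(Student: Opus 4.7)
The plan is to reduce the geometric claim to a short algebraic identity about permutations, using Lemma~\ref{lem:phi_map} together with the two preceding lemmas. Applying Lemma~\ref{lem:phi_map} to $\carte'$ gives $\phi_{\carte'} = \rho_{\carte'}^{-1}\pi_{\carte'}^{-1}$. Combining this with $\pi_{\carte'} = \pi_{\carte}|_{I_j}$ and $\rho_{\carte'} = \Tr(\rho_{\carte}\cycle{j,2m}; I_j)$ from Lemmas~\ref{lem:pi_A} and \ref{lem:rho_A}, and using the elementary identity $\Tr(\sigma;B)^{-1} = \Tr(\sigma^{-1};B)$, the statement $\phi_{\carte'} = \Tr(\phi_{\carte}; I_j)$ is equivalent to the algebraic equality
\begin{equation*}
\Tr(\cycle{j,2m}\rho^{-1};\, I_j) \cdot \pi^{-1}\big|_{I_j} \;=\; \Tr(\rho^{-1}\pi^{-1};\, I_j),
\end{equation*}
where $\rho = \rho_{\carte}$ and $\pi = \pi_{\carte}$. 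Note that $\pi^{-1}$ genuinely restricts to a bijection of $I_j$ because $\pi(j)=2m$ and $\pi(2m)=j$.

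To verify this identity, I would fix $x \in I_j$, set $y = \pi^{-1}(x) \in I_j$, and compare the first-return trajectories of $\sigma := \rho^{-1}\pi^{-1}$ starting at $x$ with those of $\tilde{\sigma} := \cycle{j,2m}\rho^{-1}$ starting at $y$. The base case is immediate: $\sigma(x) = \rho^{-1}(y)$, so $\tilde{\sigma}(y) = \cycle{j,2m}(\sigma(x))$. A short induction on $k$ then shows
\begin{equation*}
\tilde{\sigma}^k(y) \;=\; \cycle{j,2m}\bigl(\sigma^k(x)\bigr) \qquad \text{for every } k\ge 1 \text{ such that } \sigma^l(x) \in \{j,2m\}\ \text{for all}\ 1\le l<k,
\end{equation*}
using only the two facts (i) $\pi^{-1}$ and $\cycle{j,2m}$ agree on $\{j,2m\}$, and (ii) $\cycle{j,2m}$ fixes $I_j$ pointwise. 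At the first $k$ for which $\sigma^k(x) \in I_j$, the transposition $\cycle{j,2m}$ acts trivially, so $\tilde{\sigma}^k(y) = \sigma^k(x) \in I_j$: the two first-return times coincide and the returned values agree, which is exactly what the algebraic identity claims.

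I do not expect any real obstacle once Lemmas~\ref{lem:pi_A}, \ref{lem:rho_A}, and \ref{lem:phi_map} are in hand; the whole argument is an elementary orbit-tracking lemma. An alternative direct geometric proof is available—since $\eta$ is homotopic, rel endpoints, to the removed edge inside the face $f$, the cutting surgery preserves the combinatorics of face-boundaries away from the labels $j$ and $2m$, and traversing the boundary of a face of $\carte'$ counterclockwise amounts to traversing the corresponding boundary of $\carte$ while skipping the two removed labels, which is exactly what $\Tr(\phi_{\carte};I_j)$ encodes. However, the permutational proof is cleaner, avoids the case split $w_j = w_{2m}$ vs.\ $w_j\ne w_{2m}$, and directly leverages the two lemmas already established.
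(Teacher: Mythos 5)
Your proof is correct and follows essentially the same route as the paper: both apply Lemma \ref{lem:phi_map} to $\carte'$, substitute the expressions from Lemmas \ref{lem:pi_A} and \ref{lem:rho_A}, and reduce the claim to an identity for the trace operation $\Tr(\cdot\,;I_{j})$. The paper disposes of that identity by invoking invariance of the trace under conjugation by $\cycle{j,2m}$ and then pulling $\pi'^{-1}$ out of the trace, whereas you verify it by tracking first-return orbits explicitly; the two verifications amount to the same computation.
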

\begin{proof}
  By Lemmas \ref{lem:phi_map}, \ref{lem:pi_A} and \ref{lem:rho_A},
  \begin{equation*}
    \begin{split}
      \phi_{\carte'} = \gamma'\pi'^{-1} = \Tr(\gamma_{\carte}\cycle{j, 2m}; I_{j})\pi_{\carte}|_{I_{j}}^{-1}\,.
    \end{split}
  \end{equation*}

  Notice first that for any $k \in I$ and $p \in \N$, $(\gamma_{\carte}\pi_{\carte}^{-1})^{p}(k) \in \{j, 2m\}$ if and only if $(\cycle{j, 2m}\gamma_{\carte}\pi_{\carte}^{-1}\cycle{j, 2m})^{p}(k) \in \{j, 2m\}$. This implies that
  \begin{equation*}
    \Tr(\gamma_{\carte}\pi_{\carte}^{-1}; I_{j}) = \Tr(\cycle{j, 2m}\gamma_{\carte}\pi_{\carte}^{-1}\cycle{j, 2m}; I_{j}) = \Tr(\cycle{j, 2m}\gamma_{\carte}^{-1}\pi'^{-1}; I_{j})\,,
  \end{equation*}
  were we used Lemma \ref{lem:pi_A} for the last equality.

  Then, as $\pi'(j) = j$ and $\pi'(2m) = 2m$, we have
  $\Tr(\cycle{j, 2m}\gamma\pi'^{-1}; I_{j}) = \Tr(\cycle{j, 2m}\gamma; I_{j})\pi'^{-1} = \phi_{\carte'}$.
\end{proof}

\begin{lemma}\label{lem:cc_A}
  If the map $\carte$ of unitary type is connected, then the map
  $\carte'$ has one or two connected components. Furthermore, if $j$
  and $2m$ do not belong to the same cycle in $\gamma$, $\carte'$ is
  connected.
\end{lemma}
\begin{proof}
  Assume first that $j$ and $2m$ belong to the same cycle in $\gamma$. This
  means that $w_{j} = w_{2m}$. If we erase the edge containing the half-edges
  $2m$ and $j$, $\carte$ stays connected. However, when we cut the map along the
  path $\eta$, we may separate the map into two connected components. More
  precisely, we separate the map into two connected components if and only if
  $\eta$ is homologically trivial, that is, the boundary of a surface embedded
  in $\surf$.

  If $j$ and $2m$ belong to different cycles, that is
  $w_{j} \neq w_{2m}$, then when removing the edge we may disconnect
  the two vertices but we then merge them. Consequently, the map
  $\carte'$ is connected.
\end{proof}

Using the permutations $\gamma' = \gamma_{\carte'}$ and $\phi_{\carte}$, and Lemma
\ref{lem:cc_A}, we can now compute the genus of $\carte'$. We recall Euler's
formula \eqref{eq:Euler_for_map} for a map of genus $g_{\carte}$ with
$C_{\carte}$ connected components
\begin{equation*}
  2C_{\carte} - 2g_{\carte} = c(\gamma_{\carte}) + c(\phi_{\carte}) - m - r.
\end{equation*}

We now discuss the several cases that can occur. First, if both $j$
and $2m$ are fixed points of $\phi_{\carte}$ then it means that
$\carte$ is reduced to a vertex with $2$ half-edges. We will assume in
what follows that $m \geq 2$.

If $j$ (respectively $2m$) is a fixed point of
$\phi_{\carte} = \gamma_{\carte}\pi_{\carte}^{-1}$, then it means in terms of
map that the face at the left of the white half-edge $j$ (resp. $2m$)
is at the left of the half-edge $j$ (resp. $2m$) only. In term of
permutations, we have $\gamma_{\carte}(2m) = \phi_{\carte}\pi_{\carte}(2m) = j$
(resp. $\gamma_{\carte}(j) = 2m$). Then, $\cycle{j, 2m}\gamma_{\carte}(2m) = 2m$
(resp. $\cycle{j, 2m}\gamma_{\carte}(j) = j$), and when taking the trace on
$I_{j}$, we remove one cycle of $\cycle{j, 2m}\gamma_{\carte}$.
Furthermore, if we remove the disk corresponding to the face at the
left of the half-edge $j$ (resp. $2m$), the resulting map $\carte'$ is
connected.

If $j$ and $2m$ are not fixed points of $\phi_{\carte}$, then none of
the connected component is reduced to a vertex without edges. It
implies that the total number of faces and cylces of the associated
permutation stays the same. We have $c(\phi_{\carte}) = c(\phi_{\carte'})$.
The resulting map $\carte'$ has one or two connected components: by
cutting the map we either remove a handle (and decreased the genus by
one) or disconnected the map.

It gives us one particular case (the degenerate case were one
connected component is reduced to a vertex without edges):
\begin{enumerate}
  \item if $j$ or $2m$ is a fixed point of $\phi_{\carte}$, then
        $c(\phi_{\carte'}) = c(\phi_{\carte})-1$, $c(\gamma') = c(\gamma)$, and
        $\carte'$ is connected. Thus, $g_{\carte'} = g_{\carte}$.
\end{enumerate}
If both $j$ and $2m$ are not fixed points of $\phi_{\carte}$, we have the three
cases.
\begin{enumerate}
  \setcounter{enumi}{1}
  \item If $j$ and $2m$ belong to the same cycle of $\gamma_{\carte}$, and $\carte'$ is
        connected, then $c(\gamma_{\carte'}) = c(\gamma_{\carte})+1$,
        $c(\phi_{\carte'}) = c(\phi_{\carte})$, and $g' = g-1$ (by
        Euler formula).
  \item If $j$ and $2m$ belong to the same cycle of $\gamma_{\carte}$, and
        $\carte'$ has two connected components, then
        $c(\gamma_{\carte'}) = c(\gamma_{\carte})+1$,
        $c(\phi_{\carte'}) = c(\phi_{\carte})$, and $g' = g$.
  \item If $j$ and $2m$ belong to two different cycles of $\gamma_{\carte}$,
        then $c(\gamma_{\carte'}) = c(\gamma_{\carte})-1$,
        $c(\phi_{\carte'}) = c(\phi_{\carte})$, and $g'=g$.
\end{enumerate}

\subsubsection{Second case: $\tau_{r}(2m) \neq 2m$}
In this case, the white half-edge labelled $2m$ is connected to a
black vertex. Let $j = \tau_{r}(2m) \in \epsilon^{-1}(+1)$ (as the support of the
transpositions $\tau_{i}$ is contained in $\epsilon^{-1}(+1)$). In that case,
the last black vertex has an outgoing half-edge labelled by $2m$ by
Lemma \ref{lem:label_he_in-out}. Similarly, the last black vertex has
an outgoing half-edge labelled $j$.

We construct a unitary map $\carte'$ from $\carte$ using the following
procedure, depicted in Figure \ref{fig:case_2_cut}. Notice that the first step is
possible as there are no loop with black edges, as indicated in Remark
\ref{rem:face-incident_white}.
\begin{enumerate}
  \item We choose two paths $\eta_{1}$ and $\eta_{2}$ contained respectively in
        the face $f_{2m}$ at the left of the half-edge $2m$, and
        $f_{j}$ at the left of the half-edge $j$. The path $\eta_{1}$
        (respectively $\eta_{2}$) is chosen to start from the white
        vertex $w_{2m}$ (respectively $w_{j}$), attached in the corner
        at the left of the half-edge $2m$ (respectively $j$), and end
        at the $r$-th black vertex, attached in the corner at the left
        of the half-edge labelled $2m$ (respectively $j$).
  \item We remove the $r$-th black vertex, and attach each ingoing edge to
        the outgoing edge that follows it in the counterclockwise
        order.
  \item We cut the surface $\surf$ along $\eta = \eta_{1}\cup\eta_{2}$.
\end{enumerate}

\begin{figure}[htbp]
  \begin{minipage}{0.33\textwidth}
    \centering
    \includegraphics[width=0.65\textwidth]{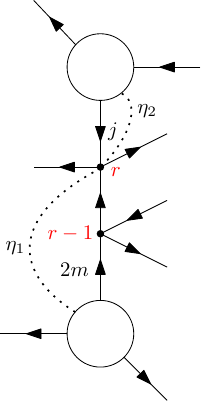}
    \caption*{1.}
  \end{minipage}\hfill
  \begin{minipage}{0.33\textwidth}
      \centering
      \includegraphics[width=0.65\textwidth]{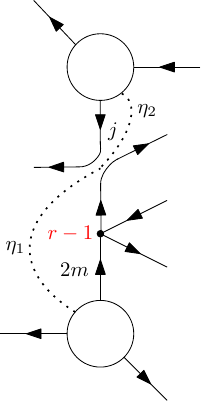}
    \caption*{2.}
  \end{minipage}
  \begin{minipage}{0.33\textwidth}
      \centering
      \includegraphics[width=0.65\textwidth]{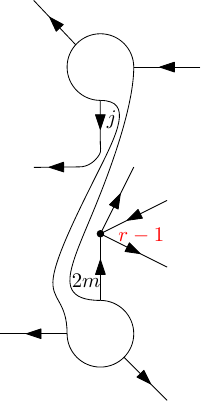}
    \caption*{3.}
  \end{minipage}
  \caption{\label{fig:case_2_cut}Second way to cut the map.}
\end{figure}

\begin{lemma}
  If $\carte$ is a map of unitary type with labelling set $I$ and with
  $r$ black vertices, then $\carte'$ is a map of unitary type with
  labelling set $I$ and with $r-1$ black vertices. Furthermore, if
  $\carte$ is non-decreasing, then so is $\carte'$.
\end{lemma}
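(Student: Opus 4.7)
The plan is to verify the three defining conditions of Definition \ref{def:maps_unitary_type} for $\carte'$ and then check the non-decreasing property.

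First, I would show that $\carte'$ is a well-defined oriented map, i.e., every complementary region of the underlying surface is a disk. This hinges on a topological analysis of the surgery: step 2 (removing the 4-valent vertex $b_r$ and connecting each ingoing edge to the ccw-next outgoing edge) merges two pairs of corner-faces at $b_r$ into two ``big'' faces, while step 3 (cutting along $\eta_1 \subset f_{2m}$ and $\eta_2 \subset f_j$) restores the separation. Since each $\eta_i$ lies in the interior of a face that is a disk and has its endpoints on boundary vertices, each face of $\carte'$ is obtained by cutting a disk by an arc, hence remains a disk. Conditions 1 and 2 of Definition \ref{def:maps_unitary_type} are then straightforward: the surgery removes only $b_r$ and leaves the other black vertices (still alternated of degree 4, numbered $1,\dots,r-1$) and the white half-edges (still labelled by $I$) untouched.

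Next, the ordering condition 3. Any edge of $\carte'$ is either inherited from $\carte$ (in which case the condition holds by assumption) or is one of the two ``smoothed'' edges created at step 2 from an ingoing half-edge at $b_r$ (coming from some vertex $v$) and the ccw-next outgoing half-edge at $b_r$ (going to some vertex $w$). I claim $w$ must be white: otherwise $w = b_l$ for some $l \leq r$, but the outgoing edge from $b_r$ to $b_l$ in $\carte$ would force $r < l$ by condition 3 of $\carte$, a contradiction. So the new edges of $\carte'$ always terminate at a white vertex and condition 3 is vacuously satisfied.

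Finally, the non-decreasing property. Using Theorem \ref{thm:permutational_model}, it suffices to show that $\tau_{\carte'} = (\tau_1, \dots, \tau_{r-1})$: then $\val(\tau_1) \leq \dots \leq \val(\tau_{r-1})$ is inherited from $\carte$. The black vertices $b_1, \dots, b_{r-1}$ of $\carte'$ have the same outgoing half-edges as in $\carte$, so the claim reduces to showing that the label of each such half-edge, determined by the propagation procedure described after Definition \ref{def:maps_unitary_type}, is unchanged by the surgery.

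The main obstacle will be verifying this last label-preservation statement. The surgery alters the faces $f_{2m}$ and $f_j$ and the two corner-faces at $b_r$ on the ``right'' sides of its outgoing edges, so if a propagation path from a half-edge at some $b_i$ (with $i < r$) passes through these modified regions in $\carte$, one must check the corresponding path in $\carte'$ reaches the same white half-edge (possibly running along a cut). I would handle this by a case analysis on how the face to the left of the half-edge at $b_i$ interacts with $f_{2m}$, $f_j$, and $b_r$; in each case, the preservation follows from the fact that $\eta_1, \eta_2$ are chosen inside $f_{2m}$ and $f_j$ precisely so as not to disrupt the clockwise propagation to the white half-edges $2m$ and $j$.
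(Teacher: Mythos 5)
Your overall strategy coincides with the paper's: analyze the surgery topologically to see that $\carte'$ is still a map, check the conditions of Definition \ref{def:maps_unitary_type}, and reduce the non-decreasing claim to the identification $\tau_{\carte'}=(\tau_{1},\ldots,\tau_{r-1})$. But your first step has a genuine gap. When $b_{r}$ is removed and the two in--out strands are smoothed, exactly \emph{one} pair of opposite corners at $b_{r}$ (the two corners lying between the strands, namely the corners belonging to $f_{2m}$ and $f_{j}$) is merged into a single region, not ``two pairs into two big faces'' as you write; and when $f_{2m}$ and $f_{j}$ are the \emph{same} face of $\carte$, the resulting merged region is an annulus, not a disk. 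In that case your justification ``each face of $\carte'$ is obtained by cutting a disk by an arc, hence remains a disk'' does not apply. The conclusion survives because $\eta=\eta_{1}\cup\eta_{2}$ joins the two boundary circles of the annulus, so cutting along it produces a single disk, but this case has to be treated separately; the paper's proof does exactly this (``we may have connected a face to itself, thus creating a `face' homeomorphic to an annulus'').

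On the ``furthermore'' part, you correctly identify the real difficulty --- the values $\val(b_{i})$ are defined through the face-propagation labelling, and the surgery modifies $f_{2m}$ and $f_{j}$, so label preservation at $b_{1},\ldots,b_{r-1}$ is not automatic --- but you only announce a case analysis without carrying it out, so this portion remains a plan rather than a proof. The paper avoids the direct label chase: it first computes the permutational model of $\carte'$ (Lemma \ref{lem:pi_A}--style computations give $\pi_{\carte'}=\tau_{r}\pi_{\carte}$, hence $\pi_{\carte'}^{(\epsilon)}=\tau_{r-1}\cdots\tau_{1}$ and $\tau_{\carte'}=(\tau_{1},\ldots,\tau_{r-1})$), after which monotonicity of $\carte'$ is inherited from that of $\carte$. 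If you want to finish along your route, the cleanest repair is to invoke that computation together with the bijection of Theorem \ref{thm:permutational_model} rather than tracking propagation paths through the modified faces by hand.
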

\begin{proof}
  As in the proof of Lemma \ref{lem:first-cut-map}, we first prove
  that $\carte'$ is a map. We have to check that the faces are
  homeomorphic to disks. When removing the black vertex, at step 2, we
  may have connected two faces together, or may have connected a face
  to itself, thus creating a ``face'' homeomorphic to an annulus.
  However, when we cut the map, at step 3, we recover one or two faces
  homeomorphic to disks. Thus, $\carte'$ is a map.

  We now show that $\carte'$ is indeed a map of unitary type. The map
  $\carte'$ has $r-1$ black vertices. We removed the last black vertex
  and did not create any new edge linking two black vertices, thus
  properties \ref{it:unit-bv} and \ref{it:unit-order} of Definition
  \ref{def:maps_unitary_type} are satisfied. We did not remove any
  white half-edge so property \ref{it:unit-wv} is satisfied as well.
  Thus, $\carte'$ is of unitary type. Furthermore, as we removed the
  last black vertex and let the other ones unchanged, if $\carte$ is
  non-decreasing (recall Definition \ref{def:monotone-unitary-map}),
  $\carte'$ is non-decreasing as well.
\end{proof}

Let us now compute the permutations that represent $\carte'$.

\begin{lemma}\label{lem:pi_B}
  Let $\pi' = \cycle{j, 2m}\pi_{\carte} = \tau_{r}\pi_{\carte}$. We have
  $\pi_{\carte'} = \pi'$.
\end{lemma}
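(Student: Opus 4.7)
The plan is to prove $\pi_{\carte'} = \tau_r \pi_\carte$ by a direct case-by-case analysis of the cut procedure, tracking how each edge of $\carte$ is transformed and how the label of the partner of every white half-edge changes. I would observe first that step 3 (cutting along $\eta$) does not alter the edge structure of the map at all---it only changes the underlying surface---so it has no effect on $\pi$. Hence only step 2 (removal of the $r$-th black vertex together with the cc pairing of ingoing and outgoing half-edges) needs to be analyzed, and this modifies only the four edges incident to the $r$-th black vertex.

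In the first case, when the edge containing the white half-edge $h_x$ is disjoint from the $r$-th black vertex, that edge is preserved and $\pi_{\carte'}(x) = \pi_\carte(x)$. I would then check that $\pi_\carte(x) \notin \{2m,j\}$ in this situation: if $\pi_\carte(x) = 2m$ the half-edge connected to $h_x$ has label $2m$ and is ingoing by Lemma \ref{lem:label_he_in-out}, but the way labels propagate around the alternated $r$-th vertex forces every ingoing half-edge carrying the label $2m$ to be located at the $r$-th black vertex, contradicting the case assumption; and likewise for $j$. Therefore $\tau_r = \cycle{2m,j}$ acts trivially on $\pi_\carte(x)$, giving $\pi_{\carte'}(x) = \tau_r \pi_\carte(x)$.

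In the second case, $h_x$ is connected to an ingoing half-edge $H$ at the $r$-th black vertex. Step 2 pairs $H$ with the outgoing half-edge $H'$ that follows it counterclockwise at the vertex, and the elsewhere end of $H'$ becomes the new partner of $h_x$ in $\carte'$. Using the alternated in/out/in/out cyclic order at the $r$-th vertex together with the propagation rule, one checks that this cc pairing has the following concrete effect: if $\pi_\carte(x)$ corresponds to the ingoing half-edge paired with $H'$ of label $2m$, then after the cut $h_x$ is connected to the half-edge previously reached from the white $h_j$ (so $\pi_{\carte'}(x) = \pi_\carte(j)$), and symmetrically with $2m$ and $j$ exchanged. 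This is precisely the prescription $\pi_{\carte'}(x) = (\tau_r \pi_\carte)(x)$, which handles the remaining values $x \in \{2m, j, \pi_\carte^{-1}(2m), \pi_\carte^{-1}(j)\}$.

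The main obstacle is the combinatorial bookkeeping in Case 2: correctly matching the counterclockwise pairing at the alternated black vertex with the transposition $\tau_r = \cycle{2m,j}$. This depends on a careful use of the conventions ``face to the left'' for ingoing and outgoing half-edges (Figure \ref{fig:half-edge-left}) and the propagation procedure (Lemma \ref{lem:label_he_in-out}), which together ensure that the labels of the two ingoing half-edges at the $r$-th vertex are precisely the ones swapped in the right-multiplication by $\tau_r$ of $\pi_\carte$. As a consistency check, the relation $\pi_{\carte'} = \tau_r \pi_\carte$ correctly implies $\pi_{\carte'}^{(\epsilon)} = \tau_r \pi_\carte^{(\epsilon)} = \tau_{r-1}\cdots\tau_1$, which is what Proposition \ref{prop:tau_carte} demands for the map $\carte'$ with $r-1$ black vertices.
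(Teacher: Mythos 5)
Your overall strategy---observe that only step 2 changes the edge pairing, and track how the four half-edges at the $r$-th black vertex are re-attached---is the same as the paper's, but your case analysis has a gap precisely where the content of the lemma lies. The permutation $\pi_{\carte}$ changes value only at $\pi_{\carte}^{-1}(j)$ and $\pi_{\carte}^{-1}(2m)$ (sent to $2m$ and $j$ respectively); these label white \emph{ingoing} half-edges attached to the two \emph{outgoing} half-edges of the $r$-th black vertex. Your Case 2 only treats white half-edges attached to the \emph{ingoing} half-edges of that vertex (which, when they exist, are $h_{j}$ and $h_{2m}$, and for which the value of $\pi_{\carte}$ does not change---only the identity, not the label, of the partner changes), so the two labels where $\pi_{\carte'}\neq\pi_{\carte}$ are never actually analyzed. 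To handle them you need the one structural fact the paper's proof hinges on and that you never invoke: since $r$ is the \emph{last} black vertex, condition 3 of Definition \ref{def:maps_unitary_type} forces the two outgoing half-edges of the $r$-th black vertex (labelled $j$ and $2m$) to be attached to white half-edges, which are then by definition $h_{\pi^{-1}(j)}$ and $h_{\pi^{-1}(2m)}$; after the splicing these become attached to half-edges labelled $2m$ and $j$, which is essentially the whole lemma.

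Two smaller points. In Case 1 your orientation is backwards: by Lemma \ref{lem:label_he_in-out} every half-edge labelled $2m$ is \emph{outgoing} (since $\epsilon(2m)=+1$), and it is false that all of them sit at the $r$-th black vertex ($h_{2m}$ itself carries that label, as may half-edges at earlier black vertices of its chain). The clean way to get $\pi_{\carte}(x)\notin\{j,2m\}$ for the untouched $x$ is simply that $\pi_{\carte}$ is a bijection whose preimages of $j$ and $2m$ have already been located at the $r$-th black vertex. Finally, $h_{2m}$ and $h_{j}$ need not be attached to the $r$-th black vertex at all (each attaches to the first black vertex whose transposition moves its label), so listing $2m$ and $j$ among the indices affected by the surgery is not correct in general---though this is harmless, since $\tau_{r}$ fixes $\pi_{\carte}(2m)$ and $\pi_{\carte}(j)$, which lie in $\epsilon^{-1}(-1)$.
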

\begin{proof}
  We only modify the edges during step 2, when we remove the black
  vertex. The outgoing half-edges of the $r$-th black vertex in
  $\carte$ are labelled on the left by $j$ and $2m$. These half-edges
  are part of edges connected at their other end to white vertices,
  because the black vertex we remove is the last. These edges are
  connected respectively to the half edge $\pi_{\carte}^{-1}(j)$ and
  $\pi_{\carte}^{-1}(2m)$.

  Consider the white half-edge labelled by $\pi_{\carte}^{-1}(j)$. After
  the surgery, it is connected to the half-edge labelled $2m$.
  Similarly, the white half-edge labelled $\pi_{\carte}^{-1}(2m)$ is
  attached to the half-edge labelled $j$.

  This corresponds to having $\pi_{\carte'}(\pi_{\carte}^{-1}(j)) = 2m$ and
  $\pi_{\carte'}(\pi_{\carte}^{-1}(2m)) = j$, and $\pi_{\carte'} = \pi_{\carte}$ for all other
  values. We can write this $\pi_{\carte'} = \cycle{j, 2m}\pi_{\carte}$.
\end{proof}

Note that
\begin{equation*}
  \pi'^{(\epsilon)} = (\tau_{r}\pi_{\carte}\tau_{r}\pi_{\carte})|_{\epsilon^{-1}(+1)} = (\tau_{r}\pi_{\carte}^{2})|_{\epsilon^{-1}(+1)} = \tau_{r}\pi_{\carte}^{(\epsilon)} = \tau_{r-1}\cdots\tau_{1}\,.
\end{equation*}
The first and third equalities are Definition
\ref{def:permutations-epsilon}, the second one follows from the fact
that $\pi_{\carte}(\epsilon^{-1}(+1)) = \epsilon^{-1}(-1)$ and the fact that the support of
$\tau_{r}$ is contained in $\epsilon^{-1}(+1)$, the fourth one is a
consequence of Proposition \ref{prop:tau_carte}. This is coherent with
the fact that
\begin{equation}\label{eq:def-tau-carte'}
  \tau_{\carte'} = (\tau_{1}, \ldots, \tau_{r-1})\,.
\end{equation}

\begin{lemma}\label{lem:rho_B}
  Let $\gamma' = \gamma\cycle{j, 2m} = \gamma\tau_{r}$.

  We have $\gamma_{\carte'} = \gamma'$
\end{lemma}
\begin{proof}
  The white vertices are only modified when we cut the map, at step 3.
  The proof is similar to the one of Lemma \ref{lem:rho_A}. We consider the two
  cases of $j$ and $2m$ in the same cycle in $\gamma$ or not, and we
  compute $\gamma_{\carte} = \gamma\tau_{r}$.
\end{proof}

It follows from Lemmas \ref{lem:phi_map}, \ref{lem:pi_B}, and \ref{lem:rho_B}, that
\begin{equation}\label{eq:conjug-phi}
  \phi_{\carte'} = \tau_{r}\phi_{\carte}\tau_{r}\,.
\end{equation}
In particular, $c(\phi_{\carte}) = c(\phi_{\carte'})$.

We can now state the counterpart of Lemma \ref{lem:cc_A}.
\begin{lemma}\label{lem:cc_B}
  If the unitary type map $\carte$ is connected, then the map
  $\carte'$ has one or two connected components. Furthermore, if $j$
  and $2m$ do not belong to the same cycle in $\gamma$, $\carte'$ is
  connected.
\end{lemma}
\begin{proof}
  The proof is almost the same as for Lemma \ref{lem:cc_A}. Alternatively, we
  can prove this lemma using Proposition \ref{prop:connectedness}.

  Let $k \geq 1$ be the number of orbits of the action of
  $G' \coloneq G(\gamma', \pi', \tau')$ on $[2m]$. We notice that
  $G \coloneq G(\gamma, \pi_{\carte}, \tau_{\carte}) = \langle{\gamma', \pi', \tau_{1}, \ldots, \tau_{r}}\rangle$. In particular, $j$
  and $2m$ always belong to the same orbit of $G$. If $j$ and $2m$ do
  not belong to the same orbit of $G'$, $\tau_{r}$ connects two orbits of
  the action of $G'$, and $G$ has $k-1$ orbits. Conversely, if $j$ and
  $2m$ belong to the same orbit of $G'$, the actions of the two groups
  have the same number $k$ of orbits.

  In particular, if $j$ and $2m$ belongs to the same cycle of $\gamma'$ (or
  equivalently to different cycles of $\gamma$), then the two groups
  have the same number of orbits. We assumed that $\carte$ is
  connected so by Proposition \ref{prop:connectedness}, the action of
  $G$ has one orbit. Thus, the action of $G'$ has one orbit and
  $\carte'$ is connected.

  In the other cases, $G$ has $k$ or $k-1$ orbits and necessarily, $k$
  is 1 or 2.
\end{proof}

Using Lemmas \ref{lem:cc_B}, \ref{lem:rho_B} and \eqref{eq:conjug-phi}, we can compute the
genus $g'$ of $\carte'$ using Euler's formula
\eqref{eq:Euler_for_map}. There are three cases.
\begin{itemize}
  \item If $j$ and $2m$ belong to the same cycle in $\gamma$ and $\carte'$ has
        two connected components, then $c(\gamma') = c(\gamma) +1$ and $g' = g$.
  \item If $j$ and $2m$ belong to the same cycle in $\gamma$ and $\carte'$ is
        connected, then $c(\gamma') = c(\gamma) +1$ and $g'=g-1$.
  \item If $j$ and $2m$ belong to two different cycles in $\gamma$, then
        $\carte'$ is connected (Lemma \ref{lem:cc_B}),
        $c(\gamma') = c(\gamma) -1$, and $g=g'$.
\end{itemize}
Note that in these three cases, $m$ is unchanged.

\subsection{Proof of Theorem \ref{thm:induction_V=0}}
We can now turn to the proof of Theorem \ref{thm:induction_V=0}.
\begin{proof}
  Fix $\gamma=\gamma_{\bm{P}} \in \Sym_{2m}$, $\epsilon = \epsilon_{\bm{P}}$, and $\bm{M} = \bm{{M}}_{\bm{P}}$.
  Assume first that $m = \frac{1}{2}\deg \bm{P} \geq 2$. We decompose the sum
  \begin{equation*}
    \begin{split}
      \M^{(g), N}_{0, l}(P_{1}, \ldots, P_{l}u) &= (-1)^{m}\sum_{\substack{\carte \in \mathfrak{C}(g, [2m], \epsilon, \gamma)\\\carte \text{ connected}}}(-1)^{c(\phi_{\carte})}\tr_{\phi_{\carte}}(\bm{M})
    \end{split}
  \end{equation*}
  in two sums, each corresponding to one of the two cases of the previous
  construction.

  We introduce the set $W^{f}_{2m}$, of monotone walks whose last step
  $\tau$ satisfy $\tau(2m) = 2m$, and the set $W^{c}_{2m}$ of monotone walks
  whose last step $\tau$ satisfy $\tau(2m) \neq 2m$. The functions
  $\ind_{W^{f}_{2m}}$ and $\ind_{W^{c}_{2m}}$ are the indicator
  functions of those sets.

  The sum corresponding to the first case, is thus by the previous
  surgery of Section \ref{sec:cut_maps}
  \begin{equation*}
    \begin{split}
      (-1)^{m}\sum_{\substack{\carte \in \mathfrak{C}(g, m, \epsilon, \gamma)\\\carte \text{
      connected}}}(-1)^{c(\phi_{\carte})}\tr_{\phi_{\carte}}(\bm{M})\ind_{W^{f}_{2m}}(\tau_{\carte})
      &= (-1)^{m}\sum_{\substack{\pi\in\Sym^{(\epsilon)}_{2m}\\\tau\in\mwset^{r(g, m, \gamma, \pi)}(\pi^{(\epsilon)})\\G(\gamma, \pi, \tau)\text{ is transitive}}}(-1)^{c(\gamma\pi^{-1})}\tr_{\gamma\pi^{-1}}(\bm{M})\ind_{W^{f}_{2m}}(\tau)\\
      &= (-1)^{m}\sum_{j\in\epsilon^{-1}(-1)}\sum_{\substack{\pi'\in\Sym^{(\epsilon)}(I_{j})\\\pi = \cycle{j, 2m}\pi'\\\tau\in\mwset^{r(g, m, \gamma, \pi)}(\pi'^{(\epsilon)})\\G(\gamma, \pi, \tau)\text{ is transitive}}}(-1)^{c(\gamma\pi^{-1})}\tr_{\gamma\pi^{-1}}(\bm{M}),\\
    \end{split}
  \end{equation*}
  where $r(g, m, \gamma, \pi) = c(\gamma) + c(\gamma^{-1}\pi^{-1}) - m +2g - 2$ according
  to (\ref{eq:Euler_for_map}), and we used the fact that in the first case $\pi$ can be
  rewritten $\pi'\cycle{j, 2m}$ for some $j \in \epsilon^{-1}(-1)$. Notice that
  the global factor $(-1)^{m}$ will account for a sign $-1$, when
  removing an edge and going from $2m$ white half-edges to $2m-2$
  white half-edges.

  We rewrite this as a sum of four terms, corresponding to the
  different ways of computing the genus, as explained in the last
  section. We interpret the new sums as series
  $\M^{N}_{g', l', 0}(Q_{1}, \ldots, Q_{l'})$, with $l'$ (which corresponds
  to the number of vertices in the new map $\carte'$) and $g'$ (the
  genus of the new map $\carte'$) two integers, and $Q_{1}, \ldots, Q_{l'}$
  monomials either in $\wordsA$ or of degree 0. We introduce the
  notation $\bm{Q} = (Q_{1}, \ldots, Q_{l'})$. These monomials are chosen
  so that the combinatorial data $\gamma'$, and $\epsilon'$ described in the last
  section, and the tuple $\bm{M'}$ of appropriate monomials of degree
  0 is such that $\gamma_{\bm{Q}} = \gamma'$, $\bm{\epsilon}_{\bm{q}} = \epsilon'$, and
  $\bm{M}_{\bm{Q}} = \bm{M}'$. The tuple $\bm{M'}$ is chosen
  differently depending on the subcase, but always so that
  $\tr_{\phi_{\carte}}(\bm{M}) = \tr_{\carte'}(\bm{M'})$ (except for
  subcase 1., see below).

  There are four cases. Let us consider first the terms corresponding
  to subcases 1. and 3., which are
  \begin{enumerate}
    \item if $j$ or $2m$ is a fixed point of $\phi_{\carte}$, then
          $c(\phi_{\carte'}) = c(\phi_{\carte})-1$,
          $l' = c(\gamma_{\carte'}) = c(\gamma_{\carte}) = l$, and $\carte'$ is
          connected. Thus, $g' = g$.
      \setcounter{enumi}{2}
      \item If $j$ and $2m$ belong to the same cycle of $\gamma_{\carte}$, and
        $\carte'$ has two connected components, then
        $l' = c(\gamma_{\carte'}) = c(\gamma_{\carte})+1 = l+1$,
        $c(\phi_{\carte'}) = c(\phi_{\carte})$, and $g' = g$.
  \end{enumerate}

  In those two cases, the map $\carte$ is cut into two maps, with total genus
  equal to $g$. The case 1. corresponds to the degenerate case where one of the
  two maps has no edges, and is reduced to a vertex. We associate to it the
  weigth $\tr(M_{j})$ or $\tr(M_{2m})$.

  Together, these cases account for the term
  \begin{equation*}
    \begin{split}
      -\sum_{P_{l}u=Qu^{-1}Ru}\sum_{\substack{g_{1}+g_{2}=g\\I\subset [l-1]}}\M^{(g_{1}), N}_{0, |I|+1}(\bm{P}|_{I}, Q)\M^{(g_{2}), N}_{0, |I^{c}|+1}(\bm{P}|_{I^{c}}, R).
    \end{split}
  \end{equation*}
  The subcase 1. corresponds to the term for which $Q$ or $R$ in the
  sum is reduced to a monomial of degree $0$, and the subcase 3. to
  the other terms. When cutting the map, we obtain two connected
  components, each containing a vertex corresponding to part of
  $P_{l}$. This correspond to the fact that in the argument of the
  series, $P_{l}$ is replaced by two monomials $Q$ and $R$ such that
  $Qu^{-1}Ru$, and one $u$ and one $u^{-1}$ are removed, corresponding
  to the two removed half-edges.

  Similarly, the subcase
  \begin{enumerate}
    \setcounter{enumi}{1}
      \item If $j$ and $2m$ belong to the same cycle of $\gamma_{\carte}$, and $\carte'$ is
        connected, then $l' = c(\gamma_{\carte'}) = c(\gamma_{\carte})+1 = l+1$,
        $c(\phi_{\carte'}) = c(\phi_{\carte})$, and $g' = g-1$ (by
        Euler formula).
  \end{enumerate}
  corresponds to the term
  \begin{equation*}
    \begin{split}
      -\sum_{P_{l}u = Qu^{-1}Ru}\M^{(g-1), N}_{0, l+1}(P_{1}, \ldots, P_{l-1}, Q, R).
    \end{split}
  \end{equation*}

  The subcase
  \begin{enumerate}
    \setcounter{enumi}{3}
    \item If $j$ and $2m$ belong to two different cycles of
          $\gamma_{\carte}$, then
          $l' = c(\gamma_{\carte'}) = c(\gamma_{\carte})-1 = l-1$,
          $c(\phi_{\carte'}) = c(\phi_{\carte})$, and $g'=g$.
  \end{enumerate}
  corresponds to the term
\begin{equation*}
    \begin{split}
      -\sum_{i=1}^{l-1}\sum_{P_{i}=Qu^{-1}R}\M^{(g), N}_{0, l-1}(P_{1}, \ldots, P_{i-1}, P_{i+1}, \ldots, P_{l-1}, RQP_{l}).
    \end{split}
  \end{equation*}
  Here, two vertices are glued together, corresponding to replacing two
  polynomials in the argument of the series by one: $RQP_{l}$.

  We proceed similarly for the terms that correspond to the second
  case where $\tau(2m) \neq 2m$. The corresponding sum is
  \begin{equation*}
    \begin{split}
      (-1)^{m}&\sum_{\substack{\carte \in \mathfrak{C}(g, [2m], \epsilon, \gamma)\\\carte \text{
      connected}}}(-1)^{c(\phi_{\carte})}\tr_{\phi_{\carte}}(\bm{M})\ind_{W^{c}_{2m}}(\tau_{\carte})\\
                                         &= (-1)^{m}\sum_{\substack{\pi\in\Sym^{(\epsilon)}_{2m}\\\bm{\tau}\in\mwset^{r(g, m, \gamma, \pi)}(\pi^{(\epsilon)})\\G(\gamma, \pi, \bm{\tau})\text{ is transitive}}}(-1)^{c(\gamma\pi^{-1})}\tr_{\gamma\pi^{-1}}(\bm{M})\ind_{W^{c}_{2m}}(\bm{\tau})\\
                                         &= (-1)^{m}\sum_{\substack{j\in\epsilon^{-1}(+1)\\j \neq 2m}}\sum_{\substack{\pi'\in\Sym^{(\epsilon)}_{2m}\\(\tau_{1}, \ldots, \tau_{r-1})\in\mwset^{r-1}(\pi'^{(\epsilon)})\\G(\gamma', \pi', \bm{\tau})\text{ is transitive}}}(-1)^{c(\gamma'\pi'^{-1})}\tr_{\gamma'\pi'^{-1}}(\bm{M}_{\cycle{j, 2m}}),\\
    \end{split}
  \end{equation*}
  where $\gamma' = \cycle{j, 2m}\gamma$,
  $\bm{M}_{\cycle{j, 2m}} = (M_{\cycle{j, 2m}(1)}, M_{\cycle{j, 2m}(2)}, \ldots, M_{\cycle{j, 2m}(2m)})$,
  $\bm{\tau} = (\tau_{1}, \ldots, \tau_{r-1}, \cycle{j, 2m})$, and
  $r = r(g, m, \gamma, \cycle{j, 2m}\pi')$. To go from the second to
  the third line, we replaced $\pi$ by $\pi' = \cycle{j, 2m}\pi$.

  Following the construction from last section, we get three kinds of
  terms corresponding to the three subcases from last section. The
  first subcase is
  \begin{enumerate}
    \item If $j$ and $2m$ belong to the same cycle in $\gamma$ and
          $\carte'$ has two connected components, then
          $c(\gamma') = c(\gamma) +1$ and $g' = g$.
  \end{enumerate}
  It corresponds to the sum
  \begin{equation*}
    \begin{split}
      \sum_{P_{l}u=QuRu}\sum_{\substack{g_{1}+g_{2}=g\\I\subset [2m]}}\M^{(g_{1}), N}_{0, |I|+1}(\bm{P}|_{I}, Qu)\M^{(g_{2}), N}_{0, |I^{c}|+1}(\bm{P}|_{I^{c}}, Ru).
    \end{split}
  \end{equation*}

  The second subcase is:
  \begin{enumerate}
    \setcounter{enumi}{1}
  \item If $j$ and $2m$ belong to the same cycle in $\gamma$ and $\carte'$ is
        connected, then $c(\gamma') = c(\gamma) +1$ and $g'=g-1$.
  \end{enumerate}
  corresponds to
  \begin{equation*}
    \begin{split}
      \sum_{P_{l}u=QuRu}\M^{(g-1), N}_{0, l+1}(P_{1}, \ldots, P_{l-1}, Qu, Ru).
    \end{split}
  \end{equation*}

  Finally, the last subcase is:
  \begin{enumerate}
    \setcounter{enumi}{2}
      \item If $j$ and $2m$ belong to two different cycles in $\gamma$, then
        $\carte'$ is connected (Lemma \ref{lem:cc_B}),
        $c(\gamma') = c(\gamma) -1$, and $g=g'$.
  \end{enumerate}
  corresponds to
   \begin{equation*}
    \begin{split}
      \sum_{i=1}^{l-1}\sum_{P_{i}=QuR}\M^{(g), N}_{0, l-1}(P_{1}, \ldots, P_{i-1}, P_{i+1}, \ldots, P_{l-1}, RQuP_{l}u).
    \end{split}
  \end{equation*}
  Putting all the terms together, we get the induction relation of
  Theorem \ref{thm:induction_V=0}.
\end{proof}

\subsection{Induction relation for the series $\mathcal{M}^{(g), N}_{V, l}$}
\label{sec:exist-form-cumul}

We first prove that the series $\M^{(g), N}_{V, l}(\bm{P})$ exist with
a radius of convergence that depend on $g$, $l$, and $V$. To that end,
we show bounds on the series of maps for $V = 0$ that are a
consequence of Theorem \ref{thm:induction_V=0}. Similar bounds have
been obtained in the Gaussian case in \cite[Lemma
4.3]{maurel-segala_high_2006}.
\begin{prop}\label{prop:bounds-V0}
  Assume that for all $N \geq 1$ and all $1 \leq i \leq p$ we have the bound
  $\|A^{N}_{i}\| \leq 1$ on the deterministic matrices matrices
  $(A_{i}^{N})$. Let $\bm{q} = (q_{1}, \ldots, q_{k}) \in \wordsA_{n}^{k}$ be
  monomials, and $\nu = \max_{1\leq i \leq k}\deg q_{i}$. We introduce the
  $n$-th Catalan number $c_{n} = \frac{1}{n + 1}\binom{2n}{n}$.

  There exists constants $A_{k} > 1$, $B_{k} > 1$, and $C_{k} > 1$
  that depend on $k$, and $D_{k, \nu} > 1$ that depends on $k$ and
  $\nu$ such that for all
  $\bm{P} = P_{1}, \ldots, P_{l} \in \wordsA_{n}^{l}$, and all
  $\bm{n}=(n_{1}, \ldots, n_{k})\in\N^{k}$, we have the bound
  \begin{equation}\label{eq:bound-V0-induction}
    \begin{split}
      \frac{1}{\bm{n}!}|\M^{(g), N}_{0, \sum_{i}n_{i}+l}(\underbrace{q_{1}, \ldots, q_{1}}_{n_{1}\text{
      times}}, \ldots, \underbrace{q_{k}, \ldots, q_{k}}_{n_{k}\text{
      times}}, P_{1}, \ldots, P_{l})| \leq A_{k}^{l(2m+\nu\bm{n})}B_{k}^{-l}C^{g(2m+\nu\bm{n})}D^{\bm{n}}\prod_{i}c_{\deg_{P_{i}}}\prod_{j=1}^{k}c_{n_{j}},
    \end{split}
  \end{equation} where $m = \frac{1}{2}\deg \bm{P}$.

  The constants can be chosen to be
  \begin{equation*}
    \begin{split}
      A_{k} &= C_{k} = \sqrt{6}\pi^{1/4}2^{k+3}\\
      B_{k} &= 3\cdot 4^{k+1}\\
      D_{k, \nu} &= 4k(4\ee^{1/\ee})^{\nu}.
    \end{split}
  \end{equation*}
\end{prop}
The proof is given in Appendix \ref{sec:bounds-sum-maps}. The value of the constants can be
improved. These bounds allow us to prove immediately that the series
$\M^{(g), N}_{V, l}$ (see Definition \ref{def:formal_cumulant}) converge.
\begin{corol}
  Let $\bm{P} = (P_{1}, \ldots, P_{l}) \in \wordsA_{n}^{l}$,
  $\bm{q} = (q_{1}, \ldots, q_{k}) \in \wordsA_{n}^{k}$ and
  $\bm{z} = (z_{1}, \ldots, z_{k})$, and let
  $V = \sum_{i=1}^{k}z_{i}q_{i}$ be a potential.

  As a series in $\bm{z}$, $\M^{(g), N}_{V, l}(P_{1}, \ldots, P_{l})$
  converges absolutely with radius of convergence
  $R_{l, g, V}\geq (4 A_{k}^{l+g}D_{k, \nu})^{-1}$.
\end{corol}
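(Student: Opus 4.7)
The plan is to plug the uniform bound of Proposition \ref{prop:bounds-V0} directly into the series defining $\M^{(g), N}_{V, l}$ and then recognise the resulting majorant as a finite product of Catalan generating series, whose radius of convergence is a straightforward explicit calculation.

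First I would unfold Definition \ref{def:formal_cumulant} and write
\begin{equation*}
  \M^{(g), N}_{V, l}(\bm{P}) = \sum_{\bm{n} \in \N^{k}} \frac{\bm{z}^{\bm{n}}}{\bm{n}!}\, \M^{(g), N}_{0, |\bm{n}|+l}\bigl(\bm{q}_{\bm{n}}, \bm{P}\bigr),
\end{equation*}
where $|\bm{n}| = n_1 + \cdots + n_k$. Applying Proposition \ref{prop:bounds-V0} term by term (with the tuple $\bm{P}$ playing the role of the ``extra'' polynomials and $\bm{q}$ the potential monomials) gives a bound of the form
\begin{equation*}
  \frac{1}{\bm{n}!}\bigl|\M^{(g), N}_{0, |\bm{n}|+l}(\bm{q}_{\bm{n}}, \bm{P})\bigr| \leq K(\bm{P}, l, g) \cdot \bigl(A_{k}^{l} C_{k}^{g}\bigr)^{\nu |\bm{n}|} D_{k, \nu}^{|\bm{n}|} \prod_{j=1}^{k} c_{n_{j}},
\end{equation*}
where $K(\bm{P}, l, g)$ collects all the factors depending only on the fixed data $\bm{P}, l, g$, and $\nu = \max_i \deg q_i$. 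Crucially, the choice of constants in Proposition \ref{prop:bounds-V0} gives $A_k = C_k$, so $A_k^l C_k^g = A_k^{l+g}$, which is where the exponent $l+g$ appearing in the statement of the corollary comes from.

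Next I would sum the absolute values. Because the right-hand side factorises over the coordinates of $\bm{n}$, we obtain
\begin{equation*}
  \sum_{\bm{n} \in \N^{k}} \frac{|\bm{z}|^{\bm{n}}}{\bm{n}!}\bigl|\M^{(g), N}_{0, |\bm{n}|+l}(\bm{q}_{\bm{n}}, \bm{P})\bigr| \leq K(\bm{P}, l, g) \prod_{j=1}^{k} \sum_{n_{j} \geq 0} c_{n_{j}} \bigl(A_{k}^{(l+g)\nu} D_{k, \nu} |z_{j}|\bigr)^{n_{j}}.
\end{equation*}
Each factor on the right is the generating series $\sum_{n \geq 0} c_{n} x^{n} = \frac{1 - \sqrt{1-4x}}{2x}$ of the Catalan numbers, which is absolutely convergent for $|x| < 1/4$. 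Therefore the whole product converges as soon as $\|\bm{z}\|_{\infty} \cdot A_{k}^{(l+g)\nu} D_{k, \nu} < 1/4$, giving the lower bound on $R_{l, g, V}$ claimed in the statement (up to absorbing the factor $\nu$ into the already loose constants, or refining the bookkeeping to obtain exactly $4 A_{k}^{l+g} D_{k, \nu}$).

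There is no real obstacle here: the whole proof is a direct substitution followed by a Catalan-series computation. The only point that requires a little care is to check that the bound of Proposition \ref{prop:bounds-V0} is applied with the right identification of parameters (the ``fixed'' polynomials being $\bm{P}$ and the ``potential monomials'' being $\bm{q}$), and to notice the algebraic coincidence $A_k = C_k$ that collapses $A_k^l C_k^g$ to $A_k^{l+g}$. Once this is done, the dependence on $\bm{z}$ lives entirely in a product of Catalan generating functions, from which the radius of convergence is read off immediately.
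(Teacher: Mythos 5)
Your proposal is correct and is exactly the argument the paper intends: the corollary is stated as an immediate consequence of Proposition \ref{prop:bounds-V0}, and the same substitution-plus-geometric/Catalan-series computation is carried out explicitly in the proof of Proposition \ref{prop:radius-formal-cumulants} for the case $g=0$, $l=1$. The small bookkeeping issue you flag (the factor $A_{k}^{(l+g)\nu}$ versus the stated $A_{k}^{l+g}$) is present in the paper's own computation as well, so your version is, if anything, more careful on this point.
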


We can now turn to the induction relations. The induction relation
from Theorem \ref{thm:induction_V=0} translates to an induction
relation on the series $\M^{(g), N}_{V, l}$.
\begin{prop}\label{prop:induction-V}
  Let $\bm{P} = (P_{1}, \ldots, P_{l}) \in (\wordsA_{n})^{l}$,
  $\bm{q} = (q_{1}, \ldots, q_{k}) \in (\wordsA_{n})^{k}$ and
  $\bm{z} = (z_{1}, \ldots, z_{k})$, and let
  $V = \sum_{i=1}^{k}z_{i}q_{i}$ be a potential. Assume that for all
  $1 \leq i \leq k$, $|z_{i}| < R_{l, g, V}$.

  For all $1 \leq i \leq n$, we have the equation
  \begin{equation}\label{eq:family_DS}
    \begin{split}
      \sum_{\substack{g_{1}+g_{2}=g\\I \subset [l - 1]}}&\M^{(g_{1}), N}_{V, |I|+1}\otimes\M^{(g_{2}), N}_{V, |I^{c}|+1}(\bm{P}_{I}\otimes \bm{P}_{I^{c}}\sharp \partial P_{l})
      + \M^{(g), N}_{V, l}(P_{1}\otimes \cdots \otimes P_{l-1}\otimes (\D V)P_{l})\\
      =&- \M^{(g-1), N}_{V, l+1}(P_{1}\otimes \cdots \otimes P_{l-1}\otimes \partial P_{l})\\
      &- \sum_{j=1}^{l-1}\M^{(g), N}_{V, l-1}(P_{1}\otimes \cdots \otimes P_{j-1}\otimes P_{j+1}\otimes \cdots \otimes P_{l-1}\otimes (\D P_{j})P_{l})\,.\\
    \end{split}
  \end{equation}
\end{prop}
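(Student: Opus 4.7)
My plan is to derive Proposition \ref{prop:induction-V} by applying the $V=0$ induction relation (Corollary \ref{corol:induction-general}) term by term to the defining series of $\M^{(g), N}_{V, l}$ and then reorganizing the resulting sums. By Definition \ref{def:formal_cumulant} (and using the notation $\bm{q}_{\bm{n}} = (\underbrace{q_1, \ldots, q_1}_{n_1}, \ldots, \underbrace{q_k, \ldots, q_k}_{n_k})$), we have
\begin{equation*}
  \M^{(g), N}_{V, l}(P_1, \ldots, P_l) = \sum_{\bm{n} \in \N^k} \frac{\bm{z}^{\bm{n}}}{\bm{n}!} \M^{(g), N}_{0, |\bm{n}|+l}(\bm{q}_{\bm{n}}, P_1, \ldots, P_l).
\end{equation*}
For $|\bm{n}| + l \geq 2$ and $\deg(\bm{q}_{\bm{n}}, \bm{P})/2 \geq 2$, I can apply Corollary \ref{corol:induction-general} with the distinguished last argument $P_l$ to each summand. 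The absolute convergence in $\bm{z}$ on the disk of radius $R_{l, g, V}$ furnished by Proposition \ref{prop:bounds-V0} legitimizes exchanging the sum over $\bm{n}$ with the induction identity.

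The next step is to rearrange the four pieces that result. The term $-\M^{(g-1), N}_{0, |\bm{n}|+l+1}(\bm{q}_{\bm{n}}, P_1, \ldots, P_{l-1}, \partial P_l)$ directly assembles into $-\M^{(g-1), N}_{V, l+1}(P_1 \otimes \cdots \otimes P_{l-1} \otimes \partial P_l)$. The splitting sum decomposes as follows: a subset $I \subset [|\bm{n}|+l-1]$ selects both some copies of the $q_i$'s and some indices among $[l-1]$. Writing $\bm{k}$ for the multi-index recording how many copies of each $q_i$ fall into $I$ and $\bm{m} = \bm{n} - \bm{k}$ for the complementary multi-index, I will use $\binom{\bm{n}}{\bm{k}} \bm{z}^{\bm{n}}/\bm{n}! = (\bm{z}^{\bm{k}}/\bm{k}!)(\bm{z}^{\bm{m}}/\bm{m}!)$ to factor the two-sided product into $\M^{(g_1), N}_{V, |I_P|+1} \otimes \M^{(g_2), N}_{V, |I_P^c|+1}$, recovering the left-hand side of (\ref{eq:family_DS}).

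The subtle new ingredient, responsible for the $\M^{(g), N}_{V, l}(P_1 \otimes \cdots \otimes P_{l-1} \otimes (\D V) P_l)$ term, comes from the sum $-\sum_{i=1}^{|\bm{n}|+l-1} \M^{(g), N}_{0, \cdot}(\ldots \check X_i \ldots \otimes (\D X_i) P_l)$. The contributions indexed by positions $i$ occupied by some $P_j$ directly produce the term $-\sum_{j=1}^{l-1} \M^{(g), N}_{V, l-1}(P_{[l-1] \setminus \{j\}} \otimes (\D P_j) P_l)$ on the right-hand side. The contributions indexed by positions $i$ occupied by a copy of some $q_j$ yield, after grouping copies, a factor $n_j$ times the expression obtained by decrementing $\bm{n}$ at coordinate $j$; the identity $n_j \bm{z}^{\bm{n}}/\bm{n}! = z_j \bm{z}^{\bm{n} - e_j}/(\bm{n} - e_j)!$ then turns this into $\sum_j z_j \M^{(g), N}_{V, l}(\bm{P}_{[l-1]} \otimes (\D q_j) P_l)$, which by linearity of $\D$ equals $\M^{(g), N}_{V, l}(\bm{P}_{[l-1]} \otimes (\D V) P_l)$, and this migrates to the left-hand side of (\ref{eq:family_DS}).

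I expect the main obstacle to be bookkeeping rather than conceptual: carefully tracking how subsets of the enlarged tuple $(\bm{q}_{\bm{n}}, \bm{P})$ split into contributions of $q$-type and $P$-type, and checking that the multinomial rewriting gives exactly the desired product of formal series. A secondary technical point is justifying the exchange of the $\bm{n}$-sum with all the operations for $|\bm{z}|_\infty < R_{l, g, V}$; this is handled by the majorizing bounds of Proposition \ref{prop:bounds-V0}, which provide uniform control on each piece appearing in the rewritten identity. The degenerate cases with $|\bm{n}| + l$ small (where Theorem \ref{thm:induction_V=0} assumes $m \geq 2$) fit in by the conventions $\M^{(-1), N}_{V, \cdot} = 0$ and $\M^{(g), N}_{V, 0} = 0$ and contribute nothing.
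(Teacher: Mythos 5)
Your proposal is correct and follows essentially the same route as the paper, which simply sums the $V=0$ induction relations over $\bm{n}$ weighted by $\bm{z}^{\bm{n}}/\bm{n}!$; you have merely made explicit the multinomial bookkeeping for the splitting term and the emergence of $(\D V)P_{l}$ from the cyclic-derivative terms hitting copies of the $q_{j}$. The only cosmetic point is that in the multimatrix setting one should invoke Proposition \ref{prop:induction-multimatrix} (with $\partial_{i}$, $\D_{i}$) rather than Corollary \ref{corol:induction-general}, as the paper does.
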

\begin{proof}
  We sum on $\bm{n}\in\N^{k}$ the induction relations of Proposition
  \ref{prop:induction-multimatrix} for
  \begin{equation*}
    \begin{split}
      \M^{(g), N}_{0, \sum_{i}n_{i}+l}(\underbrace{q_{1}, \ldots, q_{1}}_{n_{1}\text{
      times}}, \ldots, \underbrace{q_{k}, \ldots, q_{k}}_{n_{k}\text{
      times}}, P_{1}, \ldots, P_{l}),
    \end{split}
  \end{equation*}
  times $\frac{\bm{z}^{\bm{n}}}{\bm{n}!}$.
\end{proof}

%%%%%% Multimatrix
\section{The multimatrix case}\label{sec:multimatrix}
Up to now, we have only considered integrals involving one
Haar-distributed matrix $U^{N}$. The results obtained so far can be
extended in a straightforward way to the case where we have $n \geq 1$
independent Haar-distributed matrices $U^{N}_{1}, \ldots, U^{N}_{n}$. The
polynomials we consider in the sequel are the non-commutative
polynomials in $u_{i}, u^{-1}_{i}$, for $1 \leq i \leq n$ and
$a_{j}, a_{j}^{*}$ for $1 \leq j \leq p$. We denote this $*$-algebra by
$\algA_{n}$. Notice that $\algA = \algA_{1}$.

\subsection{Weingarten calculus}
As previously, we will consider a subset of monomials of $\algA_{n}$,
as the quantity we consider are multilinear functions which are
tracial in each of their arguments. We define $\wordsA_{n}$ the set of
monomials of $\algA_{n}$ of the form
\begin{equation}\label{eq:different_forms}
  \begin{split}
    P &= M_{1}u^{\epsilon_{1}}_{t_{1}}M_{2}u^{\epsilon_{2}}_{t_{2}}\cdots M_{d}u^{\epsilon_{d}}_{t_{d}}\\
  \end{split}
\end{equation}
where $\epsilon \colon [d]\to\{+1, -1\}$, $t\colon [d]\to[n]$, and $\bm{M} = (M_{1}, \ldots, M_{d})$ is a
$d$-uple of monomials $M_{j}\in \algA_{n}$, each of them being empty or
a word in $a_{1}, a_{1}^{*}, \ldots, a_{p}, a_{p}^{*}$.

We define for a tuple $\bm{P} = (P_{1}, \ldots, P_{l})$ the tuples
$\bm{\epsilon}_{\bm{P}}, \bm{t}_{\bm{P}}$, $\bm{M}_{\bm{P}}$ obtained by
concatenating the tuples corresponding to each polynomial
$P_{j}, 1 \leq j \leq l$. We also define for $1 \leq i \leq n$,
$\bm{\epsilon}_{\bm{P}, i} = \bm{\epsilon}|_{\bm{t}_{\bm{P}}^{-1}(i)}$, i.e. the
tuple which encodes the exponents of the variables $u_{i}$ only. We
define the degree with respect to $u_{i}$ of a monomial $P$,
$\deg_{i} P$ as the number of occurrence of $u_{i}$ or $u^{-1}_{i}$ in
$P$. The total degree of a tuple is
$\deg_{i} \bm{P} = \sum_{j=1}^{l}\deg_{i} P_{j}$. We define
$\deg P = \sum_{i = 1}^{n}\deg_{i} P$, and
$\deg \bm{P} = \sum_{j=1}^{l}\deg P_{j}$.

Property \ref{prop:comput_moment} is generalized as follows.
\begin{definition}
  Let $\bm{P} = (P_{1}, \ldots, P_{l}) \in (\wordsA_{n})^{l}$.
  We define the permutation
  \begin{equation*}
    \begin{split}
      \gamma_{\bm{P}} = \cycle{1, \ldots, \deg P_{1}}\cdots\cycle{\sum_{j=1}^{l-1}\deg P_{j} + 1, \ldots, \deg \bm{P}}.
    \end{split}
  \end{equation*}
\end{definition}

\begin{definition}\label{def:moment-multimatrix}
  We introduce the moment with respect to the Haar measure in the
  multimatrix case
  \begin{equation*}
    \begin{split}
      \alpha_{\bm{U}, 0, l}^{N}(P_{1}, \ldots, P_{l}) = \E \left[\Tr(P_{1})\cdots \Tr(P_{l})\right],
    \end{split}
  \end{equation*}
  where the expectation is under the product Haar measure
  $\dd U_{1}^{N}\cdots \dd U_{n}^{N}$.
\end{definition}

\begin{prop}\label{prop:multi_U_comput}
  Let $\bm{P} = (P_{1}, \ldots, P_{l}) \in (\wordsA_{n})^{l}$ and
  $J_{i} = \bm{t}_{\bm{P}}^{-1}(i)$.

  We have
  \begin{equation}
    \begin{split}
      \alpha^{N}_{\bm{U}, 0, l}(P_{1}, \ldots, P_{l}) &= \sum_{\substack{\pi_{1}\in\Sym^{(\bm{\epsilon}_{\bm{P}, 1})}(J_{1})\\\pi_{2}\in\Sym^{(\bm{\epsilon}_{\bm{P}, 2})}(J_{2})\\\cdots\\\pi_{n}\in\Sym^{(\bm{\epsilon}_{\bm{P}, n})}(J_{n})}}\left(\prod_{i=1}^{k}\Weingarten_{N}(\pi_{i}^{(\epsilon_{\bm{P}}^{(i)})})\right)\Tr_{\gamma_{\bm{P}}\pi_{1}^{-1}\cdots \pi_{n}^{-1}}(\bm{M}_{\bm{P}}).
    \end{split}
  \end{equation}
\end{prop}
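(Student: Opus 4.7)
}

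The plan is to reduce the multimatrix statement to $n$ applications of the single-matrix Weingarten formula, using the independence of the matrices $U^N_1,\ldots,U^N_n$. First, I would expand each trace $\Tr(P_j)$ as a sum over indices: writing the monomial $P_j$ in the form (\ref{eq:different_forms}), each factor $u_{t_k}^{\epsilon_k}$ contributes a matrix entry $(U^N_{t_k})_{i_k i'_k}$ if $\epsilon_k=+1$ or $\overline{(U^N_{t_k})_{i'_k i_k}}$ if $\epsilon_k=-1$, and each $M_k$ contributes a product of entries of the deterministic matrices, while the cyclic structure of each trace is encoded by $\gamma_{\bm{P}}$. The product $\Tr(P_1)\cdots\Tr(P_l)$ thus becomes a multi-index sum indexed by $[N]^{\deg\bm{P}}$ of a product of matrix entries of the $U^N_i$ and of the $A^N_i,(A^N_i)^*$.

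Next, since $U^N_1,\ldots,U^N_n$ are independent, Fubini factors the expectation as a product of $n$ integrals, one for each unitary group $\Unit(N)$. Each factor involves only the entries of a single $U^N_i$ indexed by the positions $J_i=\bm{t}_{\bm{P}}^{-1}(i)$. Applying the single-matrix Weingarten formula (used to prove Proposition \ref{prop:comput_moment}) to the $i$-th factor produces a sum over $\pi_i\in\Sym^{(\bm{\epsilon}_{\bm{P},i})}(J_i)$ of Kronecker deltas $\prod_{k\in J_i}\delta_{i_k,\sigma_i(i'_k)}\delta_{\cdots}$ against $\Weingarten_N(\pi_i^{(\bm{\epsilon}_{\bm{P},i})})$, exactly as in \cite{mingo_second_2007}; only pairings of $+1$ with $-1$ positions within the same $J_i$ survive, which is what places $\pi_i$ in $\Sym^{(\bm{\epsilon}_{\bm{P},i})}(J_i)$.

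After the Weingarten step, the remaining sum over indices $(i_1,\ldots,i_{\deg\bm{P}})\in[N]^{\deg\bm{P}}$ is constrained by the deltas coming from $\gamma_{\bm{P}}$ (cyclic structure of traces) and from the $\pi_i$ (Weingarten pairings). The key observation is that the permutations $\pi_1,\ldots,\pi_n$ act on pairwise disjoint subsets $J_1,\ldots,J_n$ partitioning $[\deg\bm{P}]$, so they commute and extend trivially to permutations of $[\deg\bm{P}]$; consequently the composed permutation $\gamma_{\bm{P}}\pi_1^{-1}\cdots\pi_n^{-1}$ is unambiguous. Tracking the $(i_k,i'_k)$ indices carried by the entries of the deterministic matrices $M_k$ and summing over the free indices contracts the $M_k$'s exactly along the cycles of this composed permutation, yielding the factor $\Tr_{\gamma_{\bm{P}}\pi_1^{-1}\cdots\pi_n^{-1}}(\bm{M}_{\bm{P}})$; this is the same contraction argument as in the proof of Proposition~\ref{prop:comput_moment}, only with a product of Weingarten pairings on disjoint index sets instead of a single one.

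The routine calculations are those just sketched; the only real point to watch is the combinatorial bookkeeping, namely that the disjointness of the $J_i$ makes the separate pairings $\pi_i$ interact with $\gamma_{\bm{P}}$ in the right way so that every cycle of $\gamma_{\bm{P}}\pi_1^{-1}\cdots\pi_n^{-1}$ produces one trace of a product of matrices from $\bm{M}_{\bm{P}}$ read in the correct non-commutative order. This is essentially formal but would be the main thing to verify carefully; once this is done, reassembling the formula gives the stated identity.
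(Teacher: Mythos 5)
Your plan is correct, but it is organized differently from the paper's argument. The paper proves the proposition by iterating a single lemma $n$ times (Lemma \ref{lem:help_multi_U_comput}): one integrates out $U^{N}_{1}$ alone by invoking the already-established single-matrix formula of Proposition \ref{prop:comput_moment}, treating the remaining factors $u_{t_i}^{\epsilon(i)}$ with $t_i\neq 1$ as part of enlarged deterministic words $\tilde{M}_i$, and then checks that the collapsed trace $\Tr_{\gamma'\pi_1^{-1}}(\bm{M}')$ on $J_1$ equals $\Tr_{\gamma_{\bm{P}}\pi_1^{-1}}(\tilde{\bm{M}})$ on the full index set; repeating for $U^N_2,\ldots,U^N_n$ peels off one sum over $\pi_i$ at a time. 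You instead go back to the entry-level Weingarten formula: expand all traces as multi-index sums, use independence to factor the expectation into $n$ simultaneous Weingarten integrals over the disjoint index sets $J_1,\ldots,J_n$, and then perform the Kronecker-delta contraction globally, using the disjointness of the supports of the $\pi_i$ to make sense of $\gamma_{\bm{P}}\pi_1^{-1}\cdots\pi_n^{-1}$. Both routes are sound. The paper's iteration reuses Proposition \ref{prop:comput_moment} as a black box at the cost of the reindexing identity between the collapsed and full traces; your simultaneous version is more self-contained and makes the role of the disjoint supports explicit, but it requires redoing the delta-contraction bookkeeping of \cite{mingo_second_2007} in the multi-permutation setting, which is exactly the step you correctly flag as the one to verify carefully.
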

This Proposition is obtained by applying the following Lemma $n$ times.
\begin{lemma}\label{lem:help_multi_U_comput}
  Let $\bm{M}_{\bm{P}} = (M_{1}, \ldots, M_{\deg \bm{P}})$
  Let $\tilde{\bm{M}} = (\tilde{M}_{i}, 1 \leq i \leq \deg \bm{P})$, defined by
  $\tilde{M}_{i} = M_{i}$ if $t_{i} = 1$, and
  $\tilde{M}_{i} = M_{i}u^{\epsilon_{\bm{P}}(i)}_{t_{i}}$ otherwise. Then, we have the
  expectation with respect to $U^{N}_{1}$ only
  \begin{equation*}
    \begin{split}
      \E_{U^{N}_{1}}\left[\Tr(P_{1})\cdots \Tr(P_{l})\right]
      &= \sum_{\pi_{1}\in\Sym^{(\epsilon_{\bm{P}, 1})}(J_{1})}\Weingarten_{N}(\pi_{1}^{(\epsilon_{\bm{P}, 1})})\Tr_{\gamma_{\bm{P}}\pi_{1}^{-1}}(\tilde{\bm{M}}).
    \end{split}
  \end{equation*}
\end{lemma}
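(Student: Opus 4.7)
The strategy is to reduce to the single-matrix Weingarten formula, Proposition \ref{prop:comput_moment}, by conditioning on the matrices $U_2^N, \ldots, U_n^N$. When these are treated as deterministic, each polynomial $P_i$ becomes a polynomial in $u_1, u_1^{-1}$ only, with matrix coefficients built from the $A_j^N$'s together with the fixed $U_k^N, (U_k^N)^{-1}$ for $k \geq 2$. This absorption is precisely what the tuple $\tilde{\bm{M}}$ of the statement encodes: for positions $i \notin J_1$ the factor $u_{t_i}^{\epsilon_i}$ is folded into $\tilde{M}_i = M_i u_{t_i}^{\epsilon_i}$, while for $i \in J_1$ we keep $\tilde{M}_i = M_i$.

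To apply Proposition \ref{prop:comput_moment} I would rewrite each $P_i$ in the form \eqref{eq:wordsX} with respect to $u_1$ alone, by grouping consecutive non-$u_1$ positions of $P_i$ into single matrix coefficients; using the trace property to cyclically shift $P_i$ if necessary, one can always arrange that the resulting cyclic word ends on a $u_1$-factor. Write $\tilde{P}_i = \tilde{N}_{i, 1} u_1^{\tilde{\epsilon}_{i, 1}} \tilde{N}_{i, 2} u_1^{\tilde{\epsilon}_{i, 2}} \cdots u_1^{\tilde{\epsilon}_{i, \tilde{d}_i}}$, where $\tilde{d}_i = \deg_1 P_i$ and each $\tilde{N}_{i, j}$ is the product of consecutive $\tilde{M}_k$'s lying between two consecutive $u_1$-positions of $P_i$. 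Applying Proposition \ref{prop:comput_moment} to $(\tilde{P}_1, \ldots, \tilde{P}_l)$ yields a sum over $\tilde{\pi} \in \Sym^{(\tilde{\epsilon})}([2\tilde{m}])$, with $2\tilde{m} = |J_1| = \deg_1 \bm{P}$, of terms $\Tr_{\tilde{\gamma} \tilde{\pi}^{-1}}(\tilde{\bm{N}}) \, \Weingarten_N(\tilde{\pi}^{(\tilde{\epsilon})})$. The canonical increasing bijection $[2\tilde{m}] \cong J_1$ then re-parameterizes this sum by $\pi_1 \in \Sym^{(\epsilon_{\bm{P}, 1})}(J_1)$; since $\Weingarten_N$ depends only on the cycle type, the weight becomes $\Weingarten_N(\pi_1^{(\epsilon_{\bm{P}, 1})})$.

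The main remaining step is to verify the combinatorial identity
\[
  \Tr_{\tilde{\gamma} \tilde{\pi}^{-1}}(\tilde{\bm{N}}) = \Tr_{\gamma_{\bm{P}} \pi_1^{-1}}(\tilde{\bm{M}}),
\]
where $\pi_1$ is extended by identity to the whole of $[\deg \bm{P}]$. The key observation is that $(\gamma_{\bm{P}} \pi_1^{-1})(k) = \gamma_{\bm{P}}(k)$ for every $k \notin J_1$, so $\gamma_{\bm{P}} \pi_1^{-1}$ follows $\gamma_{\bm{P}}$ through runs of non-$J_1$ positions and only ``jumps'' via $\pi_1^{-1}$ upon arriving at a $J_1$-position. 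Consequently each cycle of $\gamma_{\bm{P}} \pi_1^{-1}$ on $[\deg \bm{P}]$ corresponds, under the map that collapses each non-$J_1$ position to the next $J_1$-position along $\gamma_{\bm{P}}$, to a unique cycle of $\tilde{\gamma} \tilde{\pi}^{-1}$ on $[2\tilde{m}]$; the runs of $\tilde{M}_k$'s collapsed along the way multiply together into exactly one $\tilde{N}_{i, j}$, so the cycle-by-cycle trace contributions agree on both sides.

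I expect this last identification to be the main technical point, as it requires careful bookkeeping of the cyclic boundaries within each cycle of $\gamma_{\bm{P}}$, specifically of the position that precedes the first $J_1$-element when traversing such a cycle. It is essentially a relabelling argument, however, and once established, combining it with the reduced Weingarten formula yields the expression claimed in the lemma.
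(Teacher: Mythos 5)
Your proposal is correct and follows essentially the same route as the paper: condition on $U_2^N,\ldots,U_n^N$, cyclically rotate each $P_i$ so it ends in a $u_1^{\pm1}$-factor, apply Proposition \ref{prop:comput_moment} to the polynomials with the intervening factors absorbed into matrix coefficients, and then un-collapse the traces. The ``combinatorial identity'' you flag as the main technical point is precisely the step the paper dispatches with ``This is equal to,'' and your explanation of why $\gamma_{\bm{P}}\pi_1^{-1}$ agrees with $\gamma_{\bm{P}}$ off $J_1$ is the right justification for it.
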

\begin{proof}[Proof of Lemma \ref{lem:help_multi_U_comput}]
  Let $I \subset [l]$ be the subset of indices $i$ such that $P_{i}$ contains a
  letter $u_{1}$ or $u_{1}^{*}$. Denote by $c_{i}$ the cycle in $\gamma$
  that corresponds to $P_{i}$, i.e.
  \begin{equation*}
    c_{i} = \cycle{\sum_{j=1}^{i-1}\deg P_{j} + 1, \cdots, \sum_{j=1}^{i-1}\deg P_{j} + \deg P_{i}}\,.
  \end{equation*}
  We have
  \begin{equation}\label{eq:fact-multiU}
    \E_{U^{N}_{1}}\left[\Tr(P_{1})\cdots \Tr(P_{l})\right] = \left( \prod_{i\notin I} \Tr_{c_{i}}(P_{i})\right)\E \left[ \prod_{i \in I}\Tr(P_{i}) \right]\,.
  \end{equation}
  Furthermore, if we set $S = \bigcup_{i \notin I}\Supp c_{i}$ and
  $\gamma'' = \left( \prod_{i}c_{i} \right)\vert_{S}$, we can rewrite the terms
  \begin{equation*}
    \left( \prod_{i\notin I} \Tr_{c_{i}}(P_{i})\right) = \Tr_{\gamma''}(\tilde{\bm{M}}\vert_{S})\,.
  \end{equation*}

  Considering only the second term in the right side of
  \eqref{eq:fact-multiU} and using the cyclic property of the trace,
  we can assume that the last factor of each polynomial $P_{i}$ is a
  $u_{1}$ or a $u_{1}^{*}$. Let
  $J_{1} = t^{-1}(1) = \{p_{1} < p_{2} < \ldots < p_{q}\}$. We let
  $\gamma' = \Tr(\gamma_{\bm{P}}; J_{1})$. Proposition \ref{prop:comput_moment}, shows that
  \begin{equation*}
    \begin{split}
      \E_{U^{N}_{1}}\left[\Tr(P_{1})\cdots \Tr(P_{l})\right] &= \sum_{\pi_{1}\in\Sym^{(\epsilon_{\bm{P}, 1})}(J_{1})}\Weingarten_{N}(\pi_{1}^{(\epsilon_{\bm{P}, 1})})\Tr_{\gamma'\pi_{1}^{-1}}({\bm{M}'}),
    \end{split}
  \end{equation*}
  where $\bm{M}'= (M_{i}', i\in J_{1})$ defined by
  $M_{i}' = M_{p_{i-1}+1}u^{\epsilon_{\bm{P}}(p_{i-1}+1)}_{t(p_{i-1}+1)}M_{p_{i-1}+2}u^{\epsilon_{\bm{P}}(p_{i-1}+2)}_{t(p_{i-1}+2)}\cdots M_{p_{i}-1}u^{\epsilon_{\bm{P}}(p_{i}-1)}_{t(p_{i}-1)}M_{p_{i}}$
  (with the convention $p_{0} = 0$).

  This is equal to
  \begin{equation*}
    \begin{split}
      \E_{U^{N}_{1}}\left[\Tr(P_{1})\cdots \Tr(P_{l})\right] &= \sum_{\pi_{1}\in\Sym^{(\epsilon_{\bm{P}, 1})}(J_{1})}\Weingarten_{N}(\pi_{1}^{(\epsilon_{\bm{P}, 1})})\Tr_{\gamma_{\bm{P}}\pi_{1}^{-1}}(\tilde{\bm{M}}).
    \end{split}
  \end{equation*}
\end{proof}

\subsection{Multicolored maps of unitary type}
We now generalize the notion of a map of unitary type to address the
multimatrix case.

\begin{definition}
  Let $I$ be a finite subset of $\N^{*}$. A \textbf{multicolored map
    of unitary type} with $n$ colors, with labels in $I$, and with
  $r_{i}$ vertices of color $i$ for $1 \leq i \leq n$, is an oriented
  map with vertices colored in white or in one of $n$ colors, and
  colored half-edges which can be of any of the $n$ colors such that
  \begin{itemize}
    \item there are $r_{i}$ vertices of color $i$ for $1 \leq i \leq n$, which
          are alternated of degree 4 and numbered from $1$ to $r_{i}$;
    \item the half-edges connected to a vertex of color $i$ (which is
          not white) are of color $i$ as well;
    \item there are $|I|$ half-edges that are connected to white
          vertices. Each element of $I$ labels exactly one of these
          half-edges;
    \item each half-edges connected to a white vertex is colored in one of the
          $n$ colors;
    \item each edge is composed of two half-edges of the same color;
    \item if an oriented edge connects the vertex of color $i$ numbered $l_{1}$
          to the vertex of color $i$ numbered $l_{2}$ then $l_{1} < l_{2}$.
  \end{itemize}
\end{definition}

\begin{figure}[htbp]
  \centering
  \includegraphics[width=0.4\textwidth]{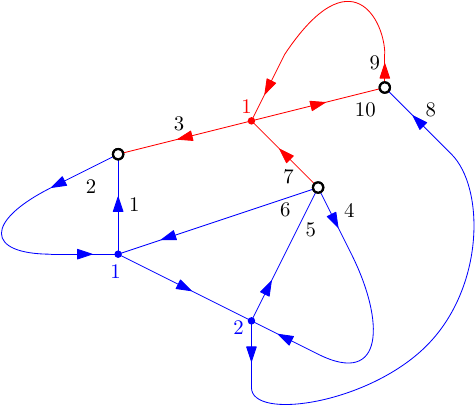}
  \caption{\label{fig:multimatrix}A muticolored map of unitary type with two colors. The
    integers $3, 7, 10, 9$ label red half-edges, and the other
    elements of $[10]$ label blue half-edges.}
\end{figure}

\begin{remark}\label{rem:erase-color}
  Notice that we can erase the colors of a multicolored unitary map to
  obtain a (monocolored) map of unitary type. To do so, we proceed as
  follows:
  \begin{itemize}
    \item each colored half-edge connected to a white vertex becomes a white half-edge;
    \item each colored half-edge connected to a colored vertex becomes a black half-edge;
    \item each colored vertex becomes a black vertex.
  \end{itemize}

  The resulting map is a map of unitary type with $\sum_{i}r_{i}$
  black vertices and labels in $I$.
\end{remark}

As in Section \ref{sec:maps_unitary}, we define for a multicolored map $\carte$ of
unitary type, with $n$ colors, the following permutations. We
construct permutations $\gamma_{\carte}$, $\pi_{\carte}$, and $\phi_{\carte}$,
and the tuple $\bm{\epsilon}_{\carte}$, as for a monocolored map of unitary
type. If the $i$-th labelled half-edge is of color $j$, we set
$\bm{t}_{\carte}(i) = j$. We then define
$J_{\carte,i} = \bm{t}_{\carte}^{-1}(i)$. We set
$\bm{\epsilon}_{\carte, i} = \bm{\epsilon}_{\carte}|_{J_{\carte, i}}$ for all
$1 \leq i \leq k$.

For each color $i \in [n]$, we consider the edges of this color. We then
define as previously a permutation
$\pi_{\carte, i}\in\Sym^{(\bm{\epsilon}_{\carte, i})}(J_{\carte, i})$ describing
these edges and the vertices of color $i$. Finally, if we consider the
vertices of color $i$, we can associate to the $j$-th vertex of color
$i$ the transposition $\tau_{i, j}$ as previously. We set
$\tau_{\carte} = (\tau_{i, j}, i\in [n], j \in [r_{i}])$. Notice that by construction, we
have $\pi_{\carte} = \pi_{\carte, 1}\pi_{\carte, 2}\cdots \pi_{\carte, n}$.

\begin{definition}\label{def:set-colored-maps}
  Let $n\geq 1$ and $g \geq 0$ be integers, and $I$ be a finite subset
  of the positive integers. Let
  $\bm{r} = (r_{1}, \ldots, r_{n}) \in \N^{n}$, $\gamma \in \Sym(I)$,
  $\bm{t}\colon I \to[n]$ and $\bm{\epsilon}\colon I \to \{\pm 1\}$.

  We denote by
  $\mathfrak{C}^{\bm{r}}(I, \bm{\epsilon}, \gamma, \bm{t})$ the set of
  multicolored maps of unitary type $\carte$ with $n$ colors, with
  label set $I$, and with $r_{i}$ vertices of color $i$, for
  $1 \leq i \leq n$, such that $\gamma_{\carte} = \gamma$,
  $\bm{\epsilon}_{\carte} = \bm{\epsilon}$, and
  $\bm{t}_{\carte} = \bm{t}$.

  We denote by $\mathfrak{C}(g, I, \bm{\epsilon}, \gamma, \bm{t})$ the set
  of multicolored maps of unitary type $\carte$ with $n$ colors, with
  label set $I$, and of genus $g$, such that $\gamma_{\carte} = \gamma$,
  $\bm{\epsilon}_{\carte} = \bm{\epsilon}$, and $\bm{t}_{\carte} = \bm{t}$.
\end{definition}

We then have the analog of Theorem \ref{thm:permutational_model}
\begin{theorem}\label{thm:perm_model_multi_U}
  Let $n \geq 1$ be an integer, and $I$ be a finite subset of the
  positive integers. Let $\bm{r} = (r_{1}, \ldots, r_{n}) \in \N^{n}$,
  $\gamma \in \Sym(I)$, $\bm{t}\colon I \to[n]$ and
  $\bm{\epsilon}\colon I \to \{\pm 1\}$. Define $J_{i} = \bm{t}^{-1}(i)$
  and $\bm{\epsilon}_{i} = \bm{\epsilon}|_{J_{i}}$ for $1 \leq i \leq n$.

  The previous construction gives a bijection between $\mathfrak{C}^{\bm{r}}(I,\bm{\epsilon}, \gamma, \bm{t})$ and
  \begin{equation*}
    \begin{split}
      \bigcup_{\pi_{1}\in\Sym^{(\epsilon_{1})}(J_{1}), \ldots, \pi_{n}\in\Sym^{(\epsilon_{n})}(J_{n})}\prod_{i=1}^{n}\{\pi_{i}\}\times\mwset^{r_{i}}(\pi_{i}^{(\epsilon_{i})}).
    \end{split}
  \end{equation*}
\end{theorem}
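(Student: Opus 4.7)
The plan is to mimic the proof of Theorem \ref{thm:permutational_model} color by color, exploiting the fact that the substructure of each color in a multicolored map of unitary type behaves like a monocolored map of unitary type, and that distinct colors interact only through the shared white vertices.

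For the forward direction, given $\carte \in \mathfrak{C}^{\bm{r}}_{I, \bm{\epsilon}, \rho, \bm{t}}$, I would first verify that the image $(\pi_{\carte, i}, \bm{\tau}_{\carte, i})_{1 \leq i \leq n}$ lands in the prescribed target set. That $\pi_{\carte, i} \in \Sym^{(\bm{\epsilon}_{\carte, i})}(J_{\carte, i})$ is a direct application of Lemma \ref{lem:pi_sym_eps} to the restriction of $\carte$ to color $i$: every edge of color $i$ flips the sign of $\bm{\epsilon}$ between its two endpoints. The identity $\tau_{i, r_i} \cdots \tau_{i, 1} = \pi_{\carte, i}^{(\bm{\epsilon}_{\carte, i})}$ is Proposition \ref{prop:tau_carte} applied to the color-$i$ substructure, using that the propagation of labels around a face only traverses colored vertices of the color of the starting half-edge (since by definition edges connect half-edges of the same color, and colored vertices are monochromatic).

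For the inverse direction, I would adapt the algorithm from the proof of Theorem \ref{thm:permutational_model}. Introduce a ground set $\tilde{I} = \{h_i : i \in I\} \cup \bigsqcup_{i=1}^{n} \bigsqcup_{j=1}^{r_i}\{h_{i, j, 1}, h_{i, j, 2}, h_{i, j, 3}, h_{i, j, 4}\}$, the labelling $L(h_i) = i$, and the sign function $\tilde{\bm{\epsilon}}$ distinguishing ingoing from outgoing. Construct $\sigma \in \Sym(\tilde{I})$ by lifting $\rho$ to $\{h_i : i \in I\}$ via $\tilde{\rho}(h_i) = h_{\rho(i)}$ and composing with the $4$-cycles $\cycle{h_{i, j, 1}, h_{i, j, 2}, h_{i, j, 3}, h_{i, j, 4}}$ for each colored vertex. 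Then build $\alpha$ color by color: for each $i \in [n]$, the data $(\pi_i, \bm{\tau}_i)$ determines, via the step-by-step procedure from the proof of Theorem \ref{thm:permutational_model} restricted to the half-edges of color $i$, the attachments of the $r_i$ vertices of color $i$ and the color-$i$ edges between white vertices. Theorem \ref{thm:perm-model-oriented-map} then produces a unique oriented map with the required local structure, and by construction the numbering constraint and the color compatibility of the edges are all satisfied, giving membership in $\mathfrak{C}^{\bm{r}}_{I, \bm{\epsilon}, \rho, \bm{t}}$.

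Injectivity is obtained by the same case analysis as at the end of the proof of Theorem \ref{thm:permutational_model}, applied within each color class: for a half-edge of color $i$, whether it attaches to another white half-edge, or to the $k$-th vertex of color $i$, is entirely dictated by the positions of the fixed points and supports of $\pi_i$ and of the $\tau_{i, \cdot}$. The main potential obstacle is ensuring the interaction of the different color-wise constructions is consistent; this reduces to the observation that the colored half-edges at white vertices have a cyclic order determined by $\rho$ alone (hence independently fixed across colors), so the color-by-color application of Theorem \ref{thm:permutational_model} commutes and produces a well-defined multicolored map.
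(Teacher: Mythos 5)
Your proposal is correct and follows essentially the same route as the paper, which proves the theorem by reducing to the monocolored case color by color (applying the arguments of Theorem \ref{thm:permutational_model} to each color class and building the inverse by constructing the edges for color $1$, then color $2$, etc.). Your additional observations -- that label propagation around a face only meets colored vertices of the starting half-edge's color, and that the cyclic order at white vertices is fixed by $\rho$ independently of the colors -- are exactly the consistency points the paper leaves implicit.
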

\begin{proof}
  The proof is very similar to the one of Theorem
  \ref{thm:permutational_model}. By considering each color, we prove
  that the construction does give a map
  $\mathfrak{C}^{\bm{r}}(I, \bm{\epsilon}, \gamma, \bm{t})\to\bigcup_{\pi_{1}\in\Sym^{(\epsilon_{1})}(J_{1}), \ldots, \pi_{n}\in\Sym^{(\epsilon_{n})}(J_{n})}\prod_{i=1}^{n}\{\pi_{i}\}\times\mwset^{r_{i}}(\pi_{i}^{(\epsilon_{i})})$.

  We can construct its inverse exactly as in the proof for the case
  with one unitary matrix, by constructing the edges for the color
  $1$, then for the color $2$, etc. More precisely, we first consider
  the color $1$. At this step, we leave untouched the half-edges of
  color $2, \ldots, n$. We construct the incidence relation for the edges
  of color $1$ using the data of $(\tau_{1, j})_{j \in [r_{1}]}$ and $\pi_{1}$. When
  this is finished, all the half-edges of color $1$ are part of some
  edge.

  We then turn to the half-edges of color $2$. Using the data of
  $(\tau_{2, j})_{j \in [r_{2}]}$ and $\pi_{2}$, we construct the incidence relation
  for the edges of color $2$. We do this until we reach color $n$ and
  obtain a multicolored map of unitary type.
\end{proof}

It follows directly by erasing the colors (see Remark
\ref{rem:erase-color}) and Proposition \ref{prop:connectedness} that we have the following
proposition.
\begin{prop}
  Let $\carte$ be a multicolored map of unitary type with $n$ colors.
  The map $\carte$ is connected if and only if the group
  $\langle{\gamma_{\carte}, \pi_{\carte, 1}, \ldots, \pi_{\carte, n}, \tau_{i, j}, 1\leq i \leq n, 1 \leq j \leq r_{i}}\rangle$
  is transitive, with
  $\tau_{\carte} = (\tau_{i, j}, i\in[n], j \in
  [r_{i}])$.
\end{prop}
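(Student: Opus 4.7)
My plan is to reduce to the monocolored case already handled in Proposition \ref{prop:connectedness}. Given a multicolored map of unitary type $\carte$ with $n$ colors, I would first erase the color information: every vertex of a nontrivial color $i$ is declared black, and every colored half-edge becomes an ordinary half-edge. The only structural condition requiring care is condition 3 of Definition \ref{def:maps_unitary_type} on the global numbering of black vertices; I would number them by concatenating the color-$1$ vertices (in their internal order), then the color-$2$ vertices, etc. Because the coloring rule forces each edge to have both half-edges of the same color, there are no edges between vertices of different colors, so the condition $k<l$ on oriented edges between black vertices transfers unchanged from each color. Thus the color-erased object $\carte^{\mathrm{er}}$ is a genuine (monocolored) map of unitary type with labelling set $I$, the same underlying graph and surface as $\carte$, and hence the same notion of connectedness.

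Next I would identify the permutations. By construction, $\rho_{\carte^{\mathrm{er}}}=\rho_{\carte}$, the tuple of transpositions of $\carte^{\mathrm{er}}$ is precisely $(\tau_{i,j})_{i\in[n],\,j\in[r_i]}$ (in the chosen order), and $\pi_{\carte^{\mathrm{er}}}=\pi_{\carte}$, which by the remark preceding the statement factors as $\pi_{\carte}=\pi_{\carte,1}\pi_{\carte,2}\cdots\pi_{\carte,n}$ where the factors have pairwise disjoint supports $J_{\carte,i}$. By Proposition \ref{prop:connectedness} applied to $\carte^{\mathrm{er}}$, connectedness of $\carte$ is equivalent to transitivity on $I$ of
\begin{equation*}
G_1=\langle \rho_{\carte},\,\pi_{\carte},\,\tau_{i,j}\,:\,i\in[n],\,j\in[r_i]\rangle.
\end{equation*}

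It remains to show that $G_1$ and
\begin{equation*}
G_2=\langle \rho_{\carte},\,\pi_{\carte,1},\ldots,\pi_{\carte,n},\,\tau_{i,j}\,:\,i\in[n],\,j\in[r_i]\rangle
\end{equation*}
have the same orbits on $I$, so that transitivity of one is equivalent to transitivity of the other. Clearly $G_1\subseteq G_2$ since $\pi_{\carte}$ is the product of the $\pi_{\carte,i}$. For the converse, the key observation is that the partition $I=\bigsqcup_{i=1}^n J_{\carte,i}$ is preserved by every generator of $G_2$ that is not $\rho_{\carte}$ or a $\tau_{i,j}$: each $\pi_{\carte,i}$ fixes every element of $J_{\carte,i'}$ for $i'\neq i$, and agrees with $\pi_{\carte}$ on $J_{\carte,i}$. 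Consequently, for $x\in J_{\carte,i}$, applying $\pi_{\carte,i}$ to $x$ produces the same element as applying $\pi_{\carte}$. Therefore every $G_2$-orbit is contained in a $G_1$-orbit, giving $G_1$ and $G_2$ the same orbits on $I$. Combining this with the application of Proposition \ref{prop:connectedness} to $\carte^{\mathrm{er}}$ yields the proposition.

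The only place that might look like a subtle point — rather than an obstacle — is verifying that after erasing colors condition 3 of Definition \ref{def:maps_unitary_type} still holds; but since edges never cross colors, this is immediate once the global numbering is chosen as above. The rest is a direct bookkeeping argument about disjoint-support permutations.
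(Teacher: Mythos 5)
Your proposal is correct and follows essentially the same route as the paper, which proves the proposition precisely by erasing the colors and invoking Proposition \ref{prop:connectedness}; your extra step checking that $\langle \rho_{\carte}, \pi_{\carte}, \tau_{i,j}\rangle$ and $\langle \rho_{\carte}, \pi_{\carte,1},\ldots,\pi_{\carte,n}, \tau_{i,j}\rangle$ have the same orbits (using the disjoint supports $J_{\carte,i}$) just makes explicit a detail the paper leaves implicit.
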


% ---

Using Proposition \ref{prop:dvpt_weingarten},
\ref{prop:multi_U_comput}, and \ref{thm:perm_model_multi_U}, we can
compute the moments with no potential (for
$N \geq m = \frac{1}{2}\deg \bm{P}$). Let
$\bm{P} = (P_{1}, \ldots, P_{l}) \in (\wordsA_{n})^{l}$, we have
\begin{equation}
  \begin{split}
    \alpha_{\bm{U}, 0, l}^{N}(P_{1}, \ldots, P_{l})
    &= N^{-m}\sum_{\substack{\pi_{1}\in\Sym^{(\epsilon_{\bm{P}, 1})}(J_{1})\\\pi_{2}\in\Sym^{(\epsilon_{\bm{P}, 2})}(J_{2})\\\cdots\\\pi_{n}\in\Sym^{(\epsilon_{\bm{P}, n})}(J_{n})}}\sum_{r_{1}, \ldots, r_{n}\geq 0}\left(\frac{-1}{N}\right)^{\bm{r}}\prod_{i=1}^{n}\mwalks^{r_{i}}(\pi_{i}^{(\epsilon_{i})})\Tr_{\gamma_{\bm{P}}\pi_{1}^{-1}\cdots \pi_{n}^{-1}}(\bm{M}_{\bm{P}})\\
    &= N^{-m}\sum_{\bm{r}\in\N^{k}}\sum_{\substack{\carte \in \mathfrak{C}(\bm{r}}_{[2m], \bm{\epsilon}_{\bm{P}}, \gamma_{\bm{P}}, \bm{t})}}\left(\frac{-1}{N}\right)^{\bm{r}}\Tr_{\gamma\pi_{1}^{-1}\cdots \pi_{n}^{-1}}(\bm{M}_{\bm{P}}),\\
  \end{split}
\end{equation}
where we use the notation $x^{\bm{r}} = x^{\sum_{i}r_{i}}$, for any $x\in \Real$.

We then compute the cumulants for no potential, when $N \geq m$,
\begin{equation}
  \begin{split}
    \W^{N}_{\bm{U^{N}}, 0, l}(P_{1}, \ldots, P_{l})
    &=N^{-m}\sum_{\bm{r}\in\N^{n}}\sum_{\substack{\carte \in \mathfrak{C}^{\bm{r}}([2m], \bm{\epsilon}_{\bm{P}}, \gamma_{\bm{P}}, \bm{t})\\\carte\text{ connected }}}\left(\frac{-1}{N}\right)^{\bm{r}}\Tr_{\phi_{\carte}}(\bm{M}_{\bm{P}}).\\
  \end{split}
\end{equation}

We now rewrite this sum using the genus of the maps rather than the
number of colored vertices. In this context, the Euler formula becomes
\begin{equation*}
  \begin{split}
    2 - 2g_{\carte} = c(\gamma_{\carte}) + c(\phi_{\carte}) - m - \sum_{i=1}^{k}r_{i}.
  \end{split}
\end{equation*}
We thus get the renormalized cumulant $\tilde{\W}^{N}_{\bm{U^{N}}, 0, l}$
\begin{equation}
  \begin{split}
    \tilde{\W}^{N}_{\bm{U^{N}}, 0, l}(P_{1}, \ldots, P_{l})
    &= N^{l-2}\W^{N}_{\bm{U^{N}}, 0, l}(P_{1}, \ldots, P_{l})\\
    &=(-1)^{m+l}\sum_{g \geq 0}\frac{1}{N^{2g}}\sum_{\substack{\carte \in \mathfrak{C}(g, [2m], \bm{\epsilon}_{\bm{P}}, \gamma_{\bm{P}}, \bm{t})\\\carte\text{ connected }}}(-1)^{c(\phi_{\carte})}\tr_{\phi_{\carte}}(\bm{M}_{\bm{P}}).\\
  \end{split}
\end{equation}
Let us recall the relevant notation. Here $l$ is the number of
monomials or the number of white vertices. In particular, we have
$c(\gamma_{\bm{P}}) = l$. The set of maps
$\mathfrak{C}(g, [2m], \bm{\epsilon}_{\bm{P}}, \gamma_{\bm{P}}, \bm{t})$
was introduced in Definition \ref{def:set-colored-maps}. As in Section
\ref{sec:expr-moments}, the coefficients in the $1/N^{2}$ expansion
are sums of maps, with a weight determined by the permutation of the
faces $\phi_{\carte}$ and the tuple of matrices $\bm{M}$.

The term of order $2g$ is then
\begin{equation*}
  \begin{split}
    \M^{(g), N}_{0, l}(P_{1}, \ldots, P_{l})
    &=(-1)^{m+l}\sum_{\substack{\carte \in \mathfrak{C}(g, [2m], \bm{\epsilon}_{\bm{P}}, \gamma_{\bm{P}}, \bm{t})\\\carte\text{ connected }}}(-1)^{c(\phi_{\carte})}\tr_{\phi_{\carte}}(\bm{M}_{\bm{P}}).
  \end{split}
\end{equation*}

We then define the formal cumulant as
\begin{equation*}
  \begin{split}
        \M^{(g), N}_{V, l}(P_{1}, \ldots, P_{l}) &=\sum_{\bm{n}\in\N^{k}}\frac{\bm{z}^{\bm{n}}}{\bm{n}!}\M^{(g), N}_{0, l}(\underbrace{q_{1}, \ldots, q_{1}}_{n_{1}\text{
        times }}, \ldots, \underbrace{q_{k}, \ldots, q_{k}}_{n_{k}\text{ times
      }}, P_{1}, \ldots, P_{l}),
  \end{split}
\end{equation*}
as previously.

\subsection{Induction relation}
We will now deduce from the relations obtained in Section \ref{sec:induction}
similar relations in the multimatrix case.
\begin{prop}\label{prop:induction-multimatrix}
  Let $\bm{P} = (P_{1}, \ldots, P_{l}) \in (\wordsA_{n})^{l}$, $i\in[n]$
  and $g \geq 1$.

  If $\frac{1}{2}\deg_{i} \bm{P} \geq 2$, then we have the equation
  \begin{equation}\label{eq:family_DS_multi_V=0}
    \begin{split}
      \sum_{\substack{g_{1}+g_{2}=g\\I \subset [l - 1]}}&\M^{(g_{1}), N}_{V, |I|+1}\otimes\M^{(g_{2}), N}_{V, |I^{c}|+1}(\bm{P}_{I}\otimes \bm{P}_{I^{c}}\# \partial_{i} P_{l})
                                                        + \M^{(g), N}_{V, l}(P_{1}\otimes \cdots \otimes P_{l-1}\otimes (\D_{i}V)P_{l})\\
                                                        =& -\M^{(g-1), N}_{V, l+1}(P_{1}\otimes \cdots \otimes P_{l-1}\otimes \partial_{i} P_{l})\\
                                                        &-\sum_{j=1}^{l-1}\M^{(g), N}_{V, l-1}(P_{1}\otimes \cdots \otimes P_{j-1}\otimes P_{j+1}\otimes \cdots \otimes P_{l-1}\otimes (\D_{i}P_{j})P_{l}).\\
    \end{split}
  \end{equation}
  Here $\partial_{i}$ and $\D_{i}$ are the non-commutative and cyclic
  derivative with respect to $u_{i}$, for $i \in [n]$.
\end{prop}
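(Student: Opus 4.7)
The plan is to mimic the single-matrix argument of Section \ref{sec:induction} color by color. Fix a color $i \in [n]$ and assume $\tfrac{1}{2}\deg_i \bm P \ge 2$. By Theorem \ref{thm:perm_model_multi_U}, the color-$i$ data $(\pi_{\carte,i},(\tau_{i,j})_{j})$ of a multicolored unitary-type map varies over $\bigcup_{\pi_i \in \Sym^{(\bm\epsilon_{\bm P,i})}(J_i)} \{\pi_i\} \times \mwset^{r_i}(\Id,\pi_i^{(\bm\epsilon_{\bm P,i})})$ independently of the data for the other colors. This independence is the whole point: the two surgeries from Section \ref{sec:cut_maps}, applied to the largest color-$i$ white label, act only on the color-$i$ edges and color-$i$ black vertices and leave the rest of the map untouched.

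First I would prove the $V=0$ analog of (\ref{eq:family_DS_multi_V=0}). By the relabelling freedom of Lemma \ref{lem:relabel_V=0}, one may assume that the largest color-$i$ label is the last $u_i^{\pm}$ factor of $P_l$. Splitting the sum defining $\M^{(g),N}_{0,l}(\bm P)$ according to whether the last color-$i$ transposition $\tau_{i,r_i}$ fixes this label or not yields two groups of terms, each of which is analyzed through the three subcases of Section \ref{sec:cut_maps} (same vs.\ different cycle of $\rho_{\carte}$; cut staying connected vs.\ splitting; degenerate fixed-point case). They produce respectively the handle-removal term $\M^{(g-1),N}_{0,l+1}(\cdots \otimes \partial_i P_l)$, the Cauchy-type splitting $\sum_{g_1+g_2=g,\,I\subset[l-1]}\M^{(g_1),N}_{0,|I|+1}\otimes\M^{(g_2),N}_{0,|I^c|+1}(\bm P_I \otimes \bm P_{I^c} \sharp \partial_i P_l)$, and the vertex-merging term $\sum_{j<l}\M^{(g),N}_{0,l-1}(\cdots,(\D_i P_j)P_l)$. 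Euler's formula (\ref{eq:Euler_for_map}) in the multicolored case only involves $\sum_i r_i$ and $c(\rho_\carte)$, and since a color-$i$ cut modifies only $r_i$ and a pair of color-$i$ half-edges, the genus bookkeeping is identical to the monocolored case.

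To pass from $V=0$ to general $V$, I would multiply the $V=0$ identity for
\begin{equation*}
\M^{(g),N}_{0,\sum_j n_j + l}\bigl(\underbrace{q_1,\dots,q_1}_{n_1},\dots,\underbrace{q_k,\dots,q_k}_{n_k},P_1,\dots,P_l\bigr)
\end{equation*}
by $\bm z^{\bm n}/\bm n!$ and sum over $\bm n \in \N^k$, exactly as in the proof of Proposition \ref{prop:induction-V}. A multimatrix version of the bounds of Proposition \ref{prop:bounds-V0} (obtained by applying the color-$i$ surgery iteratively) guarantees absolute convergence for $\|\bm z\|_\infty$ small. The vertex-merging contributions in which the vertex $P_j$ is a $q_{j_0}$-vertex rearrange as $\sum_{\bm n} \bm z^{\bm n}/\bm n! \sum_{j_0} n_{j_0}\M^{(g),N}_{0,\cdots}(\cdots,(\D_i q_{j_0})P_l)$; the factor $n_{j_0}/\bm n!$ reindexes after shifting $n_{j_0} \mapsto n_{j_0}-1$ to give the term $\M^{(g),N}_{V,l}(\cdots,(\D_i V)P_l)$ on the left of (\ref{eq:family_DS_multi_V=0}), while the remaining three groups assemble into the corresponding $\M^{(\cdot),N}_{V,\cdot}$ expressions.

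The main obstacle is not conceptual but combinatorial bookkeeping. One must treat carefully the degenerate subcase where the color-$i$ cut pinches off a vertex of degree zero with respect to $u_i$ (so that one of the factors $Q,R$ in $P_l = Q u_i^{\pm} R$ contains no $u_j^{\pm}$ at all); this contributes a scalar $\tr(\cdot)$ weight and only combines correctly with the nondegenerate Cauchy terms under the conventions $\M^{(g),N}_{V,0}=0$ and the traciality/simplification properties of Lemma \ref{lem:prop-M0}. Once this is verified, the color-wise factorization of Theorem \ref{thm:perm_model_multi_U} reduces everything to the monocolored Theorem \ref{thm:induction_V=0} and Proposition \ref{prop:induction-V}, and (\ref{eq:family_DS_multi_V=0}) follows.
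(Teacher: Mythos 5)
Your proposal is correct and follows essentially the same route as the paper: the paper's own proof simply observes that the surgeries of Section \ref{sec:cut_maps} applied to the largest color-$i$ label modify only color-$i$ edges and black vertices, so the monocolored argument of Theorem \ref{thm:induction_V=0} carries over verbatim, and the passage to general $V$ by summing the $V=0$ identities against $\bm{z}^{\bm{n}}/\bm{n}!$ is exactly how Proposition \ref{prop:induction-V} is derived. Your treatment of the color-wise factorization via Theorem \ref{thm:perm_model_multi_U} and of the degenerate pinching case matches the paper's (much terser) argument.
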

This Proposition is proved as in Section \ref{sec:induction}. If no polynomial of
$\bm{P}$ contains a $u_{i}$ then the equation is trivial. Thus, we can
assume by symmetry that $\deg_{i} P_{l} \geq 1$. We cut the maps from the
sum $\W_{0, l}^{(g), N}(\bm{P})$ as in Section \ref{sec:cut_maps}. Notice that in
this construction, we only modify edges of the color $i$ so we can use
the exact same arguments. We thus obtain the wanted equation.

\section{The Dyson-Schwinger equation and the topological expansion}
\label{sec:dyson-schw-equat}

We now work in the multi-matrix setting. All the maps involved will be
multicolored maps. The induction equations obtained in Section \ref{sec:induction} are
related to the Dyson-Schwinger equations for unitary matrices. In this
section, we introduce the Dyson-Schwinger lattice of equations for the
renormalized cumulants $\tW^{N}_{V, l} = N^{l-2}\W^{N}_{V, l}$.
Together with the induction relations derived in Section \ref{sec:induction}, they
allow us to show that the renormalized cumulants $\tW^{N}_{V, l}$
admit an asymptotic topological expansion as $N \to \infty$. The methods used
in this section are heavily inspired from \cite{guionnet_asymptotics_2015}.

\subsection{Scalar product and parametric norms on $\algA_{n}$}
\label{sec:notation}

Following \cite{guionnet_asymptotics_2015}, we introduce some useful notions. The vector space
$\algA$ -- the algebra of noncommutative polynomials -- admits a
countable basis, which is the set $\hat{\wordsA}_{n}$ of all words in
the letters
$u_{1}, u_{1}^{*}, \ldots, u_{n}, u_{n}^{*}, a_{1}, a_{1}^{*}, \ldots, a_{p}, a_{p}^{*}$,
plus the empty word $1$. Notice that this set contains $\wordsA_{n}$,
the set of such words that finishes by a $u_{i}$ or a $u_{i}^{*}$. Let
\(\langle{\cdot \,, \cdot}\rangle\) be the scalar product that makes this basis orthonormal. In
particular, $\algB^{\perp}$ is the algebra generated by the polynomials
with no constant term, i.e. without factors $u_{i}$ or $u^{*}_{i}$.

\begin{definition}\label{def:parametric-norm}
  Let \(\xi \geq 1\). The \(\xi\)-norm is \(\|\cdot\|_{\xi}\) defined by
  \begin{equation*}
    \begin{split}
      \|P\|_{\xi} = \sum_{Q \in \hat{\wordsA}}|\langle{P, Q}\rangle| \xi^{\deg Q},
    \end{split}
  \end{equation*}
  for $P \in \algA$.

  We write $\algB^{\perp}_{\xi}$ the completion of the algebra
  $\algB^{\perp}$ in the $\xi$-norm $\|\cdot\|_{\xi}$.
\end{definition}
This norm is a deformation of the $\ell^{1}$ norm that takes into
account the degree of the basis monomials. The usual $\ell^{1}$ norm
will in many case not be the appropriate norm, as the effect of many
operators we consider in the sequel depends on the degree of the
monomial it is applied to.

\begin{ex}
  This norm is deformation of the $\ell^{1}$ norm, recovered when
  considering the \(1\)-norm. For instance the \(1\)-norm of the
  potential we consider is
  \begin{equation*}
    \begin{split}
      \|V\|_{1} = \sum_{i=1}^{k}|z_{i}|.
    \end{split}
  \end{equation*}
\end{ex}

This notion of norm allows us to define the parametric \(\xi\)-norm of
a linear operator or form.
\begin{definition}
  Let $T$ be an operator on $\algA$ and $\xi, \xi' \geq 1$. Its \((\xi, \xi')\)-norm is
  \begin{equation*}
    \begin{split}
      \|T\|_{\xi, \xi'} = \sup_{P\in \algA}\frac{\|T P\|_{\xi'}}{\|P\|_{\xi}}.
    \end{split}
  \end{equation*}
  When $\xi = \xi'$ we write $\|T\|_{\xi} = \|T\|_{\xi, \xi}$.

  Similarly, let \(\tau\colon \algA \to \C\) be a linear form. Its
  \(\xi\)-norm is
  \begin{equation*}
    \begin{split}
      \|\tau\|_{\xi} = \sup_{P \in \algA}\frac{|\tau(P)|}{\|P\|_{\xi}}.
    \end{split}
  \end{equation*}
\end{definition}

A particularly important sort of linear forms are tracial states.
\begin{definition}\label{def:tracial_state}
  Let $\mathcal{C}$ be a unital $*$-algebra. A \textbf{tracial state}
  on $\mathcal{C}$ is a linear form $\tau \colon \mathcal{C} \to \C$
  such that for any $P, Q \in \mathcal{C}$, we have
  \begin{itemize}
    \item $\tau(\bm{1}) = 1$, where $\bm{1}$ is the empty word;
    \item $\tau(PQ) = \tau(QP)$;
    \item $\tau(P P^{*}) \geq 0$.
  \end{itemize}
\end{definition}

\begin{remark}
  The normalized trace \(\tr\) is a tracial state on $\algB$. Under
  Hypothesis \ref{hyp:bound}, the Cauchy-Schwarz inequality implies that
  $\|\tr\|_{1} \leq 1$. Furthermore, we have $\tr(\bm{1}) = \tr(\Id) = 1$
  where $\Id$ is the identity matrix, and thus
  \begin{equation*}
      \|\tr\|_{1} = 1.
  \end{equation*}
  Assuming furthermore Hypothesis \ref{hyp:real}, we have that $\tW^{N}_{V,1}$ is
  a tracial state on $\algA_{n}$, with $\|\tW^{N}_{V, 1}\|_{1} = 1$.
\end{remark}

\subsection{The Dyson-Schwinger equations for the unitary matrices}
\label{sec:dyson-schw-equat-unitary}

Let \(\sigma\) be a tracial state on \(\algB\). A tracial state \(\mu\)
on \(\algA\) is a solution to the Dyson-Schwinger problem with initial
value \(\sigma\) if for all $P \in \algA$,
\begin{equation}\label{eq:DS-problem}
  \begin{cases}
    \mu\otimes\mu(\partial_{i} P) + \mu(\D_{i} V \cdot P) &= 0, \text{ for } 1 \leq i \leq n\\
    \mu\vert_{\algB} &= \sigma\,,
  \end{cases}
\end{equation}
where $\partial_{i}$ and $\D_{i}$ are the non-commutative derivative
and cyclic derivative with respect to $u_{i}$, see Definitions
\ref{def:nc_derivative} and \ref{def:cyclic_derivative}.

It has been shown in \cite{collins_asymptotics_2009} that there exists a solution to this problem
when \(\Tr V = \Tr V^{*}\) (which implies that $\Tr V$ is real), and
that the solution is unique for a potential $V$ small enough (i.e.
$\sum_{i=1}^{k}|z_{i}| < \epsilon$ for some $\epsilon > 0$). Notice that for all
$N \geq 1$, $\M^{(0), N}_{V, 1}$ is a solution to \eqref{eq:DS-problem} with $\sigma = \tr_{N}$.
In \cite{guionnet_asymptotics_2015}, a family of equations that generalize \eqref{eq:DS-problem} was studied. The
renormalized cumulants \(\tilde{\W}^{N}_{V, l}\) are solution to these
equations. We reproduce them here.
\begin{prop}[{\cite[Proposition 20]{guionnet_asymptotics_2015}}]\label{prop:DS-first}
  Assume Hypothesis \ref{hyp:real}. The renormalized
  cumulants \(\{\tW_{V, l}^{N}\}_{l \geq 1}\) satisfy the equation
  \begin{equation}\label{eq:DS-first}
    \begin{split}
      \sum_{I \subset [l-1]}&\tilde{\W}^{N}_{V, |I|+1}\otimes\tilde{\W}^{N}_{V, |I^{c}|+1}(\bm{P}_{I}\otimes\bm{P}_{I^{c}}\#\partial_{i} P_{l}) + \tilde{\W}^{N}_{V, l}\left(\bm{P}_{[l-1]}\otimes (\D_{i}V\cdot P_{l})\right)\\
      &= - \sum_{j=1}^{l-1}\tilde{\W}^{N}_{V, l-1}(P_{1} \otimes \cdots \otimes \check{P_{j}} \otimes \cdots \otimes P_{l-1}\otimes(\D_{i}P_{j}\cdot P_{l})) - \frac{1}{N^{2}}\tilde{\W}^{N}_{V, l+1}(\bm{P}_{[l-1]}\otimes \partial_{i}P_{l})\,,
    \end{split}
  \end{equation}
  where $\check{P_{j}}$ means that the factor $P_{j}$ is omitted.
\end{prop}
The series of maps $\M^{(g), N}_{V, l}$ satisfy similar equations (see
\eqref{eq:family_DS}).

\subsection{Radius of convergence of the series $\M^{(g), N}_{V, l}$}
\label{sec:radi-conv-seri}

Before giving the proof of Theorem \ref{thm:main}, we show that all the terms
$\M^{(g), N}_{V, l}$ have a radius of convergence greater than some
$R_{V} > 0$. We can apply the gradient trick from \cite{guionnet_asymptotics_2015} that we explain
in Appendix \ref{sec:gradient-trick} to the equations from Proposition \ref{prop:induction-multimatrix}. To do so, we
introduce some notation, motivated in Appendix \ref{sec:gradient-trick}.
\begin{definition}\label{def:operators-from-appendix}
  Let $P \in \hat{\wordsA}_{n}$ be a monomial, we define
  \begin{equation*}
    \begin{split}
      \Delta_{i} P
      &= \sum_{P = P_{1}u_{i}P_{2}}\left(\sum_{P_{2}P_{1} = Q_{1}u_{i}Q_{2}}Q_{1}u_{i}\otimes Q_{2}u_{i} - \sum_{P_{2}P_{1} = Q_{1}u_{i}^{-1}Q_{2}}Q_{1}\otimes Q_{2}\right)\\
      & - \sum_{P = P_{1}u_{i}^{-1}P_{2}}\left(\sum_{P_{2}P_{1} = Q_{1}u_{i}Q_{2}}Q_{1}\otimes Q_{2} - \sum_{P_{2}P_{1} = Q_{1}u_{i}^{-1}Q_{2}}u_{i}^{-1}Q_{1}\otimes u_{i}^{-1}Q_{2}\right)\,.
    \end{split}
  \end{equation*}
  The \textbf{reduced Laplacian} $\Delta\colon \algA_{n} \to \algA_{n}^{\otimes 2}$ is
  then defined by
  \begin{equation*}
    \Delta = \sum_{i=1}^{n}\Delta_{i}\,.
  \end{equation*}

  Let $(P, Q) \in \algA_{n}^{2}$, we define the operator $\opP^{Q}$ by
  \begin{equation*}
    \opP^{Q}P = \sum_{i=1}^{n}(\D_{i} Q)(\D_{i} P)\,.
  \end{equation*}

  We define the operator $D_{i}$, which acts on a monomial $P$ by
  $D_{i}P = \deg_{i}(P)P$, and $D$ by
  \begin{equation*}
    D = \sum_{i=1}^{n}D_{i}\,.
  \end{equation*}
  Furthermore, for an operator \(T\), we introduce its regularization
  $\bar{T} = T D^{-1}$.
\end{definition}

\begin{definition}[{\cite[Definition 13]{guionnet_asymptotics_2015}}]\label{def:master-operator}
  Let $\Pi$ be the orthogonal projection of the polynomials onto
  $\algB^{\perp}$, the algebra of polynomials without a degree 0 term.
  Let $\Pi' = \Id - \Pi$ be the complementary projection of the
  polynomials onto $\algB$.

  Let $\tau$ be a tracial state. We define
  \begin{equation*}
    T_{\tau} = (\Id\otimes\tau + \tau\otimes\Id)\Delta\,.
  \end{equation*}

  The \textbf{master operator} is
  \begin{equation*}
    \Xi_{\tau}^{V} = \Id + \Pi\bar{T}_{\tau} + \bar{\opP}^{V}\,.
  \end{equation*}
  Where $\bar{T}_{\tau}$ and $\bar{\opP}^{V}$ are the regularization
  (see Definition \ref{def:operators-from-appendix}) of $T_{\tau}$ and
  $\opP^{V}$.
\end{definition}

Applying the gradient trick, we obtain for $(g, l) \neq (0, 1)$,
\begin{equation}\label{eq:induction-formal-cumulant-secondary}
  \begin{split}
    \M^{(g), N}_{V, l}(\bm{P}_{[l-1]}\otimes &\Xi^{V}_{\M^{(0), N}_{V, 1}}P_{l})
    = -\M^{(g-1), N}_{V, l+1}(\bm{P}_{[l-1]}\otimes \bar{\Delta} P_{l})\\
                                             &- \sum_{h=1}^{g-1}\M_{V, l}^{(g-h), N}(\bm{P}_{[l-1]}\otimes \M^{(h), N}_{V, 1}\otimes \Id)(\bar{\Delta} P_{l})\\
                                             &-\sum_{\substack{\emptyset \subsetneq I \subsetneq [l-1]\\g_{1} + g_{2} = g}}\M^{(g_{1}), N}_{V, |I|+1}\otimes\M^{(g_{2}), N}_{V, |I^{c}|+1}(\bm{P}_{I}\otimes \bm{P}_{I^{c}}\# \bar{\Delta} P_{l})\\
                                             &-\sum_{j=1}^{l-1}\M^{(g), N}_{V, l-1}(P_{1}, \ldots, \check{P_{j}}, \ldots, P_{l-1}, \bar{\opP}^{P_{j}}P_{l}).
  \end{split}
\end{equation}

\begin{prop}\label{prop:radius-formal-cumulants}
  Fix a potential $V = \sum_{i=1}^{k}z_{i}q_{i}$, with
  $q_{1}, \ldots, q_{k} \in \wordsA_{n}$. Let $g \geq 0$, $l \geq 1$,
  $\bm{P} \in \algA_{n}^{l}$.

  The radius of convergence of $\M^{(g), N}_{V, l}(\bm{P})$ depends
  only on $k$ and $q_{1}, \ldots, q_{k}$, and is greater than
  $R_{V} = \min(\frac{1}{2}(4A_{k}D_{k, \nu})^{-1}, \frac{1}{2k\nu(4A_{k}+\frac{2^{k+2}}{B_{k}})^{\nu}})$,
  where $\nu = \max_{1 \leq i \leq k}\deg q_{i}$ and the constants $A_{k}, B_{k}$, and $D_{k, \nu}$ are those of Proposition \ref{prop:bounds-V0}.
\end{prop}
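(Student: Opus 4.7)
The plan is to bootstrap the per-index radius of convergence $R_{l,g,V}$ given by Proposition \ref{prop:bounds-V0} up to the uniform value $R_V$ by treating the gradient-trick equation \eqref{eq:induction-formal-cumulant-secondary} as a recursive identity, inverting the operator $\Xi^V_{\M^{(0), N}_{V, 1}}$. The argument proceeds by induction on the composite index $2g+l$.

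For the base case $(g,l) = (0,1)$, the formal series $\M^{(0),N}_{V,1}$ is a tracial state satisfying the Dyson--Schwinger equation \eqref{eq:DS-problem} with initial value $\tr$. Exploiting the contractive structure of \eqref{eq:DS-problem} for $\bm z$ small in a suitable $\xi$-norm on $\algB^{\perp}$ (as developed in the gradient-trick appendix, following \cite{collins_asymptotics_2009, guionnet_asymptotics_2015}), the solution extends $\bm{z}$-analytically to $B_V = \{\|\bm{z}\|_\infty < R_V\}$, with an estimate of the form $\|\M^{(0),N}_{V,1} - \M^{(0),N}_{0,1}\|_\xi \lesssim \|\bm z\|_\infty$ that is independent of $N$. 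Combined with the invertibility of the unperturbed operator $\Xi^0_{\M^{(0),N}_{0,1}}$, this closeness estimate implies by a Neumann-series perturbation that $\Xi^V_{\M^{(0),N}_{V,1}}$ is invertible on $\algB^{\perp}_\xi$, with inverse bounded uniformly in $N$ on all of $B_V$.

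For the inductive step, inspect the right-hand side of \eqref{eq:induction-formal-cumulant-secondary}: every formal cumulant appearing has index $2g'+l'$ strictly smaller than $2g+l$. Indeed, $\M^{(g-1),N}_{V,l+1}$ has index $2g+l-1$; in the sum over $h\geq 1$, the factor $\M^{(g-h),N}_{V,l}$ has index $2g+l-2h<2g+l$ (and $\M^{(h),N}_{V,1}$ has index $2h+1\leq 2g-1$); the constraint $\emptyset\subsetneq I\subsetneq[l-1]$ forces $|I|+1<l$ and $|I^c|+1<l$, so each factor of the bilinear products has strictly smaller index; and $\M^{(g),N}_{V,l-1}$ has index $2g+l-1$. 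By the induction hypothesis all of these are $\bm z$-analytic on $B_V$, so the right-hand side is analytic there. Applying the uniformly bounded inverse of $\Xi^V_{\M^{(0),N}_{V,1}}$ to the variable $P_l$ yields an analytic extension of $\M^{(g),N}_{V,l}(\bm{P})$ to $B_V$. This extension must agree with the formal power series on the smaller ball $\{\|\bm z\|_\infty < R_{l,g,V}\}$ where convergence is already known from Proposition \ref{prop:bounds-V0}, so by analyticity the radius of convergence of the formal series is at least $R_V$.

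The main obstacle is the uniform (in $N$) invertibility of $\Xi^V_{\M^{(0),N}_{V,1}}$ on $B_V$, and the two terms defining $R_V$ precisely track the two estimates it demands: $\tfrac{1}{2}(4A_kD_{k,\nu})^{-1}$ keeps the $V=0$ series defining $\M^{(0),N}_{V,1}$ convergent with a tractable bound via Proposition \ref{prop:bounds-V0}, while $\tfrac{1}{2k\nu(4A_k+2^{k+2}/B_k)^\nu}$ controls the norm of $\D V$ and the resulting perturbative contributions, guaranteeing that the operator-norm deviation $\|\Xi^V_{\M^{(0),N}_{V,1}} - \Xi^0_{\M^{(0),N}_{0,1}}\|_\xi$ stays strictly below the reciprocal of the unperturbed inverse's norm throughout $B_V$, which is exactly what makes the Neumann series converge uniformly in $N$.
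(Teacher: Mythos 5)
Your proposal follows essentially the same route as the paper: the base case $(g,l)=(0,1)$ and the invertibility of the master operator $\Xi^{V}_{\M^{(0), N}_{V, 1}}$ are ultimately grounded in the bounds of Proposition \ref{prop:bounds-V0}, and the inductive step applies the (uniformly bounded, analytic-in-$\bm{z}$) inverse to the secondary equation \eqref{eq:induction-formal-cumulant-secondary} and concludes by analytic continuation from the smaller polydisc of radius $R_{l, g, V}$. The only differences are local and equivalent in effect: the paper inducts lexicographically on $(g,l)$ rather than on $2g+l$ (both orderings are compatible with the right-hand side of \eqref{eq:induction-formal-cumulant-secondary}), it bounds $\M^{(0), N}_{V, 1}$ directly by summing the combinatorial estimates rather than detouring through the Dyson--Schwinger contraction (the corollary to Proposition \ref{prop:bounds-V0} already gives radius $(4A_{k}D_{k,\nu})^{-1} > R_{V}$ for this term), and it inverts $\Xi^{V}_{\M^{(0), N}_{V, 1}}$ via the direct norm estimate $K(\xi, V) < 1$ rather than a Neumann perturbation around the $V=0$ operator.
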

\begin{proof}
  Let $\bm{P} \in (\wordsA_{n})^{l}$ be monomials. As
  $\M^{(g), N}_{V, l}$ is linear in each polynomial $P_{i}$, the
  result follows from the case where the $P_{i}$ are monomials. Using
  Proposition \ref{prop:bounds-V0}, the series
  $\M^{(0), N}_{V, 1}(\bm{P})$ can be bounded as follows
  \begin{equation*}
    \begin{split}
      |\M^{(0), N}_{V, 1}(\bm{P})|
      &\leq \sum_{\bm{n} \in \N^{k}}\frac{\bm{z}^{\bm{n}}}{\bm{n}!}|\M^{(0), N}_{0, \sum_{i}n_{i}+l}(\underbrace{q_{1}, \ldots, q_{1}}_{n_{1}\text{
      times}}, \ldots, \underbrace{q_{k}, \ldots, q_{k}}_{n_{k}\text{
      times}}, P_{1}, \ldots, P_{l})|\\
      &\leq \frac{(4A_{k})^{\deg \bm{P}}}{B_{k}}\sum_{\bm{n} \in \N^{k}}\bm{z}^{\bm{n}}(4A_{k}D_{k, \nu})^{\bm{n}}\\
      &\leq \frac{(4A_{k})^{\deg \bm{P}}}{B_{k}}\prod_{i=1}^{k}\frac{1}{1 - 4A_{k}D_{k, \nu}z_{i}}\,,\\
    \end{split}
  \end{equation*}
  where Proposition \ref{prop:bounds-V0} is used on the second line.
  Notice that the radius of convergence of the series does not depend
  on $\bm{P}$.

  Assuming that
  $\|\bm{z}\|_{\infty} < \frac{1}{2}(4A_{k}D_{k, \nu})^{-1}$, we get
  \begin{equation*}
    \begin{split}
      \|\M^{(0), N}_{V, 1}\|_{4A_{k}} \leq \frac{2^{k}}{B_{k}}.
    \end{split}
  \end{equation*}

  Define
  \begin{equation*}
    \begin{split}
      K(\xi, V) = \frac{2^{k+1}}{B_{k}}\frac{4A_{k}}{\xi - 4A_{k}} + \|\Pi V\|_{1}\nu\xi^{\nu},
    \end{split}
  \end{equation*}
  where as before $\nu = \max_{1 \leq i \leq k}\deg q_{i}$. Choose
  $\xi = 4A_{k} + \frac{2^{k+2}}{B_{k}}$. Then, assuming than
  \begin{equation*}
    \begin{split}
      \|\Pi V\|_{1} = \sum_{i=1}^{k}|z_{i}| < \frac{1}{2\nu\xi^{\nu}}\,,
    \end{split}
  \end{equation*}
  where $\Pi$ is the projection operator introduced in Definition
  \ref{def:master-operator}, we have $K(\xi, V) < 1$ and
  $\Xi^{V}_{\M^{(0), N}_{V, 1}}$ is an invertible operator
  $\mathcal{B}^{\perp}_{\xi} \to \mathcal{B}^{\perp}_{\xi}$. Note that this is
  satisfied if $\|\bm{z}\|_{\infty} < \frac{1}{2k\nu\xi^{\nu}}$. We thus set
  $R_{V} = \min(\frac{1}{2}(4A_{k}D_{k, \nu})^{-1}, \frac{1}{2k\nu\xi^{\nu}})$.

  We then proceed by induction. Assume that for all
  $(g', l') < (g, l)$ (with the lexicographic order), and for all
  $\bm{P}\in \wordsA_{n}^{l'}$, the series
  $\M^{(g'), N}_{V, l'}(\bm{P})$ has a radius of convergence greater
  than $R_{V}$. Then, the right side of (\ref{eq:induction-formal-cumulant-secondary}) is a holomorphic
  function that is defined on a polydisc of radius $R_{V}$. The left
  side is a holomorphic function defined on a polydisc of radius
  $R_{l, g, V}$ which coincide with the right side. Thus, it can be
  extended to a holomorphic function on a polydisc of radius $R_{V}$.
  The fact that $\Xi^{V}_{\M^{(0), N}_{V, 1}}$ is invertible allows us
  to conclude.
\end{proof}

\subsection{The topological expansion: proof of Theorem \ref{thm:main}}
\label{sec:topol-expans}
We introduce the truncated formal
cumulant (cf. Definition \ref{def:formal_cumulant})
\begin{equation*}
  S^{(g), N}_{V, l} = \sum_{h=0}^{g}\frac{1}{N^{2h}}\M^{(h), N}_{V, l}.
\end{equation*}

We will show that the cumulants $\tW^{N}_{V, l}$ admit a topological
expansion by bounding the errors $\delta^{(g), N}_{V, l}$ defined by
\begin{equation}\label{eq:def-errors}
  \delta^{(g), N}_{V, l} = \tW^{N}_{V, l} - S^{(g), N}_{V, l}\,.
\end{equation}
In particular, we shall set $\delta^{(-1), N}_{V, l} = \tW^{N}_{V, l}$.

We will derive equations on the errors $\delta^{(g), N}_{V, l}$. To make
this clearer, we first consider the case $g=0, l=1$. In that case,
when $\|\bm{z}\|_{\infty} < R_{V}$, we have as a consequence of \eqref{eq:family_DS},
\begin{equation}\label{eq:simple-troncated-DS}
  \frac{1}{2}\left(S^{(0), N}_{V, 1}\otimes \M^{(0), N}_{V, 1} + \M^{(0), N}_{V, 1}\otimes S^{(0), N}_{V, 1}\right)(\partial_{i}P)
  + S^{(0), N}_{1}((\D_{i}V)P)
  = 0\,.
\end{equation}
On the other hand, Proposition \ref{prop:DS-first} implies
\begin{equation}\label{eq:simple-cumulant-DS}
  \tilde{\W}^{N}_{V, 1}\otimes\tilde{\W}^{N}_{V, 1}(\partial_{i} P) + \tilde{\W}^{N}_{V, 1}(\D_{i}V\cdot P)
  = - \frac{1}{N^{2}}\tilde{\W}^{N}_{V, 2}(\partial_{i}P)\,.
\end{equation}
Taking the difference of \eqref{eq:simple-troncated-DS} and \eqref{eq:simple-cumulant-DS}, we get
\begin{equation*}
  \begin{split}
    \frac{1}{2}\big(\delta^{(0), N}_{V, 1}\otimes\tW^{N}_{V, 1} &+ \tW^{N}_{V, 1}\otimes\delta^{(0), N}_{V, 1} + S^{(0), N}_{V, 1}\otimes \delta^{(0), N}_{V, 1} + \delta^{(0), N}_{V, 1}\otimes S^{(0), N}_{V, 1}\big)(\partial_{i}P)\\
                                                                &+ \delta^{(0), N}_{V, 1}((\D_{i}V)P)
                                                                  = -\frac{1}{N^{2}}\tW^{N}_{V, 2}(\partial_{i}P)\,.
  \end{split}
\end{equation*}

We apply the gradient trick as in Appendix \ref{sec:gradient-trick}.
To do so, we replace $P$ by $\D_{i} P$. Making use of Lemma
\ref{lem:use-big-delta} and the fact that
\begin{equation*}
  \frac{1}{2}\left(\delta^{(0), N}_{V, 1}\otimes\tW^{N}_{V, 1} + \tW^{N}_{V, 1}\otimes\delta^{(0), N}_{V, 1}\right)
  ~\text{ and }~\frac{1}{2}\left(S^{(0), N}_{V, 1}\otimes \delta^{(0), N}_{V, 1} + \delta^{(0), N}_{V, 1}\otimes S^{(0), N}_{V, 1}\right)
\end{equation*}
are symmetric, we can make the master operator defined in Definition
\ref{def:master-operator} appear. We get
\begin{equation}\label{eq:simple-gradient-trick}
  \delta^{(0), N}_{V, 1}
  \left(\Xi^{V}_{\tW^{N}_{V, 1}/2 + S^{(0), N}_{V, 1}/2}\right)(P)
  =
  -\delta^{(0), N}_{V, 1}\otimes\delta^{(0), N}_{V, 1}(\bar{\Delta}P) - \frac{1}{N^{2}}\tW^{N}_{V, 2}(\partial_{i}P)\,.
\end{equation}

We now turn to the general case. When $\|\bm{z}\|_{\infty} < R_{V}$, the
truncated formal cumulants satisfy the equation
\begin{equation}\label{eq:eq-truncated}
  \begin{split}
    \sum_{\substack{I \subset [l-1]\\0 \leq f \leq g}}&\frac{1}{2}\left(\frac{1}{N^{2f}}\M^{(f), N}_{V, |I| + 1}\otimes S^{(g -f), N}_{V, |I^{c}| + 1} + \frac{1}{N^{2f}}S^{(g -f), N}_{V, |I| + 1}\otimes \M^{(f), N}_{V, |I^{c}| + 1}\right)(\bm{P}_{I}\otimes\bm{P}_{I^{c}}\# \partial_{i}P_{l})\\
                                                                 &+ S^{(g), N}_{l}(\bm{P}_{[l-1]}\otimes (\D_{i}V)P_{l})\\
                                                                 &= -\frac{1}{N^{2}}S^{(g-1), N}_{V, l}(\bm{P}_{[l-1]}\otimes\partial_{i}P_{l}) -\sum_{j=1}^{l-1}S^{(g), N}_{V, l-1}(P_{1}\otimes \cdots P_{j-1}\otimes P_{j} \otimes P_{l-1}\otimes (\D_{i} P_{j})P_{l})\,.
  \end{split}
\end{equation}
These equations are obtained by summing the equations
\eqref{eq:family_DS_multi_V=0} for different values of $g$, multiplied
by $1/N^{2g}$.

Together with \eqref{eq:DS-first}, these equations imply equation
\eqref{eq:error-intermediate} on the errors defined by \eqref{eq:def-errors}. Before stating the
equation, we explain how this equation is derived. We substract from
\eqref{eq:DS-first} equation \eqref{eq:eq-truncated}. To have all the
terms from \eqref{eq:DS-first} simplify, we must rewrite the most
complicated term
\begin{equation*}
  \sum_{\substack{I \subset [l-1]\\0 \leq f \leq g}}\frac{1}{2}\left(\frac{1}{N^{2f}}\M^{(f), N}_{V, |I| + 1}\otimes S^{(g -f), N}_{V, |I^{c}| + 1} + \frac{1}{N^{2f}}S^{(g -f), N}_{V, |I| + 1}\otimes \M^{(f), N}_{V, |I^{c}| + 1}\right)(\bm{P}_{I}\otimes\bm{P}_{I^{c}}\# \partial_{i}P_{l})\,.
\end{equation*}
We rewrite $S^{(g), N}_{V, l} = \tW^{N}_{V, l} - \delta^{(g), N}_{V, l}$ in the sum
\begin{equation*}
  \begin{split}
    \sum_{0 \leq f \leq g}\frac{1}{N^{2f}}&\M^{(f), N}_{V, |I| + 1}\otimes S^{(g -f), N}_{V, |I^{c}| + 1}
    =\sum_{0 \leq f \leq g}\frac{1}{N^{2f}}\M^{(f), N}_{V, |I| + 1}\otimes \left( \tW^{N}_{V, |I^{c}| + 1} - \delta^{(g -f), N}_{V, |I^{c}| + 1} \right)\\
    &=S^{(g), N}_{V, |I| + 1}\otimes \tW^{N}_{V, |I^{c}| + 1} - \sum_{0 \leq f \leq g}\frac{1}{N^{2f}}\M^{(f), N}_{V, |I| + 1}\otimes \delta^{(g -f), N}_{V, |I^{c}| + 1}\\
    &=\tW^{N}_{V, |I| + 1}\otimes \tW^{N}_{V, |I^{c}| + 1} - \delta^{(g), N}_{V, |I| + 1}\otimes \tW^{N}_{V, |I^{c}| + 1} - \sum_{0 \leq f \leq g}\frac{1}{N^{2f}}\M^{(f), N}_{V, |I| + 1}\otimes \delta^{(g -f), N}_{V, |I^{c}| + 1}\,.
  \end{split}
\end{equation*}
We proceed similarly for
$\sum_{0 \leq f \leq g}\frac{1}{N^{2f}}S^{(g -f), N}_{V, |I^{c}| + 1}\otimes \M^{(f), N}_{V, |I| + 1}$.
After this rewriting and substracting \eqref{eq:DS-first}, we have
\begin{equation}\label{eq:error-intermediate}
  \begin{split}
    \sum_{I \subset [l-1]}&\frac{1}{2}\left(\delta^{(g), N}_{V, |I|+1}\otimes\tW^{N}_{V, |I^{c}|+1} + \tW^{N}_{V, |I|+1}\otimes\delta^{(g), N}_{V, |I^{c}|+1}\right)(\bm{P}_{I}\otimes\bm{P}_{I^{c}}\# \partial_{i} P_{l})\\
    +\sum_{\substack{I \subset [l-1]\\0 \leq f \leq g}}&\frac{1}{2}\left(\frac{1}{N^{2f}}\M^{(f), N}_{V, |I| + 1}\otimes \delta^{(g -f), N}_{V, |I^{c}| + 1} + \frac{1}{N^{2f}}\delta^{(g -f), N}_{V, |I| + 1}\otimes \M^{(f), N}_{V, |I^{c}| + 1}\right)(\bm{P}_{I}\otimes\bm{P}_{I^{c}}\# \partial_{i}P_{l})\\
                                                                 &+ \delta^{(g), N}_{V, l}(\bm{P}_{[l-1]}\otimes (\D_{i}V)P_{l})\\
                                                                 &= -\frac{1}{N^{2}}\delta^{(g-1), N}_{V, l+1}(\bm{P}_{[l-1]}\otimes\partial_{i}P_{l}) -\sum_{j=1}^{l-1}\delta^{(g), N}_{V, l-1}(P_{1}\otimes \cdots P_{j-1}\otimes P_{j} \otimes P_{l-1}\otimes (\D_{i} P_{j})P_{l})\,.
  \end{split}
\end{equation}

Using the gradient trick (see Section \ref{sec:gradient-trick}), these
equations can be rewritten as follows
\begin{equation}\label{eq:error-gradient-trick}
  \begin{split}
    \delta^{(g), N}_{V, l}
    &\left(\bm{P}_{[l-1]}\otimes\Xi^{V}_{\tW^{N}_{V, 1}/2 + \M^{(0), N}_{V, 1}/2}\right)(P_{l})\\
    = &- \frac{1}{N^{2}}\delta^{(g-1)}_{l+1}(\bm{P}_{[l-1]}\otimes \bar{\Delta} P_{l})\\
    &-\frac{1}{2}\left([\tW^{N}_{V, l} + \M^{(0),N}_{V, l}](\bm{P}_{[l-1]}\otimes \Id)\otimes \delta^{(g), N}_{V, 1} + \delta^{(g), N}_{V, 1}\otimes[\tW^{N}_{V, l} + \M^{(0),N}_{V, l}](\bm{P}_{[l-1]}\otimes \Id)\right)(\bar{\Delta}P_{l})\\
    &-\sum_{\emptyset\subsetneq I \subsetneq [l-1]} \frac{1}{2}\left([\tW^{N}_{V, l} + \M^{(0),N}_{V, l}]\otimes \delta^{(g), N}_{V, |I^{c}| +1}+ \delta^{(g), N}_{V, |I| + 1}\otimes [\tW^{N}_{V, l} + \M^{(0),N}_{V, l}]\right)(\bm{P}_{I}\otimes\bm{P}_{I^{c}}\#\bar{\Delta}P_{l})\\
    &-\sum_{\substack{I \subset [l-1]\\1 \leq f \leq g}}\frac{1}{2}\left(\frac{1}{N^{2f}}\M^{(f), N}_{V, |I| + 1}\otimes \delta^{(g -f), N}_{V, |I^{c}| + 1} + \frac{1}{N^{2f}}\delta^{(g -f), N}_{V, |I| + 1}\otimes \M^{(f), N}_{V, |I^{c}| + 1}\right)(\bm{P}_{I}\otimes\bm{P}_{I^{c}}\# P_{l})\\
    &- \sum_{j=1}^{l-1}\delta^{(g)}_{l-1}(P_{1}\otimes \cdots \otimes \check{P_{j}}\otimes  \cdots \otimes P_{l-1}\otimes \bar{\opP}^{P_{j}}P_{l}).
  \end{split}
\end{equation}
The notation $\Id$ in the third line means the identity operator, in
particular terms on the third line must be read as follows:
\begin{equation*}
  [\tW^{N}_{V, l} + \M^{(0),N}_{V, l}](\bm{P}_{[l-1]}\otimes \Id)\otimes \delta^{(g), N}_{V, 1}(P \otimes Q)
  = [\tW^{N}_{V, l} + \M^{(0),N}_{V, l}](\bm{P}_{[l-1]}\otimes P)\otimes \delta^{(g), N}_{V, 1}(Q)\,.
\end{equation*}

The bounds of Proposition \ref{prop:bounds-V0} imply the following
results.
\begin{lemma}\label{lem:bound-first-error}
  Assume that for all $N\geq 1$, $\Tr V$ is real and
  $\|A^{N}_{i}\| \leq 1$ for all $i$ (Hypotheses \ref{hyp:real} and
  \ref{hyp:bound}). There exists $\xi > 1$ and $\epsilon > 0$, such
  that if
  \begin{equation*}
    \begin{split}
      \|\bm{z}\|_{\infty} < \epsilon,
    \end{split}
  \end{equation*}
  then, that for all $g \geq 0$ and $l \geq 1$, we have
  \begin{equation*}
    \begin{split}
      \|\delta^{(0)}_{1}\|_{\xi} \leq \frac{C}{N^{2}}.
    \end{split}
  \end{equation*}
\end{lemma}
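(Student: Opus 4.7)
My plan is to set up and invert a linearized Schwinger--Dyson equation for the error $\delta := \delta^{(0), N}_{V, 1} = \tW^{N}_{V, 1} - \M^{(0), N}_{V, 1}$. Subtracting the $l=1$ Dyson--Schwinger equation of Proposition~\ref{prop:DS-first},
\begin{equation*}
  \tW^{N}_{V, 1} \otimes \tW^{N}_{V, 1}(\partial_{i} P) + \tW^{N}_{V, 1}\bigl((\D_{i} V) P\bigr) = -\tfrac{1}{N^{2}}\tW^{N}_{V, 2}(\partial_{i} P),
\end{equation*}
from the analogous planar equation $\M^{(0), N}_{V, 1} \otimes \M^{(0), N}_{V, 1}(\partial_{i} P) + \M^{(0), N}_{V, 1}((\D_{i} V) P) = 0$, which is the $(g, l) = (0, 1)$ instance of Proposition~\ref{prop:induction-V}, and using the algebraic identity $\tW \otimes \tW - \M \otimes \M = \delta \otimes \tW + \M \otimes \delta$, one obtains a linear equation for $\delta$,
\begin{equation*}
  \delta\bigl(\mathcal{L}^{N}_{V, i} P\bigr) = -\tfrac{1}{N^{2}}\tW^{N}_{V, 2}(\partial_{i} P), \qquad i \in [n],
\end{equation*}
with the Schwinger--Dyson operator $\mathcal{L}^{N}_{V, i}\colon \algA_{n} \to \algA_{n}$ defined by
\begin{equation*}
  \mathcal{L}^{N}_{V, i} P = (\D_{i} V) P + (\operatorname{Id} \otimes \tW^{N}_{V, 1})(\partial_{i} P) + (\M^{(0), N}_{V, 1} \otimes \operatorname{Id})(\partial_{i} P).
\end{equation*}

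The next step is to show that, for $\xi > 1$ close to $1$ and $\|\bm{z}\|_{\infty}$ small, the family $(\mathcal{L}^{N}_{V, i})_{i \in [n]}$, viewed as a single operator on $\algB_{\xi}^{\perp}$, is invertible with operator norm bounded uniformly in $N$. At $V = 0$ and with both tracial states replaced by $\tr$, this reduces to the unperturbed unitary Schwinger--Dyson operator, whose invertibility is precisely the content of the gradient trick of Appendix~\ref{sec:gradient-trick}, already used in the proof of Proposition~\ref{prop:radius-formal-cumulants}. The full operator differs from this base by (i) multiplication by $\D_{i} V$, of $\xi$-norm bounded by a constant multiple of $\|\bm{z}\|_{\infty}$; and (ii) two corrections that replace $\tr$ with $\tW^{N}_{V, 1}$ and $\M^{(0), N}_{V, 1}$ inside $\partial_{i}$. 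Under Hypothesis~\ref{hyp:bound} both are bounded tracial states; for $\M^{(0), N}_{V, 1}$ the bound $\|\M^{(0), N}_{V, 1} - \tr\|_{\xi} = O(\|\bm{z}\|_{\infty})$ follows from the explicit estimates used to prove Proposition~\ref{prop:radius-formal-cumulants}, and for $\tW^{N}_{V, 1}$ the same estimate follows from Lemma~\ref{lem:diff-free-energy} (which presents $\tW^{N}_{V, 1}$ as a $\bm{z}$-derivative of the free energy) combined with a uniform bound on $\tW^{N}_{V, 2}$. A Neumann-series argument then yields $\|(\mathcal{L}^{N}_{V, i})^{-1}\|_{\xi', \xi} \le C$ uniformly in $N$ for appropriate $\xi' > \xi > 1$.

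Combining this invertibility with the a priori bound $\|\tW^{N}_{V, 2}\|_{\xi} \le C$ uniformly in $N$ on the right-hand side will give $\|\delta^{(0), N}_{V, 1}\|_{\xi} \le C/N^{2}$, as required. The \emph{main obstacle} is precisely this a priori bound. Since $\tW^{N}_{V, 2} = \W^{N}_{V, 2}$ is the covariance of $\Tr(P_{1})$ and $\Tr(P_{2})$, and since under Hypothesis~\ref{hyp:bound} the maps $U \mapsto \Tr(P(U, U^{*}, A^{N}, (A^{N})^{*}))$ are Lipschitz on $\Unit(N)$ with a constant depending only on $\deg P$ (and not on $N$), the bound can be obtained from concentration of measure on $\Unit(N)$, which is uniform in $N$ thanks to the positive Ricci curvature of the group; for $V = 0$ one can alternatively read the bound off Proposition~\ref{prop:cumulants_V=0_second} at $l = 2$, and a perturbative argument in $\bm{z}$ (using Lemma~\ref{lem:diff-free-energy}) extends it to small $V$. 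This analytic input, rather than the combinatorial manipulations, is the core difficulty, and it is what justifies the appeal to \cite{guionnet_asymptotics_2015} for the general machinery.
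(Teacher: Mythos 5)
Your proposal follows essentially the same route as the paper's proof: subtract the $(g,l)=(0,1)$ map equation from the Dyson--Schwinger equation of Proposition \ref{prop:DS-first} to obtain a (quasi-)linear equation for $\delta^{(0)}_{1}$, invert the resulting master operator via the gradient trick for $\|\bm{z}\|_{\infty}$ small, and feed in a uniform-in-$N$ bound on $\tW^{N}_{V,2}$ --- which the paper imports from \cite[Theorem 22]{guionnet_asymptotics_2015} and which is indeed, as you suggest, a concentration-of-measure statement on $\Unit(N)$. The one correction needed is that the master operator is inverted for $\xi$ \emph{large} (the paper takes $\xi \geq 32A_{k}$), not for $\xi>1$ close to $1$: the estimate $\|\bar{T}_{\tau}\|_{\xi}\leq 2\|\tau\|_{1}/(\xi-1)$ of Proposition \ref{prop:bound-T} is only small in that regime, so your Neumann series has no invertible base point near $\xi=1$; since the Lemma only asserts existence of some $\xi>1$, this is a parameter fix rather than a structural gap.
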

\begin{proof}
  Consider the equation \eqref{eq:simple-gradient-trick} for the
  errors with $g=0, l=1$:
  \begin{equation*}
      \delta^{(0)}_{1}\left(\Xi^{V}_{\tW^{N}_{V, 1}/2+\M^{(0), N}_{V, 1}/2}P\right)
      = -\delta^{(0)}_1\otimes\delta^{(0)}_{1}(\bar{\Delta}P)-\frac{1}{N^{2}}\tW^{N}_{V, 2}(\bar{\Delta}P)\,.
  \end{equation*}

  First, notice that the series $\M^{(0), N}_{V, 1}$ satisfies $\|\M^{(0), N}_{V, 1}\|_{4A_{k}} \leq \frac{2^{k}}{B_{k}}$, with $A_{k}$ and $B_{k}$ the constants from Proposition \ref{prop:bounds-V0}. Proposition \ref{prop:bound-T} implies that
  \begin{equation*}
    \begin{split}
      \|\bar{T}_{\M^{(0), N}_{V, 1}}\|_{\xi} \leq \frac{2^{k+1}}{B_{k}}\frac{4A_{k}}{\xi + 4A_{k}}.
    \end{split}
  \end{equation*}

  Let $\xi \geq 32A_{k} \geq 12$ and $0 < \epsilon< R_{V}$, such that
  \begin{equation*}
    \begin{split}
      K(\xi, V) = 2\frac{\xi+1}{\xi(\xi-1)} + \frac{2^{k}}{B_{k}}\frac{4A_{k}}{\xi + 4A_{k}} + \|\Pi V\|_{1}\nu \xi^{\nu} < 1/2.
    \end{split}
  \end{equation*}
  In that case, the operator
  $\Xi^{V}_{\tW^{N}_{V, 1}/2+\M^{(0), N}_{V, 1}/2}\colon \algB^{\perp}_{\xi}\to\algB^{\perp}_{\xi}$
  is invertible.

  We get that
  \begin{equation*}
    \begin{split}
      \|\delta^{(0)}_{1}\|_{\xi}
      &\leq \|\delta^{(0)}_{1}\|_{\xi}\|\bar{T}_{\delta^{(0)}_{1}}\|_{\xi}\|(\Xi^{V}_{\tW^{N}_{V, 1}})^{-1}\|_{\xi} + \frac{C'}{N^{2}}\|\bar{\Delta}\|_{\xi, \xi/2}\|(\Xi^{V}_{\tW^{N}_{V, 1}})^{-1}\|_{\xi},\\
    \end{split}
  \end{equation*}
  where we used that there exists a constant $C' > 0$ such that
  $\|\tW^{N}_{V, 2}\|_{\xi/2} \leq C'$ by \cite[Theorem 22]{guionnet_asymptotics_2015}. Note that for this Theorem 22
  to be applicable, one must show that the sequence of cumulants is
  $\xi$-uniformly bounded in the sense of \cite[Definition 21]{guionnet_asymptotics_2015}. This is shown assuming
  Assumption \ref{hyp:real} in \cite[Corollary 32]{guionnet_asymptotics_2015} for all $\xi \geq 12$.

  The bound on $\M^{(0), N}_{V, 1}$ implies that
  $\|\delta^{(0)}_{1}\|_{4A_{k}} \leq 1 + \frac{2^{k}}{B_{k}} \leq 2$.
  This fact and Proposition \ref{prop:bound-T} give
  \begin{equation*}
    \begin{split}
      \|\bar{T}_{\delta^{(0)}_{1}}\|_{\xi} \leq 2(1 + \frac{2^{k}}{B_{k}})\frac{4A_{k}}{\xi - 4A_{k}} < 1/2.
    \end{split}
  \end{equation*}

  With this result and \cite[Proposition
  19]{guionnet_asymptotics_2015}, we finally get that
  \begin{equation*}
    \begin{split}
      \|\delta^{(0)}_{1}\|_{\xi} &\leq \frac{C'}{N^{2}}\frac{\|(\Xi^{V}_{\tW^{N}_{V, 1}})^{-1}\|_{\xi}}{1 - \|\bar{T}_{\delta^{(0)}_{1}}\|_{\xi}\|(\Xi^{V}_{\tW^{N}_{V, 1}})^{-1}\|_{\xi}} \leq \frac{C'}{N^{2}}\frac{1}{1/2 - K(\xi, V)}.
    \end{split}
  \end{equation*}
\end{proof}

\begin{prop}\label{prop:bound-error}
  Assume that for all $N\geq 1$, $\Tr V$ is real and
  $\|A^{N}_{i}\| \leq 1$ for all $i$ (Hypotheses \ref{hyp:real} and
  \ref{hyp:bound}). There exists $\xi > 1$ and $\epsilon > 0$, such
  that if
  \begin{equation*}
    \begin{split}
      \|\bm{z}\|_{\infty} < \epsilon,
    \end{split}
  \end{equation*}
  then for all $g \geq 0$ and $l \geq 1$, we have
  \begin{equation*}
    \begin{split}
      \|\delta^{(g)}_{l}\|_{2^{l-2}\xi} = \order{N^{-2g-2}}.
    \end{split}
  \end{equation*}
\end{prop}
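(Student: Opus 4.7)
The plan is to proceed by a nested induction: the outer induction is on the genus $g$, and for each fixed $g$ the inner induction is on $l$. The base case $(g,l) = (0,1)$ is exactly Lemma \ref{lem:bound-first-error}. The main engine is the secondary error equation derived just before the statement (using the gradient trick), which, once the operator $\Xi^{V}_{\tW^{N}_{V,1}/2 + \M^{(0),N}_{V,1}/2}$ is inverted, expresses $\delta^{(g),N}_{V,l}$ as a linear combination of terms each of which is either of strictly smaller order in $(g,l)$, or carries an explicit factor $1/N^{2}$ (or $1/N^{2f}$) in front of an error of genus $g-1$ (resp.\ $g-f$).

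First, fix $\xi \geq 32 A_{k}$ (where $A_k$ is the constant from Proposition \ref{prop:bounds-V0}) and choose $\epsilon \in (0, R_V)$ small enough that the quantity $K(2^{l-2}\xi, V)$ appearing in the proof of Lemma \ref{lem:bound-first-error} stays below $1/2$ for every $l$ in the relevant range; this uses that $\|\tW^{N}_{V,1}\|_{1} = 1$ (a bounded tracial state under Hypotheses \ref{hyp:real}--\ref{hyp:bound}), together with $\|\M^{(0),N}_{V,1}\|_{4A_{k}} \leq 2^{k}/B_{k}$, established inside the proof of Proposition \ref{prop:radius-formal-cumulants}. This guarantees that $\Xi^{V}_{\tW^{N}_{V,1}/2 + \M^{(0),N}_{V,1}/2}$ is invertible on $\algB^{\perp}_{2^{l-2}\xi}$ with a bound on its inverse that is uniform in $N$ and in $l$ (for $l$ up to a fixed value).

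Next, in the inductive step, bound every term on the right-hand side of the secondary equation for $\delta^{(g),N}_{V,l}$. The induction hypothesis applied at norm $2^{l'-2}\xi$ gives $\|\delta^{(g'),N}_{V,l'}\|_{2^{l'-2}\xi} = \order{N^{-2g'-2}}$ for $(g',l') < (g,l)$ lexicographically. The required norm reduction for the operators $\bar{\Delta}$ and $\bar{\opP}^{P_{j}}$ (which send $\algA_{2\xi'}$ to $\algA_{\xi'} \otimes \algA_{\xi'}$, morally) is precisely what motivates the factor $2^{l-2}$ in the statement: the $\delta^{(g-1),N}_{V,l+1}$ term gets evaluated at norm $2^{(l+1)-2}\xi = 2^{l-1}\xi$, but after $\bar{\Delta}$ sits at norm $2^{l-2}\xi$, as required. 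Counting the $N$-powers: the term $\frac{1}{N^{2}}\delta^{(g-1),N}_{V,l+1}$ contributes $\order{N^{-2} \cdot N^{-2(g-1)-2}} = \order{N^{-2g-2}}$; each term $\frac{1}{N^{2f}}\M^{(f),N}\otimes\delta^{(g-f),N}$ with $f \geq 1$ contributes $\order{N^{-2f} \cdot N^{-2(g-f)-2}} = \order{N^{-2g-2}}$; the remaining terms involve $\delta^{(g),N}_{V,l'}$ with $l' < l$ and are $\order{N^{-2g-2}}$ by the inner induction. The bounds on $\M^{(g),N}_{V,l}$ come from Proposition \ref{prop:bounds-V0} and Proposition \ref{prop:radius-formal-cumulants}; the bounds on $\tW^{N}_{V,l}$ follow from \cite[Theorem 22]{guionnet_asymptotics_2015} applied via \cite[Corollary 32]{guionnet_asymptotics_2015}, which guarantees the required $\xi$-uniform boundedness for $\xi$ large enough and $\bm{z}$ small enough. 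Inverting $\Xi^{V}$ then yields the claimed bound on $\|\delta^{(g),N}_{V,l}\|_{2^{l-2}\xi}$.

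The principal obstacle is bookkeeping: one must verify that a single pair $(\xi,\epsilon)$ can be chosen so that (i) $\Xi^{V}$ is uniformly invertible at the sequence of norms $2^{l-2}\xi$ that arise, (ii) the operators $\bar{\Delta}$ and $\bar{\opP}^{P_j}$ from the gradient trick indeed reduce the norm by the factor of two needed to align the inductive hypothesis with the LHS norm, and (iii) the constants appearing in $\|\tW^{N}_{V,l}\|_{2^{l-2}\xi}$ and $\|\M^{(g),N}_{V,l}\|_{2^{l-2}\xi}$ do not blow up faster than the $N^{-2g-2}$ gain. A secondary subtlety is that at each step of the inner induction on $l$, the error $\delta^{(g),N}_{V,l}$ appears on the right through factors of the form $\delta^{(g),N}_{V,l'}$ with $l'<l$ coupled to $\tW^{N}_{V,|I|+1} + \M^{(0),N}_{V,|I|+1}$; making $\epsilon$ smaller if necessary ensures that the corresponding operator contribution is a strict contraction and can be absorbed into the inverse of $\Xi^{V}$, just as in the proof of Lemma \ref{lem:bound-first-error}.
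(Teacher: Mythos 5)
Your proof follows the same route as the paper's: a lexicographic induction on $(g,l)$ with base case Lemma \ref{lem:bound-first-error}, bounding each term on the right-hand side of the secondary error equations via the inductive hypothesis together with the bounds on $\M^{(g),N}_{V,l}$ (Propositions \ref{prop:bounds-V0} and \ref{prop:radius-formal-cumulants}) and on $\tW^{N}_{V,l}$ from \cite{guionnet_asymptotics_2015}, and then inverting the master operator. In fact your write-up supplies considerably more of the bookkeeping (the role of the parameter $2^{l-2}\xi$, the power counting in $N$, the absorption of the self-referential $\delta^{(g)}_{1}$ contribution) than the paper's two-sentence argument does.
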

\begin{proof}
  We proceed by induction on $(g, l)$, with lexicographic order. For
  $l=1, g=0$, the result is given by Lemma \ref{lem:bound-first-error}. Assume now that for
  all $(g', l') < (g, l)$, we have
  \begin{equation*}
    \begin{split}
      \|\delta^{(g')}_{l'}\|_{\xi} = \order{N^{-2g'-2}}.
    \end{split}
  \end{equation*}
  Then, in the equations \eqref{eq:error-gradient-trick} for the
  errors, all the terms on the right side of the equation are of order
  $N^{-2g}$. Note that terms $\delta^{(-1)}_{l} = \tW^{N}_{V, l}$ are
  bounded using \cite[Theorem 22]{guionnet_asymptotics_2015}. This gives the result.
\end{proof}

We can finally prove Theorem \ref{thm:main}.
\begin{proof}[Proof of Theorem \ref{thm:main}.]

  Proposition \ref{prop:bound-error} directly implies Theorem
  \ref{thm:main}, with Hypothesis \ref{hyp:bound-infty} replaced by
  Hypothesis \ref{hyp:bound}. Then, if we only assume Hypothesis
  \ref{hyp:bound-infty}, we set
  \begin{equation*}
    c = \frac{1}{\sup_{N \geq 1}\sup_{1 \leq i \leq p}\|A_{i}^{N}\|}\,.
  \end{equation*}
  We can then replace each matrix $A_{i}^{N}$ by $c A_{i}^{N}$ and
  rescale each coefficient $z_{i}$ of $V$ by an appropriate multiple
  of $c^{-1}$. When the new coefficients of $V$ are small enough, we
  can apply the result obtained with Hypothesis \ref{hyp:bound-infty}
  and obtain the result.
\end{proof}

\begin{remark}
  Notice that for $N$ big enough, the series
  \begin{equation*}
    \begin{split}
      \sum_{h = 0}^{g}\frac{1}{N^{2h}}\M^{(h), N}_{V, l}(P_{1}, \ldots, P_{l})
    \end{split}
  \end{equation*}
  is well defined for all $V$ with $\bm{z}$ small enough, even if
  Hypothesis \ref{hyp:real} is not satisfied. In fact, for any $V$ with $\bm{z}$
  small, provided the cumulants exist, are bounded, and satisfy the
  Dyson-Schwinger equations, the same method applies and the
  asymptotic topological expansion holds.

  The complex asymptotics of the HCIZ and BGW partition functions were
  studied with a different method in \cite{novak_complex_2020}.
\end{remark}

\appendix
\section{Bounds for the sum of maps $\M^{(g), N}_{0, l}$}
\label{sec:bounds-sum-maps}

This appendix gives a detailed proof of Proposition \ref{prop:bounds-V0}.

We assume that $\nu \geq 1$ and that up to cyclic permutation of its
factors we can write $P_{l}$ as $Pu$. If $P_{l}$ has no term $u$, a
similar argument holds with $P_{l} = u^{*}P$. Furthermore, to make
notation less cumbersome, we write
  \begin{equation*}
    \begin{split}
      \M^{(g), N}_{\bm{n}, l}(P_{1}, \ldots, P_{l}) = \M^{(g), N}_{0, \sum_{i}n_{i}+l}(\underbrace{q_{1}, \ldots, q_{1}}_{n_{1}\text{
      times}}, \ldots, \underbrace{q_{k}, \ldots, q_{k}}_{n_{k}\text{
      times}}, P_{1}, \ldots, P_{l}),
    \end{split}
  \end{equation*}
  and omit the indices $k$ and $\nu$ in the constants.

  To prove the result, we do an induction on $\N^{k+3}$, where we
  endow a tuple $(g, l, n_{1}, \ldots, n_{k}, m)$ with the lexicographic
  order. Notice that the result is obvious when
  $n_{1} = \ldots = n_{k} = 0$, $g=0$, $l=1$ when $\deg \bm{P}=1$ (as
  $\M^{(0), N}_{0, 1}(MU^{\pm 1}) = 0$ for all $M \in \wordsB$), and when
  $m = 1 = \frac{1}{2}\deg\bm{P}$ (as
  $\M^{(0), N}_{0, 1}(M_{1}UM_{2}U^{-1}) = \tr M_{1}\tr M_{2}$), as
  soon as $M \geq 1$. In fact, by Lemma \ref{lem:bound_01},
  $|\M^{(0), N}_{\bm{0}, 1}(P)| \leq 1$ for all $P \in \wordsA$.

  Assuming that $m \geq 2$ or $\bm{n} \neq 0$, Theorem
  \ref{thm:induction_V=0} yields
  \begin{equation}\label{eq:induction-for-bound}
    \begin{split}
      \frac{1}{\bm{n}!}\M^{(g), N}_{\bm{n}, l}&(P_{1}, \ldots, P_{l-1}, Pu)
      = -\frac{1}{\bm{n}!}\M^{(g-1), N}_{\bm{n}, l+1}(P_{1}, \ldots, P_{l-1}, (\partial P)\times 1\otimes u)\\
      &-\sum_{\substack{I\subset [l-1]\\\bm{n_{1}} + \bm{n_{2}} = \bm{n}}}\sum_{g_{1} + g_{2} = g}\frac{1}{\bm{n_{1}}!}\frac{1}{\bm{n_{2}}!}\M^{(g_{1}), N}_{\bm{n_{1}}, |I|+1}\otimes\M^{(g_{2}), N}_{\bm{n_{2}}, |I^{c}|+1}(\bm{P}_{I}\otimes \bm{P}_{I^{c}}\# (\partial P)\times 1 \otimes u)\\
      &- \sum_{j=1}^{l-1}\frac{1}{\bm{n}!}\M^{(g), N}_{\bm{n}, l-1}(P_{1}, \ldots, \check{P_{j}}, \ldots, P_{l-1}, (\D P_{j})Pu)\\
      &- \sum_{j=1}^{k}\frac{1}{(\bm{n} - 1_{j})!}\M^{(g), N}_{\bm{n}-1_{j}, l}(P_{1}, \ldots, P_{l-1}, (\D q_{j})Pu),\\
    \end{split}
  \end{equation}
  where $\check{P_{j}}$ means that $P_{j}$ is removed.

  Now assuming that the bound \eqref{eq:bound-V0-induction} holds for
  $(g', l', n_{1}', \ldots, n_{k}', m') < (g, l, n_{1}, \ldots, n_{k}, m)$, we
  get four terms from \eqref{eq:induction-for-bound}.
  \begin{enumerate}
    \item \begin{equation*}
      \begin{split}
        \frac{1}{\bm{n}!}|\M^{(g-1), N}_{\bm{n}, l+1}
        &(P_{1}, \ldots, P_{l-1}, (\partial P)\times 1\otimes u)|\\
        &\leq \sum^{\deg Pu}_{m' = 1}A^{(l+1)(2m + \nu\bm{n})}B^{-l-1}C^{(g-1)(2m + \nu\bm{n})}D^{\bm{n}}c_{m'}c_{\deg Pu - m'}\prod_{i=1}^{l-1}c_{\deg P_{i}}\prod_{j=1}^{k}c_{n_{j}}\\
        &\leq A^{(l+1)(2m + \nu\bm{n})}B^{-l-1}C^{(g-1)(2m + \nu\bm{n})}D^{\bm{n}}(c_{\deg Pu +1} - c_{\deg Pu})\prod_{i=1}^{l-1}c_{\deg P_{i}}\prod_{j=1}^{k}c_{n_{j}}\\
        &\leq \frac{3}{B}\left(\frac{A}{C}\right)^{2m}A^{l(2m + \nu\bm{n})}B^{-l}C^{g(2m + \nu\bm{n})}D^{\bm{n}}\prod_{i=1}^{l}c_{\deg P_{i}}\prod_{j=1}^{k}c_{n_{j}}.\\
      \end{split}
    \end{equation*}
  \end{enumerate}
  In the second line, we expanded the non-commutative derivative (see
  Definition \ref{def:nc_derivative}). In the third line we used the
  recurrence formula for Catalan numbers
  $c_{n+1} = \sum^{n}_{i=0}c_{i}c_{n-i}$ and in the fourth line we used
  that $c_{n+1} \leq 4c_{n}$ for all $n \in \N$. We choose $A$ and $C$ so
  that $A/C \leq 1$ and $B \geq 12$.

  \begin{enumerate}
    \setcounter{enumi}{1}
    \item
          \begin{equation*}
            \begin{split}
              \sum_{\substack{I\subset [l-1]\\\bm{n_{1}} + \bm{n_{2}} = \bm{n}}}
              &\sum_{g_{1} + g_{2} = g}\frac{1}{\bm{n_{1}}!}\frac{1}{\bm{n_{2}}!}|\M^{(g_{1}), N}_{\bm{n_{1}}, |I|+1}\otimes\M^{(g_{2}), N}_{\bm{n_{2}}, |I^{c}|+1}|(\bm{P}_{I}\otimes \bm{P}_{I^{c}}\# (\partial P)\times 1 \otimes u)\\
              & \leq \sum_{m' = 1}^{\deg P}\sum_{I\subset [l-1]}\sum_{g_{1} + g_{2} = g}A^{m_{1}(|I| + 1) + m_{2}(|I^{c}| + 1)}B^{-l-1}C^{g_{1}m_{1}+g_{2}m_{2}}c_{m'}c_{\deg Pu - m'}\\
              &\times \sum_{\bm{n_{1}} + \bm{n_{2}} = \bm{n}}A^{l\nu\bm{n}}C^{g\nu\bm{n}}D^{\bm{n}}\prod_{i=1}^{l-1}c_{\deg P_{i}}\prod^{k}_{i=1}c_{n_{1, i}}c_{n_{2, i}},\\
            \end{split}
          \end{equation*}
          where we used the notation $m_{1} = \sum_{i \in I}\deg P_{i} + m'$ and $m_{2} = \sum_{i\in I^{c}}\deg P_{i} + \deg Pu-m'$. With this notation, we get as soon as $C \geq 2$,
          \begin{equation*}
            \begin{split}
              \sum_{g_{1} + g_{2} = g}C^{g_{1}m_{1}+g_{2}m_{2}}
              &= C^{2mg}\sum_{h=0}^{g}\left(\frac{1}{C^{m_{2}}}\right)^{h}\left(\frac{1}{C^{m_{1}}}\right)^{g-h}\\
              &\leq C^{2mg}\sum_{h=0}^{g}2^{-g}\\
              &\leq C^{2mg}\,.
            \end{split}
          \end{equation*}
          In the second line, we used that $m_{1}, m_{2} \geq 1$.
          Similarly, we have when $A \geq 2$,
          \begin{equation*}
            \begin{split}
              \sum_{I\subset [l-1]}A^{m_{1}(|I|+1)+ m_{2}(|I^{c}| + 1)}
              &= A^{2ml}\sum_{I\subset [l-1]}A^{-m_{1}|I^{c}| - m_{2}|I|}\\
              &\leq A^{2ml}\sum_{i=0}^{l-1}\binom{l-1}{i}\left(\frac{1}{A^{\deg Pu - m'}}\right)^{i}\left(\frac{1}{A^{m'}}\right)^{l-i-i}\\
              &= A^{2ml}(\frac{1}{A^{\deg Pu - m'}} + \frac{1}{A^{m'}})^{l-1}\\
              &\leq A^{2ml}.
            \end{split}
          \end{equation*}
          We finally get
          \begin{equation*}
            \begin{split}
              \sum_{\substack{I\subset [l-1]\\\bm{n_{1}} + \bm{n_{2}} = \bm{n}}}
              &\sum_{g_{1} + g_{2} = g}\frac{1}{\bm{n_{1}}!}\frac{1}{\bm{n_{2}}!}|\M^{(g_{1}), N}_{\bm{n_{1}}, |I|+1}\otimes\M^{(g_{2}), N}_{\bm{n_{2}}, |I^{c}|+1}|(\bm{P}_{I}\otimes \bm{P}_{I^{c}}\# (\partial P)\times 1 \otimes u)\\
              &\leq \frac{6\cdot 4^{k}}{B} A^{l(2m + \nu\bm{n})} B^{-l}C^{g(2m+\nu\bm{n})}D^{\bm{n}}\prod_{i=1}^{l}c_{\deg P_{i}}\prod_{i=1}^{k}c_{n_{i}}\\
            \end{split}
          \end{equation*}
        \end{enumerate}
        Thus, we choose $B \geq 6 \cdot 4^{k+1}$.

        \begin{enumerate}
          \setcounter{enumi}{2}
          \item \begin{equation*}
            \begin{split}
              \sum_{j=1}^{l-1}\frac{1}{\bm{n}!}
              &|\M^{(g), N}_{\bm{n}, l-1}(P_{1}, \ldots, \check{P_{j}}, \ldots, P_{l-1}, (\D P_{j})Pu)|\\
              &\leq \sum_{j=1}^{l-1}(\deg P_{j})A^{(l-1)(2m+\nu\bm{n})}B^{-l+1}C^{g(2m + \nu\bm{n})}D^{\bm{n}}c_{\deg P_{j} + \deg Pu}\prod_{\substack{i=1\\i\neq j}}^{l-1}c_{\deg P_{i}}\prod_{j=1}^{k}c_{n_{j}}\\
              &\leq \frac{B}{A^{2m+\nu\bm{n}}}\left(\sum_{j=1}^{l-1}(\deg P_{j})\frac{c_{\deg P_{j} + \deg Pu}}{c_{\deg P_{j}}c_{\deg Pu}}\right)A^{l(2m + \nu\bm{n})}B^{-l}C^{g(2m + \nu\bm{n})}D^{\bm{n}}\prod_{i=1}^{l}c_{\deg P_{i}}\prod_{j=1}^{k}c_{n_{j}}.\\
            \end{split}
          \end{equation*}
        \end{enumerate}
        To bound this term, we use the following estimate for the
        Catalan numbers, a consequence of the Stirling bound
        \begin{equation*}
          \begin{split}
            \frac{4^{n}}{(n+1)\sqrt{\pi n}}\exp(\frac{1}{24 n+1} - \frac{1}{24 n}) \leq c_{n} \leq \frac{4^{n}}{(n+1)\sqrt{\pi n}}\exp(\frac{1}{24 n} - \frac{1}{24 n + 2}),
          \end{split}
        \end{equation*}
        which implies
        \begin{equation*}
          \begin{split}
            \frac{4^{n}}{\sqrt{\pi}(n+1)^{3/2}} \leq c_{n} \leq \frac{4^{n}}{\sqrt{\pi}n^{3/2}}.
          \end{split}
        \end{equation*}
        It implies that for $p, q \in \N^{*}$,
        \begin{equation*}
          \begin{split}
            \frac{c_{p+q}}{c_{p}c_{q}} \leq \pi^{1/2}\left(\frac{(p+1)(q+1)}{p+q}\right)^{3/2} \leq \pi^{1/2}(p+1)^{3/2}.
          \end{split}
        \end{equation*}
        Thus,
        \begin{equation*}
          \begin{split}
            \frac{B}{A^{2m+\nu\bm{n}}}\left(\sum_{j=1}^{l-1}(\deg P_{j})\frac{c_{\deg P_{j} + \deg Pu}}{c_{\deg P_{j}}c_{\deg Pu}}\right) &\leq \frac{\pi^{1/2}B}{A^{2m+\nu\bm{n}}}(\deg Pu + 1)^{3/2}(2m - \deg Pu).
          \end{split}
        \end{equation*}
        As we can assume that $m \geq 1$ (else this term could be bounded
        by 0), it suffices to choose $A \leq 2 B^{1/2} \pi^{1/4}2^{3/2}$.
        Notice that for all $n \geq 1$, $(n+1)^{3/2} \leq 2^{3n/2}$.

        \begin{enumerate} \setcounter{enumi}{3}
          \item \begin{equation*}
            \begin{split}
              \sum_{j=1}^{k}\frac{1}{(\bm{n} - 1_{j})!}&\M^{(g), N}_{\bm{n}-1_{j}, l}(P_{1}, \ldots, P_{l-1}, (\D q_{j})Pu)\\
              &\leq \frac{1}{D}\sum_{j=1}^{k}(\deg q_{k})A^{l(2m + \nu\bm{n})}B^{-l}C^{g(2m + \nu\bm{n})}D^{\bm{n}}\frac{c_{\deg Pu + \deg q_{j}}c_{n_{j}-1}}{c_{\deg Pu}c_{n_{j}}}\prod_{i=1}^{l}c_{\deg P_{i}}\prod_{i=1}^{k}c_{n_{i}}\\
              &\leq \frac{1}{D}\sum_{j=1}^{k}4^{\deg q_{k}}(\deg q_{k})A^{l(2m + \nu\bm{n})}B^{-l}C^{g(2m + \nu\bm{n})}D^{\bm{n}}\prod_{i=1}^{l}c_{\deg P_{i}}\prod_{i=1}^{k}c_{n_{i}}\\
              &\leq \frac{1}{D}\sum_{j=1}^{k}(4\ee^{1/\ee})^{\deg q_{k}}A^{l(2m + \nu\bm{n})}B^{-l}C^{g(2m + \nu\bm{n})}D^{\bm{n}}\prod_{i=1}^{l}c_{\deg P_{i}}\prod_{i=1}^{k}c_{n_{i}}.\\
            \end{split}
          \end{equation*}
        \end{enumerate}
        We choose $D = 4k(4\ee^{1/\ee})^{\nu}$ to get the result. Notice
        that we can thus choose
        \begin{equation*}
          \begin{split}
            A &= C = 2^{k+3}\sqrt{6}\pi^{1/4}\\
            B &= 3\cdot 4^{k+1}\\
            D &= 4k(4\ee^{1/\ee})^{\nu}.
          \end{split}
        \end{equation*}

\section{The gradient trick}
\label{sec:gradient-trick}

We use several times the gradient trick, previously introduced in \cite{guionnet_asymptotics_2015}.
The main idea of the gradient trick is to replace the polynomial $P$
(or $P_{l}$) the equations of Proposition \ref{prop:DS-first} (or in the
Dyson-Schwinger problem \eqref{eq:DS-problem}, see Section \ref{sec:dyson-schw-equat}) by its cyclic derivative
$\D_{i} P$. An operator -- the master operator introduced below --
naturally appears in the equations. When the potential $V$ is small
enough, this operator is invertible. The gradient trick was introduced
in \cite{guionnet_asymptotics_2015} to study the Dyson-Schwinger lattice of equations.

\subsection{The trick}
\label{sec:trick}

The gradient trick allows us to simplify quadratic terms. We take as
an example the equation for the sums of maps for $g=0, l=2$
\begin{equation*}
  \begin{split}
      \sum_{I \subset [l - 1]}\M^{(0), N}_{0, |I|+1}\otimes\M^{(0), N}_{0, |I^{c}|+1}(\bm{P}_{I}\otimes \bm{P}_{I^{c}}\# \partial_{i} P_{2})
      &= -\M^{(g), N}_{0, 1}((\D_{i}P_{1})P_{2})\,.\\
  \end{split}
\end{equation*}

We can rewrite it as
\begin{equation*}
  \begin{split}
    \M^{(0), N}_{0, 2}(P_{1}\otimes\Id\otimes\M^{(0), N}_{0, 1} + P_{1}\otimes\M^{(0), N}_{0, 1}\otimes \Id)(\partial_{i}P_{2}) = -\M^{(g), N}_{0, 1}((\D_{i}P_{1})P_{2})\,,
  \end{split}
\end{equation*}
where $\Id$ denote the identity operator. In particular, the notation
$P_{1}\otimes\Id\otimes\M^{(0), N}_{0, 1}(\partial_{i}P_{2})$ must be
understood as follows. For $(Q_{1}, Q_{2}) \in \algA_{n}^{2}$,
\begin{equation*}
  P_{1}\otimes\Id\otimes\M^{(0), N}_{0, 1}(Q_{1}\otimes Q_{2}) = (\M^{(0), N}_{0, 1}(Q_{2}))\cdot (P_{1}\otimes Q_{1}) \in \algA_{n}^{\otimes 2}\,.
\end{equation*}

We now replace $P_{2}$ by its cyclic derivative $\D_{i}P_{2}$, and obtain
\begin{equation*}
  \begin{split}
    \M^{(0), N}_{0, 2}(P_{1}\otimes\Id\otimes\M^{(0), N}_{0, 1} + P_{1}\otimes\M^{(0), N}_{0, 1}\otimes \Id)(\partial_{i}\D_{i}P_{2}) = -\M^{(g), N}_{0, 1}((\D_{i}P_{1})(\D_{i}P_{2}))\,.
  \end{split}
\end{equation*}

\begin{lemma}\label{lem:use-big-delta}
  Let $\mu_{2}\colon \algA_{n}\times \algA_{n} \to \C$ be a bilinear
  form, tracial in each of its variables. For a monomial
  $P\in\wordsA_{n}$, write $\deg_{i}^{+}(P)$ for the number of factors $u_{i}$
  in $P$ and $\deg_{i}^{-}(P)$ for the number of factors $u^{*}_{i}$ in $P$.
  We have for any monomial $P \in \algA_{n}$,
  \begin{equation*}
    \mu_{2}(\partial_{i} \D_{i} P)
    = \deg_{i}^{+}(P) \mu_{2}(P\otimes 1) + \deg_{i}^{-}(P)\mu_{2}(1\otimes P) + \mu_{2}(\Delta_{i} P)\,,
  \end{equation*}
  with the operator $\Delta_{i}$ introduced in Definition
  \ref{def:operators-from-appendix}. In particular, if $\mu_{2}$ is
  symmetric, we get
  \begin{equation*}
    \mu_{2}(\partial_{i} \D_{i} P)
    = \deg_{i}(P)\cdot\mu_{2}(1\otimes P) + \mu_{2}(\Delta_{i} P)\,.
  \end{equation*}
\end{lemma}

This Lemma allows us to rewrite the above expression as
\begin{equation*}
  \M^{(0), N}_{0, 2}\left(\deg_{i}(P_{2})P_{1}\otimes\Id + (P_{1}\otimes\Id\otimes\M^{(0), N}_{0, 1} + P_{1}\otimes\M^{(0), N}_{0, 1}\otimes \Id)(\Delta_{i}P_{2})\right) = -\M^{(g), N}_{0, 1}((\D_{i}P_{1})(\D_{i}P_{2}))\,.
\end{equation*}

Introducing the operator
\begin{equation*}
  \opP^{q}_{i}P = (\D_{i} q)(\D_{i} P),\\
\end{equation*}
for \(P, Q \in \algA_{n}\), we get
\begin{equation*}
  \M^{(0), N}_{0, 2}\left(\deg_{i}(P_{2})P_{1}\otimes\Id + (P_{1}\otimes\Id\otimes\M^{(0), N}_{0, 1} + P_{1}\otimes\M^{(0), N}_{0, 1}\otimes \Id)(\Delta_{i}P_{2})\right) = -\M^{(g), N}_{0, 1}(\opP_{i}^{P_{1}}P_{2})\,.
\end{equation*}
for $1 \leq i \leq n$.

Using the operators defined in Defintion
\ref{def:operators-from-appendix}. The sum of maps
$\M^{(0), N}_{0, 2}$ satisfies
\begin{equation*}
  \M^{(0), N}_{0, 2}\left(P_{1}\otimes\Id + (P_{1}\otimes\Id\otimes\M^{(0), N}_{0, 1} + P_{1}\otimes\M^{(0), N}_{0, 1}\otimes \Id)(\bar{\Delta} P_{2})\right) = -\M^{(g), N}_{0, 1}(\bar{\opP}^{P_{1}}P_{2})
\end{equation*}
for all $P_{1}, P_{2} \in \algA_{n}$. This computation justifies the
introduction of the master operator $\Xi_{\tau}^{V}$ of Defintion
\ref{def:master-operator}.

Thus, we have
\begin{equation*}
  \M^{(0), N}_{0, 2}\left(P_{1}\otimes\Xi^{0}_{\M^{(0), N}_{0, 1}}P_{2}\right) = -\M^{(g), N}_{0, 1}(\bar{\opP}^{P_{1}}P_{2})
\end{equation*}
for all $P_{1}, P_{2} \in \algA_{n}$. This will be called the secondary
form of the equation (\ref{eq:family_DS_multi_V=0}). Notice that in this particular case
$V = 0$. In the sequel, we will derive secondary equation with a
potential.

\subsection{Operator norm estimates}
\label{sec:oper-norm-estim}

We now give some bounds on the norms of the different operators. These
bounds and more were derived in \cite[Section
3.2]{guionnet_asymptotics_2015}. In particular, it was shown that
under some hypotheses the master operator is invertible.

\begin{prop}[{\cite[Section 3.3]{guionnet_asymptotics_2015}}]
  Let $\xi \geq 1$, $V \in \algA$ and $\tau$ a tracial state
  satisfying $\|\tau\| \leq 1$. Introduce
  \begin{equation*}
    K(\xi, V) = 4\frac{\xi + 1}{\xi (\xi - 1)} + \|V\|_{1}\xi^{\deg V}\deg V,
  \end{equation*}
  and assume that $K(\xi, V) < 1$. Then, the operator $\Xi^{V}_{\tau}$
  extends to an operator $\algB^{\perp}_{\xi} \to \algB^{\perp}_{\xi}$
  ($\algB^{\perp}_{\xi}$ is defined in Definition
  \ref{def:parametric-norm}) which is invertible, with inverse
  satisfying
  \begin{equation*}
    \begin{split}
      \|\left(\Xi^{V}_{\tau}\right)^{-1}\|_{\xi} \leq \frac{1}{1 - K(\xi, V)}.
    \end{split}
  \end{equation*}
\end{prop}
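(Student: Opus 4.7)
The plan is to establish invertibility by a Neumann series argument. Writing $\Xi^{V}_{\tau} = \Id + M$ on $\algB^{\perp}_{\xi}$ where $M = \Pi\bar{T}_{\tau} + \bar{\opP}^{V}$, it suffices to show $\|M\|_{\xi} \leq K(\xi,V) < 1$; then $\Xi^{V}_{\tau}$ is invertible with $\|(\Xi^{V}_{\tau})^{-1}\|_{\xi} \leq \sum_{n\geq 0}\|M\|_{\xi}^{n} \leq \frac{1}{1-K(\xi,V)}$. So the work reduces to two separate operator norm estimates on $\algB^{\perp}_{\xi}$: one for $\Pi\bar{T}_{\tau}$ and one for $\bar{\opP}^{V}$.

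First I would bound $\|\Pi\bar{T}_{\tau}\|_{\xi}$. Fix a monomial $P$ of total degree $d \geq 1$ and apply $\Delta_{i}$ using the explicit formula recorded earlier; a summand comes from a choice of factorization $P = P_{1}u_{i}^{\pm 1}P_{2}$ together with a factorization $P_{2}P_{1} = Q_{1}u_{i}^{\pm 1}Q_{2}$. Each resulting tensor has total degree $d$ or $d-2$. Applying $\Id \otimes \tau + \tau \otimes \Id$ and the hypothesis $\|\tau\|_{1} \leq 1$ kills the $\tau$-side without increasing the $\xi$-norm contribution beyond the corresponding degree factor $\xi^{d-2}$ or $\xi^{d-4}$. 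Dividing by $d$ (from $D^{-1}$) and bounding the double sum over factorizations by a geometric sum in $1/\xi$ and $1/\xi^{2}$ should produce the clean expression $4\frac{\xi+1}{\xi(\xi-1)}$; this is exactly the computation carried out in \cite{guionnet_asymptotics_2015}, so I would follow it verbatim rather than reinvent it.

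Second I would bound $\|\bar{\opP}^{V}\|_{\xi}$. For a monomial $P$ of degree $d$, $\D_{i}P$ is a sum of $\deg_{i}(P)$ monomials each of degree $d-1$, so $\|\D_{i}P\|_{\xi} \leq \deg_{i}(P)\,\xi^{d-1}$. Hence
\begin{equation*}
\|\opP^{V}_{i}P\|_{\xi} \leq \|\D_{i}V\|_{\xi}\cdot \deg_{i}(P)\,\xi^{d-1} \leq \|V\|_{1}\deg(V)\xi^{\deg V-1}\cdot \deg_{i}(P)\,\xi^{d-1}.
\end{equation*}
Summing over $i$, dividing by $d$ via $D^{-1}$, and comparing to $\|P\|_{\xi}=\xi^{d}$ yields the factor $\|V\|_{1}\deg(V)\xi^{\deg V}$, as required. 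Extending from monomials to $\algB^{\perp}_{\xi}$ by linearity is routine since $\|\cdot\|_{\xi}$ is a sum of absolute values over the monomial basis.

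The main obstacle is the bookkeeping for $\Pi\bar{T}_{\tau}$: one must keep track of both the splitting $P=P_{1}u_{i}^{\pm 1}P_{2}$ (whose count is $\deg P$) and the secondary splitting of $P_{2}P_{1}$ (whose count is again at most $\deg P$), while the regularization $D^{-1}$ divides by only one factor of $\deg P$. The saving that produces a \emph{convergent} bound comes from the two $\xi^{-1}$ or $\xi^{-2}$ factors that appear in every term (two letters $u^{\pm 1}_{i}$ are removed), which allow geometric summation only when $\xi > 1$. Since this is a verbatim application of the gradient-trick estimates of Guionnet--Maurel-Segala, the work really reduces to stating the two lemmas in the $\xi$-norm and combining them; the Neumann-series step is then immediate.
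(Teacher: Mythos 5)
Your overall strategy is correct and is essentially that of the cited source: the paper does not reprove this proposition (it is quoted from \cite{guionnet_asymptotics_2015}), but the Neumann-series reduction $\Xi^{V}_{\tau} = \Id + M$ with $M = \Pi\bar{T}_{\tau} + \bar{\opP}^{V}$ and $\|M\|_{\xi} \leq K(\xi,V) < 1$, combined with separate $\xi$-norm bounds on the two summands, is exactly how the result is obtained there. Note also that the paper's own Proposition \ref{prop:bound-T}, specialized to $\xi_{1}=1$ and $\xi_{2}=\xi$, gives $\|\bar{T}_{\tau}\|_{\xi} \leq \frac{2}{\xi-1} \leq 4\frac{\xi+1}{\xi(\xi-1)}$ under $\|\tau\|_{1}\leq 1$, so you could invoke that self-contained estimate instead of deferring verbatim to the reference for the first summand.

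There is one bookkeeping error in your second estimate. With the paper's definition $\D P = \sum_{P=QuR}RQu - \sum_{P=Qu^{-1}R}u^{-1}RQ$, the cyclic derivative does \emph{not} lower the degree: each term $RQu$ (resp.\ $u^{-1}RQ$) retains the differentiated letter and has the same degree as $P$. Hence $\|\D_{i}P\|_{\xi} \leq \deg_{i}(P)\,\xi^{d}$ and $\|\D_{i}V\|_{\xi} \leq \sum_{j}|z_{j}|\deg_{i}(q_{j})\xi^{\deg q_{j}}$, not $\xi^{d-1}$ and $\xi^{\deg V-1}$. Your stated exponents, followed consistently, would give a factor $\|V\|_{1}\deg(V)\xi^{\deg V-2}$ rather than the $\xi^{\deg V}$ you announce; the correct count gives precisely the announced factor, namely
\begin{equation*}
  \|\bar{\opP}^{V}P\|_{\xi} \leq \frac{1}{d}\sum_{i}\Big(\sum_{j}|z_{j}|\deg_{i}(q_{j})\xi^{\deg q_{j}}\Big)\deg_{i}(P)\,\xi^{d} \leq \|V\|_{1}\deg(V)\,\xi^{\deg V}\,\|P\|_{\xi},
\end{equation*}
using $\sum_{i}\deg_{i}(q_{j})\deg_{i}(P) \leq \deg(q_{j})\deg(P)$ and the submultiplicativity of $\|\cdot\|_{\xi}$. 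So the conclusion stands and only the intermediate exponents need correcting; the rest of the argument (the degree-$d$ versus degree-$(d-2)$ case analysis for $\Delta_{i}$, the use of $\|\tau\|_{1}\leq 1$ to evaluate one tensor leg, the geometric summation over factorizations, and the final Neumann series) is sound.
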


We use a slightly modified version of \cite[Proposition
17]{guionnet_asymptotics_2015}.
\begin{prop}\label{prop:bound-T}
  Let $1 \leq \xi_{1} < \xi_{2}$, and $\tau$ a linear form
  $\algA_{n}\to \C$. We have
  \begin{equation*}
    \|\bar{T}_{\tau}\|_{\xi_{2}} \leq 2\|\tau\|_{\xi_{1}}\frac{\xi_{1}}{\xi_{2} - \xi_{1}}.
  \end{equation*}
\end{prop}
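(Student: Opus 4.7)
The plan is to reduce to monomials and then exploit the combinatorial structure of $\Delta P$ to extract a geometric series in $\xi_{1}/\xi_{2}$. Decomposing $P = \sum_M c_M M$ over monomials and applying the triangle inequality for $\|\cdot\|_{\xi_{2}}$, it will suffice to show $\|\bar{T}_{\tau} M\|_{\xi_{2}} \leq 2\|\tau\|_{\xi_{1}}\, \xi_{1} \xi_{2}^{m} / (\xi_{2} - \xi_{1})$ for each monomial $M$ of degree $m \geq 1$, since $\|M\|_{\xi_{2}} = \xi_{2}^{m}$ and $\bar{T}_{\tau}$ is linear.

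I would then fix a monomial $P$ of degree $m$ and enumerate the summands of $\Delta P = \sum_{i} \Delta_{i} P$. Each one is parametrized by an ordered pair $(p_{1}, p_{2})$ of distinct positions of same-color $u^{\pm 1}$-letters in $P$: position $p_{1}$ determines the decomposition $P = P_{1} u_{i}^{\epsilon_{1}} P_{2}$, and position $p_{2}$ (viewed inside the cyclic shift $P_{2} P_{1}$) determines $P_{2} P_{1} = Q_{1} u_{i}^{\epsilon_{2}} Q_{2}$. Reading off the four cases in the definition of $\Delta_{i}$, the resulting tensor $A \otimes B$ has $\deg A + \deg B = m$ when $\epsilon_{1} = \epsilon_{2}$ (outer cases, producing $Q_{1} u_{i} \otimes Q_{2} u_{i}$ or $u_{i}^{-1} Q_{1} \otimes u_{i}^{-1} Q_{2}$) and $\deg A + \deg B = m - 2$ when $\epsilon_{1} \neq \epsilon_{2}$ (inner cases, producing $\pm Q_{1} \otimes Q_{2}$). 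Applying $(\Id \otimes \tau + \tau \otimes \Id)$ gives $\tau(B) A + \tau(A) B$, whose $\xi_{2}$-norm is bounded by $\|\tau\|_{\xi_{1}}(\xi_{1}^{\deg B}\xi_{2}^{\deg A} + \xi_{1}^{\deg A}\xi_{2}^{\deg B})$.

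The crux of the argument will be that for fixed $p_{1}$, distinct choices of $p_{2}$ yield distinct values of $j \coloneqq \deg Q_{1} \in \{0, 1, \ldots, m-2\}$, because $\deg Q_{1}$ counts the $u$-letters preceding $p_{2}$ in the fixed word $P_{2} P_{1}$. Moreover the mixed-type contribution ($\epsilon_{1} \neq \epsilon_{2}$) differs from the same-type contribution by an overall factor $1/(\xi_{1}\xi_{2}) \leq 1$, since both exponents of $A$ and $B$ decrease by one; hence each pair contributes at most $\|\tau\|_{\xi_{1}}(\xi_{1}^{m-1-j}\xi_{2}^{j+1} + \xi_{1}^{j+1}\xi_{2}^{m-1-j})$. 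Summing over $p_{2}$ for fixed $p_{1}$ is then bounded by
\begin{equation*}
  \sum_{j=0}^{m-2}\left(\xi_{1}^{m-1-j}\xi_{2}^{j+1} + \xi_{1}^{j+1}\xi_{2}^{m-1-j}\right) = 2\sum_{k=1}^{m-1}\xi_{1}^{k} \xi_{2}^{m-k} \leq 2\xi_{2}^{m}\sum_{k=1}^{\infty}(\xi_{1}/\xi_{2})^{k} = \frac{2\xi_{1}\xi_{2}^{m}}{\xi_{2} - \xi_{1}}.
\end{equation*}

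Finally, summing over the $m$ possible choices of $p_{1}$ brings in a factor $m$ in $\|T_{\tau} P\|_{\xi_{2}}$, which is exactly cancelled by the action of $D^{-1}$ (multiplication by $1/m$ on degree-$m$ monomials) in $\bar{T}_{\tau} = T_{\tau} D^{-1}$, yielding the announced bound. The main point of care will be keeping the combinatorics straight — particularly the injectivity of $p_{2} \mapsto \deg Q_{1}$ for fixed $p_{1}$ and the domination of the mixed-type case by the same-type case — after which the geometric summation and the cancellation of the $m$ factor are automatic.
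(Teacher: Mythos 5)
Your proof is correct and follows essentially the same route as the paper: expand $\Delta P$ over the positions of the $u$-letters, bound each tensor summand via $|\tau(Q)|\leq\|\tau\|_{\xi_{1}}\xi_{1}^{\deg Q}$, dominate the resulting sum by the geometric series $2\sum_{k\geq 1}\xi_{1}^{k}\xi_{2}^{m-k}$, and cancel the factor $m$ against $D^{-1}$. Your write-up is in fact somewhat more careful than the paper's, since you make explicit the injectivity of $p_{2}\mapsto\deg Q_{1}$ and the domination of the mixed-sign terms, both of which the paper leaves implicit.
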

\begin{proof}
  We proceed as in \cite{guionnet_asymptotics_2015}. Let $P$ be a
  monomial of degree $d \geq 1$. We have
  \begin{equation*}
    \begin{split}
      T_{\tau}P
      &= \sum_{i=1}^{n}\sum_{P=P_{1}u_{i}P_{2}}\left(\sum_{P_{2}P_{1}u_{i} = Q_{1}u_{i}Q_{2}u_{i}}(Q_{1}u_{i}\tau(Q_{2}u_{i}) + \tau(Q_{1}u_{i})Q_{2}u_{i})\right)\\
      &-\sum_{i=1}^{n}\sum_{P=P_{1}u_{i}P_{2}}\left(\sum_{P_{2}P_{1}u_{i} = Q_{1}u_{i}^{-1}Q_{2}u_{i}}(Q_{1}\tau(Q_{2}) + \tau(Q_{1})Q_{2})\right)\\
      &- \sum_{i=1}^{n}\sum_{P=P_{1}u_{i}^{-1}P_{2}}\left(\sum_{u_{i}^{-1}P_{2}P_{1} = u_{i}^{-1}Q_{1}u_{i}Q_{2}}(Q_{1}\tau(Q_{2}) + \tau(Q_{1})Q_{2})\right)\\
      &+\sum_{i=1}^{n}\sum_{P=P_{1}u_{i}^{-1}P_{2}}\left(\sum_{u_{i}^{-1}P_{2}P_{1} = u_{i}^{-1}Q_{1}u_{i}^{-1}Q_{2}}(u_{i}^{-1}Q_{1}\tau(u_{i}^{-1}Q_{2}) + \tau(u_{i}^{-1}Q_{1})u_{i}^{-1}Q_{2})\right).
    \end{split}
  \end{equation*}
  We now take the norm $\|\cdot\|_{\xi_{2}}$ (recall Definition
  \ref{def:parametric-norm}). Using the triangle inequality and
  $|\tau(P)|\leq \|\tau\|_{\xi_{1}}\xi_{1}^{\deg P}$, we get (as $Q_{1}, Q_{2}$ are monomials)
  \begin{equation*}
    \begin{split}
      \frac{\|T_{\tau}P\|_{\xi_{2}}}{\|\tau\|_{\xi_{1}}}
      &\leq \sum_{i=1}^{n}\sum_{P=P_{1}u_{i}P_{2}}\left(\sum_{P_{2}P_{1}u_{i} = Q_{1}u_{i}Q_{2}u_{i}}(\xi_{2}^{\deg_{i} Q_{1}u_{i}}\xi_{1}^{\deg_{i} Q_{2}u_{i}} + \xi_{1}^{\deg_{i} Q_{1}u_{i}}\xi_{2}^{\deg_{i} Q_{2}u_{i}})\right)\\
      &+\sum_{i=1}^{n}\sum_{P=P_{1}u_{i}P_{2}}\left(\sum_{P_{2}P_{1}u_{i} = Q_{1}u_{i}^{-1}Q_{2}u_{i}}(\xi_{2}^{\deg_{i} Q_{1}}\xi_{1}^{\deg_{i} Q_{2}} + \xi_{1}^{\deg_{i} Q_{1}}\xi_{2}^{\deg_{i} Q_{2}})\right)\\
      &+ \sum_{i=1}^{n}\sum_{P=P_{1}u_{i}^{-1}P_{2}}\left(\sum_{u_{i}^{-1}P_{2}P_{1} = u_{i}^{-1}Q_{1}u_{i}Q_{2}}(\xi_{2}^{\deg_{i} Q_{1}}\xi_{1}^{\deg_{i} Q_{2}} + \xi_{1}^{\deg_{i} Q_{1}}\xi_{2}^{\deg_{i} Q_{2}})\right)\\
      &+\sum_{i=1}^{n}\sum_{P=P_{1}u_{i}^{-1}P_{2}}\left(\sum_{u_{i}^{-1}P_{2}P_{1} = u_{i}^{-1}Q_{1}u_{i}^{-1}Q_{2}}(\xi_{2}^{\deg_{i} u_{i}^{-1}Q_{1}}\xi_{1}^{\deg_{i} u_{i}^{-1}Q_{2}} + \xi_{1}^{\deg_{i} u_{i}^{-1}Q_{1}}\xi_{2}^{\deg_{i} u_{i}^{-1}Q_{2}})\right)\\
      &\leq 2\sum_{i=1}^{n}\deg_{i}P\left(\sum_{k=1}^{\deg_{i} P -1}\xi_{2}^{k}\xi_{1}^{\deg_{i}P-k}\right)\,.
    \end{split}
  \end{equation*}
  The main step of the computation is the second inequality. The
  factor $\deg_{i}P$ accounts for the first sum, on the decompositions
  of $P$ as $P=P_{1}u_{i}P_{2}$ or $P = P_{1}u_{i}^{*}P_{2}$. The sum
  on $k$ accounts from the decompositions of $P_{2}P_{1}$ as
  $P_{2}P_{1} = Q_{1}u_{i}Q_{2}$ or
  $P_{2}P_{1} = Q_{1}u_{i}^{*}Q_{2}$. Finally, there is a factor 2 as
  we count those decompositions at most twice. For instance, we have
  \begin{equation*}
    \begin{split}
      &\sum_{i=1}^{n}\sum_{P=P_{1}u_{i}P_{2}}\left(\sum_{P_{2}P_{1}u_{i} = Q_{1}u_{i}Q_{2}u_{i}}(\xi_{2}^{\deg_{i} Q_{1}u_{i}}\xi_{1}^{\deg_{i} Q_{2}u_{i}} + \xi_{1}^{\deg_{i} Q_{1}u_{i}}\xi_{2}^{\deg_{i} Q_{2}u_{i}})\right)\\
      &+\sum_{i=1}^{n}\sum_{P=P_{1}u_{i}P_{2}}\left(\sum_{P_{2}P_{1}u_{i} = Q_{1}u_{i}^{-1}Q_{2}u_{i}}(\xi_{2}^{\deg_{i} Q_{1}}\xi_{1}^{\deg_{i} Q_{2}} + \xi_{1}^{\deg_{i} Q_{1}}\xi_{2}^{\deg_{i} Q_{2}})\right)\\
      &\leq \sum_{i=1}^{n}\sum_{P=P_{1}u_{i}P_{2}}\sum_{P_{2}P_{1} = Q_{1}u_{i}^{\pm 1}Q_{2}}2\xi_{2}^{\deg_{i} Q_{1}u_{i}^{\pm 1}}\xi_{1}^{\deg_{i} Q_{2}u_{i}^{\pm 1}}\\
      &\leq \sum_{i=1}^{n}\sum_{P=P_{1}u_{i}P_{2}}\sum_{k=1}^{\deg_{i}P-1}2\xi_{2}^{k}\xi_{1}^{\deg_{i}P - 1}\\
      &\leq 2\sum_{i=1}^{n}\deg_{i}^{+}P \sum_{k=1}^{\deg_{i}P-1}\xi_{2}^{k}\xi_{1}^{\deg_{i}P - 1}\,.
    \end{split}
  \end{equation*}
  In the first inequality, we abused notation and wrote $u^{\pm 1}_{i}$
  to mean either of $u_{i}$ or $u_{i}^{*}$. In the last line,
  $\deg_{i}^{+}P$ denotes the number of letter $u_{i}$ in $P$.

  We can then conclude:
  \begin{equation*}
    \begin{split}
      \frac{\|T_{\tau}P\|_{\xi_{2}}}{\|\tau\|_{\xi_{1}}}
      &= 2 \sum_{i=1}^{n} \left( \deg_{i}P \right)\xi_{2}^{\deg_{i}P}\sum_{k=1}^{\deg_{i}P-1} \left( \frac{\xi_{1}}{\xi_{2}} \right)^{\deg_{i}P - k}\\
      &\leq 2 \sum_{i=1}^{n} \left( \deg_{i}P \right)\xi_{2}^{\deg_{i}P}\frac{\xi_{1}}{\xi_{2}}\frac{1}{1 - \xi_{1}/\xi_{2}}\\
      &\leq 2 \sum_{i=1}^{n} \left( \deg_{i}P \right)\xi_{2}^{\deg_{i}P}\frac{\xi_{1}}{\xi_{2} - \xi_{1}}\\
      &\leq 2 d \frac{\xi_{1}}{\xi_{2} - \xi_{1}}\|P\|_{\xi_{2}}\,.
    \end{split}
  \end{equation*}
  In the last line, $d$ is the total degree of $P$.
\end{proof}

\bibliographystyle{alpha}
\bibliography{Article}

\end{document}